\numberwithin{equation}{subsection}
\theoremstyle{plain}
\newtheorem{thm}{Theorem}[subsection]
\newtheorem{lem}[thm]{Lemma}
\newtheorem{prop}[thm]{Proposition}
\newtheorem{cor}[thm]{Corollary}
\newtheorem{claim}{Claim}[thm]
\newtheorem*{claim*}{Claim}
\newtheorem*{ithm}{Theorem}
\theoremstyle{definition}
\newtheorem{define}[thm]{Definition}
\theoremstyle{remark}
\newtheorem{rem}[thm]{Remark}
\newcommand{\Fg}{\mathfrak{g}}
\newcommand{\Fh}{\mathfrak{h}}
\newcommand{\FL}{\mathfrak{L}}
\newcommand{\FM}{\mathfrak{M}}
\newcommand{\FS}{\mathfrak{S}}
\newcommand{\BC}{\mathbb{C}}
\newcommand{\BR}{\mathbb{R}}
\newcommand{\BQ}{\mathbb{Q}}
\newcommand{\BZ}{\mathbb{Z}}
\newcommand{\BB}{\mathbb{B}}
\newcommand{\BP}{\mathbb{P}}
\newcommand{\CB}{\mathcal{B}}
\newcommand{\Rep}{\mathop{\rm Rep}\nolimits}
\newcommand{\Hom}{\mathop{\rm Hom}\nolimits}
\newcommand{\wt}{\mathop{\rm wt}\nolimits}
\newcommand{\Par}{\mathop{\rm Par}\nolimits}
\newcommand{\Turn}{\mathop{\rm Turn}\nolimits}
\newcommand{\Conn}{\mathop{\rm Conn}\nolimits}
\newcommand{\af}{\mathrm{af}}
\newcommand{\Ht}{\mathrm{ht}}
\newcommand{\si}{\frac{\infty}{2}}
\newcommand{\sil}{\ell^{\frac{\infty}{2}}}
\newcommand{\QB}{\mathrm{QB}^J}
\newcommand{\SB}{\mathrm{SiB}^{J}}
\newcommand{\SBo}[1]{\mathrm{SiB}^{#1}}
\newcommand{\SBa}{\mathrm{SiB}(\lambda\,;\,a)}
\newcommand{\SBb}[1]{\mathrm{SiB}(\lambda\,;\,#1)}
\newcommand{\LP}{\mathrm{LZ}(\lambda)}
\newcommand{\LPa}{\mathrm{LZ}(\lambda\,;\,a)}
\newcommand{\LPb}[1]{\mathrm{LZ}(\lambda\,;\,#1)}
\newcommand{\Ia}{I(\lambda\,;\,a)}
\newcommand{\Jca}{J^{c}(\lambda\,;\,a)}
\newcommand{\Jcb}[1]{J^{c}(\lambda\,;\,#1)}
\newcommand{\rr}{\Delta_{\af}}
\newcommand{\prr}{\Delta_{\af}^{+}}
\newcommand{\jad}{Q^{\vee,\,\text{\rm $J$-ad}}}
\newcommand{\kad}{Q^{\vee,\,\text{\rm $K$-ad}}}
\newcommand{\PJ}{\Pi^{J}}
\newcommand{\QJp}[1]{Q_{#1}^{\vee+}}
\newcommand{\pJ}[1]{[#1]}
\newcommand{\mcr}[1]{\lfloor #1 \rfloor}
\newcommand{\edge}[1]{\xrightarrow{\ #1 \ }}
\newcommand{\rsa}[1]{\stackrel{#1}{\text{-\!-\!-\!-\!-\!-\!-}}}
\newcommand{\sLS}{\mathbb{B}^{\si}(\lambda)}
\newcommand{\sLSo}{\mathbb{B}^{\si}_{0}(\lambda)}
\newcommand{\LS}{\mathbb{B}(\lambda )}
\newcommand{\B}{\mathcal{B}(\lambda)}
\newcommand{\Bo}{\mathcal{B}_{0}(\lambda)}
\newcommand{\pair}[2]{\langle #1,\,#2 \rangle}
\newcommand{\ve}{\varepsilon}
\newcommand{\vp}{\varphi}
\newcommand{\vpi}{\varpi}
\newcommand{\ha}[1]{\widehat{#1}}
\newcommand{\ti}[1]{\widetilde{#1}}
\newcommand{\ol}[1]{\overline{#1}}
\newcommand{\bzero}{{\bf 0}}
\newcommand{\bqed}{\quad \hbox{\rule[-0.5pt]{3pt}{8pt}}}
\newenvironment{enu}{%
 \begin{enumerate}%
}{\end{enumerate}}
\begin{document}

\baselineskip=18pt

\title{\Large\bf Semi-infinite Lakshmibai-Seshadri path model \\[2mm]
for level-zero extremal weight modules \\[2mm]
over quantum affine algebras%
\footnote{2010 Mathematics Subject Classification. Primary: 17B37, Secondary: 17B67, 81R50, 81R10.}}
\author{
Motohiro Ishii \\
 \small Research Center for Pure and Applied Mathematics, \\
 \small Graduate School of Information Sciences, Tohoku University, \\
 \small Aramaki aza Aoba 6-3-09, Aoba-ku, Sendai 980-8579, Japan \\
 \small (e-mail: {\tt ishii@math.is.tohoku.ac.jp}) \\[5mm]
 Satoshi Naito \\ 
 \small Department of Mathematics, Tokyo Institute of Technology, \\
 \small 2-12-1 Oh-okayama, Meguro-ku, Tokyo 152-8551, Japan \\
 \small (e-mail: {\tt naito@math.titech.ac.jp}) \\[5mm]
Daisuke Sagaki \\ 
 \small Institute of Mathematics, University of Tsukuba, \\
 \small Tsukuba, Ibaraki 305-8571, Japan \\
 \small (e-mail: {\tt sagaki@math.tsukuba.ac.jp})
}
\date{}
\maketitle

%
\begin{abstract} \setlength{\baselineskip}{16pt}
We introduce semi-infinite Lakshmibai-Seshadri paths 
by using the semi-infinite Bruhat order 
(or equivalently, Lusztig's generic Bruhat order) 
on affine Weyl groups in place of the usual Bruhat order. 
These paths enable us to give an explicit realization of 
the crystal basis of an extremal weight module of 
an arbitrary level-zero dominant integral extremal weight 
over a quantum affine algebra. This result can be thought 
of as a full generalization of our previous result 
(which uses Littelmann's Lakshmibai-Seshadri paths), 
in which the level-zero dominant integral weight 
is assumed to be a positive-integer multiple of 
a level-zero fundamental weight.
\end{abstract}
%
%
\section{Introduction.} 
\label{sec:intro}

In our previous papers \cite{NS03, NS06}, 
we gave a combinatorial realization of 
the crystal basis $\CB(m_{i} \vpi_{i})$ of 
the extremal weight module $V(m_{i} \vpi_{i})$ of 
extremal weight $m_{i} \vpi_{i}$, 
where $m_{i} \in \BZ_{\geq 1}$ and 
$\vpi_{i}$ is the $i$-th level-zero fundamental weight 
for $i \in I$, over the quantum affine algebra 
$U_{q}(\mathfrak{g}_{\af})$ in terms of 
Lakshmibai-Seshadri (LS for short) paths of 
shape $m_{i} \vpi_{i}$ in the sense of \cite{Lit95}; 
however, in \cite{NS08}, we showed that it is 
impossible to give a realization of 
the crystal basis $\CB(\lambda)$ of 
the extremal weight module $V(\lambda)$ of 
a general level-zero dominant integral extremal weight 
$\lambda$ in terms of Littelmann's LS paths of shape $\lambda$. 
The purpose of this paper is to overcome this difficulty, 
and to give an explicit realization of the crystal basis 
of the extremal weight module $V(\lambda)$ for 
a level-zero dominant integral weight $\lambda$ 
in full generality; however, we assume that 
an affine Lie algebra $\Fg_{\af}$ is 
of untwisted type throughout this paper.

Extremal weight modules over 
the quantized universal enveloping algebras of 
symmetrizable Kac-Moody algebras 
were introduced by Kashiwara \cite{Kas94}
in his study of the level-zero part of 
the modified quantized universal enveloping algebra 
(see \cite{Lus92}) of an affine Lie algebra.

Let $\lambda$ be an integral weight 
for an affine Lie algebra $\Fg_{\af}$. 
If $\lambda$ is of positive (resp., negative) level, 
then the extremal weight module $V(\lambda)$ is just 
the integrable highest (resp., lowest) weight module over $U_{q}(\Fg_{\af})$. 
However, in the case when $\lambda$ is of level-zero, 
the structure of $V(\lambda)$ is 
much more complicated than in the case of positive or negative level. 
In fact, it is known (\cite[Remark~2.15]{Nak04}; 
see also \cite[Proposition~4.5]{CP01}) that 
$V(\lambda)$ is isomorphic to the quantum Weyl module $W_{q}(\lambda)$ 
introduced by Chari and Pressley (\cite{CP01}).

Also, in the case when $\Fg_{\af}$ is 
an untwisted affine Lie algebra of type $A$, $D$, or $E$, 
an extremal weight module can be thought of 
as a universal standard module. Here standard modules $M_P$, 
parametrized by Drinfeld polynomials $P$, 
were constructed by Nakajima (\cite{Nak01}) 
by use of quiver varieties, as a new basis of 
the Grothendieck ring $\Rep U_{q}(L\Fg)$ of 
finite-dimensional modules (of type $\mathrm{1}$) 
over the quantum loop algebra $U_{q}(L\Fg)$, 
where $\Fg \subset \Fg_{\af}$ is the canonical finite-dimensional 
subalgebra (of type $A$, $D$, or $E$), 
and $L\Fg=\BC[t,\,t^{-1}] \otimes_{\BC} \Fg$; 
unique irreducible quotients $L_{P}$ of 
the standard modules $M_{P}$ 
form another basis of $\Rep U_{q}(L\Fg)$.

More precisely, for a level-zero dominant integral weight 
$\lambda = \sum_{i \in I} m_{i} \vpi_{i}$ 
with $m_i \in \BZ_{\ge 0}$, $i \in I$, 
the universal standard module $M(\lambda)$ is 
defined to be the Grothendieck group 
$K^{G_{\lambda} \times \BC^{*}}(\FL(\lambda))$ 
of $G_{\lambda} \times \BC^{*}$-equivariant coherent sheaves 
on a certain Lagrangian subvariety $\FL(\lambda)$ of 
the quiver variety $\FM(\lambda)$, 
where $G_{\lambda} := \prod_{i \in I} GL_{m_i}(\BC)$. 
For an $I$-tuple $P = (P_{i}(u))_{i \in I}$ of 
monic polynomials with coefficients in $\BC(q)$ 
such that $\deg P_{i}(u) = m_{i}$ 
(called a Drinfeld polynomial), 
the corresponding standard module $M_P$ is 
obtained from $M(\lambda)$ as the specialization 
$M(\lambda) \otimes_{R(G_{\lambda})[q,q^{-1}]} \BC(q)$, 
where $R(G_{\lambda}) \cong \bigotimes_{i \in I} 
\BZ[x^{\pm 1}_{i, 1},\,\ldots,\,x^{\pm 1}_{i, m_{i}}]^{\FS_{m_i}}$ 
is the representation ring of $G_{\lambda}$, and 
by the algebra homomorphism $R(G_{\lambda})[q, q^{-1}] \rightarrow \BC(q)$, 
the indeterminates $x_{i, 1},\,\ldots,\,x_{i, m_i}$ are 
sent to the roots of the polynomial $P_{i}(u)$ for $i \in I$. 
In \cite[Theorem~2]{Nak04}, Nakajima proved that 
there exists a $U_{q}(\Fg_{\af}') 
\otimes_{\BZ[q, q^{-1}]} R(G_{\lambda})[q, q^{-1}]$-module isomorphism 
between the extremal weight module $V(\lambda)$ and 
$M(\lambda) \otimes_{\BZ[q, q^{-1}]} \BC(q)$, 
as a by-product of his proof of Kashiwara's conjecture 
(see \cite[\S 13]{Kas02b}) on the structure of 
extremal weight modules of level-zero extremal weights, 
where $\Fg_{\af}'=\bigl(\BC[t,\,t^{-1}] \otimes_{\BC} \Fg\bigr) \oplus \BC c$. 

Now, let $\lambda$ be a level-zero dominant integral weight 
of the form $\lambda = \sum_{i \in I} m_{i} \vpi_{i}$, 
with $m_{i} \in \BZ_{\geq 0}$ for all $i \in I$. 
In \cite{BN04}, Beck and Nakajima proved Kashiwara's conjecture above 
for all affine Lie algebras, and in particular, 
showed that there exists an isomorphism of crystals
\begin{align}\label{eq:tensor}
\CB(\lambda) \cong \bigotimes_{i \in I} \CB(m_{i} \vpi_{i}). \tag{1}
\end{align}
Soon afterward, in \cite{NS03, NS06}, 
we gave a combinatorial realization of 
the crystal basis $\CB(m_{i} \vpi_{i})$ 
for each $i \in I$ in terms of LS paths of 
shape $m_{i} \vpi_{i}$. Namely, we proved that 
the set $\BB(m_{i} \vpi_{i})$ of LS paths 
of shape $m_{i} \vpi_{i}$ is isomorphic as a crystal 
to $\CB(m_{i} \vpi_{i})$, though neither of 
(the crystal graphs of) these two crystals is 
connected if $m_{i} > 1$. However, it turned out in \cite{NS08} 
that the crystals $\BB(\lambda)$ and $\CB(\lambda)$ are 
not necessarily isomorphic for a general 
level-zero dominant integral weight $\lambda$; 
for example, if $\lambda$ is of the form 
$\sum_{i \in K} \vpi_{i}$ for $K \subset I$ with $\# K \geq 2$, 
then the crystals $\BB(\lambda)$ and $\CB(\lambda)$ are 
never isomorphic, though both of these crystals are 
connected (for details, see \cite[Appendix]{NS08}). 
This is mainly because each connected component of 
the crystal $\BB(\lambda)$ has fewer extremal elements 
than that of the crystal $\CB(\lambda)$ has.

Let us explain the situation above more precisely. 
As a consequence of the isomorphism \eqref{eq:tensor}, 
we see (Proposition~\ref{prop:stab}) that 
the set $\bigl\{ u_x := x u_{\lambda} \mid x \in W_{\af} \bigr\}$ of 
extremal elements in the connected component 
$\CB_{0}(\lambda)$ of $\CB(\lambda)$ containing 
the extremal element $u_{\lambda}$ of extremal weight $\lambda$ 
is in bijective correspondence 
with the quotient set $W_{\af} / (W_J)_{\af}$, where 
$W_{\af} = W \ltimes \bigl\{ t_{\xi} \mid \xi \in Q^{\vee} \bigr\}$ 
is the (affine) Weyl group of $\Fg_{\af}$, and 
$(W_J)_{\af} := W_J \ltimes 
\bigl\{ t_{\xi} \mid \xi \in Q_J^{\vee} \bigr\}$, 
with $J :=\bigl\{i \in I \mid m_i = 0 \bigr\}$, 
is identical to the stabilizer 
$\bigl\{ x \in W_{\af} \mid x u_{\lambda} = u_{\lambda} \bigr\}$ 
of $u_{\lambda}$ in $W_{\af}$ .
In contrast, the set 
$\bigl\{ \pi_{x\lambda} := (x\lambda\,;\,0,\,1) \mid x \in W_{\af} \bigr\}$
of extremal elements in the connected component 
$\BB_{0}(\lambda)$ of $\BB(\lambda)$ containing 
the straight line path $\pi_{\lambda} := (\lambda\,;\,0,\,1)$ 
of weight $\lambda $ is in bijective correspondence 
with the quotient set $W_{\af}/(W_{\af})_{\lambda}$, 
where $(W_{\af})_{\lambda}$ is the stabilizer of $\lambda$ 
in $W_{\af}$, and is identical to $W_{J} \ltimes 
\bigl\{ t_{\xi} \mid \pair{\xi}{\lambda} = 0 \bigr\}$. 
Here, we have $(W_J)_{\af} \subset (W_{\af})_{\lambda}$ in general, 
with equality if and only if $\lambda$ is 
a nonnegative integer multiple of $\vpi_{i}$ 
for some $i \in I$. 

In order to overcome the difficulty mentioned above, 
we introduce the notion of 
semi-infinite Lakshmibai-Seshadri 
(SiLS for short) paths of shape $\lambda$. 
A SiLS path of shape $\lambda$ is, by definition, 
a pair $(\bm{x} ; \bm{a})$ of a decreasing sequence 
$\bm{x} : x_1 >_{\si} x_2 >_{\si} \cdots >_{\si} x_s$ 
in the set $(W^J)_{\af}$ of Peterson's coset representatives 
for the cosets in $W_{\af} / (W_J)_{\af}$, 
equipped with the semi-infinite Bruhat order $\ge_{\si}$, and 
an increasing sequence $\bm{a} : 0 = a_0 < a_1 < \cdots < a_s =1$ 
of rational numbers, while a (usual) LS path of shape $\lambda$ 
is a pair $(\bm{\lambda } ; \bm{a})$ of a decreasing sequence 
$\bm{\lambda } : \lambda_1 > \lambda_2 > \cdots > \lambda_s$ 
of elements in the affine Weyl group  orbit $W_{\af} \lambda$ through $\lambda$, 
equipped with the partial order $\ge$ which 
Littelmann defined in \cite{Lit95}, and 
an increasing sequence $\bm{a}$ of rational numbers as above. 

The coset representatives $(W^J)_{\af}$ were 
originally introduced by Peterson (\cite{Pet97}; see also \cite{LS10}) 
in his study of the relationship between 
the $T$-equivariant homology (ring) of 
the affine Grassmannian $G(\BC (\!( t )\!)) / G(\BC [\![ t ]\!])$ 
and the $T$-equivariant (small) quantum cohomology ring 
$QH_T^{\bullet}(G/P)$ of the partial flag variety $G/P$, 
where $G$ denotes a simply-connected simple algebraic group 
over $\BC$, $P \subset G$ a parabolic subgroup, 
and $T \subset G$ a maximal torus. 
Also, we see from \cite[Claim~4.14]{Soe97} that 
the semi-infinite Bruhat order on the affine Weyl group $W_{\af}$ is 
a slight modification of Lusztig's generic Bruhat order on $W_{\af}$ 
(for details, see Appendix \ref{sec:Appendix}); 
the generic Bruhat order was originally introduced 
by Lusztig (\cite{Lus80}) in his study of 
the conjectural character formula for 
the irreducible quotient of the Weyl module of 
a simply-connected almost simple algebraic group 
over an algebraically closed field of positive characteristic. 
As for the geometric meaning of the semi-infinite Bruhat order, 
it is known (\cite[\S5]{FFKM99}; see also \cite[\S4]{FF90}) that 
the semi-infinite Bruhat order describes the closure relation 
among the fine Schubert strata, parametrized by $W_{\af}$, 
of the Drinfeld compactification of the variety of 
algebraic maps of a fixed degree from 
the complex projective line $\BP^{1}$ to 
the flag variety $G/B$. In addition, we remark that 
in Peterson's lecture note (\cite{Pet97}), 
the semi-infinite Bruhat order (or, stable Bruhat order 
in his terminology) plays an important role, and 
that some of our arguments in the study of SiLS paths 
use (parabolic) quantum Bruhat graphs (\cite{BFP99, LNSSS13a}), 
which appear in the equivariant quantum Chevalley formula 
for $QH_T^{\bullet} (G/P)$ (\cite{Mih07}). 

Now we are ready to state our main results. 
First, we prove that the natural surjection 
from the poset $(W^J)_{\af}$ 
(equipped with the semi-infinite Bruhat order) 
onto the poset $W_{\af} \lambda$ 
(equipped with Littelmann's partial order) 
is order-preserving, and hence that 
there exists a surjection from the set $\sLS$ 
of SiLS paths of shape $\lambda$ onto 
the set $\LS$ of LS paths of shape $\lambda$ (Proposition~\ref{prop:pi_}). 
Next, we define a crystal structure on $\sLS$ for 
$U_q(\Fg_{\af})$ in such a way that 
this surjection becomes a morphism of crystals 
(Theorem~\ref{thm:stability}). Then, each connected component of the 
resulting crystal $\sLS$ indeed has as many extremal elements as 
that of the crystal $\CB(\lambda)$ has (Proposition~\ref{prop:conn-isom}).
Also, we can prove that the crystal $\sLS$ has as many connected components 
as the crystal $\CB(\lambda)$ has (Proposition~\ref{prop:BNforSLS}). 
Combining the results above, we finally obtain the following (Theorem~\ref{thm:main}).

\begin{ithm}
Let $\Fg_{\af}$ be an untwisted affine Lie algebra, 
and $\lambda = \sum_{i \in I} m_{i} \vpi_{i}$ 
a level-zero dominant integral weight, 
with $m_{i} \in \BZ_{\geq 0}$ for all $i \in I$. 
Let $\B$ denote the crystal basis of 
the extremal weight module $V(\lambda)$ 
over $U_q (\Fg_{\af})$, and let $\sLS$ denote 
the set of SiLS paths of shape $\lambda$, 
equipped with the $U_q (\Fg_{\af})$-crystal structure as above. 
Then, we have an isomorphism of crystals 
\begin{align} \label{eq:isomorphism}
\B \cong \sLS. \tag{2}
\end{align}
\end{ithm}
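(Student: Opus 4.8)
The plan is to prove the isomorphism \eqref{eq:isomorphism} by composing the combinatorial description of $\sLS$ developed in this paper with the tensor product decomposition \eqref{eq:tensor} of Beck--Nakajima, and reducing, component by component, to the rank-one case $\lambda = m_i \vpi_i$ already settled in \cite{NS03, NS06}. More precisely, by Proposition~\ref{prop:BNforSLS} the crystal $\sLS$ has the same number of connected components as $\B$, and by Proposition~\ref{prop:conn-isom} each connected component of $\sLS$ has as many extremal elements as the corresponding connected component $\Bo$ of $\B$; since $\B \cong \bigotimes_{i \in I} \CB(m_i \vpi_i)$ and each $\CB(m_i \vpi_i) \cong \BB(m_i \vpi_i)$, the key point will be to identify $\sLS$ with $\bigotimes_{i \in I} \BB(m_i \vpi_i)$ as crystals (after suitably matching up connected components via the extremal-element bookkeeping).

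First I would set up the concatenation/tensor map: given a level-zero dominant $\lambda = \sum_{i} m_i \vpi_i$ and the associated $J = \{ i \mid m_i = 0 \}$, I would construct explicit crystal morphisms relating $\sLS$, the connected component $\sLSo$ containing the straight-line path $\pi_\lambda$, and the tensor product of the connected components $\BB_0(m_i \vpi_i)$. The surjection $\sLS \twoheadrightarrow \LS$ of Proposition~\ref{prop:pi_}, which by Theorem~\ref{thm:stability} is a morphism of crystals, gives one handle; on the other side, one has the Minkowski-sum / concatenation maps for LS paths. The heart of the matter is to show that on the connected component $\sLSo$ of $\pi_\lambda$, the Kashiwara operators $e_j, f_j$ match those on $\B_0(\lambda)$; this is where the semi-infinite Bruhat order is essential, since it is precisely the failure of Littelmann's order to see enough extremal elements (the discrepancy $(W_J)_{\af} \subsetneq (W_{\af})_\lambda$ highlighted in the introduction) that forced the passage from $\LS$ to $\sLS$.

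The strategy for the matching would be the standard crystal-embedding argument: exhibit a strict morphism of crystals $\Psi : \sLS \to \B$ (or its inverse) sending $\pi_\lambda$ to $u_\lambda$, check it commutes with $e_j, f_j$, and then upgrade it to an isomorphism by a counting/connectedness argument using Propositions~\ref{prop:stab}, \ref{prop:conn-isom} and \ref{prop:BNforSLS}. To define $\Psi$ one uses that $\B = \CB(\lambda) \subset V(\lambda)$ is generated over the affine crystal operators by the extremal element $u_\lambda$, together with the explicit combinatorics of SiLS paths (the root operators on $\sLS$ defined in the course of proving Theorem~\ref{thm:stability}); concretely, one sends a path to the corresponding monomial in lowering operators applied to $u_\lambda$ and verifies well-definedness using the local structure of the semi-infinite Bruhat order (covering relations, which by the results cited via the parabolic quantum Bruhat graph are controlled). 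Injectivity of $\Psi$ on each connected component follows because both crystals are, by Propositions~\ref{prop:conn-isom} and \ref{prop:BNforSLS}, ``as large as possible'' in the relevant sense; surjectivity follows from the matching of connected components and of extremal elements within each.

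The main obstacle I anticipate is verifying that the root operators on $\sLS$ genuinely agree with those on $\B$ along the connected component of $\pi_\lambda$ — i.e., that the crystal structure defined on $\sLS$ (designed to make the surjection onto $\LS$ a morphism) is not merely a formal crystal but really coincides with the one coming from the extremal weight module. This is delicate because the elements of $\sLS$ are indexed by sequences in $(W^J)_{\af}$ rather than by weights, so the bijection with $\B$ must be checked to be compatible with $e_j, f_j$ uniformly across all $j \in I_{\af}$, including the affine node; the technical input here is the precise description of how $e_j, f_j$ act on a SiLS path via the semi-infinite Bruhat order, and showing the resulting combinatorial gadget reproduces the Demazure-type crystal structure of $V(\lambda)$. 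Once this compatibility is established, the passage from morphism to isomorphism is purely a matter of the component-and-extremal-element counting already in hand, so I would isolate and prove the compatibility statement first, then assemble the theorem from it and the earlier propositions.
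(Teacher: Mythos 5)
Your overall assembly (connected-component isomorphism plus component bookkeeping plus a Beck--Nakajima input) points in the same direction as the paper, but as written the proposal has a genuine gap: it never actually produces the isomorphism, and the step you single out as ``the main obstacle'' is precisely the ingredient you have already cited. The paper's proof of the theorem is a three-line assembly: $\B \cong \Par(\lambda)\otimes\Bo$ (Proposition~\ref{prop:BN}, the relevant form of the Beck--Nakajima result, not the decomposition \eqref{eq:tensor}), $\Bo \cong \sLSo$ (Proposition~\ref{prop:conn-isom}), and $\sLS \cong \Par(\lambda)\otimes\sLSo$ (Proposition~\ref{prop:BNforSLS}), together with the elementary decomposition of $\Par(\lambda)\otimes\mathcal{B}$ into the pieces $\{\bm{\rho}\}\otimes\mathcal{B}$. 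If you grant Proposition~\ref{prop:conn-isom}, then the compatibility of the root operators on $\sLSo$ with those on $\Bo$ is already established and there is no remaining ``obstacle''; if you do not grant it, then your sketch for proving it --- send a path to a monomial in lowering operators applied to $u_\lambda$ and check well-definedness ``using the local structure of the semi-infinite Bruhat order'' --- is not a proof: well-definedness (that $Xu_\lambda=Yu_\lambda$ iff $X\eta_e=Y\eta_e$) is exactly the hard point, and the paper's mechanism for it, namely the $N$-multiple maps $\sigma_N$ on both sides (Propositions~\ref{prop:Nmulti} and \ref{prop:Nmulti2}), the reduction of $\sigma_N(b)$ to tensor products of extremal elements $u_{x_1}\otimes\cdots\otimes u_{x_N}$ (Proposition~\ref{prop:similarB}), and the comparison Lemma~\ref{lem:N-multi}, is entirely absent from your outline. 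Moreover, your statement that $\B$ is generated by $u_\lambda$ under the Kashiwara operators is false whenever some $m_i\ge 2$: only $\Bo$ is so generated, and the extra components, parametrized by $\Par(\lambda)$, are exactly why Propositions~\ref{prop:BN} and \ref{prop:BNforSLS} are needed.

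Two further points. First, your proposed ``key point'' --- identifying $\sLS$ with $\bigotimes_{i\in I}\BB(m_i\vpi_i)$ via concatenation/Minkowski-type maps --- is never carried out, and it cannot simply be asserted: such an identification is equivalent in strength to the theorem itself, and \cite{NS08} shows how delicate the comparison between path models and $\CB(\lambda)$ is at this level (a naive concatenation map does not obviously respect the semi-infinite chain condition or the root operators). Second, upgrading a morphism to an isomorphism by ``matching of connected components and of extremal elements within each'' is not enough: equality of the number of components does not show that each non-principal component of $\sLS$ is isomorphic, as a crystal, to $\sLSo$ (up to the $\delta$-shift recorded by $\bm{\rho}$); that is the actual content of Proposition~\ref{prop:BNforSLS}, whose proof again runs through the $\sigma_N$-machinery (Lemmas~\ref{lem:tensor} and \ref{lem:vT}) and the classification of the distinguished elements \eqref{eq:extr} via Propositions~\ref{prop:conn} and \ref{prop:unique}. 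So either take Propositions~\ref{prop:conn-isom}, \ref{prop:BN}, \ref{prop:BNforSLS} as given and simply compose them, or supply the $N$-multiple/extremal-element arguments; as it stands your proposal does neither.
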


Remark that for each $i \in I$, 
we have a natural identification 
$\BB^{\si}(m_{i} \vpi_{i}) = \BB(m_{i} \vpi_{i})$, 
since the equality $(W_{I \setminus \{i\}})_{\af} = 
(W_{\af})_{m_{i} \vpi_{i}}$ holds. 
Hence we recover our previous results in \cite{NS05, NS06}. 
Also, we should mention that Hernandez and Nakajima (\cite{HN06}) 
gave a monomial realization of the connected component $\CB_{0}(\lambda)$; 
however, their realization is given in a recursive way, 
and it is difficult to determine 
all the elements in $\CB_{0}(\lambda)$ explicitly in this realization.

As an application of the isomorphism theorem above,
in our paper \cite{NS-Dem} sequel to the present one,
we prove that the graded character of a particular Demazure submodule
$V_{w_{0}}^{+}(\lambda)$ of $V(\lambda)$ 
for the longest element $w_{0} \in W$ is 
identical to a $q$-Whittaker function,
which is nothing but the graded character of a global Weyl module over the current
algebra (see \cite{BF} for simply-laced cases). 
This result yields a purely crystal-theoretical explanation of 
the relation between a $q$-Whittaker function and
the specialization at $t = 0$ of a symmetric Macdonald polynomial,
which is nothing but the graded character of 
a local Weyl module over the current algebra (see \cite{LNSSS2}).

This paper is organized as follows. In \S \ref{sec:prelim}, 
we fix our notation for untwisted affine root data, and review 
some basic facts about LS paths and (parabolic) semi-infinite Bruhat 
graphs. 
In \S \ref{sec:main}, we introduce SiLS paths 
and define a crystal structure on them, postponing to 
\S\ref{sec:pr-prop:pi_} the proof of the stability property of
the set of semi-infinite LS paths under root operators. 
Also, we state our main result, i.e., 
the isomorphism theorem above between $\B$ and $\sLS$. 
In \S\ref{sec:pr-prop:pi_}, we first study some fundamental 
properties of semi-infinite Bruhat graphs, and 
then prove that there exists a canonical surjection 
from $\sLS$ to $\LS$. Using this surjection, we give 
the postponed proof of the stability property above of SiLS paths. 
In \S\ref{sec:pr-prop:conn-isom}, 
we prove that the connected component $\Bo$ of $\B$ 
containing $u_{\lambda}$ is isomorphic, as a crystal, 
to the connected component $\sLSo$ of $\sLS$ 
containing the element $\eta_e := (e\,;\,0,1)$.
In \S\ref{sec:directed-path}, 
we obtain a condition for the existence of a directed path 
in a (parabolic) semi-infinite Bruhat graph from a translation 
to another translation; this result is used in \S\ref{sec:ConnComp} 
to give a parametrization of the connected components of $\sLS$. 
Finally, in \S\ref{sec:ConnComp}, by combining the results 
in \S\ref{sec:pr-prop:pi_}, \S\ref{sec:pr-prop:conn-isom}, 
and \S\ref{sec:directed-path}, we obtain the desired isomorphism 
$\B \cong \sLS$. 
In Appendix \ref{sec:Appendix}, 
we give another (but equivalent) definition of semi-infinite Bruhat graphs, 
and also mention the relation between the semi-infinite Bruhat 
order and Lusztig's generic Bruhat order. 

\paragraph{Acknowledgements.}
The second and third authors would like to 
thank Christian Lenart, Anne Schilling, and Mark Shimozono 
for related collaborations. 
The third author thanks Professor Peter Littelmann, Professor Ghislain Fourier, 
Dr. Deniz Kus, and students attending at Professor Littelmann's seminar for 
giving us valuable comments on this work.
%
%
\section{LS paths and semi-infinite Bruhat graphs.}
\label{sec:prelim}
%
%
\subsection{Untwisted affine root data.}
\label{subsec:affalg}

Let $\Fg_{\af}$ be an untwisted affine Lie algebra 
over $\BC$ with Cartan subalgebra $\Fh_{\af}$. 
Let $\{ \alpha_i \}_{i \in I_{\af}} \subset 
\Fh_{\af}^* := \Hom_{\BC}(\Fh_{\af},\,\BC)$ 
and $\{ \alpha_i^{\vee} \}_{i \in I_{\af}} 
\subset \Fh_{\af}$ be the sets of simple roots and 
simple coroots, respectively.
Let $\pair{\cdot\,}{\cdot} : 
\Fh_{\af} \times \Fh_{\af}^* \rightarrow \BC$ 
denote the canonical pairing. 
Throughout this paper, we take and fix an integral weight lattice 
$P_{\af} \subset \Fh_{\af}^{*}$ satisfying the conditions that 
$\alpha_i \in P_{\af}$ and 
$\alpha_i^{\vee} \in \Hom_{\BZ}(P_{\af},\,\BZ)$ 
for all $i \in I_{\af}$, and that for each $i \in I_{\af}$
there exists $\Lambda_i \in P_{\af}$ such that 
$\pair{\alpha_{j}^{\vee}}{\Lambda_i} = \delta_{ij}$ 
for all $j \in I_{\af}$. 
Let $\delta = \sum_{i \in I_{\af}} a_i \alpha_i \in \Fh_{\af}^{*}$ 
and $c = \sum_{i \in I_{\af}} a_i^{\vee} \alpha_i^{\vee} \in \Fh_{\af}$ 
be the null root and the canonical central element, respectively. 
We take and fix $0 \in I_{\af}$ such that $a_0 = a_0^{\vee} =1$, 
and set $I := I_{\af} \setminus \{ 0 \}$; note that 
the subset $I$ of $I_{\af}$ corresponds to 
the index set for the finite-dimensional 
simple Lie subalgebra $\Fg$ of $\Fg_{\af}$. 
For each $i \in I$, we define $\varpi_i := 
\Lambda_i - \pair{c}{\Lambda_i} \Lambda_0$; 
note that $\pair{c}{\varpi_i}=0$ for all $i \in I$. 
Set 
\begin{align*}
Q := \bigoplus_{i \in I} \BZ \alpha_i, &&
Q^{\vee} := \bigoplus_{i \in I} \BZ \alpha_i^{\vee}, &&
P^+ := \sum_{i \in I} \BZ_{\ge 0} \varpi_i; 
\end{align*} 
we call an element of $P^{+}$ a level-zero dominant integral weight. 

Let $W_{\af} := \langle r_i \mid i \in I_{\af} \rangle$ be 
the (affine) Weyl group of $\Fg_{\af}$, 
where $r_i$ denotes the simple reflection 
with respect to $\alpha_i$, and 
set $W := \langle r_i \mid i \in I \rangle \subset W_{\af}$, 
which can be regarded as the Weyl group of $\Fg$. 
Let $e \in W_{\af}$ be the unit element, and 
$\ell : W_{\af} \rightarrow \mathbb{Z}_{\ge 0}$ 
the length function. 
For $\xi \in Q^{\vee}$, denote by $t_{\xi} \in W_{\af}$ 
the translation with respect to $\xi$ (see \cite[\S6.5]{Kac90}). 
We know from \cite[Proposition 6.5]{Kac90} that 
$\bigl\{ t_{\xi} \mid \xi \in Q^{\vee} \bigr\}$ forms 
an abelian normal subgroup of $W_{\af}$, 
for which $t_{\xi} t_{\zeta} = t_{\xi + \zeta}$, 
$\xi,\,\zeta \in Q^{\vee}$, and 
$W_{\af} = W \ltimes \bigl\{ t_{\xi} \mid \xi \in Q^{\vee} \bigr\}$; 
remark that for $w \in W$ and $\xi \in Q^{\vee}$, we have 
%
%
\begin{align}\label{eq:lv0action}
w t_{\xi} \mu = w \mu - \pair{\xi}{\mu}\delta \quad 
\text{if $\mu \in \Fh_{\af}^{*}$ satisfies $\pair{c}{\mu}=0$}.
\end{align}

Denote by $\Delta_{\af}$ the set of real roots of 
$\Fg_{\af}$, and $\prr$ the set of positive real roots 
of $\Fg_{\af}$; we know from \cite[Proposition 6.3]{Kac90} that
\begin{align*}
\begin{split}
\rr & = 
\bigl\{ \alpha + n \delta \mid \alpha \in \Delta,\, n \in \BZ \bigr\}, \\
\prr & = 
\Delta^{+} \sqcup 
\bigl\{ \alpha + n \delta \mid \alpha \in \Delta,\, n \in \BZ_{> 0}\bigr\},
\end{split}
\end{align*}
where $\Delta := \Delta_{\af} \cap Q$ is 
the (finite) root system corresponding to $I$, and 
$\Delta^{+} := \Delta \cap \sum_{i \in I} \BZ_{\ge 0} \alpha_i$. 
For $\beta \in \Delta_{\af}$, 
denote by $\beta^{\vee} \in Q^{\vee}$ the coroot of $\beta $, 
and by $r_{\beta} \in W_{\af}$ 
the reflection with respect to $\beta$; 
if $\beta \in \Delta_{\af}$ is 
of the form $\beta = \alpha + n \delta $ 
with $\alpha \in \Delta $ and $n \in \BZ$, then 
%
%
\begin{equation}\label{eq:refl}
r_{\beta} =r_{\alpha} t_{n\alpha^{\vee}} .
\end{equation}
%

For a subset $J$ of $I$, we set 
\begin{align*}
Q_J &:= \bigoplus_{j \in J} \BZ \alpha_j, &
Q_J^{\vee} &:= \bigoplus_{j \in J} \BZ \alpha_j^{\vee}, &
\QJp{J} &:= \sum_{j \in J} \BZ_{\ge 0} \alpha_j^{\vee}, \\[3mm]
\Delta_J &:= \Delta \cap Q_J, & 
\Delta_J^+ &:= 
 \Delta_J \cap \sum_{i \in I} \BZ_{\ge 0}  \alpha_i , &
W_J &:= \langle r_j \mid j \in J \rangle . &
\end{align*}
Let $W^J$ denote the set of minimal(-length) coset representatives 
for $W / W_J$; we see from \cite[\S 2.4]{BB05} that 
%
%
\begin{align}\label{eq:W^J}
W^J = \bigl\{ w \in W \mid 
\text{$w \alpha \in \Delta^+$ for all $\alpha \in \Delta_J^+$}\bigr\} .
\end{align}
For $w \in W$, we denote by $\mcr{w}=\mcr{w}^{J} \in W^J$ 
the minimal coset representative for the coset $w W_J$ in $W / W_J$.

%
\subsection{Lakshmibai-Seshadri paths.}
\label{subsec:LSpath}

In this subsection, we briefly review 
some basic facts about crystals of 
Lakshmibai-Seshadri (LS for short) paths, 
introduced by Littelmann \cite{Lit94}, \cite{Lit95}. 
In this subsection, we fix $\lambda \in P^+$. 

\begin{define}[{\cite[\S 4]{Lit95}}] \label{def:wtposet}
We define a partial order $\le$ on $W_{\af} \lambda $ as follows: 
for $\mu,\,\nu \in W_{\af} \lambda $, we write $\mu \le \nu $ 
if there exists a sequence 
$\mu = \nu_0,\,\nu_1,\,\ldots,\,\nu_k = \nu$ of 
elements in $W_{\af} \lambda$ and 
a sequence $\beta_1,\,\ldots,\,\beta_k$ 
of elements in $\prr$ such that 
$\nu_m = r_{\beta_m} \nu_{m-1}$ and 
$\pair{\beta_m^{\vee}}{\nu_{m-1}} \in \BZ_{>0}$ 
for all $m=1,\,2,\,\dots,\,k$. 
We call the poset $(W_{\af} \lambda,\,\le)$ 
the level-zero weight poset of shape $\lambda$. 
\end{define}

\begin{rem} \label{rem:cover}
Let $\nu \in P$, and $\beta,\,\gamma \in \prr$. 
If $r_{\beta}\nu=r_{\gamma}\nu \ne \nu$, then 
$\beta=\gamma$. Indeed, we have
$\nu-\pair{\beta^{\vee}}{\nu}\beta = 
 \nu-\pair{\gamma^{\vee}}{\nu}\gamma$, 
and hence $\pair{\beta^{\vee}}{\nu}\beta =
\pair{\gamma^{\vee}}{\nu}\gamma$. 
Also, since $r_{\beta}\nu \ne \nu$ and 
$r_{\gamma}\nu \ne \nu$, it follows immediately that 
$\pair{\beta^{\vee}}{\nu} \ne 0$ and 
$\pair{\gamma^{\vee}}{\nu} \ne 0$. 
Hence, by \cite[Proposition~5.1\,b)]{Kac90}, 
we obtain $\beta=\gamma$. 
\end{rem}

\begin{define}\label{def:X}
\mbox{}
\begin{enu}
\item
Define $\LP$ to be the $\prr$-labeled, directed graph 
with vertex set $W_{\af} \lambda$ and 
$\prr$-labeled, directed edges of the following form: 
$\mu \edge{\beta} \nu$ for $\mu,\,\nu \in W_{\af} \lambda$, 
where $\nu$ covers $\mu$ in the poset $W_{\af} \lambda$, and 
the label $\beta$ of the edge 
is a unique positive real root $\beta \in \prr$ 
such that $\nu = r_{\beta } \mu$ and 
$\pair{\beta^{\vee}}{\mu} > 0$ (see Remark~\ref{rem:cover}).

\item
Let $0 < a \le 1$ be a rational number. 
Define $\LPa$ to be the subgraph of $\LP$ 
with the same vertex set but having only the edges of the form:
%
\begin{align}\label{eq:Xa}
\mu \edge{\beta} \nu \quad \text{with} \quad 
a \pair{\beta^{\vee}}{\mu} \in \BZ;
\end{align}
note that $\LPb{1} = \LP$.
\end{enu}
\end{define}

\begin{define}[{\cite[\S 4]{Lit95}}] \label{def:LSpath}
An LS path of shape $\lambda $ is, 
by definition, a pair $(\bm{\nu}\,;\,\bm{a})$ of 
a decreasing sequence $\bm{\nu} : \nu_1 > \cdots > \nu_s$ 
of elements in $W_{\af} \lambda $ and an increasing sequence 
$\bm{a} : 0 = a_0 < a_1 < \cdots < a_s =1$ of 
rational numbers satisfying the condition that 
there exists a directed path from $\nu_{u+1}$ to $\nu_u$ 
in $\LPb{a_u}$ for each $u = 1,\,2,\,\ldots,\,s-1$. 
Let $\LS$ denote the set of LS paths of shape $\lambda$.
\end{define}

We identify $\pi = 
(\nu_1,\,\ldots,\,\nu_s\,;\,a_0,\,\ldots,\,a_s) \in \LS$ 
with the piecewise-linear, continuous map 
$\pi : [0,1] \rightarrow \BR \otimes_{\BZ} P_{\af}$ 
whose ``direction vector'' for the interval 
$[a_{u-1},\,a_{u}]$ is equal to $\nu_{u}$ 
for each $1 \le u \le s$, that is, 
\begin{equation} \label{eq:LSpath}
\pi (t) = 
\sum_{p = 1}^{u-1}(a_p - a_{p-1}) \nu_p + (t - a_{u-1}) \nu_u 
\quad
\text{for $t \in [a_{u-1},\,a_u]$, $1 \le u \le s$}.
\end{equation}
We also express $\pi = 
(\nu_1,\,\ldots,\,\nu_s\,;\,a_0,\,\ldots,\,a_s) \in \LS$ as:
\begin{equation*}
0= a_{0} \rsa{\nu_1} a_{1} \rsa{\nu_2} \cdots 
\rsa{\nu_{s-1}} a_{s-1}
\rsa{\nu_{s}} a_{s}=1.
\end{equation*}

Now, we equip the set $\LS$ with a crystal structure with weights 
in $P_{\af}$ as follows (for the definition of crystals, see 
\cite[\S 7.2]{Kas95} and \cite[Definition 4.5.1]{HK02} for example). 
First, we define $\wt : \LS \rightarrow P_{\af}$ by 
$\wt(\pi) := \pi (1) \in P_{\af}$; 
we know from \cite[Lemma~4.5\,a)]{Lit95} that 
$\pi(1) \in P_{\af}$ for all $\pi \in \LS$.
Next, for $\pi \in \LS$ and $i \in I_{\af}$, we set
\begin{equation} \label{eq:H}
\begin{cases}
H_i^{\pi}(t) := \pair{\alpha_i^{\vee}}{\pi(t)} \quad 
\text{for $t \in [0,1]$}, \\[1.5mm]
m_i^{\pi} := 
 \min \bigl\{ H_i^{\pi} (t) \mid t \in [0,1] \bigr\}.
\end{cases}
\end{equation}
\begin{rem}\label{rem:locmini}
We see from \cite[Lemma 4.5 d)]{Lit95} that 
for each $\pi \in \LS$ and $i \in I_{\af}$, 
all local minima of the function 
$H^{\pi}_{i}(t)$, $t \in [0,1]$, are integers. 
In particular, the minimum $m^{\pi}_{i}$ 
is a nonpositive integer (recall that $\pi(0)=0$, 
and hence $H^{\pi}_{i}(0)=0$). 
\end{rem}

Following \cite[\S1]{Lit95} (see also \cite[\S 1]{NS06}), 
we define the root operators $e_i$, $f_i$, $i \in I_{\af}$, 
on $\LS \sqcup \bigl\{\bzero\bigr\}$ as follows. Here, 
$\bzero$ is an additional element 
not contained in any crystal. 
%
%
\begin{define} \label{dfn:ro}
Let $\pi = (\nu_1,\,\ldots,\,\nu_s\,;\,a_0,\,\ldots,\,a_s) \in \LS$, 
and $i \in I_{\af}$. 
\begin{enu}
\item
If $m_i^{\pi} = 0$, then we define $e_i \pi := \bzero$. 
If $m_i^{\pi} \le -1$, then set
%
%
\begin{align} \label{eq:t-e}
\begin{cases}
t_1 := 
  \min \bigl\{ t \in [0,1] \mid 
    H_i^{\pi}(t) = m_i^{\pi} \bigr\}, \\[1.5mm]
t_0 := 
  \max \bigl\{ t \in [0,t_1] \mid 
    H_i^{\pi}(t) = m_i^{\pi} + 1 \bigr\} ;
\end{cases}
\end{align}
we deduce from Remark \ref{rem:locmini} that $H_i^{\pi}(t)$ is 
strictly decreasing on $[t_0,\,t_1]$. 
Notice that 
there exists $1 \le q \le s$ such that $t_1 = a_q$. 
Let $1 \le p \le q$ be such that $a_{p-1} \le t_0 < a_p$. 
Then we define $e_{i}\pi$ to be 
%
%
%
%
%
%
%
\begin{equation*}
0= a_{0} \rsa{\nu_1} \cdots \rsa{\nu_{p-1}} a_{p-1} \rsa{\nu_p} 
\underbrace{t_{0} \rsa{r_{i}\nu_p} a_{p} \rsa{r_{i}\nu_{p+1}} \cdots 
\rsa{r_{i}\nu_{q}} a_{q}=t_{1}}_{\text{``reflected'' by $r_{i}$}}
\rsa{\nu_{q+1}} \cdots \rsa{\nu_{s}} a_{s}=1,
\end{equation*}
that is, 
\begin{align*}
e_i \pi := ( 
& \nu_1,\,\ldots,\,\nu_p,\,r_i \nu_p,\,r_i \nu_{p+1},\,\ldots,\,
  r_i \nu_q,\,\nu_{q+1},\,\ldots,\,\nu_s ; \\
& a_0,\,\ldots,\,a_{p-1},\,t_0,\,a_p,\,\ldots,\,a_{q}=t_1,\,
\ldots,\,a_s);
\end{align*}
if $t_0 = a_{p-1}$, then we drop $\nu_p$ and $a_{p-1}$, and if $r_i \nu_q = \nu_{q+1}$, then we drop $\nu_{q+1}$ and $a_{q}=t_1$.

\item
If $H_i^{\pi}(1) - m_i^{\pi}  = 0$, then 
we define $f_i \pi := \bzero$. 
If $H_i^{\pi}(1) - m_i^{\pi}  \ge 1$, 
then set
\begin{align} \label{eq:t-f}
\begin{cases}
t_0 := 
 \max \bigl\{ t \in [0,1] \mid H_i^{\pi}(t) = m_i^{\pi} \bigr\}, \\[1.5mm]
t_1 := 
 \min \bigl\{ t \in [t_0,1] \mid H_i^{\pi}(t) = m_i^{\pi} + 1 \bigr\};
\end{cases}
\end{align}
we deduce from Remark \ref{rem:locmini} that $H_i^{\pi}(t)$ is 
strictly increasing on $[t_0,\,t_1]$. 
Notice that 
there exists $0 \le p \le s-1$ such that $t_0 = a_p$. 
Let $p \le q \le s-1$ be such that $a_{q} < t_1 \le a_{q+1}$. 
Then we define $f_{i}\pi$ to be 
\begin{equation*}
0= a_{0} \rsa{\nu_1} \cdots \rsa{\nu_{p}} 
\underbrace{a_{p}=t_{0} \rsa{r_{i}\nu_{p+1}} 
\cdots \rsa{r_{i}\nu_{q}} a_{q} \rsa{r_{i}\nu_{q+1}} t_{1}%
}_{\text{``reflected'' by $r_{i}$}}
\rsa{\nu_{q+1}} a_{q+1} 
\rsa{\nu_{q+2}} \cdots \rsa{\nu_{s}} a_{s}=1,
\end{equation*}
that is, 
\begin{align*}
f_i \pi := ( 
& \nu_1,\,\ldots,\,\nu_{p},\,r_i\nu_{p+1},\,\dots,\,
  r_i\nu_{q},\,r_i\nu_{q+1},\,\nu_{q+1},\,\ldots,\,\nu_s ; \\
& a_0,\,\ldots,\,a_{p}=t_0,\,\ldots,\,a_{q},\,t_1,\,
  a_{q+1},\,\ldots,\,a_s);
\end{align*}
if $t_1 = a_{q+1}$, then we drop $\nu_{q+1}$ and $a_{q+1}$, and 
if $\nu_{p} = r_i \nu_{p+1}$, then we drop $\nu_{p}$ and $a_{p}=t_0$.

\item 
Set $e_i\bzero=f_i\bzero:=\bzero$ for all $i \in I_{\af}$.
\end{enu}
\end{define}

We know from \cite[Corollary 2 a)]{Lit95} that 
the set $\LS \sqcup \bigl\{ \bzero \bigr\}$ is 
stable under the action of the root operators $e_i$, $f_i$, 
$i \in I_{\af}$. Now we define 
%
%
\begin{equation} \label{eq:vevp}
\begin{cases}
\ve_i (\pi) := 
 \max \bigl\{ n \in \BZ_{\ge 0} \mid e_i^n \pi \neq \bzero \bigr\}, \\[1.5mm]
\vp_i (\pi) := 
 \max \bigl\{ n \in \BZ_{\ge 0} \mid f_i^n \pi \neq \bzero \bigr\}
\end{cases}
\end{equation}
for $\pi \in \LS$ and $i \in I_{\af}$. 
We know from \cite[\S2 and \S4]{Lit95} that 
the set $\LS$, equipped with the maps $\wt$, $e_i$, $f_i$, $i \in I_{\af}$, 
and $\ve_i$, $\vp_i$, $i \in I$, defined above, is a crystal 
with weights in $P_{\af}$.

%
\subsection{Peterson's coset representatives.}
\label{subsec:W^J_af}

Let $J$ be a subset of $I$. Following \cite{Pet97} (see also \cite[\S 10]{LS10}), we define
\begin{align}
(\Delta_J)_{\af} 
  & := \bigl\{ \alpha + n \delta \mid 
  \alpha \in \Delta_J , n \in \BZ \bigr\} \subset \Delta_{\af}, \\
(\Delta_J)_{\af}^+
  &:= (\Delta_J)_{\af} \cap \prr = 
  \Delta_J^+ \sqcup \bigl\{ \alpha + n \delta \mid 
  \alpha \in \Delta_J,\,n \in \BZ_{> 0} \bigr\}, \\
\label{eq:stabilizer}
(W_J)_{\af} 
 & := W_J \ltimes \bigl\{ t_{\xi} \mid \xi \in Q_J^{\vee} \bigr\} , \\
\label{eq:Pet}
(W^J)_{\af}
 &:= \bigl\{ x \in W_{\af} \mid 
 \text{$x\beta \in \prr$ for all $\beta \in (\Delta_J)_{\af}^+$} \bigr\}. 
\end{align}

\begin{rem} \label{rem:stabilizer}
We can easily show, using \eqref{eq:refl}, that 
$(W_J)_{\af} = \langle r_{\beta} \mid \beta \in (\Delta_J)_{\af}^+ \rangle$.
%
%
\end{rem}
We know the following proposition from \cite{Pet97} 
(see also \cite[Lemma 10.6]{LS10}).
%
%
\begin{prop} \label{prop:P}
For each $x \in W_{\af}$, there exist a unique 
$x_1 \in (W^J)_{\af}$ and a unique $x_2 \in (W_J)_{\af}$ 
such that $x = x_1 x_2$.
\end{prop}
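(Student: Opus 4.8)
The plan is to establish Proposition~\ref{prop:P} by combining two facts: first, that $(W^J)_{\af}$ is exactly a set of coset representatives for $W_{\af}/(W_J)_{\af}$ (existence of the decomposition), and second, that these representatives are \emph{minimal length} in their cosets, which forces uniqueness. Throughout I would lean on the characterization \eqref{eq:Pet} of $(W^J)_{\af}$ via the inversion sets, together with the description of $(W_J)_{\af}$ as the parabolic-type subgroup generated by the reflections $r_{\beta}$, $\beta \in (\Delta_J)_{\af}^{+}$, as recorded in Remark~\ref{rem:stabilizer}.

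For existence, I would argue as follows. Given $x \in W_{\af}$, consider the (right) coset $x(W_J)_{\af}$ and pick an element $x_1$ of minimal length in it; write $x = x_1 x_2$ with $x_2 \in (W_J)_{\af}$. I claim $x_1 \in (W^J)_{\af}$. Suppose not; then by \eqref{eq:Pet} there exists a simple-type root $\beta \in (\Delta_J)_{\af}^{+}$ (a simple root of the root subsystem $(\Delta_J)_{\af}$, for a suitable choice of simple system) with $x_1\beta \in -\prr$. Since the reflections $r_{\beta}$ for such simple $\beta$ generate $(W_J)_{\af}$ and behave like Coxeter generators for it, the standard exchange/length argument shows $\ell(x_1 r_{\beta}) < \ell(x_1)$, and $x_1 r_{\beta}$ still lies in the same coset $x(W_J)_{\af}$, contradicting minimality of $x_1$. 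The one technical point requiring care here is that $(W_J)_{\af}$, being of the form $W_J \ltimes \{t_\xi \mid \xi \in Q_J^\vee\}$, is itself (isomorphic to) an affine Weyl group, hence a Coxeter group, so the inversion-set/length machinery genuinely applies to it as a reflection subgroup of $W_{\af}$; I would cite \cite{Pet97} or \cite{LS10} for the fact that it is a reflection subgroup with the expected length behavior relative to the ambient positive system $\prr$ (this is essentially Deodhar's theory of reflection subgroups).

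For uniqueness, suppose $x = x_1 x_2 = x_1' x_2'$ with $x_1, x_1' \in (W^J)_{\af}$ and $x_2, x_2' \in (W_J)_{\af}$. Then $x_1' = x_1 w$ for $w := x_2 (x_2')^{-1} \in (W_J)_{\af}$, and it suffices to show $w = e$. The inversion set of $w$ as an element of the reflection subgroup $(W_J)_{\af}$ is contained in $(\Delta_J)_{\af}^{+}$; if $w \ne e$ there is a simple-type root $\gamma \in (\Delta_J)_{\af}^{+}$ with $w\gamma \in -\prr$. Then $x_1'\gamma = x_1(w\gamma)$: since $\gamma \in (\Delta_J)_{\af}^{+}$ and $x_1' \in (W^J)_{\af}$, we get $x_1'\gamma \in \prr$; but $w\gamma \in -\prr$ and $x_1 \in (W^J)_{\af}$ forces $x_1(w\gamma) \in -\prr$ (applying \eqref{eq:Pet} to $-w\gamma \in (\Delta_J)_{\af}^{+}$), a contradiction. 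Hence $w = e$ and both decompositions coincide.

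The main obstacle, as I see it, is not any single deep step but rather setting up the reflection-subgroup formalism cleanly: one must be careful about what ``simple roots of $(\Delta_J)_{\af}$'' means (the canonical simple system for this sub-root-system need not be a subset of $\{\alpha_i\}_{i \in I_{\af}}$), and one must confirm that the length function of $W_{\af}$ restricted along cosets of $(W_J)_{\af}$ interacts with $(\Delta_J)_{\af}^{+}$ in the way Deodhar's theory predicts. Once that bookkeeping is in place, both existence and uniqueness follow from the same one-line inversion-set argument applied in two directions. In the write-up I would likely just invoke \cite[Lemma~10.6]{LS10} for the hard structural input and present the minimal-length/uniqueness reasoning in the compact form above.
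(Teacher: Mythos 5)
Your argument is correct, but note that the paper does not actually prove Proposition~\ref{prop:P} at all: it is quoted from the literature, with references to \cite{Pet97} and \cite[Lemma~10.6]{LS10}, so there is no in-paper proof to match. What you supply is essentially the standard coset-representative argument for the reflection subgroup $(W_J)_{\af}$, and it is sound. Two remarks on streamlining. First, for the existence step you do not need Deodhar--Dyer theory or any choice of simple system for $(\Delta_J)_{\af}$: if $x_1\beta \in -\prr$ for \emph{any} $\beta \in (\Delta_J)_{\af}^{+}$, the general Coxeter-group fact ``$\ell(x_1 r_{\beta}) < \ell(x_1)$ if and only if $x_1\beta < 0$'' (valid for arbitrary positive roots, not just simple ones; see e.g.\ \cite{BB05}) already gives the length decrease, and $r_{\beta} \in (W_J)_{\af}$ by Remark~\ref{rem:stabilizer} keeps you in the coset; so minimality of $x_1$ yields $x_1 \in (W^J)_{\af}$ directly. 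Second, the genuine content is concentrated in the uniqueness step, namely the claim that every $w \in (W_J)_{\af}$ with $w \neq e$ sends some $\gamma \in (\Delta_J)_{\af}^{+}$ to a negative root; this uses, implicitly, that $(W_J)_{\af}$ preserves the subsystem $(\Delta_J)_{\af}$ (clear from $r_{\gamma+m\delta}(\alpha+n\delta)=r_{\gamma}\alpha+(n-m\pair{\gamma^{\vee}}{\alpha})\delta$) and that $(\Delta_J)_{\af}^{+}$ is a positive system for $(W_J)_{\af}$. You can justify the latter either by the reflection-subgroup theory you invoke, or more concretely by observing that $(W_J)_{\af} = W_J \ltimes \bigl\{t_{\xi} \mid \xi \in Q_J^{\vee}\bigr\}$ is itself the affine Weyl group of the finite root system $\Delta_J$ with $(\Delta_J)_{\af}^{+}$ its standard set of positive (affine) roots. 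With that input made explicit, your two-directional inversion-set argument gives both existence and uniqueness, and is in substance the proof one finds behind \cite[Lemma~10.6]{LS10}; the trade-off is simply that the paper outsources this to the references while you make it self-contained.
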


We define a (surjective) map $\PJ : W_{\af} \rightarrow (W^J)_{\af}$ 
by $\PJ (x) := x_1$ if $x= x_1 x_2$ with $x_1 \in (W^J)_{\af}$ and 
$x_2 \in (W_J)_{\af}$.
%
%
\begin{lem}[{\cite{Pet97}; see also \cite[Proposition 10.10]{LS10}}] 
\label{lem:PJ}
\mbox{}
\begin{enu}
\item $\PJ (w) = \lfloor w \rfloor $ for every $w \in W$.
\item $\PJ (x t_{\xi}) = \PJ (x) \PJ (t_{\xi})$ 
for every $x \in W_{\af}$ and $\xi \in Q^{\vee}$.
\end{enu}
\end{lem}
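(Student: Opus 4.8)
\textbf{Proof proposal for Lemma~\ref{lem:PJ}.}

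The plan is to prove both parts by appealing directly to the defining characterization of $(W^J)_{\af}$ in \eqref{eq:Pet} together with the uniqueness in Proposition~\ref{prop:P}, reducing everything to a check on roots. For part~(1), I would first recall from \eqref{eq:W^J} that $\lfloor w \rfloor \in W^J$ is characterized by $\lfloor w \rfloor \alpha \in \Delta^+$ for all $\alpha \in \Delta_J^+$, and that $w = \lfloor w \rfloor v$ for a unique $v \in W_J$. Since $v \in W_J \subset (W_J)_{\af}$ and $\lfloor w \rfloor \in W \subset W_{\af}$, by the uniqueness in Proposition~\ref{prop:P} it suffices to show $\lfloor w \rfloor \in (W^J)_{\af}$, i.e.\ that $\lfloor w \rfloor \beta \in \prr$ for every $\beta \in (\Delta_J)_{\af}^+$. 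Writing $\beta = \alpha + n\delta$ with $\alpha \in \Delta_J$ and either ($n = 0$, $\alpha \in \Delta_J^+$) or $n \in \BZ_{>0}$, and using that $\lfloor w \rfloor \in W$ fixes $\delta$, one gets $\lfloor w \rfloor \beta = \lfloor w \rfloor \alpha + n\delta$. In the case $n > 0$ this lies in $\prr$ regardless of the sign of $\lfloor w \rfloor \alpha$ (by the description of $\prr$ in \S\ref{subsec:affalg}), and in the case $n = 0$, $\alpha \in \Delta_J^+$, the characterization \eqref{eq:W^J} of $W^J$ gives $\lfloor w \rfloor \alpha \in \Delta^+ \subset \prr$. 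Hence $\lfloor w \rfloor \in (W^J)_{\af}$, and $\PJ(w) = \lfloor w \rfloor$.

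For part~(2), write $x = x_1 x_2$ with $x_1 = \PJ(x) \in (W^J)_{\af}$, $x_2 \in (W_J)_{\af}$, and $t_\xi = y_1 y_2$ with $y_1 = \PJ(t_\xi) \in (W^J)_{\af}$, $y_2 \in (W_J)_{\af}$. Then $x t_\xi = x_1 x_2 y_1 y_2$, and the goal is to show $\PJ(x t_\xi) = x_1 y_1$. Since $\{t_\zeta \mid \zeta \in Q^\vee\}$ is normal in $W_{\af}$ and $x_2 \in (W_J)_{\af}$ normalizes $\{t_\zeta \mid \zeta \in Q_J^\vee\}$ — indeed $(W_J)_{\af}$ is itself a group of the same semidirect-product shape — I would first argue that $x_2 y_1 = y_1' x_2'$ for some $y_1' \in (W^J)_{\af}$ and $x_2' \in (W_J)_{\af}$; but this is not automatic and is exactly where care is needed. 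The cleaner route is: by Proposition~\ref{prop:P} applied to $x_2 y_1$, write $x_2 y_1 = z_1 z_2$ with $z_1 \in (W^J)_{\af}$, $z_2 \in (W_J)_{\af}$; then $x t_\xi = x_1 z_1 (z_2 y_2)$ with $z_2 y_2 \in (W_J)_{\af}$, so $\PJ(x t_\xi) = \PJ(x_1 z_1)$ provided $x_1 z_1 \in (W^J)_{\af}$. So the whole thing reduces to two sub-claims: (a) $(W^J)_{\af}$ is closed under multiplication on the left by elements of $W^J$-type translations appropriately, more precisely $x_1 z_1 \in (W^J)_{\af}$ when $x_1 \in (W^J)_{\af}$ and $z_1 = \PJ(x_2 y_1)$; and (b) $\PJ(x_2 y_1) = \PJ(y_1) = y_1$, i.e.\ that left multiplication by $(W_J)_{\af}$ does not change the $\PJ$-component.

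Sub-claim (b) follows again from \eqref{eq:Pet}: if $u \in (W_J)_{\af}$ and $v \in (W^J)_{\af}$, then for $\beta \in (\Delta_J)_{\af}^+$ we have $v\beta \in \prr$, but we want to conclude $(uv)$ has the same $\PJ$ as $v$; the key point is that $(W_J)_{\af}$ stabilizes $(\Delta_J)_{\af}$ and permutes $(\Delta_J)_{\af}^+$ only up to sign, so one should instead note $uv = v \cdot (v^{-1}uv)$ and check $v^{-1} u v \in (W_J)_{\af}$ — this holds because $(W_J)_{\af} = \langle r_\beta \mid \beta \in (\Delta_J)_{\af}^+\rangle$ by Remark~\ref{rem:stabilizer} and $v^{-1} r_{\alpha + n\delta} v = r_{v^{-1}(\alpha+n\delta)}$, which need not have its label in $(\Delta_J)_{\af}^+$ in general, so this naive conjugation does \emph{not} work and confirms that $(W_J)_{\af}$ is \emph{not} normal. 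The correct and simplest argument is purely about cosets: $\PJ$ is, by Proposition~\ref{prop:P}, the canonical section of the projection $W_{\af} \to W_{\af}/(W_J)_{\af}$, so $\PJ(x) = \PJ(x')$ iff $x(W_J)_{\af} = x'(W_J)_{\af}$; thus $\PJ(x_2 y_1) = \PJ(y_1)$ because $x_2 y_1 (W_J)_{\af} = x_2 \cdot y_1 (W_J)_{\af}$ and... no, that still uses left cosets where $\PJ$ indexes right cosets $W_{\af}/(W_J)_{\af}$ wait — $\PJ$ as defined picks $x_1$ with $x = x_1 x_2$, so $\PJ$ indexes the coset space $W_{\af}/(W_J)_{\af}$ of \emph{right} cosets $x_1 (W_J)_{\af}$, and $\PJ(x) = x_1$ means $x \in x_1 (W_J)_{\af}$. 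Then $\PJ(x t_\xi) = x_1 \PJ(t_\xi)$ is equivalent to $x t_\xi \in x_1 \PJ(t_\xi) (W_J)_{\af}$, i.e.\ $x_1^{-1} x t_\xi \in \PJ(t_\xi)(W_J)_{\af}$, i.e.\ $x_2 t_\xi \in \PJ(t_\xi)(W_J)_{\af}$, i.e.\ (since $x_2 \in (W_J)_{\af}$ and $t_\xi = \PJ(t_\xi) y_2$) $x_2 \PJ(t_\xi) y_2 \in \PJ(t_\xi)(W_J)_{\af}$, i.e.\ $\PJ(t_\xi)^{-1} x_2 \PJ(t_\xi) \in (W_J)_{\af}$. \textbf{This conjugation statement is the heart of the lemma and the main obstacle.} I expect to prove it by the root-theoretic criterion: $\PJ(t_\xi) \in (W^J)_{\af}$ sends $(\Delta_J)_{\af}^+$ into $\prr$, and a short computation using \eqref{eq:refl} and \eqref{eq:lv0action} shows that in fact $\PJ(t_\xi)$ sends $(\Delta_J)_{\af}^+$ \emph{bijectively onto} $(\Delta_J)_{\af}^+$ (not just into $\prr$) — because $t_\xi$ already preserves $(\Delta_J)_{\af}$ as a set (translations act on root spaces by shifting the $\delta$-coefficient within $\Delta_J$), and the $W_J$-part of $t_\xi (\PJ(t_\xi))^{-1}$ preserves positivity in $(\Delta_J)_{\af}^+$ by the analogue of \eqref{eq:W^J}. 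Granting that $\PJ(t_\xi)$ normalizes $(\Delta_J)_{\af}^+$, conjugation of the reflection generators $r_\beta$, $\beta \in (\Delta_J)_{\af}^+$, stays inside $\langle r_\beta \mid \beta \in (\Delta_J)_{\af}^+\rangle = (W_J)_{\af}$, which gives exactly $\PJ(t_\xi)^{-1}(W_J)_{\af}\PJ(t_\xi) \subseteq (W_J)_{\af}$, hence equality, completing part~(2). Sub-claim~(a) then follows along the same lines, or can be absorbed into this coset bookkeeping. In short: part~(1) is a direct root check; part~(2) reduces, via the coset interpretation of $\PJ$, to showing that $\PJ(t_\xi)$ normalizes the reflection subgroup $(W_J)_{\af}$, which in turn follows from the fact that $\PJ(t_\xi)$ preserves the positive system $(\Delta_J)_{\af}^+$.
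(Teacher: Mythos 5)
The paper gives no proof of this lemma at all — it is quoted from Peterson's notes and \cite[Proposition~10.10]{LS10} — so there is nothing in-paper to compare against; judged on its own, your argument is essentially correct and self-contained. Part (1) is exactly the obvious root check and is fine. For part (2), after the false starts (which you do correct), your final reduction is the right one, but two steps deserve to be written out rather than waved at. First, the equivalence ``$\PJ(xt_{\xi}) = \PJ(x)\PJ(t_{\xi})$ iff $xt_{\xi} \in \PJ(x)\PJ(t_{\xi})(W_J)_{\af}$'' is only valid once you also know $\PJ(x)\PJ(t_{\xi}) \in (W^J)_{\af}$, since by Proposition~\ref{prop:P} the element $\PJ(z)$ is characterized as the unique member of $(W^J)_{\af}$ in the coset $z(W_J)_{\af}$; this is your sub-claim (a), and it genuinely has to be proved (a product of two elements of $(W^J)_{\af}$ need not lie in $(W^J)_{\af}$ in general). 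Second, your justification of the key fact — that $y_1:=\PJ(t_{\xi})$ maps $(\Delta_J)_{\af}^{+}$ onto $(\Delta_J)_{\af}^{+}$ — via ``the $W_J$-part of $t_{\xi}\PJ(t_{\xi})^{-1}$ preserves positivity'' is garbled; the clean argument is: writing $t_{\xi} = y_1 y_2$ with $y_2 \in (W_J)_{\af}$, both $t_{\xi}$ and $y_2$ stabilize $(\Delta_J)_{\af}$ setwise (a translation only shifts the $\delta$-coefficient of $\alpha+n\delta$ with $\alpha \in \Delta_J$, and $(W_J)_{\af}$ visibly preserves $(\Delta_J)_{\af}$), hence so does $y_1 = t_{\xi}y_2^{-1}$; combined with $y_1(\Delta_J)_{\af}^{+} \subset \prr$ from \eqref{eq:Pet} this gives $y_1(\Delta_J)_{\af}^{+} \subset (\Delta_J)_{\af}^{+}$, and surjectivity follows by the usual sign argument. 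Granting this, sub-claim (a) is immediate, since for $\beta \in (\Delta_J)_{\af}^{+}$ one has $\PJ(x)y_1\beta = \PJ(x)\bigl(y_1\beta\bigr) \in \prr$, and your conjugation step (b) follows because $y_1^{-1} r_{\beta} y_1 = r_{y_1^{-1}\beta}$ with $y_1^{-1}\beta \in (\Delta_J)_{\af}^{+}$, so Remark~\ref{rem:stabilizer} applies. With these two points filled in, your proposal is a complete and correct proof of the cited result.
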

%
%
\begin{define}[{see \cite[Lemma~3.8]{LNSSS13a}}] \label{def:J-adj} 
An element $\xi \in Q^{\vee}$ is said to be $J$-adjusted 
if $\pair{\xi}{\gamma} \in \bigl\{ -1,\,0 \bigr\}$ 
for all $\gamma \in \Delta_J^+$. 
Let $\jad$ denote the set of $J$-adjusted elements.
\end{define}
%
%
\begin{lem} \label{lem:J-adj}
\mbox{}
\begin{enu}

\item For each $\xi \in Q^{\vee}$, there exists 
a unique $\phi_J (\xi) \in Q_J^{\vee}$ such that 
$\xi + \phi_J (\xi) \in \jad$. In particular, 
$\xi \in \jad$ if and only if $\phi_{J}(\xi)=0$. 

\item For each $\xi \in Q^{\vee}$, 
the element $\PJ (t_{\xi}) \in (W^{J})_{\af}$ is of the form 
$\PJ (t_{\xi}) = z_{\xi} t_{\xi + \phi_J (\xi)}$ 
for some $z_{\xi} \in W_J$. Therefore, by Lemma~\ref{lem:PJ}, 
$\PJ (w t_{\xi}) = 
\mcr{w} z_{\xi} t_{\xi + \phi_J (\xi)}$ 
for every $w \in W$ and $\xi \in Q^{\vee}$.

\item 
We have 
%
%
\begin{equation}\label{eq:W^J_af}
(W^J)_{\af} = 
\bigl\{ w z_{\xi} t_{\xi} \mid w \in W^J,\,\xi \in \jad \bigr\}.
\end{equation}

\end{enu}
\end{lem}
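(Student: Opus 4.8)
The plan is to prove the three parts in order, since each relies on the previous one. For part (1), I would start from the decomposition $W_{\af} = W \ltimes \{t_\xi\}$ and the fact that $Q^\vee = Q_J^\vee \oplus (Q^\vee / Q_J^\vee)$-type splitting is \emph{not} available directly over $\BZ$; instead, the key observation is that the pairing $Q_J^\vee \times \Delta_J^+ \to \BZ$ is the Cartan pairing of the (finite) root subsystem $\Delta_J$, and for \emph{any} $\xi \in Q^\vee$ the restriction $\gamma \mapsto \pair{\xi}{\gamma}$ is an integral weight for $\Delta_J$. So I would invoke the standard fact that every weight of a finite root system is $W_J$-conjugate to a unique antidominant (or here, one adapts to the ``$\{-1,0\}$'' window) representative — more precisely, I would use that the coweight lattice modulo $Q_J^\vee$ is finite and pick the unique coset representative $\xi + \phi_J(\xi)$ landing in the fundamental alcove-type region cut out by $\pair{\cdot}{\gamma} \in \{-1,0\}$ for $\gamma \in \Delta_J^+$. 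Uniqueness follows because two elements of $\jad$ differing by an element of $Q_J^\vee$ must pair identically (to something in $\{-1,0\}$, hence equal) against all of $\Delta_J^+$, forcing the difference to be $0$ since $\Delta_J^+$ spans $Q_J$. The last sentence of (1) is then immediate. The cleanest route may be to cite \cite[Lemma~3.8]{LNSSS13a} as the definition suggests, and just verify the existence/uniqueness statement matches.

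For part (2), I would unwind $\PJ(t_\xi)$ using Proposition~\ref{prop:P}: write $t_\xi = x_1 x_2$ with $x_1 \in (W^J)_{\af}$, $x_2 \in (W_J)_{\af}$. Writing $x_2 = u\, t_\zeta$ with $u \in W_J$, $\zeta \in Q_J^\vee$, and $x_1 = v\, t_\eta$ with $v \in W$, $\eta \in Q^\vee$, the equation $v t_\eta u t_\zeta = t_\xi$ together with $W_{\af} = W \ltimes \{t_\xi\}$ forces $vu = e$ and $\eta + \zeta = \xi$ (after moving $u$ past $t_\eta$, using that conjugation by $u \in W_J$ preserves $Q_J^\vee$ but may alter $\eta$ — so more carefully $v = u^{-1}$ and the translation part gives $u^{-1}(\eta) + \zeta$-type identity; I would track this bookkeeping). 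Thus $x_1 = v\, t_\eta$ with $v \in W_J$ and $\eta \in \xi + Q_J^\vee$. Now apply the defining condition \eqref{eq:Pet} for $(W^J)_{\af}$: for all $\gamma \in \Delta_J^+$ we need $x_1(\gamma) \in \prr$ and $x_1(-\gamma + \delta) \in \prr$; translating by \eqref{eq:refl}/\eqref{eq:lv0action} this becomes a condition on $\pair{\eta}{\gamma}$ (when $v\gamma \in \Delta_J^+$ versus $v\gamma \in \Delta_J^-$), which pins down $\pair{\eta}{\gamma} \in \{-1,0\}$, i.e.\ $\eta = \xi + \phi_J(\xi)$ by part (1), and forces $v = z_\xi \in W_J$ to be precisely the element making $x_1$ satisfy the positivity condition. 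The final formula $\PJ(w t_\xi) = \mcr{w} z_\xi t_{\xi + \phi_J(\xi)}$ then drops out of Lemma~\ref{lem:PJ}(2) and (1): $\PJ(w t_\xi) = \PJ(w)\PJ(t_\xi) = \mcr{w}\cdot z_\xi t_{\xi+\phi_J(\xi)}$, noting $\mcr{w} z_\xi$ need not be reduced but the product lies in $(W^J)_{\af}$ by closure of $\PJ$.

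For part (3), the inclusion $\supseteq$ follows by checking directly that $w z_\xi t_\xi$ with $w \in W^J$, $\xi \in \jad$ satisfies \eqref{eq:Pet} — and indeed by part (2), $w z_\xi t_\xi = \mcr{w} z_\xi t_{\xi + \phi_J(\xi)} = \PJ(w t_\xi)$ since $\phi_J(\xi) = 0$ for $\xi \in \jad$ and $\mcr{w} = w$ for $w \in W^J$; anything in the image of $\PJ$ lies in $(W^J)_{\af}$. For $\subseteq$, take $x \in (W^J)_{\af}$, write $x = u t_\zeta$ with $u \in W$, $\zeta \in Q^\vee$; then $x = \PJ(x) = \PJ(u t_\zeta) = \mcr{u} z_\zeta t_{\zeta + \phi_J(\zeta)}$ by part (2). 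Setting $w := \mcr{u} \in W^J$ and $\xi := \zeta + \phi_J(\zeta) \in \jad$, we need $\mcr{u} z_\zeta = w z_\xi$, i.e.\ $z_\zeta = z_\xi$ — but $z_\xi$ is characterized (by part (2) applied to $\xi \in \jad$, where $\phi_J(\xi)=0$) as the $W_J$-element with $z_\xi t_\xi \in (W^J)_{\af}$, and $z_\zeta$ has the same defining property since $\zeta$ and $\xi$ differ by $\phi_J(\zeta) \in Q_J^\vee$ and give the same $J$-adjusted translation; so the two agree. Hence $x = w z_\xi t_\xi$ as desired.

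The main obstacle I anticipate is the careful bookkeeping in part (2): tracking how conjugation by the Weyl-group part reshuffles the translation lattice element when one multiplies out $v t_\eta \cdot u t_\zeta$, and correctly extracting from the positivity condition \eqref{eq:Pet} — which must be checked on \emph{all} of $(\Delta_J)_{\af}^+$, both the finite roots $\Delta_J^+$ and the imaginary-shifted roots $\alpha + n\delta$ — exactly the constraint that forces $\eta \in \jad$ and that pins down $z_\xi \in W_J$ uniquely. The identity $r_\beta = r_\alpha t_{n\alpha^\vee}$ of \eqref{eq:refl} and the level-zero action formula \eqref{eq:lv0action} are the computational tools here, but getting the signs and the floor $\mcr{\cdot}$ interactions right is the delicate part. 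Parts (1) and (3) are comparatively routine once (2) is in hand.
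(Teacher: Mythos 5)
The paper's own proof of this lemma is a one-line citation of \cite[(3.6), (3.7), and Lemma~3.7]{LNSSS13a}, so your fallback suggestion of simply citing that paper coincides with what the authors actually do; your self-contained argument is therefore a genuinely different route. Your treatment of parts (2) and (3) is sound in outline: the bookkeeping $t_{\xi}=vt_{\eta}\,ut_{\zeta}$ with $v=u^{-1}\in W_J$ and $\eta\in\xi+Q_J^{\vee}$ is correct, testing the defining condition \eqref{eq:Pet} on $\gamma\in\Delta_J^{+}$ and on $-\gamma+\delta$ does pin the pairings $\pair{\eta}{\gamma}$ into $\{-1,0\}$, and part (3) then follows formally from Proposition~\ref{prop:P} and Lemma~\ref{lem:PJ} as you describe.

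The genuine gap is in part (1), on which (2) and (3) depend. Your uniqueness justification is a non sequitur: if $\mu,\nu\in\jad$ with $\mu-\nu\in Q_J^{\vee}$, their pairings with a given $\gamma\in\Delta_J^{+}$ both lie in $\{-1,0\}$ but need not be equal, so all you get is $\pair{\mu-\nu}{\gamma}\in\{-1,0,1\}$. To conclude $\mu=\nu$ one still needs a lattice argument, e.g.: replace $\kappa:=\mu-\nu\in Q_J^{\vee}$ by a $W_J$-conjugate that is dominant for $\Delta_J$ (this preserves the bound $|\pair{\kappa}{\gamma}|\le 1$); a nonzero dominant element of the coroot lattice of an irreducible component of $\Delta_J$ pairs with the highest root of that component by at least $2$, since pairing equal to $1$ would force it to be a minuscule fundamental coweight, which never lies in the coroot lattice; hence $\kappa=0$. (This cannot be replaced by an appeal to your part (2), since your derivation of (2) already invokes the uniqueness in (1).) Your existence argument is also unsubstantiated: the assertion that the coweight lattice modulo $Q_J^{\vee}$ is finite is false for $J\subsetneq I$, and the $Q_J^{\vee}$-translates of the region cut out by $-1\le\pair{\cdot}{\gamma}\le 0$, $\gamma\in\Delta_J^{+}$, do not cover the space, so one cannot just ``pick the coset representative in that window.'' The repair is to use the affine Weyl group $W_J\ltimes\bigl\{t_{\phi}\mid\phi\in Q_J^{\vee}\bigr\}$, every orbit of which in $\BR\otimes_{\BZ}Q^{\vee}$ meets that region, together with the observation that $u\xi-\xi\in Q_J^{\vee}$ for all $u\in W_J$ and $\xi\in Q^{\vee}$: the point produced by the affine $W_J$-action therefore still lies in the coset $\xi+Q_J^{\vee}$, giving existence. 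With these two repairs (or simply with the citation, as in the paper), the rest of your argument goes through.
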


\begin{proof}
Parts (1), (2), and (3) follow immediately 
from \cite[(3.6), (3.7), and Lemma~3.7]{LNSSS13a}, respectively. 
\end{proof}

Now we prove a few easy lemmas, which will be used later. 
%
%
\begin{lem}\label{lem:r_i}
Let $x \in (W^J)_{\af}$ and $i \in I_{\af}$. 
Then, $x^{-1} \alpha_i \notin (\Delta_J)_{\af}$ if and only if 
$r_i x \in (W^J)_{\af}$.
\end{lem}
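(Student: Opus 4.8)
The plan is to reduce the statement to the elementary fact about minimal coset representatives in $W_{\af}$ for $W_{\af}/(W_J)_{\af}$, namely the characterization \eqref{eq:Pet} together with the description of $(W_J)_{\af}$ as the reflection subgroup $\langle r_{\beta} \mid \beta \in (\Delta_J)_{\af}^+ \rangle$ from Remark~\ref{rem:stabilizer}. First I would prove the easy direction: suppose $x^{-1}\alpha_i \notin (\Delta_J)_{\af}$. Since $x \in (W^J)_{\af}$, for every $\beta \in (\Delta_J)_{\af}^+$ we have $x\beta \in \prr$, and moreover $x\beta \neq \alpha_i$ (otherwise $\beta = x^{-1}\alpha_i \in (\Delta_J)_{\af}$, contradiction). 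Now I would use the standard fact that $r_i$ permutes $\prr \setminus \{\alpha_i\}$: hence $r_i x \beta = r_i(x\beta) \in \prr$ for all $\beta \in (\Delta_J)_{\af}^+$, which by \eqref{eq:Pet} means exactly $r_i x \in (W^J)_{\af}$.

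For the converse, I would argue by contraposition: assume $x^{-1}\alpha_i \in (\Delta_J)_{\af}$ and show $r_i x \notin (W^J)_{\af}$. Set $\gamma := x^{-1}\alpha_i \in (\Delta_J)_{\af}$; replacing $\gamma$ by $-\gamma$ if necessary we may assume $\gamma \in (\Delta_J)_{\af}^+$. Then $x\gamma = \alpha_i \in \prr$, consistent with $x \in (W^J)_{\af}$, but $r_i x \gamma = r_i \alpha_i = -\alpha_i \notin \prr$. Since $\gamma \in (\Delta_J)_{\af}^+$, the condition in \eqref{eq:Pet} fails for $r_i x$, so $r_i x \notin (W^J)_{\af}$, as desired.

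The only point requiring a little care — and the step I expect to be the main (minor) obstacle — is justifying that one may normalize $\gamma = x^{-1}\alpha_i$ to lie in $(\Delta_J)_{\af}^+$: this uses that $(\Delta_J)_{\af}$ is stable under $\beta \mapsto -\beta$ and that $(\Delta_J)_{\af} = (\Delta_J)_{\af}^+ \sqcup (-(\Delta_J)_{\af}^+)$, which is immediate from the displayed description of $(\Delta_J)_{\af}^+$ just before \eqref{eq:stabilizer}. One should also double-check the real-root fact that $r_i(\prr \setminus \{\alpha_i\}) = \prr \setminus \{\alpha_i\}$ holds in the affine setting; this is standard (e.g.\ \cite[Proposition~6.3 or its consequences]{Kac90}, or simply the general Kac-Moody fact that a simple reflection permutes the positive real roots other than the corresponding simple root). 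With these two observations in place, the proof is a two-line verification in each direction against the defining condition \eqref{eq:Pet}.
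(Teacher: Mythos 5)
Your ``only if'' direction is exactly the paper's argument: for $\beta \in (\Delta_J)_{\af}^+$ one has $x\beta \in \prr$ and $x\beta \neq \alpha_i$, so $r_i x\beta \in \prr$ because $r_i$ permutes $\prr \setminus \{\alpha_i\}$. For the converse you take a genuinely different (and more elementary) route: the paper argues by contradiction using Proposition~\ref{prop:P} and Remark~\ref{rem:stabilizer} --- if $x^{-1}\alpha_i \in (\Delta_J)_{\af}$ then $r_{x^{-1}\alpha_i} \in (W_J)_{\af}$, so $\PJ(r_i x) = \PJ(x r_{x^{-1}\alpha_i}) = \PJ(x)$, and since both $r_i x$ and $x$ lie in $(W^J)_{\af}$ this forces $r_i x = x$, a contradiction --- whereas you exhibit directly a positive root of $(\Delta_J)_{\af}^+$ that $r_i x$ sends to $-\alpha_i$, violating \eqref{eq:Pet}. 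Your version avoids invoking the uniqueness of the Peterson decomposition altogether, which is a nice simplification.

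One step in your converse needs repair, though it is easily fixed. You write ``replacing $\gamma$ by $-\gamma$ if necessary we may assume $\gamma \in (\Delta_J)_{\af}^+$'' and then use $x\gamma = \alpha_i$; but if you actually replaced $\gamma = x^{-1}\alpha_i$ by $-\gamma$, you would have $x\gamma = -\alpha_i$ and then $r_i x\gamma = \alpha_i \in \prr$, so the computation as written would prove nothing in that case. The correct observation is that no replacement is ever needed: since $x \in (W^J)_{\af}$, the root $\gamma = x^{-1}\alpha_i$ is automatically in $(\Delta_J)_{\af}^+$, for if it lay in $-(\Delta_J)_{\af}^+$ then $x$ would send the positive root $-\gamma \in (\Delta_J)_{\af}^+$ to $-\alpha_i \notin \prr$, contradicting \eqref{eq:Pet} for $x$ itself. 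With that justification substituted for the sign-flip, your contrapositive argument is complete and correct.
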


\begin{proof}
First, let us show the ``only if'' part; 
by definition \eqref{eq:Pet} of $(W^J)_{\af}$, 
it suffices to show that $r_i x \beta \in \prr$ 
for all $\beta \in (\Delta_J)_{\af}^+$. 
Let $\beta \in (\Delta_J)_{\af}^+$. 
Since $x \in (W^J)_{\af}$, we have $x \beta \in \prr$. 
Also, since $x^{-1} \alpha_i \notin (\Delta_J)_{\af}$ by the assumption, 
it follows that $x^{-1} \alpha_i \neq \beta$, 
and hence $x \beta \neq \alpha_i$. Therefore, we obtain 
$r_i x \beta \in \prr$. 

Next, let us show the ``if'' part. 
Suppose, for a contradiction, 
that $x^{-1} \alpha_i \in (\Delta_J)_{\af}$. 
Then, since $r_{x^{-1}\alpha_i} \in (W_{J})_{\af}$, 
we see by Proposition~\ref{prop:P} that 
$\PJ(r_i x) = \PJ(xr_{x^{-1}\alpha_{i}}) = \PJ(x)$. 
Since $r_{i}x,\,x \in (W^{J})_{\af}$ by the assumption, we have 
$\PJ(r_i x)=r_{i}x$ and $\PJ(x)=x$. Therefore, we obtain $r_{i}x=x$, 
which is a contradiction. Thus, 
$x^{-1} \alpha_i \not\in (\Delta_J)_{\af}$. 
This proves the lemma.
\end{proof}
%
%
\begin{lem} \label{lem:xt}
Let $x \in W_{\af}$, and $\xi \in \jad$. 
Then, $x z_{\xi} t_{\xi} \in (W^J)_{\af}$ 
if and only if $x \in (W^J)_{\af}$.
\end{lem}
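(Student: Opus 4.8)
The plan is to reduce the statement for an arbitrary $x \in W_{\af}$ to the case $x \in W$ by using the fact, recorded in Lemma~\ref{lem:J-adj}(2), that $\PJ(t_\eta) = z_\eta t_{\eta + \phi_J(\eta)}$, together with the multiplicativity of $\PJ$ on translations from Lemma~\ref{lem:PJ}(2). Write $x = w t_\eta$ with $w \in W$ and $\eta \in Q^\vee$. Since $\xi \in \jad$, we have $\phi_J(\xi) = 0$ by Lemma~\ref{lem:J-adj}(1), so $z_\xi t_\xi = \PJ(t_\xi)$. The first step is therefore to compute $\PJ(x z_\xi t_\xi)$ and $\PJ(x)$ explicitly using Lemmas~\ref{lem:PJ} and~\ref{lem:J-adj}(2), and to observe that $x z_\xi t_\xi \in (W^J)_{\af}$ (resp.\ $x \in (W^J)_{\af}$) is equivalent to $\PJ(x z_\xi t_\xi) = x z_\xi t_\xi$ (resp.\ $\PJ(x) = x$) by Proposition~\ref{prop:P}.

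The key computation is this: since $z_\xi \in W_J$, the element $w z_\xi$ lies in $w W_J$, so $\mcr{w z_\xi} = \mcr{w}$. Writing $\eta$ as the translation part of $x$, one gets from Lemma~\ref{lem:J-adj}(2) that
\begin{align*}
\PJ(x z_\xi t_\xi) &= \PJ(w z_\xi t_{\eta + \xi})
  = \mcr{w z_\xi}\, z_{\eta + \xi}\, t_{\eta + \xi + \phi_J(\eta + \xi)}
  = \mcr{w}\, z_{\eta + \xi}\, t_{\eta + \xi + \phi_J(\eta + \xi)},
\end{align*}
while $\PJ(x) = \mcr{w}\, z_\eta\, t_{\eta + \phi_J(\eta)}$. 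The next step is to show $\phi_J(\eta + \xi) = \phi_J(\eta)$ and $z_{\eta+\xi}$ differs from $z_\eta$ exactly by multiplication by $z_\xi$ on the appropriate side — more precisely, that $\PJ(t_{\eta+\xi}) = \PJ(t_\eta)\, z_\xi t_\xi$ or the analogous identity, which follows from $\PJ(t_{\eta+\xi}) = \PJ(t_\eta t_\xi) = \PJ(t_\eta)\PJ(t_\xi) = \PJ(t_\eta)\, z_\xi t_\xi$ via Lemma~\ref{lem:PJ}(2) and $\PJ(t_\xi) = z_\xi t_\xi$. From this, right-multiplication by $z_\xi t_\xi$ carries $\PJ(x)$ to $\PJ(x z_\xi t_\xi)$, and since $z_\xi t_\xi$ is invertible (its inverse is again of the form $z' t_{\xi'}$ with $z' \in W_J$, $\xi' \in Q^\vee$, though one should check it lies in $(W_J)_{\af}$, which it does since $(W_J)_{\af}$ is a group), the equation $\PJ(x) = x$ holds if and only if $\PJ(x z_\xi t_\xi) = x z_\xi t_\xi$. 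This establishes the desired equivalence.

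The main obstacle I anticipate is bookkeeping with the two-sidedness of the decompositions: $\PJ$ multiplies coset representatives on the \emph{left} by $\mcr{\cdot}$ but composes with the $(W_J)_{\af}$-part on the \emph{right}, so one must be careful that $x z_\xi t_\xi$ and $x$ genuinely lie in the same right coset of $(W_J)_{\af}$, i.e.\ that $z_\xi t_\xi \in (W_J)_{\af}$. This last point is immediate from definition~\eqref{eq:stabilizer}, since $z_\xi \in W_J$ and $\xi \in Q^\vee$ — but wait, we need $\xi \in Q_J^\vee$ for $t_\xi \in (W_J)_{\af}$, which is \emph{not} assumed. So the cleaner route, avoiding this issue altogether, is: $x z_\xi t_\xi = x\, \PJ(t_\xi)$, and $\PJ(t_\xi) \in (W^J)_{\af}$, so one instead shows directly that $\PJ(x \PJ(t_\xi)) = \PJ(x)\, \PJ(t_\xi)$ — an analogue of Lemma~\ref{lem:PJ}(2) with $t_\xi$ replaced by $\PJ(t_\xi)$, which holds because $\PJ(t_\xi)$ and $t_\xi$ differ by an element of $(W_J)_{\af}$ on the right and $\PJ$ is constant on right $(W_J)_{\af}$-cosets. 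Then $\PJ(x \PJ(t_\xi)) = x \PJ(t_\xi)$ iff $\PJ(x)\PJ(t_\xi) = x \PJ(t_\xi)$ iff $\PJ(x) = x$ (cancelling the invertible element $\PJ(t_\xi)$ on the right), giving the result.
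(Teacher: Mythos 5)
Your final ``cleaner route'' is correct and is essentially the paper's own proof: both hinge on the identity $\PJ(x z_{\xi}t_{\xi})=\PJ(x)\,z_{\xi}t_{\xi}$ (obtained from Lemma~\ref{lem:PJ}\,(2) together with $\PJ(t_{\xi})=z_{\xi}t_{\xi}$ for $\xi\in\jad$ and the fact that $\PJ$ is constant on right $(W_J)_{\af}$-cosets), followed by cancelling the invertible factor $z_{\xi}t_{\xi}$ and invoking the characterization $y\in(W^J)_{\af}\iff\PJ(y)=y$. Note only that your abandoned first computation contains a slip ($t_{\eta}z_{\xi}=z_{\xi}t_{z_{\xi}^{-1}\eta}$, so $x z_{\xi}t_{\xi}=w z_{\xi}t_{z_{\xi}^{-1}\eta+\xi}$, not $w z_{\xi}t_{\eta+\xi}$, and $\phi_J(\eta+\xi)=\phi_J(\eta)$ need not hold), but since you discarded that route it does not affect the argument.
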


\begin{proof}
First, we remark that 
\begin{align}
\PJ (x z_{\xi} t_{\xi}) 
& = \PJ (x z_{\xi}) \PJ (t_{\xi}) 
  \qquad \text{by Lemma \ref{lem:PJ}\,(2)} \notag \\
&= \PJ (x) z_{\xi} t_{\xi} 
  \qquad \text{by Lemmas \ref{lem:PJ}\,(1) and \ref{lem:J-adj}\,(1), (2)}. 
\label{eq:xzt}
\end{align}
Let us prove the ``only if" part. 
Assume that $x z_{\xi} t_{\xi} \in (W^J)_{\af}$; 
note that $\PJ (x z_{\xi} t_{\xi}) = x z_{\xi} t_{\xi}$. 
Combining this equality and \eqref{eq:xzt}, 
we obtain $\PJ (x) z_{\xi} t_{\xi} = x z_{\xi} t_{\xi}$, 
and hence $\PJ (x) =x$, 
which implies that $x \in (W^J)_{\af}$. 
Now, let us prove the ``if" part. 
Assume that $x \in (W^J)_{\af}$; 
note that $\PJ (x) =x$. 
Combining this equality and \eqref{eq:xzt}, 
we obtain $\PJ (x z_{\xi} t_{\xi}) = 
\PJ (x) z_{\xi} t_{\xi} = x z_{\xi} t_{\xi}$, 
which implies that $x z_{\xi} t_{\xi} \in (W^J)_{\af}$. 
This proves the lemma.
\end{proof}
%
%
\subsection{Semi-infinite Bruhat graphs.} 
\label{subsec:SBG}

\begin{define}[{\cite{Pet97}}] \label{def:sil}
Let $x \in W_{\af}$, and 
write it as $x = v t_{\zeta }$ with $v \in W$ and $\zeta \in Q^{\vee}$. 
Then we define the semi-infinite length $\sil(x)$ of $x$ by
\begin{equation*}
\sil (x) := \ell (v) + 2 \pair{\zeta}{\rho}, 
\end{equation*}
where $\rho := (1/2)\sum_{\alpha \in \Delta^+} \alpha$. 
\end{define}

\begin{define}\label{def:sib}
Let $J$ be a subset of $I$. 
\begin{enu}

\item
Define the (parabolic) semi-infinite Bruhat graph $\SB$ 
to be the $\prr$-labeled, directed graph with vertex set $(W^J)_{\af}$ 
and $\prr$-labeled, directed edges of the following form:
$x \edge{\beta} r_{\beta} x$ for $x \in (W^J)_{\af}$ and $\beta \in \prr$, 
where $r_{\beta } x \in (W^J)_{\af}$ and 
$\sil (r_{\beta} x) = \sil (x) + 1$.

\item
The semi-infinite Bruhat order is a partial order 
$\le_{\si}$ on $(W^J)_{\af}$ defined as follows: 
for $x,\,y \in (W^J)_{\af}$, we write $x \le_{\si} y$ 
if there exists a directed path from $x$ to $y$ in $\SB$. 

\end{enu}
\end{define}


%
\section{Isomorphism theorem.}
\label{sec:main}
%
%
\subsection{Semi-infinite Lakshmibai-Seshadri paths.}
\label{subsec:SLS}

\begin{define}\label{def:QBaf(a)}
Let $\lambda \in P^+$, and set 
$J=J_{\lambda}:= \bigl\{ i \in I \mid 
\pair{\alpha_i^{\vee}}{\lambda}=0 \bigr\} \subset I$.
For a rational number $0 < a \le 1$, 
define $\SBa$ to be the subgraph of $\SB$ 
with the same vertex set but having only the edges of the form:
%
\begin{align}\label{eq:siba}
x \edge{\beta} y \quad \text{with} \quad 
a\pair{\beta^{\vee}}{x\lambda} \in \BZ;
\end{align}
note that $\SBb{1} = \SB$. 
\end{define}

\begin{define}\label{def:SLSpath}
Let $\lambda \in P^+$, and set $J:=J_{\lambda}=
\bigl\{ i \in I \mid 
\pair{\alpha_i^{\vee}}{\lambda}=0 \bigr\}$. 
A semi-infinite Lakshmibai-Seshadri (SiLS for short) path of 
shape $\lambda $ is, by definition, a pair $(\bm{x} ; \bm{a})$ of 
a decreasing sequence $\bm{x} : x_1 >_{\si} \cdots >_{\si} x_s$ 
of elements in $(W^J)_{\af}$ and an increasing sequence 
$\bm{a} : 0 = a_0 < a_1 < \cdots  < a_s =1$ of rational numbers 
satisfying the condition that there exists a directed path 
from $x_{u+1}$ to  $x_u$ in $\SBb{a_u}$ 
for each $u = 1,2,\ldots , s-1$;
we express this element as:
\begin{equation*}
0= a_{0} \rsa{x_1} a_{1} \rsa{x_2} \cdots 
\rsa{x_{s-1}} a_{s-1}
\rsa{x_{s}} a_{s}=1.
\end{equation*}
Denote by $\sLS$ the set of all SiLS paths of shape $\lambda$.
\end{define}

For the rest of this subsection, 
we fix $\lambda \in P^{+}$, and set $J:=J_{\lambda}=
\bigl\{i \in I \mid \pair{\alpha_{i}^{\vee}}{\lambda}=0\bigr\}$. 
For $\eta = (x_1,\,\dots,\,x_s ; a_0,\,\dots,\,a_s) \in \sLS$, 
we set
\begin{equation*}
\ol{\eta}:=
(x_1\lambda,\,\dots,\,x_s\lambda ; a_0,\,\dots,\,a_s).
\end{equation*}
\begin{prop}[which will be proved in \S\ref{subsec:SB-LP}] \label{prop:pi_}
It holds that $\ol{\eta} \in \LS$ 
for every $\eta \in \sLS$. 
Thus we obtain a map $\ol{\phantom{\eta}}:\sLS \rightarrow \LS$, 
$\eta \mapsto \ol{\eta}$. 
\end{prop}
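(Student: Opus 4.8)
The plan is to show that the two defining conditions of an LS path of shape $\lambda$ are satisfied by $\ol{\eta}$: first, that the sequence $x_1\lambda > x_2\lambda > \cdots > x_s\lambda$ is strictly decreasing in the level-zero weight poset $(W_{\af}\lambda,\,\le)$; and second, that for each $u$ there is a directed path from $x_{u+1}\lambda$ to $x_u\lambda$ in the graph $\LPb{a_u}$. Both will follow from a single key assertion, which I would isolate as a lemma (and which is presumably what gets proved in \S\ref{subsec:SB-LP}): the assignment $x \mapsto x\lambda$ carries the parabolic semi-infinite Bruhat graph $\SB$ to the level-zero weight graph $\LP$ in an edge-compatible way. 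More precisely, I would prove that if $x \edge{\beta} r_{\beta}x$ is an edge of $\SB$ (so $x, r_\beta x \in (W^J)_{\af}$ and $\sil(r_\beta x) = \sil(x)+1$), then either $x\lambda = r_\beta x \lambda$, or $x\lambda \edge{\beta} r_\beta x \lambda$ is an edge of $\LP$, i.e.\ $r_\beta x\lambda$ covers $x\lambda$ with $\pair{\beta^\vee}{x\lambda} > 0$.

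The heart of the matter is the sign/covering statement, and here is where I expect the main obstacle to lie. Given the edge $x \edge{\beta} r_\beta x$, one knows $\sil$ increases by $1$; writing $x = vt_\zeta$ and $\beta = \alpha + n\delta$ with $\alpha \in \Delta$, the change in semi-infinite length is controlled by whether $v^{-1}\alpha \in \Delta^+$ together with the value of $\pair{\zeta}{\alpha}$ (via $r_\beta x = r_\alpha v \, t_{\zeta - \pair{\zeta}{\alpha}\alpha^\vee + \cdots}$, using \eqref{eq:refl}). I would compute $\pair{\beta^\vee}{x\lambda} = \pair{(x^{-1}\beta)^\vee}{\lambda}$, note that $x^{-1}\beta \in \rr$ and use \eqref{eq:lv0action} to reduce the pairing against $\lambda$ (which is level-zero) to a pairing in the finite root system: if $x^{-1}\beta = \gamma + m\delta$ then $\pair{\beta^\vee}{x\lambda} = \pair{\gamma^\vee}{\lambda}$. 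The claim $r_\beta x\lambda \ne x\lambda \Rightarrow \pair{\beta^\vee}{x\lambda}>0$ should then come from combining the $\sil$-increase-by-one condition with the membership $x, r_\beta x \in (W^J)_{\af}$: the condition $\sil(r_\beta x) > \sil(x)$ forces a positivity that, transported through $x^{-1}$ and restricted to the level-zero weight $\lambda$ via $J = J_\lambda$, yields $\pair{\gamma^\vee}{\lambda} \ge 0$, with the case $=0$ being exactly the case $x^{-1}\beta \in (\Delta_J)_{\af}$, i.e.\ $x\lambda = r_\beta x\lambda$. I would want to be careful about the interplay between "$\sil$ increases" (which is a statement about $\rr^+$, i.e.\ positive real affine roots) and "$\pair{\gamma^\vee}{\lambda}>0$" (a finite positivity), since a positive affine root $\alpha + n\delta$ can have $\alpha$ negative; the definition $J_\lambda = \{i : \pair{\alpha_i^\vee}{\lambda}=0\}$ and the characterization \eqref{eq:Pet} of $(W^J)_{\af}$ are what make this work, and getting that argument clean is the real content.

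Granting the edge-compatibility lemma, the rest is routine bookkeeping. For the decreasing condition: $x_{u+1} <_{\si} x_u$ means there is a directed path in $\SB$ from $x_{u+1}$ to $x_u$; applying the lemma edge by edge along this path and collapsing the steps where $x\lambda$ is unchanged, I get either a directed path in $\LP$ from $x_{u+1}\lambda$ to $x_u\lambda$ (so $x_{u+1}\lambda < x_u\lambda$ by Definition~\ref{def:wtposet}), or $x_{u+1}\lambda = x_u\lambda$; the latter is excluded because it would violate strictness, but in fact I only need $x_{u+1}\lambda \le x_u\lambda$ for the poset condition and can handle equality separately — here I should check the SiLS definition forbids it or that Littelmann's LS-path definition tolerates a "collapsed" pair by simply merging the corresponding intervals. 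Actually the cleaner route: since the $x_u$ are strictly $\si$-decreasing they are distinct, but $x_u\lambda$ need not be distinct, so $\ol\eta$ as written might have repeated consecutive direction vectors; I would then simply merge consecutive equal $x_u\lambda$'s (adjusting the $a$'s accordingly) and observe the merged object still satisfies the chain conditions — this produces a genuine element of $\LS$. For the $\LPb{a_u}$ condition: the SiLS definition gives a directed path from $x_{u+1}$ to $x_u$ in $\SBb{a_u}$, whose edges $x \edge{\beta} y$ additionally satisfy $a_u\pair{\beta^\vee}{x\lambda} \in \BZ$; since the lemma identifies $\pair{\beta^\vee}{x\lambda}$ with the label data on the corresponding edge $x\lambda \edge{\beta} y\lambda$ of $\LP$, the integrality condition \eqref{eq:siba} transports verbatim to condition \eqref{eq:Xa} defining $\LPb{a_u}$, after the same collapsing. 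Assembling these, $\ol\eta \in \LS$, and the map $\eta \mapsto \ol\eta$ is well-defined.
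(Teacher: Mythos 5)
Your overall strategy --- show that $x \mapsto x\lambda$ carries edges of $\SB$ to edges of $\LP$ and then transport the integrality condition --- is exactly the route the paper takes (its Proposition~\ref{prop:sib-lv0}). The sign part of your key lemma is essentially correct and matches Lemma~\ref{lem:sil}: writing $x = vt_{\zeta}$ and $\beta = \alpha + n\delta$, one gets $\pair{\beta^{\vee}}{x\lambda} = \pair{(v^{-1}\alpha)^{\vee}}{\lambda}$, and the condition $\sil(r_{\beta}x) = \sil(x)+1$ forces $v^{-1}\alpha \in \Delta^{+}$, hence $\pair{\beta^{\vee}}{x\lambda} \ge 0$. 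Moreover your equality case is vacuous: $\pair{\beta^{\vee}}{x\lambda}=0$ would mean $x^{-1}\beta \in (\Delta_{J})_{\af}$, so $r_{\beta}x = x r_{x^{-1}\beta}$ with $r_{x^{-1}\beta} \in (W_{J})_{\af}$ (Remark~\ref{rem:stabilizer}), and then Proposition~\ref{prop:P} gives $r_{\beta}x = \PJ(r_{\beta}x) = \PJ(x) = x$, a contradiction; so the ``merge equal consecutive directions'' step is never needed.

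The genuine gap is the covering property. An edge of $\LP$ (Definition~\ref{def:X}) is not merely ``$\nu = r_{\beta}\mu$ with $\pair{\beta^{\vee}}{\mu} > 0$''; it requires that $\nu$ \emph{cover} $\mu$ in the level-zero weight poset, and Definition~\ref{def:LSpath} requires directed paths in $\LPb{a_{u}}$, i.e.\ chains of such covers each satisfying the integrality condition. Your positivity argument only yields the relation $x\lambda < r_{\beta}x\lambda$, and there is no general principle that converts a single reflection relation with $a\pair{\beta^{\vee}}{x\lambda} \in \BZ$ into a directed path in $\LPb{a}$; the intricacy of this poset is precisely the phenomenon studied in \cite{NS08}. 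Establishing that each $\SB$-edge maps to an actual cover is the real content of the paper's proof of Proposition~\ref{prop:sib-lv0}: it argues by induction on the length of a directed path in $\LP$ from $x\lambda$ down to the translation coset (supplied by \cite[Lemma~1.4]{AK}), treating the base case with Lemma~\ref{lem:zt} and the inductive step with the diamond Lemmas~\ref{lem:dia1} and~\ref{lem:dia2} combined with Littelmann's Lemma~4.1\,c). Nothing in your sketch addresses the absence of intermediate elements between $x\lambda$ and $r_{\beta}x\lambda$, so as it stands the proposal does not show that a directed path in $\SBb{a_{u}}$ maps to a directed path in $\LPb{a_{u}}$, which is what Proposition~\ref{prop:pi_} needs.
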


We equip the set $\sLS$ with a crystal structure with weights in $P_{\af}$ 
as follows. We define $\wt:\sLS \rightarrow P_{\af}$ by: 
$\wt(\eta):=\wt(\ol{\eta})=\ol{\eta}(1) \in P_{\af}$. 
We define operators $e_i$, $f_{i}$, $i \in I_{\af}$, 
which we call root operators for $\sLS$, 
in the same manner as for $\LS$. 
\begin{define} \label{def:e_if_i}
Let $\eta = (x_1,\,\ldots,\,x_s ; a_0,\,\ldots,\,a_s) \in \sLS$, 
and $i \in I_{\af}$. 
\begin{enu}
\item
If $m_i^{\ol{\eta}} = 0$, then we define $e_i \eta := \bzero$. 
If $m_i^{\ol{\eta}} \le -1$, then define $t_{0},\,t_{1} \in [0,1]$ 
by \eqref{eq:t-e}, with $\pi=\ol{\eta}$. 
Let $1 \le p \le q \le s$ be such that 
$a_{p-1} \le t_0 < a_p$ and $t_1 = a_q$. 
Then we define $e_{i}\eta$ to be 
\begin{equation*}
0= a_{0} \rsa{x_1} \cdots \rsa{x_{p-1}} a_{p-1} \rsa{x_p} 
\underbrace{t_{0} \rsa{r_{i}x_p} a_{p} \rsa{r_{i}x_{p+1}} \cdots 
\rsa{r_{i}x_{q}} a_{q}=t_{1}}_{\text{``reflected'' by $r_{i}$}}
\rsa{x_{q+1}} \cdots \rsa{x_{s}} a_{s}=1,
\end{equation*}
that is, 
\begin{align*}
e_i \eta := ( 
& x_1,\,\ldots,\,x_p,\,r_i x_p,\,r_i x_{p+1},\,\ldots,\,
  r_i x_q,\,x_{q+1},\,\ldots,\,x_s ; \\
& a_0,\,\ldots,\,a_{p-1},\,t_0,\,a_p,\,\ldots,\,a_{q}=t_1,\,
\ldots,\,a_s);
\end{align*}
if $t_0 = a_{p-1}$, then we drop $x_p$ and $a_{p-1}$, and 
if $r_i x_q = x_{q+1}$, then we drop $x_{q+1}$ and $a_{q}=t_1$.

\item
If $H_i^{\ol{\eta}}(1) - m_i^{\ol{\eta}} = 0$, 
then we define $f_i \eta := \bzero$. 
If $H_i^{\ol{\eta}}(1) - m_i^{\ol{\eta}}  \ge 1$, 
then define $t_{0},\,t_{1} \in [0,1]$ 
by \eqref{eq:t-f}, with $\pi=\ol{\eta}$. 
Let $0 \le p \le q \le s-1$ be such that $t_0 = a_p$, and 
$a_{q} < t_1 \le a_{q+1}$. Then we define $f_{i}\eta$ to be 
\begin{equation*}
0= a_{0} \rsa{x_1} \cdots \rsa{x_{p}} 
\underbrace{a_{p}=t_{0} \rsa{r_{i}x_{p+1}} 
\cdots \rsa{r_{i}x_{q}} a_{q} \rsa{r_{i}x_{q+1}} t_{1}%
}_{\text{``reflected'' by $r_{i}$}}
\rsa{x_{q+1}} a_{q+1} 
\rsa{x_{q+2}} \cdots \rsa{x_{s}} a_{s}=1,
\end{equation*}
that is, 
\begin{align*}
f_i \pi := ( 
& x_1,\,\ldots,\,x_{p},\,r_ix_{p+1},\,\dots,\,
  r_ix_{q},\,r_ix_{q+1},\,x_{q+1},\,\ldots,\,x_s ; \\
& a_0,\,\ldots,\,a_{p}=t_0,\,\ldots,\,a_{q},\,t_1,\,
  a_{q+1},\,\ldots,\,a_s);
\end{align*}
if $t_1 = a_{q+1}$, then we drop $x_{q+1}$ and $a_{q+1}$, and 
if $x_{p} = r_i x_{p+1}$, then we drop $x_{p}$ and $a_{p}=t_0$.

\item
Set $e_i \bzero = f_i \bzero := \bzero$ for all $i \in I_{\af}$.
\end{enu}
\end{define}

\begin{thm}[which will be proved in \S\ref{subsec:pr-thm:stability}]
\label{thm:stability}
\mbox{}
\begin{enu}
\item
The set $\sLS \sqcup \bigl\{ \bzero \bigr\}$ is 
stable under the action of the root operators 
$e_i$ and $f_i$, $i \in I_{\af}$.

\item
For each $\eta \in \sLS$ and $i \in I_{\af}$, we set 
\begin{equation*}
\begin{cases}
\ve_i (\eta) := 
 \max \bigl\{ n \ge 0 \mid e_i^n \eta \neq \bzero \bigr\}, \\[1.5mm]
\vp_i (\eta) := 
 \max \bigl\{ n \ge 0 \mid f_i^n \eta \neq \bzero \bigr\} .
\end{cases}
\end{equation*}
Then, the set $\sLS$, equipped with the maps $\wt$, $e_i$, $f_{i}$, 
$i \in I_{\af}$, and $\ve_i$, $\vp_i$, $i \in I_{\af}$, defined above, is 
a crystal with weights in $P_{\af}$.
\end{enu}
\end{thm}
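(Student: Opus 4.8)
The plan is to use the surjection $\eta\mapsto\ol{\eta}$ from $\sLS$ onto $\LS$ (Proposition~\ref{prop:pi_}), together with the already-established crystal structure on $\LS$, to reduce the ``analytic'' part of the statement to the case of LS paths, and to isolate the genuinely new combinatorial content in a single lifting statement for the parabolic semi-infinite Bruhat graph. First, observe that for $\eta=(x_1,\dots,x_s;a_0,\dots,a_s)\in\sLS$ and $i\in I_{\af}$, the function $H_i^{\ol{\eta}}$ and the number $m_i^{\ol{\eta}}$ depend only on $\ol{\eta}$, and since $\ol{\eta}\in\LS$, Remark~\ref{rem:locmini} gives that all local minima of $H_i^{\ol{\eta}}$ are integers; hence $m_i^{\ol{\eta}}\in\BZ_{\le 0}$, the cut points $t_0<t_1$ of \eqref{eq:t-e} and \eqref{eq:t-f} are well defined, $H_i^{\ol{\eta}}$ is strictly monotone on $[t_0,t_1]$, and the indices $p\le q$ in Definition~\ref{def:e_if_i} are pinned down. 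In particular $e_i\eta=\bzero\iff m_i^{\ol{\eta}}=0\iff e_i\ol{\eta}=\bzero$ (and similarly for $f_i$), so for part~(1) it remains only to show that, when $e_i\eta\neq\bzero$ (resp. $f_i\eta\neq\bzero$), the tuple produced in Definition~\ref{def:e_if_i} really lies in $\sLS$; once this is known, the droppings in Definition~\ref{def:e_if_i} are forced to match those in Definition~\ref{dfn:ro}, so that $\ol{e_i\eta}=e_i\ol{\eta}$ and $\ol{f_i\eta}=f_i\ol{\eta}$.

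To see that the reflected vertices stay in $(W^J)_{\af}$, suppose $m_i^{\ol{\eta}}\le-1$ and let $p\le j\le q$. On a subinterval of $[t_0,t_1]$ the direction vector of $\ol{\eta}$ equals $x_j\lambda$, so strict decrease of $H_i^{\ol{\eta}}$ forces $0\neq\pair{\alpha_i^{\vee}}{x_j\lambda}=\pair{x_j^{-1}\alpha_i^{\vee}}{\lambda}$. Writing $\beta:=x_j^{-1}\alpha_i=\gamma+n\delta$ with $\gamma\in\Delta$ and $n\in\BZ$, we have $\pair{x_j^{-1}\alpha_i^{\vee}}{\lambda}=\pair{\gamma^{\vee}}{\lambda}$, and since $J=J_{\lambda}$ one has $\pair{\gamma^{\vee}}{\lambda}=0\iff\gamma\in\Delta_J\iff\beta\in(\Delta_J)_{\af}$; hence $x_j^{-1}\alpha_i\notin(\Delta_J)_{\af}$, and Lemma~\ref{lem:r_i} gives $r_ix_j\in(W^J)_{\af}$. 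The case of $f_i$ is identical (strict increase gives $\pair{\alpha_i^{\vee}}{x_j\lambda}>0$). What is then left for $e_i\eta$ is to check that
\[
\cdots >_{\si} x_{p-1} >_{\si} x_p >_{\si} r_ix_p >_{\si} r_ix_{p+1} >_{\si} \cdots >_{\si} r_ix_q >_{\si} x_{q+1} >_{\si} \cdots
\]
is strictly decreasing and that there is a directed path from $r_ix_p$ to $x_p$ in $\SBb{t_0}$, from $r_ix_{u+1}$ to $r_ix_u$ in $\SBb{a_u}$ for $p\le u\le q-1$, and from $x_{q+1}$ to $r_ix_q$ in $\SBb{a_q}$, together with the analogous assertions for $f_i$.

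This last point is the crux, and the essential input is a \emph{lifting lemma} for the parabolic semi-infinite Bruhat graph: given $x,y\in(W^J)_{\af}$ with a directed path from $y$ to $x$ in $\SBb{a}$, together with the sign and integrality constraints on $\pair{\alpha_i^{\vee}}{x\lambda}$ and $\pair{\alpha_i^{\vee}}{y\lambda}$ coming from the monotonicity of $H_i^{\ol{\eta}}$, one must produce the corresponding directed path between the $r_i$-translates, as well as the ``mixed'' variants at the two ends of the reflected block, where $x_p$ confronts $r_ix_p$ and $r_ix_q$ confronts $x_{q+1}$. I would prove this using the structural properties of the graphs $\SB$ and $\SBb{a}$ (to be developed beforehand), and --- since $r_0$ does \emph{not} change the semi-infinite length by $\pm 1$ in general, so that a naive ``reflect the whole path'' argument fails --- by transporting the question, via the comparison of $\SB$ with the (parabolic) quantum Bruhat graph (cf. \cite{BFP99, LNSSS13a}), to a setting where the relevant diamond/lifting lemmas apply, and then translating back; an alternative route is to lift directed paths from $\LPb{a}$ to $\SBb{a}$ with prescribed endpoints. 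I expect this lifting lemma to be the main obstacle; everything else is bookkeeping.

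Finally, granting part~(1), part~(2) is essentially formal. By the first paragraph, $\ol{e_i\eta}=e_i\ol{\eta}$ and $\ol{f_i\eta}=f_i\ol{\eta}$ whenever the left-hand sides are not $\bzero$; iterating, $\ve_i(\eta)=\ve_i(\ol{\eta})=-m_i^{\ol{\eta}}$ and $\vp_i(\eta)=\vp_i(\ol{\eta})=H_i^{\ol{\eta}}(1)-m_i^{\ol{\eta}}$, whence $\vp_i(\eta)-\ve_i(\eta)=H_i^{\ol{\eta}}(1)=\pair{\alpha_i^{\vee}}{\ol{\eta}(1)}=\pair{\alpha_i^{\vee}}{\wt(\eta)}$. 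The weight shifts $\wt(e_i\eta)=\wt(\eta)+\alpha_i$ and $\wt(f_i\eta)=\wt(\eta)-\alpha_i$ follow from the reflection formula exactly as in \cite[\S4]{Lit95}, and the relations $f_ie_i\eta=\eta$ for $e_i\eta\neq\bzero$ and $e_if_i\eta=\eta$ for $f_i\eta\neq\bzero$ hold because the reflected block and the cut points are determined by $\ol{\eta}$ (for which this is the standard LS-path computation) and $r_i^2=e$ reflects exactly the same block back, recovering $(x_1,\dots,x_s)$. This checks the crystal axioms, so $\sLS$ is a crystal with weights in $P_{\af}$.
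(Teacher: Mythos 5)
Your reduction is set up correctly and matches the paper's strategy: the membership step ($r_ix_u\in(W^J)_{\af}$ via strict monotonicity of $H^{\ol{\eta}}_i$ and Lemma~\ref{lem:r_i}) is exactly the paper's step (i), and your treatment of part (2) — transporting all axioms through $\eta\mapsto\ol{\eta}$ and checking $e_i\eta_1=\eta_2\Leftrightarrow\eta_1=f_i\eta_2$ by noting that the cut points $t_0,t_1$ are determined by $\ol{\eta}$ — is the paper's argument. But the crux, which you yourself isolate as the "lifting lemma", is precisely what you do not prove. In the paper this is Lemma~\ref{lem:directed} (together with the one-edge statements needed at the two ends of the reflected block, which use \eqref{eq:simple2} and \cite[Lemma~4.5\,c), Remark~4.6]{Lit95} applied to $\ol{\eta}$), and its proof rests on the diamond Lemmas~\ref{lem:dia1} and \ref{lem:dia2}, which in turn rest on the semi-infinite length formula of Lemma~\ref{lem:sil}. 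Saying that you "would prove this using structural properties of $\SB$ to be developed beforehand", or by transporting to the quantum Bruhat graph, or by lifting paths from $\LPb{a}$ with prescribed endpoints, is a statement of intent, not a proof: none of these routes is carried out, and the last one is delicate because the projection $(W^J)_{\af}\to W_{\af}\lambda$ is not injective, so lifting a path in $\LPb{a}$ to one in $\SBb{a}$ with both endpoints prescribed is exactly the kind of statement that needs the diamond-lemma machinery anyway.

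Moreover, your stated reason for avoiding the direct argument is factually wrong: by \eqref{eq:simple} in Lemma~\ref{lem:sil}, every simple reflection, including $r_0$, does change the semi-infinite length by exactly $\pm 1$ (the $i=0$ case is nontrivial and uses \cite[Proposition~5.11]{LNSSS13a}, but it holds). In fact the paper's proof is the "reflect the whole path" argument you dismiss: under the uniform-sign hypothesis one shows edge by edge, via Lemmas~\ref{lem:dia1} and \ref{lem:dia2} and a length count $\sil(r_iy)=\sil(x)+2=\sil(r_ix)+1$, that the reflected sequence is again a directed path in $\SBa$, and the "mixed" cases at the ends are handled by parts (2) and (3) of Lemma~\ref{lem:directed}. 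So the gap is not merely one of detail: the combinatorial heart of Theorem~\ref{thm:stability}(1) — the stability of directed paths in $\SBa$ under the partial reflection dictated by the root operators — is missing from your argument, and the heuristic you offer in its place points away from the argument that actually works.
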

%
%
\subsection{Isomorphism theorem between $\B$ and $\sLS$.}

Let $V(\lambda)$ denote the extremal weight module of 
extremal weight $\lambda \in P^+$ over 
the quantized universal enveloping algebra $U_q (\Fg_{\af})$ 
associated with $\Fg_{\af}$, which is 
an integrable $U_q (\Fg_{\af})$-module generated 
by a single element $v_{\lambda }$ with 
the defining relation that $v_{\lambda }$ is 
an ``extremal weight vector'' of weight $\lambda$
(for details, see \cite[\S8]{Kas94} and \cite[\S3]{Kas02b}). 
We know from \cite[\S8]{Kas94} that $V(\lambda)$ has 
a crystal basis $\B$. The main result of this paper is 
the following theorem.

\begin{thm}\label{thm:main}
Let $\lambda \in P^+$. The crystal basis $\B$ of 
the extremal weight module $V(\lambda)$ of 
extremal weight $\lambda $ is isomorphic, as a crystal, to 
the crystal $\sLS$ of SiLS paths of shape $\lambda $.
\end{thm}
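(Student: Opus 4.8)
The plan is to prove the isomorphism $\B \cong \sLS$ by combining three structural facts whose proofs occupy the later sections: (i) the connected component $\Bo$ of $\B$ containing $u_\lambda$ is isomorphic to the connected component $\sLSo$ of $\sLS$ containing $\eta_e := (e\,;\,0,1)$ (this is Proposition~\ref{prop:conn-isom}); (ii) the connected components of $\sLS$ are parametrized in the same way as those of $\B$, i.e.\ by the same index set (Proposition~\ref{prop:BNforSLS}), with the parametrization of the components of $\B$ being governed by the isomorphism \eqref{eq:tensor} together with the Beck--Nakajima analysis; and (iii) every connected component of $\sLS$ is isomorphic, as a crystal, to $\sLSo$ (so that $\sLS$ is a disjoint union of copies of $\sLSo$), and likewise every connected component of $\B$ is isomorphic to $\Bo$. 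Granting these, the isomorphism follows: choose the bijection on the sets of connected components, and on each component use the isomorphism with $\sLSo \cong \Bo$.

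First I would reduce to the component level. Since a morphism of crystals that is bijective on vertices and preserves all the structure maps is an isomorphism, and since both $\B$ and $\sLS$ decompose into connected components, it suffices to (a) set up a bijection $\Phi$ between the connected components of $\B$ and those of $\sLS$, and (b) for each component, produce a crystal isomorphism compatible with $\Phi$. For step (b), the key point is homogeneity: I would show that every connected component of $\sLS$ is obtained from $\sLSo$ by translating the shape, more precisely that for a suitable translation/Weyl-group element the component is isomorphic to $\sLSo$ as an abstract crystal (this uses the description of $\sLS$ in terms of the parabolic semi-infinite Bruhat graph and the results of \S\ref{sec:directed-path} on directed paths between translations, which pin down exactly which $x \in (W^J)_{\af}$ can be the ``initial direction'' $x_1$ of a SiLS path, hence which components occur and what their ``minimal'' element looks like). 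On the representation-theoretic side, the analogous homogeneity of $\B$ is exactly the content of \cite{BN04}/the Beck--Nakajima description: all connected components of the extremal weight module crystal are isomorphic to $\Bo$, and they are indexed by the ``$\ell$-weight'' data.

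The heart of the argument — and the step I expect to be the main obstacle — is Proposition~\ref{prop:conn-isom}, the isomorphism $\Bo \cong \sLSo$ of the ``basic'' components. Here the strategy is to use the characterization of $\Bo$ as a crystal: by \eqref{eq:tensor} and the known realization $\CB(m_i\vpi_i) \cong \BB(m_i\vpi_i)$, the component $\Bo$ can be identified combinatorially, and one must match it with $\sLSo$. The natural route is via the surjection $\ol{\phantom{\eta}}\colon \sLS \to \LS$ of Proposition~\ref{prop:pi_}, which by construction (Definition~\ref{def:e_if_i}) intertwines the root operators; restricting to $\sLSo$ gives a surjective morphism of crystals onto $\BB_0(\lambda)$, and the task is to understand its fibers and to see that $\sLSo$ supplies precisely the ``missing'' extremal elements that $\BB_0(\lambda)$ lacks but $\Bo$ has — i.e.\ that the extremal elements of $\sLSo$ biject with $W_{\af}/(W_J)_{\af}$ (via $x \mapsto (x\,;\,0,1)$, $x \in (W^J)_{\af}$) exactly as those of $\Bo$ do, and that a connected crystal with a given weight and the same extremal structure as $\Bo$ must coincide with it. Establishing this ``extremal rigidity'' — that $\sLSo$ and $\Bo$ agree because they are both connected normal crystals generated by their extremal elements with matching weight data, together with the injectivity of the induced map — is where the real work lies, and it draws on the stability theorem (Theorem~\ref{thm:stability}), the order-preserving surjection $(W^J)_{\af} \twoheadrightarrow W_{\af}\lambda$, and Kashiwara's theory of extremal weight crystals. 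Once $\Bo \cong \sLSo$ is in hand, assembling the global isomorphism from the component decomposition and the common parametrization is routine.
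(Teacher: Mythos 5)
Your overall route is the paper's: decompose both crystals into connected components, identify the basic components via Proposition~\ref{prop:conn-isom}, and match the remaining components through the common parametrization by $\Par(\lambda)$ (Propositions~\ref{prop:BN} and \ref{prop:BNforSLS}); this is exactly the chain $\B \cong \Par(\lambda)\otimes\Bo \cong \Par(\lambda)\otimes\sLSo \cong \sLS$ given in \S\ref{sec:main}. One point in your step (iii) needs repair, though: it is \emph{not} true that every connected component of $\sLS$ (or of $\B$) is isomorphic as a crystal to $\sLSo$ (resp.\ $\Bo$). The component labelled by $\bm{\rho}\in\Par(\lambda)$ has all its weights shifted by $\wt(\bm{\rho})=-\sum_{i}|\rho^{(i)}|\delta$; the map $b\mapsto \bm{\rho}\otimes b$ commutes with the Kashiwara operators but is not weight-preserving. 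Consequently, ``choose a bijection on the sets of components and use the isomorphism with $\sLSo\cong\Bo$ on each'' does not by itself produce a weight-preserving map: the bijection must be the one induced by the common $\Par(\lambda)$-labels, so that the $\delta$-shifts on the two sides cancel — which is precisely what the formulations $\B\cong\Par(\lambda)\otimes\Bo$ and $\sLS\cong\Par(\lambda)\otimes\sLSo$ encode. Since you do invoke Proposition~\ref{prop:BNforSLS} and the common index set, this is a fixable imprecision rather than a fatal flaw, but as written claim (iii) is false and the assembly step is underdetermined.

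A second remark: your sketch of how Proposition~\ref{prop:conn-isom} itself would be proved (via the surjection $\sLSo\to\BB_{0}(\lambda)$ and an ``extremal rigidity'' principle) is not the paper's argument and is the weakest part of the proposal. The paper emphasizes (citing \cite{NS08}) that connectedness together with a supply of extremal elements does not determine a crystal — $\BB(\lambda)$ and $\CB(\lambda)$ can both be connected yet non-isomorphic — so a rigidity statement of the kind you invoke would itself require substantial proof. The paper instead proves Proposition~\ref{prop:conn-isom} by means of the $N$-multiple (similarity) maps $\sigma_{N}$, reducing elements of $\Bo$ and $\sLSo$ to tensor products of extremal elements $u_{x}$, $\eta_{x}$, $x\in (W^J)_{\af}$, and comparing monomials in the Kashiwara operators. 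For Theorem~\ref{thm:main} as stated this does not matter, since Proposition~\ref{prop:conn-isom} is available as a quoted result, but it is where the real work lives.
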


Let us give an outline of the proof of Theorem \ref{thm:main}. 
Let $\sLSo$ denote the connected component of $\sLS$ containing 
$\eta_e := (e ; 0,1) \in \sLS$. Also, let $u_{\lambda}$ be 
the element of $\B$ corresponding to the generator 
$v_{\lambda}$ of $V(\lambda )$, and let $\Bo$ denote
the connected component of $\B$ containing 
$u_{\lambda} \in \B$. 

\begin{prop}[which will be proved in \S \ref{sec:pr-prop:conn-isom}]
\label{prop:conn-isom}
There exists a unique isomorphism 
$\Bo \stackrel{\sim}{\rightarrow} \sLSo$ of crystals 
that maps $u_{\lambda}$ to $\eta_e$.
\end{prop}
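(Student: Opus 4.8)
The plan is to construct the isomorphism by realizing both $\Bo$ and $\sLSo$ as quotients of a single "universal" object — namely the set of all semi-infinite Bruhat paths (or, on the module side, the crystal generated by $u_\lambda$ viewed via the embedding into a tensor product), and then checking that the root-operator actions match. More concretely, I would first recall from the discussion after \eqref{eq:tensor} that $\Bo$ has the property that its set of extremal elements is in bijection with $W_{\af}/(W_J)_{\af}$, via $x(W_J)_{\af} \mapsto x u_\lambda$; on the other side, the extremal elements of $\sLSo$ should be exactly the straight-line paths $\eta_x := (x\,;\,0,1)$ for $x \in (W^J)_{\af}$, which by Proposition \ref{prop:P} are also parametrized by $W_{\af}/(W_J)_{\af}$. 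So the first step is to identify the extremal elements of $\sLSo$ and show that $\eta_x \in \sLSo$ for every $x \in (W^J)_{\af}$: this uses the fact that in the semi-infinite Bruhat graph $\SB$ one can reach any $x$ from $e$ by a directed path (connectivity of $\SB$, which follows from the directed-path results promised in \S\ref{sec:directed-path}), combined with the action of the root operators to move between extremal elements.

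Next I would set up the comparison map. Because $\B$ is the crystal basis of an extremal weight module, Kashiwara's theory tells us that $\Bo$ is characterized up to isomorphism among "normal" crystals by the combinatorial data of its extremal elements together with the local structure around them; alternatively, using \eqref{eq:tensor} one reduces to understanding $\CB(m_i\vpi_i) = \BB^{\si}(m_i\vpi_i)$ (via the identification remarked after \eqref{eq:isomorphism}, since $(W_{I\setminus\{i\}})_{\af} = (W_{\af})_{m_i\vpi_i}$) and the tensor product rule. So the strategy is: define a map $\Phi : \Bo \to \sLSo$ by sending $u_\lambda \mapsto \eta_e$ and extending by the requirement that $\Phi$ commute with $e_i, f_i$ for all $i \in I_{\af}$; well-definedness requires that whenever $f_{i_1}\cdots f_{i_k} u_\lambda = f_{j_1}\cdots f_{j_\ell} u_\lambda$ (and similarly with $e$'s mixed in), the same relation holds after applying the root operators to $\eta_e$ in $\sLSo$. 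The cleanest way to get this is to exhibit a morphism of crystals in one direction whose image is all of $\sLSo$ and which is injective, or to use the embedding of $\Bo$ into $\B(\infty)\otimes \B(-\infty)$-type objects and match it with an analogous combinatorial embedding of $\sLSo$.

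The key technical input, which I would invoke rather than reprove, is that the concatenation/decomposition of SiLS paths mirrors the tensor product of crystals: a path $\eta \in \sLSo$ with linear pieces $x_1 >_{\si} \cdots >_{\si} x_s$ can be built up from straight-line segments, and Theorem \ref{thm:stability} guarantees the root operators behave compatibly with this. Combined with the surjection $\ol{\phantom{\eta}} : \sLS \to \LS$ of Proposition \ref{prop:pi_}, one can transport known facts about $\LS$ (in particular Littelmann's isomorphism $\BB(m_i\vpi_i) \cong \CB(m_i\vpi_i)$ from \cite{NS03,NS06}) to the semi-infinite setting. Concretely: restrict to the case $\lambda = m_i\vpi_i$ where $J = I\setminus\{i\}$ and $\sLS = \BB(m_i\vpi_i)$, where the result is our previous theorem; then for general $\lambda$, use that $\sLSo$ embeds into $\bigotimes_{i} \BB^{\si}(m_i\vpi_i)$ (an analogue of \eqref{eq:tensor} at the level of connected components) compatibly with the embedding $\Bo \hookrightarrow \bigotimes_i \CB(m_i\vpi_i)$, and conclude.

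The main obstacle I expect is precisely establishing this tensor-product-type embedding $\sLSo \hookrightarrow \bigotimes_{i \in I} \BB^{\si}(m_i\vpi_i)$ and verifying it is compatible with root operators and lands inside the connected component corresponding to $\bigotimes_i u_{m_i\vpi_i}$: the semi-infinite Bruhat order on $(W^J)_{\af}$ does not interact with the partial orders on the factors $(W^{I\setminus\{i\}})_{\af}$ in a naive coordinatewise way, so one must carefully use Peterson's coset structure (Proposition \ref{prop:P}, Lemma \ref{lem:PJ}, Lemma \ref{lem:J-adj}) together with the cover relations in $\SB$ to show that a decreasing sequence in $(W^J)_{\af}$ projects to compatible data in each factor. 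Uniqueness of the isomorphism is comparatively easy: any crystal morphism $\Bo \to \sLSo$ is determined by the image of $u_\lambda$ since $\Bo$ is generated from $u_\lambda$ by the root operators $e_i, f_i$ (as $u_\lambda$ is an extremal element and $\Bo$ is connected), and an isomorphism must send the extremal element $u_\lambda$ of weight $\lambda$ to the unique element $\eta_e$ of $\sLSo$ of weight $\lambda$.
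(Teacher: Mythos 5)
Your plan starts out parallel to the paper's preparatory steps: the paper too first shows that $(W_J)_{\af}$ is the stabilizer of $u_{\lambda}$ (Proposition~\ref{prop:stab}), that $\eta_{x}=(x\,;\,0,1)$ lies in $\sLSo$ for every $x \in (W^J)_{\af}$ (via the Weyl-group action formula \eqref{eq:etax}, rather than via connectivity of $\SB$), and that $\wt$, $\ve_i$, $\vp_i$ agree on $u_x$ and $\eta_x$ (\eqref{eq:ext-u}, \eqref{eq:ext-eta}). But the heart of the matter --- the well-definedness of your map $\Phi$, i.e.\ that for monomials $X,Y$ in the root operators one has $Xu_{\lambda}=\bzero$ iff $X\eta_e=\bzero$ and $Xu_{\lambda}=Yu_{\lambda}$ iff $X\eta_e=Y\eta_e$ --- is exactly what you defer to an unproved tensor-product-type embedding $\sLSo \hookrightarrow \bigotimes_{i\in I}\BB^{\si}(m_i\vpi_i)$ compatible with the root operators and with the Beck--Nakajima embedding of $\Bo$. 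You flag this yourself as the main obstacle, and nothing available at this point in the paper (Proposition~\ref{prop:pi_}, Theorem~\ref{thm:stability}, Peterson's decomposition) delivers it: such an embedding is a semi-infinite analogue of \eqref{eq:tensor} for path crystals, of essentially the same depth as the proposition being proved (in the paper it comes out only \emph{after} the main theorem). So the proposal has a genuine gap at its central step, and reducing to the known case $\lambda=m_i\vpi_i$ does not by itself repair it.

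For comparison, the paper closes this gap with a different device, $N$-multiple (similarity) maps, avoiding any decomposition of $\sLSo$ into fundamental-weight factors. On the module side, $\sigma_N:\Bo\hookrightarrow\Bo^{\otimes N}$ is built from the similarity map of \cite{NS03} together with embeddings into tensor products of the $\CB(\vpi_i)$ (Propositions~\ref{prop:Nmulti}, \ref{prop:similarB}); on the path side, $\sigma_N:\sLS\hookrightarrow\sLS^{\otimes N}$ just cuts a SiLS path into $N$ straight segments $\eta_{x}$ (Lemma~\ref{lem:Na}, Proposition~\ref{prop:Nmulti2}). For suitable $N$, both $\sigma_N(Xu_{\lambda})$ and $\sigma_N(X\eta_e)$ become pure tensors of extremal elements, whose behaviour under the tensor product rule is governed solely by the matching data \eqref{eq:ext-u}, \eqref{eq:ext-eta}; an induction on the length of $X$ (Lemma~\ref{lem:N-multi}) then yields the two conditions above and hence the isomorphism, with uniqueness following, as you say, from connectedness of $\Bo$. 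To rescue your outline you would either have to prove the tensor-product embedding of $\sLSo$ directly (a nontrivial comparison of the semi-infinite Bruhat order on $(W^J)_{\af}$ with those for the parabolics $I\setminus\{i\}$), or switch to a similarity-type argument of this kind.
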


We write $\lambda \in P^+$ as 
$\lambda = \sum_{i \in I} m_i \varpi_i$ 
with $m_i \in \BZ_{\ge 0}$, $i \in I$, and define
\begin{align}\label{eq:partition}
\Par(\lambda ) := 
 \bigl\{ \bm{\rho} = (\rho^{(i)})_{i \in I} \mid 
 \text{$\rho^{(i)}$ is a partition of length less than $m_i$ 
 for each $i \in I$} \bigr\} ;
\end{align}
we understand that $\rho^{(i)}$ is the empty partition if $m_i = 0$. 
We equip the set $\Par(\lambda)$ with a crystal structure as follows: 
for each $\bm{\rho} = (\rho^{(i)})_{i \in I} \in \Par(\lambda)$, 
we set
\begin{align}
\begin{cases}
e_i \bm{\rho} = f_i \bm{\rho} := \bzero,\ 
\ve_i (\bm{\rho}) = \vp_i (\bm{\rho}) := -\infty 
  & \text{for}\ i\in I_{\af}, \\[1.5mm]
\wt(\bm{\rho}) := - \sum_{i \in I} |\rho^{(i)}| \delta , &
\end{cases}
\end{align}
where for a partition 
$\chi = (\chi_1 \ge \chi_2 \ge \cdots \ge \chi_{k} \ge 0)$, 
we set $|\chi| := \sum_{l=1}^{k} \chi_{l}$. 
By Proposition \ref{prop:conn-isom}, 
we have the isomorphism 
\begin{align} \label{eq:P*B}
\Par(\lambda ) \otimes \Bo \cong \Par(\lambda ) \otimes \sLSo
\end{align}
of crystals. Let $\CB$ be either $\Bo$ or $\sLSo$. 
For each $\bm{\rho} \in \Par(\lambda )$, 
we set $\{ \bm{\rho} \} \otimes \mathcal{B} := 
\bigl\{ \bm{\rho} \otimes b \mid b \in \CB \bigr\} \subset 
\Par(\lambda ) \otimes \CB$. Then it is easily seen from 
the tensor product rule for crystals that 
\begin{equation*}
\Par(\lambda ) \otimes \CB = 
\bigsqcup_{\bm{\rho} \in \Par(\lambda )} 
\{ \bm{\rho} \} \otimes \mathcal{B}
\end{equation*}
is the decomposition of $\Par(\lambda ) \otimes \CB$ into 
its connected components. Moreover, 
the map $\CB \rightarrow \{ \bm{\rho} \} \otimes \mathcal{B}$, 
$b \mapsto \bm{\rho} \otimes b$, is 
bijective and commutes with Kashiwara operators. 

Now, we know the following proposition 
from \cite[Theorem 4.16\,(i)]{BN04}. 

\begin{prop}\label{prop:BN}
For $\lambda \in P^+$, there exists an isomorphism 
$\B \stackrel{\sim}{\rightarrow} \Par(\lambda) \otimes \Bo$ of crystals. 
\end{prop}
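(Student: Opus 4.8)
\emph{Proof proposal.}
This is \cite[Theorem~4.16\,(i)]{BN04}; the plan is either to cite it directly or, to keep the logic self-contained, to reconstruct it from the tensor decomposition \eqref{eq:tensor} together with the structure of $\CB(m_{i}\vpi_{i})$ in the single-fundamental-weight case. I describe the reconstruction. Write $\lambda=\sum_{i\in I}m_{i}\vpi_{i}$. First I would recall \eqref{eq:tensor}: there is an isomorphism of crystals $\B\cong\bigotimes_{i\in I}\CB(m_{i}\vpi_{i})$ under which $u_{\lambda}$ corresponds to $\bigotimes_{i\in I}u_{m_{i}\vpi_{i}}$, where $u_{m_{i}\vpi_{i}}$ denotes the extremal element of weight $m_{i}\vpi_{i}$ in $\CB(m_{i}\vpi_{i})$.

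Next, for each $i\in I$ I would invoke the known structure of $\CB(m_{i}\vpi_{i})$ --- the case $\lambda=m_{i}\vpi_{i}$, for which $J_{\lambda}=I\setminus\{i\}$ --- namely that its connected components are indexed by the partitions of length less than $m_{i}$, each being a $\delta$-shift of the component $\CB_{0}(m_{i}\vpi_{i})$ containing $u_{m_{i}\vpi_{i}}$; equivalently, there is an isomorphism of crystals
\begin{equation*}
\CB(m_{i}\vpi_{i})\;\cong\;\Par(m_{i}\vpi_{i})\otimes\CB_{0}(m_{i}\vpi_{i})
\end{equation*}
sending $\emptyset\otimes u_{m_{i}\vpi_{i}}$ to $u_{m_{i}\vpi_{i}}$, where $\Par(m_{i}\vpi_{i})$ carries the trivial crystal structure ($e_{j}=f_{j}=\bzero$, $\ve_{j}=\vp_{j}=-\infty$, $\wt=-|\cdot|\,\delta$). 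This is a special case of the present statement, contained in \cite{BN04} (and, combinatorially, in our earlier LS-path realization of $\CB(m_{i}\vpi_{i})$ in \cite{NS03,NS06}).

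I would then tensor these isomorphisms together. A tensor factor all of whose Kashiwara operators are $\bzero$, all of whose $\ve_{j},\vp_{j}$ equal $-\infty$, and all of whose weights are multiples of $\delta$ (which is orthogonal to every $\alpha_{j}^{\vee}$) is invisible to the tensor product rule, and hence may be moved freely past the other factors; so
\begin{equation*}
\B\;\cong\;\bigotimes_{i\in I}\CB(m_{i}\vpi_{i})\;\cong\;\Bigl(\bigotimes_{i\in I}\Par(m_{i}\vpi_{i})\Bigr)\otimes\Bigl(\bigotimes_{i\in I}\CB_{0}(m_{i}\vpi_{i})\Bigr)\;=\;\Par(\lambda)\otimes\Bigl(\bigotimes_{i\in I}\CB_{0}(m_{i}\vpi_{i})\Bigr),
\end{equation*}
the last equality identifying an $I$-tuple of partitions with an element of $\Par(\lambda)$ (the weights agree: $\sum_{i}(-|\rho^{(i)}|\,\delta)$), and $u_{\lambda}$ going to $\bm{\rho}_{0}\otimes\bigl(\bigotimes_{i\in I}u_{m_{i}\vpi_{i}}\bigr)$ with $\bm{\rho}_{0}=(\emptyset,\dots,\emptyset)\in\Par(\lambda)$. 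Finally I would prove that $\bigotimes_{i\in I}\CB_{0}(m_{i}\vpi_{i})$ is \emph{connected}; granting this, it is its own unique component and therefore corresponds to $\Bo$ (the component of $\B$ through $u_{\lambda}$), so $\Bo\cong\bigotimes_{i\in I}\CB_{0}(m_{i}\vpi_{i})$, and by the tensor product rule $\Par(\lambda)\otimes\bigotimes_{i}\CB_{0}(m_{i}\vpi_{i})=\bigsqcup_{\bm{\rho}\in\Par(\lambda)}\{\bm{\rho}\}\otimes\bigotimes_{i}\CB_{0}(m_{i}\vpi_{i})$ with each piece isomorphic to $\Bo$, whence $\B\cong\Par(\lambda)\otimes\Bo$.

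The main obstacle is this last step: the connectedness of $\bigotimes_{i\in I}\CB_{0}(m_{i}\vpi_{i})$, equivalently that the connected component of $\bigotimes_{i}\CB(m_{i}\vpi_{i})$ through $\bigotimes_{i}u_{m_{i}\vpi_{i}}$ meets only the ``empty-partition'' pieces and that these tensor to a single connected crystal. This is precisely where the deep content of \cite[Theorem~4.16]{BN04} lies, proved there by means of the PBW and global bases of $V(\lambda)$; I would simply invoke that. (Alternatively one could try to extract it combinatorially by matching the extremal elements of $\Bo$, parametrized by $W_{\af}/(W_{J})_{\af}$ via Proposition~\ref{prop:stab}, against those of $\bigotimes_{i}\CB_{0}(m_{i}\vpi_{i})$, but this needs the extra input that these connected crystals are pinned down by their extremal elements.)
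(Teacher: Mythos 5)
Your proposal is correct and matches the paper's treatment: Proposition~\ref{prop:BN} is simply quoted from \cite[Theorem~4.16\,(i)]{BN04} with no proof given in the paper, which is exactly your first option (and your reconstruction sketch, as you note yourself, still ends up invoking the same theorem for the essential connectedness step).
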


Also, we have the following proposition. 

\begin{prop}[which will be proved in \S \ref{subsec:prf-BNforSLS}] 
\label{prop:BNforSLS}
For $\lambda \in P^+$, 
there exists an isomorphism $\sLS \stackrel{\sim}{\rightarrow} 
\Par(\lambda ) \otimes \sLSo$ of crystals. 
\end{prop}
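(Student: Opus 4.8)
The plan is to mimic, on the SiLS side, the structure that Beck--Nakajima exhibit for $\B$: decompose $\sLS$ into connected components, and show each component is a ``shifted copy'' of $\sLSo$ indexed by an element of $\Par(\lambda)$. Concretely, I would construct an explicit map $\Phi : \sLS \to \Par(\lambda) \otimes \sLSo$ and check it is an isomorphism of crystals. Since $e_i,f_i$ on $\Par(\lambda)$ act as $\bzero$ and $\wt(\bm{\rho}) = -\sum_i |\rho^{(i)}|\delta$, a map $\eta \mapsto \bm{\rho}(\eta) \otimes \eta_0(\eta)$ will be a crystal morphism precisely when: (i) $\eta$ and $\eta_0(\eta)$ lie in the same connected component of $\sLS$ up to a $\delta$-shift in weight, with $\eta_0(\eta) \in \sLSo$; (ii) the assignment is compatible with root operators, i.e.\ $\Phi(e_i\eta) = \bm{\rho}(\eta) \otimes e_i \eta_0(\eta)$ and similarly for $f_i$ (using that $e_i,f_i$ act only on the right tensor factor by the tensor product rule, once one checks $\ve_i,\vp_i$ on $\Par(\lambda)$ are $-\infty$); and (iii) $\wt(\eta) = \wt(\eta_0(\eta)) - \sum_i |\rho^{(i)}(\eta)|\delta$.

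The key geometric input is a classification of the connected components of $\sLS$ together with, for each component, a distinguished representative that maps into $\sLSo$. Here I would use Proposition~\ref{prop:conn-isom} (the isomorphism $\Bo \cong \sLSo$) together with the results promised in \S\ref{sec:directed-path} and \S\ref{sec:ConnComp} on the existence of directed paths between translations in parabolic semi-infinite Bruhat graphs: these should yield that every $\eta \in \sLS$ is linked, via root operators, to an element of $\sLSo$ after adding a suitable multiple of $\delta$ to the weight, and that the ``defect'' is recorded precisely by a tuple of partitions of bounded length. The bound ``length less than $m_i$'' should emerge because the component of $\eta_e$ accounts for the $W_{\af}/(W_J)_{\af}$-worth of extremal elements (Proposition~\ref{prop:conn-isom}), while the extra components correspond to the combinatorial data distinguishing $\CB(\lambda)$ from $\Bo^{\,\otimes}$, which by \eqref{eq:tensor} and the $m_i$-fold structure of $\CB(m_i\vpi_i)$ is governed by partitions fitting inside an $m_i$-column strip. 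Concretely I expect $\bm{\rho}(\eta)$ to be read off from the ``translation parts'' of the $x_u$ appearing in $\eta$, modulo the $J$-adjusted normalization of Lemma~\ref{lem:J-adj}.

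Once the map $\Phi$ is defined and (iii) is verified by a direct weight computation, the crystal-morphism property (ii) reduces to showing that applying $e_i$ or $f_i$ to $\eta$ changes $\bm{\rho}(\eta)$ by nothing and changes $\eta_0(\eta)$ by the corresponding root operator --- this is where Theorem~\ref{thm:stability} (stability of $\sLS$ under root operators) and the explicit form of $e_i,f_i$ in Definition~\ref{def:e_if_i} are used: the reflection $x_u \mapsto r_i x_u$ alters the finite part but preserves the $\delta$-defect, so it cannot change the partition data. Surjectivity of $\Phi$ follows by exhibiting, for each $\bm{\rho}$ and each $\eta_0 \in \sLSo$, an explicit preimage obtained by inserting appropriate translation elements into $\eta_0$; injectivity follows since the component and the position within it determine $\eta$. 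The main obstacle, I expect, is step (i): proving that the connected components of $\sLS$ are exactly parametrized by $\Par(\lambda)$ and that each meets $\sLSo$ after a $\delta$-shift. This is genuinely the content of \S\ref{sec:directed-path}--\S\ref{sec:ConnComp}, requiring the directed-path criterion between translations in $\SB$ and a careful analysis of which $J$-adjusted coweights can be reached; everything else is bookkeeping with the tensor product rule and the definitions of the root operators.
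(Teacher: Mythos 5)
Your overall strategy --- decompose $\sLS$ into its connected components, parametrize the components by $\Par(\lambda)$, and identify each component with $\{\bm{\rho}\}\otimes\sLSo$ via a distinguished representative --- is exactly the paper's. But as written, the two substantive steps are not actually carried out. For the parametrization you defer wholesale to \S\ref{sec:directed-path}--\S\ref{sec:ConnComp}, yet the proposition you are proving is itself the endpoint of \S\ref{sec:ConnComp}: what is really needed is the directed-path criterion between translations (Proposition~\ref{prop:di2}), from which one deduces that every component contains a unique element of the form \eqref{eq:extr} with $\xi_{s}=0$ (Propositions~\ref{prop:conn} and~\ref{prop:unique}), and then the explicit bijection $\Conn(\lambda)\leftrightarrow\Par(\lambda)$ of Proposition~\ref{prop:1-1corr}, obtained by reading off the coefficients of the $\alpha_i^{\vee}$ in the $J$-adjusted translation parts at the times in $\Turn(\lambda)$; your sketch gestures at this but supplies none of it, and the uniqueness of the representative (which is what makes the partition well defined) is nontrivial.

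More seriously, your mechanism for the component-wise isomorphism does not work as stated. The claim that ``the reflection $x_u\mapsto r_i x_u$ alters the finite part but preserves the $\delta$-defect, so it cannot change the partition data'' is not a proof and, read literally, is misleading: root operators involving $r_0=r_{\theta}t_{-\theta^{\vee}}$ do change the translation parts of the $x_u$, so no partition can be read off elementwise in a root-operator-invariant way; the partition is an invariant of the \emph{component} only, and invariance of the component under $e_i,f_i$ is trivial and does not produce an isomorphism onto $\{\bm{\rho}\}\otimes\sLSo$. What is required is a comparison of the action of an arbitrary monomial $X$ in root operators on $\eta_{\bm{\rho}}$ with its action on $\eta_e$, namely $X\eta_{\bm{\rho}}\neq\bzero$ iff $X(\bm{\rho}\otimes\eta_e)\neq\bzero$, and $X\eta_{\bm{\rho}}=Y\eta_{\bm{\rho}}$ iff $X(\bm{\rho}\otimes\eta_e)=Y(\bm{\rho}\otimes\eta_e)$. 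The paper achieves this by embedding into $\sLS^{\otimes N}$ through the $N$-multiple map $\sigma_N$ (Proposition~\ref{prop:Nmulti2}, Lemma~\ref{lem:Na}) and proving the parallel-transport statement Lemma~\ref{lem:vT}: the Weyl group elements $v_M$ produced by $X$ on $\sigma_N(\eta_{\bm{\rho}})=[T_{\zeta_1},\dots,T_{\zeta_N}]$ are the same as those produced on $\sigma_N(\eta')$ for any other element of the form \eqref{eq:extr}, because $T_{\zeta}\lambda\equiv\lambda$ modulo $\BQ\delta$, so the functions $H_i$ governing the root operators depend only on the $v_M$, not on the translation parts. Without this device (or an equivalent one), your injectivity and surjectivity claims presuppose exactly the isomorphism being constructed, so the proposal has a genuine gap at its central step.
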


Combining all the results above, we finally obtain
\begin{align*}
\B & \cong \Par(\lambda ) \otimes \Bo \quad 
     \text{by Proposition \ref{prop:BN}} \\
& \cong \Par(\lambda ) \otimes \sLSo \quad 
     \text{by \eqref{eq:P*B}} \\
& \cong \sLS \quad 
     \text{by Proposition \ref{prop:BNforSLS}},
\end{align*}
as desired. 

In the remainder of this paper, we will give proofs of 
the results above; we prove 
Proposition~\ref{prop:pi_} in \S\ref{subsec:SB-LP}, 
Theorem~\ref{thm:stability} in \S\ref{subsec:pr-thm:stability}, 
Proposition~\ref{prop:conn-isom} in \S\ref{sec:pr-prop:conn-isom}, 
and Proposition~\ref{prop:BNforSLS} 
in \S\ref{subsec:prf-BNforSLS}.
%
%
\section{Proofs of Proposition \ref{prop:pi_} and Theorem \ref{thm:stability}.} \label{sec:pr-prop:pi_}
%
%
\subsection{Some technical lemmas.} 
\label{subsec:basiclem}

\begin{lem}[{\cite[Lemma 4.3]{BFP99}}] \label{lem:ineq}
We have $\ell (r_{\alpha }) \le 2 \pair{\alpha^{\vee}}{\rho}-1$ 
for all $\alpha \in \Delta^+$.
\end{lem}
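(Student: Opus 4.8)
\textbf{Proof proposal for Lemma \ref{lem:ineq}.}

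The plan is to reduce the statement to the well-known fact that a reflection $r_\alpha$ has a reduced expression whose length is controlled by the height of $\alpha^\vee$ relative to $\rho$. First I would recall the classical formula $\ell(r_\alpha) = 2\,\Ht(\alpha^\vee) - 1$ in the special case where $\alpha$ is... more precisely, I would use the standard identity valid for every $\alpha \in \Delta^+$: writing $r_\alpha$ in terms of simple reflections, one has $\ell(r_\alpha) \le 2\Ht(\alpha) - 1$, where $\Ht$ denotes the height with respect to the simple roots of $\Delta$. The cleanest route, however, is to work directly with $\pair{\alpha^\vee}{\rho}$, since $\rho$ is the half-sum of positive roots and $\pair{\alpha^\vee}{\rho}$ equals the height of $\alpha^\vee$ in the dual root system when $\rho$ is replaced appropriately; I would instead invoke the inversion-set description: $\ell(r_\alpha) = \#\{\beta \in \Delta^+ \mid r_\alpha \beta \in \Delta^- \}$.

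Concretely, the key steps, in order, are as follows. First, I would establish that $\ell(r_\alpha) = \#\{\beta \in \Delta^+ \mid \pair{\alpha^\vee}{\beta} > 0,\ \beta \ne \alpha\} + 1$, using the fact that $r_\alpha \beta \in \Delta^-$ for $\beta \in \Delta^+$ exactly when $\pair{\alpha^\vee}{\beta} > 0$ (together with the case $\beta = \alpha$, which contributes the $+1$). Second, I would pair the inversion set off in $\pm$-pairs: the map $\beta \mapsto \alpha - \beta$ (when $\pair{\alpha^\vee}{\beta} = 1$) or a suitable involution shows that the positive roots $\beta$ with $\pair{\alpha^\vee}{\beta} > 0$ and $\beta \ne \alpha$ come in a controlled count. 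Third — and this is the arithmetic heart — I would compute $2\pair{\alpha^\vee}{\rho} = \pair{\alpha^\vee}{\sum_{\gamma \in \Delta^+}\gamma} = \sum_{\gamma \in \Delta^+} \pair{\alpha^\vee}{\gamma}$, and observe that the terms with $\pair{\alpha^\vee}{\gamma} < 0$ exactly cancel (via $\gamma \mapsto r_\alpha\gamma$) against terms on the positive side, leaving $2\pair{\alpha^\vee}{\rho} = \sum_{\gamma \in \Delta^+,\ \pair{\alpha^\vee}{\gamma} > 0} \pair{\alpha^\vee}{\gamma} \ge \#\{\gamma \in \Delta^+ \mid \pair{\alpha^\vee}{\gamma} > 0\} \ge \ell(r_\alpha)$, whence $\ell(r_\alpha) \le 2\pair{\alpha^\vee}{\rho}$. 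To sharpen this to the claimed $\le 2\pair{\alpha^\vee}{\rho} - 1$, I would note that $\gamma = \alpha$ contributes $\pair{\alpha^\vee}{\alpha} = 2$ to the sum while contributing only $1$ to $\ell(r_\alpha)$, giving the extra $-1$.

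The step I expect to be the main obstacle is the bookkeeping in the cancellation argument: making precise that $\sum_{\gamma \in \Delta^+}\pair{\alpha^\vee}{\gamma}$ reduces to a sum over only the positive-pairing roots, and then correctly counting the surplus contributed by $\gamma = \alpha$ and by any roots $\gamma$ with $\pair{\alpha^\vee}{\gamma} \ge 2$ (these occur in non-simply-laced types). In fact the existence of even one such root, or the double count from $\alpha$ itself, already yields the strict inequality, so the argument is robust; the care needed is simply to handle the comparison $\ell(r_\alpha) \le \#\{\gamma \in \Delta^+ : \pair{\alpha^\vee}{\gamma} > 0\}$ cleanly — this last inequality is itself standard (it is the statement that $r_\alpha$ sends exactly these roots to negatives, plus $\alpha \mapsto -\alpha$), so I would cite \cite{BB05} or \cite{Kac90} for it rather than reprove it. Since the paper attributes the lemma to \cite[Lemma 4.3]{BFP99}, an acceptable alternative is simply to quote that reference; but the self-contained argument above is short enough to include.
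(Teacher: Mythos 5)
The paper itself gives no proof of this lemma: it is quoted from \cite[Lemma~4.3]{BFP99}, so simply citing that reference (as you suggest at the end) matches what the paper does, and a short self-contained proof would also be acceptable. However, your self-contained argument has a genuine error at its ``arithmetic heart''. The cancellation identity you assert, namely $2\pair{\alpha^{\vee}}{\rho}=\sum_{\gamma\in\Delta^{+},\,\pair{\alpha^{\vee}}{\gamma}>0}\pair{\alpha^{\vee}}{\gamma}$, is false: in type $A_{2}$ with $\alpha=\alpha_{1}$ the left-hand side is $2$ while the right-hand side is $\pair{\alpha_{1}^{\vee}}{\alpha_{1}}+\pair{\alpha_{1}^{\vee}}{\alpha_{1}+\alpha_{2}}=3$. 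The reason is that the involution $\gamma\mapsto r_{\alpha}\gamma$ cancels each negative term \emph{together with} its positive partner, so what survives is not the full sum over positive-pairing roots. The same example refutes your Step~1 equality $\ell(r_{\alpha})=\#\{\beta\in\Delta^{+}\mid\pair{\alpha^{\vee}}{\beta}>0,\ \beta\ne\alpha\}+1$ (here $\ell(r_{\alpha_{1}})=1$, not $2$): the implication ``$r_{\alpha}\beta$ negative $\Rightarrow\pair{\alpha^{\vee}}{\beta}>0$'' is correct, but its converse fails (e.g. $\beta=\alpha_{1}+\alpha_{2}$, $r_{\alpha_{1}}\beta=\alpha_{2}$). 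Nor can the chain be rescued by weakening the equality to an inequality: in type $A_{3}$ with $\alpha=\alpha_{1}$ one has $2\pair{\alpha^{\vee}}{\rho}=2<3=\#\{\gamma\in\Delta^{+}\mid\pair{\alpha^{\vee}}{\gamma}>0\}$, so the comparison $2\pair{\alpha^{\vee}}{\rho}\ge\#\{\gamma\in\Delta^{+}\mid\pair{\alpha^{\vee}}{\gamma}>0\}\ge\ell(r_{\alpha})$ breaks at its first step.

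The repair is to sum over the inversion set rather than over all positive-pairing roots. Set $\Inv(r_{\alpha}):=\{\gamma\in\Delta^{+}\mid r_{\alpha}\gamma\in-\Delta^{+}\}$, so that $\ell(r_{\alpha})=\#\Inv(r_{\alpha})$. On the complement $\Delta^{+}\setminus\Inv(r_{\alpha})$ the map $\gamma\mapsto r_{\alpha}\gamma$ is an involution which negates $\pair{\alpha^{\vee}}{\cdot}$, so that part of the sum vanishes and
\begin{equation*}
2\pair{\alpha^{\vee}}{\rho}=\sum_{\gamma\in\Delta^{+}}\pair{\alpha^{\vee}}{\gamma}
=\sum_{\gamma\in\Inv(r_{\alpha})}\pair{\alpha^{\vee}}{\gamma}.
\end{equation*}
Each $\gamma\in\Inv(r_{\alpha})$ satisfies $\pair{\alpha^{\vee}}{\gamma}\ge 1$ (your correct implication), and the term $\gamma=\alpha$ contributes $2$; hence $2\pair{\alpha^{\vee}}{\rho}\ge\#\Inv(r_{\alpha})+1=\ell(r_{\alpha})+1$, which is the assertion of the lemma. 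So your overall strategy (length as inversion count plus a weighted count against $2\pair{\alpha^{\vee}}{\rho}$) is sound, but as written the key identity is wrong and must be replaced by the inversion-set version above.
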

%
%
\begin{lem} \label{lem:sil}
Let $x=vt_{\zeta} \in W_{\af}$ with $v \in W$ and $\zeta \in Q^{\vee}$, 
and let $\beta=\alpha+n\delta \in \prr$ with $\alpha \in \Delta$ and 
$n \in \BZ_{\ge 0}$. Then, $\ell^{\si}(r_{\beta}x) > \ell^{\si}(x)$ 
if and only if $v^{-1}\alpha$ is a positive root. 
In particular, if $\beta=\alpha_{i}$ for some $i \in I_{\af}$ 
(note that $\alpha=\alpha_{i}$ and $n=0$ if $i \in I$, and 
$\alpha=-\theta$ and $n=1$ if $i=0$), then 
%
%
\begin{equation} \label{eq:simple}
\sil(r_{i}x)=
 \begin{cases}
 \sil(x)+1 & \text{\rm if $v^{-1}\alpha$ is a positive root}, \\[1mm]
 \sil(x)-1 & \text{\rm if $v^{-1}\alpha$ is a negative root}.
 \end{cases}
\end{equation}
\end{lem}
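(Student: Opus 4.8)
The plan is to reduce the general statement to the combinatorics of ordinary lengths plus the pairing with $\rho$, using the factorization $r_\beta = r_\alpha t_{n\alpha^\vee}$ from \eqref{eq:refl}. First I would write $x = vt_\zeta$ and compute $r_\beta x = r_\alpha t_{n\alpha^\vee} v t_\zeta$. Moving the translation past $v$ (via $t_{n\alpha^\vee}v = v t_{n v^{-1}\alpha^\vee}$, which follows from the normality of the translation subgroup and the fact that $w t_\mu w^{-1} = t_{w\mu}$ for $w\in W$, $\mu\in Q^\vee$), this becomes $(r_\alpha v)\, t_{\zeta + n v^{-1}\alpha^\vee}$. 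So the $W$-part of $r_\beta x$ is $r_\alpha v$ and its coweight part is $\zeta + n v^{-1}\alpha^\vee$. Plugging into Definition~\ref{def:sil},
\begin{equation*}
\sil(r_\beta x) - \sil(x) = \bigl(\ell(r_\alpha v) - \ell(v)\bigr) + 2n\pair{v^{-1}\alpha^\vee}{\rho}.
\end{equation*}
Now $v^{-1}\alpha$ is a positive root iff $v^{-1}\alpha^\vee$ is a positive coroot iff $\ell(r_\alpha v) = \ell(v) + \ell(r_{v^{-1}\alpha})$ (standard fact about lengths and reflections in finite Weyl groups), and symmetrically $v^{-1}\alpha < 0$ iff $\ell(r_\alpha v) = \ell(v) - \ell(r_{v^{-1}\alpha})$.

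Next I would treat the two cases. If $v^{-1}\alpha \in \Delta^+$, write $\gamma := v^{-1}\alpha^\vee \in \Delta^{\vee,+}$; then $\sil(r_\beta x) - \sil(x) = \ell(r_\gamma) + 2n\pair{\gamma}{\rho} > 0$, since $n\ge 0$, $\pair{\gamma}{\rho} > 0$, and $\ell(r_\gamma) \ge 1$ (in fact $\ell(r_\gamma)\ge 0$ suffices here, but it is strictly positive since $\gamma\ne 0$). Actually care is needed: when $n = 0$ we need $\ell(r_\gamma) > 0$, which holds as $r_\gamma \ne e$. If instead $v^{-1}\alpha \in \Delta^-$, set $\gamma := v^{-1}(-\alpha)^\vee = -v^{-1}\alpha^\vee \in \Delta^{\vee,+}$; then $\sil(r_\beta x) - \sil(x) = -\ell(r_\gamma) - 2n\pair{\gamma}{\rho}$, and I claim this is $< 0$. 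When $n\ge 1$ this is immediate; when $n = 0$, $\beta = \alpha \in \Delta$, and the difference is $-\ell(r_\gamma) < 0$. So in both directions the sign is as asserted — this gives the first assertion, that $\sil(r_\beta x) > \sil(x)$ iff $v^{-1}\alpha \in \Delta^+$.

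For the ``in particular'' clause, I would recall that $\alpha_i$ ($i\in I$) has $r_{\alpha_i} = r_i$ with $\alpha=\alpha_i$, $n=0$, and that $\alpha_0 = -\theta + \delta$ with $\alpha = -\theta$, $n=1$, where $\theta$ is the highest root; in either case $r_i = r_\beta$ for $\beta = \alpha_i \in \prr$. The key remaining point is that $|\sil(r_i x) - \sil(x)| = 1$ exactly. From the computation above, $\sil(r_i x) - \sil(x) = \pm(\ell(r_\gamma) + 2n\pair{\gamma}{\rho})$ where $\gamma = \pm v^{-1}\alpha^\vee$. If $i\in I$ (so $n=0$), then $\alpha = \alpha_i$ is simple, $v^{-1}\alpha_i^\vee$ is $\pm$ a simple coroot only when... no — $v^{-1}\alpha_i^\vee$ need not be simple, but $\ell(r_\gamma)$ can be large. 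So the clean argument is instead: $r_i$ is a \emph{simple} reflection, so left multiplication by $r_i$ changes $\sil$ by $\pm 1$ directly — this should follow because $\sil$ is (up to the fixed constant shift coming from Definition~\ref{def:sil}) compatible with a ``semi-infinite'' length function for which simple reflections are the length-one generators; concretely, one checks $\sil(r_i x) = \sil(x) \pm 1$ by noting that $r_i x$ and $x$ differ by a simple reflection on the left in $W_\af$, and that $\sil$ restricted along the affine Weyl group satisfies $\sil(r_i x) - \sil(x) \in \{+1,-1\}$ because $2\pair{(v t_\zeta)^{-1}\alpha_i^{\vee}}{\cdot}$-type corrections collapse: explicitly, when $i\in I$, $n=0$ and the increment is $\ell(r_i v) - \ell(v) = \pm 1$; when $i = 0$, $n=1$, $\alpha = -\theta$, and the increment is $\bigl(\ell(r_\theta v) - \ell(v)\bigr) + 2\pair{v^{-1}(-\theta^\vee)}{\rho} = \bigl(\ell(r_\theta v)-\ell(v)\bigr) - 2\pair{v^{-1}\theta^\vee}{\rho}$, which one shows equals $\pm 1$ using $\ell(r_\theta) = 2\pair{\theta^\vee}{\rho} - 1$ (Lemma~\ref{lem:ineq} is an inequality, but for $\theta$ it is an equality since $\theta$ is the highest root and $r_\theta w_0 r_\theta \cdots$; this is a known fact) together with the length formula $\ell(r_\theta v) = \ell(v) \pm \ell(r_{v^{-1}\theta})$.

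\textbf{The main obstacle.} The genuinely delicate step is the ``in particular'' part for $i = 0$: proving that $|\sil(r_0 x) - \sil(x)| = 1$ and not merely that the sign is correct. This requires the sharp equality $\ell(r_\theta) = 2\pair{\theta^\vee}{\rho} - 1$ (the extremal case of Lemma~\ref{lem:ineq}) and a careful bookkeeping of how $\ell(r_\theta v) - \ell(v)$ relates to $\pair{v^{-1}\theta^\vee}{\rho}$; equivalently, one uses that $r_0 = r_\theta t_{\theta^\vee}$ acts as a length-one generator of $W_\af$ and that $\sil$ agrees with the ordinary affine length $\ell$ up to an additive constant on any coset of the finite part, after the standard renormalization. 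I would phrase the $i=0$ verification via the identity $\sil(r_0 x) - \sil(x) = \ell(r_\theta v) - \ell(v) - 2\pair{v^{-1}\theta^\vee}{\rho}$ and case-split on the sign of $v^{-1}\theta$, invoking $\ell(r_{v^{-1}\theta}) = 2\pair{(v^{-1}\theta)^{\vee,+}}{\rho} - 1$ in each case to collapse the expression to $\pm 1$.
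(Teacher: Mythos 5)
Your reduction (factor $r_{\beta}x=(r_{\alpha}v)\,t_{\zeta+nv^{-1}\alpha^{\vee}}$ and compute $\sil(r_{\beta}x)-\sil(x)=\bigl(\ell(r_{\alpha}v)-\ell(v)\bigr)+2n\pair{v^{-1}\alpha^{\vee}}{\rho}$) is exactly the paper's starting point, but the step you lean on is not a standard fact and is false: it is not true that $v^{-1}\alpha\in\Delta^{+}$ iff $\ell(r_{\alpha}v)=\ell(v)+\ell(r_{v^{-1}\alpha})$, nor that $\ell(r_{\alpha}v)=\ell(v)\pm\ell(r_{v^{-1}\alpha})$ in general. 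In type $A_{2}$ take $\alpha=\alpha_{1}$ and $v=r_{2}$: then $v^{-1}\alpha=\alpha_{1}+\alpha_{2}\in\Delta^{+}$, but $\ell(r_{\alpha}v)=\ell(r_{1}r_{2})=2$ while $\ell(v)+\ell(r_{v^{-1}\alpha})=1+3=4$. Moreover, when $\alpha$ is a negative root (which is exactly the case $n\ge 1$ that carries the content of the lemma) and $v^{-1}\alpha\in\Delta^{+}$, one has $\ell(r_{\alpha}v)<\ell(v)$, so even the sign of your term $\ell(r_{\gamma})$ is wrong; your displayed formulas $\sil(r_{\beta}x)-\sil(x)=\pm\bigl(\ell(r_{\gamma})+2n\pair{\gamma}{\rho}\bigr)$ are therefore incorrect, and the positivity/negativity you read off from them is unjustified. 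What is actually available is only the two-sided bound $|\ell(r_{\alpha}v)-\ell(v)|\le\ell(r_{v^{-1}\alpha})$, and the whole point of the lemma is that the translation contribution $2n\pair{v^{-1}\alpha^{\vee}}{\rho}$ beats this finite-length fluctuation; that is where Lemma~\ref{lem:ineq} ($\ell(r_{\gamma})\le 2\pair{\gamma^{\vee}}{\rho}-1$) must enter, as in the paper's proof. Your argument replaces this genuinely needed inequality by an exact additivity that fails, so the first assertion is not proved.

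The same issue propagates to the ``in particular'' clause for $i=0$, which you correctly single out as the delicate point. There the needed sharp statement is that for the highest root $\theta$ one has $\ell(r_{\theta}v)=\ell(v)-2\pair{-v^{-1}\theta^{\vee}}{\rho}+1$ when $v^{-1}\theta$ is negative (the paper cites \cite[Proposition~5.11]{LNSSS13a} for this); equivalently, $\theta$ and its $W$-conjugates are precisely the kind of roots for which equality in Lemma~\ref{lem:ineq} holds and the length change is as large as possible. Your proposed route derives this from the general ``length formula'' $\ell(r_{\theta}v)=\ell(v)\pm\ell(r_{v^{-1}\theta})$ together with $\ell(r_{v^{-1}\theta})=2\pair{(v^{-1}\theta)^{\vee,+}}{\rho}-1$; the first of these is the same false general identity as above, and the second, while true for $W$-conjugates of $\theta$ (they are long roots), is false for arbitrary roots in non-simply-laced types, so it cannot be invoked as a consequence of Lemma~\ref{lem:ineq} without a separate argument. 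In short: the conclusion is correct, the skeleton matches the paper's, but the key inequalities are replaced by a false exact identity, which is a genuine gap rather than an alternative proof.
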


\begin{proof}
Assume that $\alpha \in \Delta$ is a negative root; 
note that $n \ge 1$. We see by \eqref{eq:refl} that 
$r_{\beta}x=r_{\alpha}t_{n\alpha^{\vee}}vt_{\zeta}=
vr_{v^{-1}\alpha} t_{nv^{-1}\alpha^{\vee}+\zeta}$. 
If $v^{-1}\alpha$ is a positive root, then we have 
\begin{align*}
\ell^{\si}(r_{\beta}x) 
 & = \ell(vr_{v^{-1}\alpha}) + 2\pair{nv^{-1}\alpha^{\vee}+\zeta}{\rho}
   \ge \ell(v) - \ell(r_{v^{-1}\alpha}) + 
       2n\pair{v^{-1}\alpha^{\vee}}{\rho}+2\pair{\zeta}{\rho} \\
 & \ge \ell(v) - 2\pair{v^{-1}\alpha^{\vee}}{\rho}+1+
        2n\pair{v^{-1}\alpha^{\vee}}{\rho}+2\pair{\zeta}{\rho} 
   \quad \text{by Lemma~\ref{lem:ineq}} \\
 & \ge \ell(v)+2\pair{\zeta}{\rho}+1
   \quad \text{since $v^{-1}\alpha \in \Delta^{+}$ and $n \ge 1$} \\
 & =\ell^{\si}(x)+1 > \ell^{\si}(x).
\end{align*}
If $v^{-1}\alpha$ is a negative root, then we have
\begin{align*}
\ell^{\si}(r_{\beta}x) 
 & = \ell(vr_{v^{-1}\alpha}) + 2\pair{nv^{-1}\alpha^{\vee}+\zeta}{\rho}
   \le \ell(v) + \ell(r_{v^{-1}\alpha}) + 
       2n\pair{v^{-1}\alpha^{\vee}}{\rho}+2\pair{\zeta}{\rho} \\
 & \le \ell(v) + 2\pair{v^{-1}\alpha^{\vee}}{\rho}-1+
        2n\pair{v^{-1}\alpha^{\vee}}{\rho}+2\pair{\zeta}{\rho} 
   \quad \text{by Lemma~\ref{lem:ineq}} \\
 & \le \ell(v)+2\pair{\zeta}{\rho}-1
   \quad \text{since $-v^{-1}\alpha \in \Delta^{+}$ and $n \ge 1$} \\
 & =\ell^{\si}(x)-1 < \ell^{\si}(x).
\end{align*}
The proof for the case that $\alpha \in \Delta$ 
is a positive root is easier. 

In order to show \eqref{eq:simple}, it suffices to verify that 
$\sil(r_{i}x)=\sil(x) \pm 1$. If $i \in I$, then it is obvious. 
Assume that $i=0$. 
We see from \cite[Proposition~5.11]{LNSSS13a} that 
if $v^{-1}\theta$ is a negative root, then 
$\ell(r_{\theta}v)=\ell(v)-2\pair{-v^{-1}\theta^{\vee}}{\rho}+1$.
The desired equality above for $i=0$ follows easily from 
this equality, together with the computation above. 
This proves the lemma. 
\end{proof}
%
%
\begin{rem} \label{rem:simple}
Let $\lambda \in P^{+}$, and set 
$J:=J_{\lambda}=\bigl\{i \in I \mid 
\pair{\alpha_{i}^{\vee}}{\lambda}=0\bigr\}$. 
For each $x \in (W^{J})_{\af}$ and $i \in I_{\af}$, 
we deduce from Lemma~\ref{lem:r_i} that 
$r_{i}x \in (W^{J})_{\af}$ if and only if 
$\pair{\alpha_{i}^{\vee}}{x\lambda} \ne 0$, and 
from \eqref{eq:simple} that 
%
%
\begin{equation} \label{eq:simple2}
\begin{cases}
x \edge{\alpha_{i}} r_{i}x \quad \text{in $\SB$} 
\iff \pair{\alpha_{i}^{\vee}}{x\lambda} > 0
\iff x\lambda \edge{\alpha_{i}} r_{i}x\lambda \quad \text{in $\LP$}, 
& \\[1.5mm]
r_{i}x \edge{\alpha_{i}} x \quad \text{in $\SB$} 
\iff \pair{\alpha_{i}^{\vee}}{x\lambda} < 0
\iff r_{i}x\lambda \edge{\alpha_{i}} x\lambda \quad 
\text{in $\LP$}. &
\end{cases}
\end{equation}
\end{rem}

We prove the ``diamond lemma'' for $\SB$ 
(cf. \cite[Lemma~4.1]{Lit95} and \cite[Lemma~5.14]{LNSSS13a}). 
%
%
\begin{lem} \label{lem:dia1}
Let $J$ be a subset of $I$. 
Let $x \in (W^{J})_{\af}$, $\beta \in \prr$, and $i \in I_{\af}$. 
Assume that $x \edge{\beta} r_{\beta}x=:y$ and 
$x \edge{\alpha_i} r_{i}x$ in $\SB$.
If we write $y^{-1}\alpha_{i} \in \rr$ as 
$y^{-1}\alpha_{i}=\gamma+n\delta$ 
with $\gamma \in \Delta$ and $n \in \BZ$, 
then $\gamma \notin \Delta_{J}^{+}$. 
Moreover, if $\gamma \in \Delta^{+} \setminus  \Delta_{J}^{+}$, 
then $\beta \ne \alpha_{i}$, and we have dotted edges in $\SB$ 
as in the following diagram: 
\begin{equation*}
\begin{diagram}
\node{x}\arrow{s,l}{\alpha_{i}} \arrow{e,t}{\beta} 
\node{y}\arrow{s,r,..}{\alpha_{i}} \\
\node{r_{i}x} \arrow{e,t,..}{r_{i}\beta} \node{r_{i}y}
\end{diagram}
\end{equation*}
If $\gamma$ is a negative root, 
then $\beta=\alpha_{i}$. 
\end{lem}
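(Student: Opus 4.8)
The statement is a ``diamond lemma'' for the semi-infinite Bruhat graph $\SB$, so the plan is to exploit the translation $x = vt_\zeta$ presentation and the criterion of Lemma~\ref{lem:sil} for when $\sil$ increases under a reflection $r_\beta$, together with Lemma~\ref{lem:r_i} (and Remark~\ref{rem:simple}) for when a reflection keeps an element in $(W^J)_{\af}$. First I would set up notation: write $x = v t_\zeta$, and use $x \edge{\beta} y$ and $x \edge{\alpha_i} r_i x$ to record the two pieces of data $\sil(r_\beta x) = \sil(x)+1$, $r_\beta x \in (W^J)_{\af}$, and (by Remark~\ref{rem:simple}) $\pair{\alpha_i^\vee}{x\lambda} > 0$ together with $r_i x \in (W^J)_{\af}$, i.e.\ $x^{-1}\alpha_i \notin (\Delta_J)_{\af}$.

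\textbf{First claim: $\gamma \notin \Delta_J^+$.} Here $\gamma + n\delta = y^{-1}\alpha_i = (r_\beta x)^{-1}\alpha_i$. Suppose for contradiction $\gamma \in \Delta_J^+$. Then $y^{-1}\alpha_i \in (\Delta_J)_{\af}^+$ or its negative lies in $(\Delta_J)_{\af}^+$; in either case, since $y \in (W^J)_{\af}$, applying $y$ (or $r_i y = r_{y y^{-1}\alpha_i} \cdot$ something) would force $\alpha_i = y\beta'$ for some $\beta' \in (\Delta_J)_{\af}$, contradicting that $r_i y \in (W^J)_{\af}$ would fail — more precisely, I would argue that $y^{-1}\alpha_i \in (\Delta_J)_{\af}$ implies $r_i y \notin (W^J)_{\af}$ by Lemma~\ref{lem:r_i}, while on the other hand relate $r_i y$ to $r_i r_\beta x = r_{r_\beta \alpha_i}(r_\beta x)\cdot$(correction) and use $x \edge{\alpha_i} r_i x \in (W^J)_{\af}$ to derive a contradiction through $\PJ$. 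The cleanest route is: $y^{-1}\alpha_i = \gamma+n\delta$ with $\gamma \in \Delta_J$; then $r_{y^{-1}\alpha_i} = x^{-1} r_\beta r_i r_\beta x \in (W_J)_{\af}$, so $\PJ(r_i y) = \PJ(y \, r_{y^{-1}\alpha_i}) = \PJ(y) = y$; but $x \edge{\beta} y$ means $y = r_\beta x$ with $\sil(y) = \sil(x)+1$, and combining with $x \edge{\alpha_i} r_i x$ one shows $r_i y \ne y$, forcing $r_i y \notin (W^J)_{\af}$ via Lemma~\ref{lem:r_i}'s contrapositive, a contradiction.

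\textbf{Second and third claims.} For the case $\gamma \in \Delta^+ \setminus \Delta_J^+$: since $y^{-1}\alpha_i \notin (\Delta_J)_{\af}$, Lemma~\ref{lem:r_i} gives $r_i y \in (W^J)_{\af}$. To get the edge $y \edge{\alpha_i} r_i y$ I would use Lemma~\ref{lem:sil}: writing $y = v' t_{\zeta'}$, the condition ``$v'^{-1}\alpha$ is a positive root'' (where $\gamma = \pm$ the finite part) needs to be checked; positivity of $\gamma$ combined with $y^{-1}\alpha_i = \gamma + n\delta$ being a \emph{positive} real root (which I'd verify: $\gamma \in \Delta^+$ and, using the edge $x\edge\beta y$ plus $\pair{\alpha_i^\vee}{x\lambda}>0$, that $n \ge 0$) gives that $r_i$ raises $\sil$ on $y$. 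For the bottom edge $r_i x \edge{r_i\beta} r_i y$: conjugating $x \edge\beta y$ by $r_i$ gives $r_i x \xrightarrow{r_i\beta} r_i r_\beta x = r_{r_i\beta} r_i x = r_i y$; one checks $r_i \beta \in \prr$ (it is, unless $\beta = \alpha_i$, which is excluded since $\beta=\alpha_i$ would force $y = r_i x$ hence $y^{-1}\alpha_i = -x^{-1}\alpha_i$, whose finite part is $\le 0$ after reduction, contradicting $\gamma \in \Delta^+$) and that $\sil$ increases by $1$ along it, either by a direct $\sil$-computation or by the square-commutation of $r_i$-conjugation with $\sil$ when $x \edge{\alpha_i} r_i x$. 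For the last sentence, if $\gamma$ is a negative root, I would again use Lemma~\ref{lem:r_i}: then $y^{-1}\alpha_i = \gamma + n\delta$ with $-\gamma \in \Delta^+$; I'd show $\beta \ne \alpha_i$ is impossible by checking that $\beta \ne \alpha_i$ forces (via the diamond geometry / $\sil$-counting on the four vertices) $\gamma \in \Delta^+$, contradiction, so $\beta = \alpha_i$.

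\textbf{Main obstacle.} The delicate point is the bookkeeping of signs and of the $\delta$-coefficient $n$ in $y^{-1}\alpha_i = \gamma + n\delta$: one must carefully track, using the explicit formula \eqref{eq:refl} for $r_\beta$ and the translation decomposition, how the finite part $\gamma$ and $n$ depend on whether $\beta = \alpha_i$, and marry this with the $\sil$-monotonicity criterion of Lemma~\ref{lem:sil} (which is phrased in terms of the sign of $v^{-1}\alpha$, the finite part of the reflecting root, \emph{not} of $\gamma$). Keeping the parabolic condition (membership in $(W^J)_{\af}$, controlled by Lemma~\ref{lem:r_i}) consistent with the length condition throughout all the sign cases is where the real work lies; the rest is the standard diamond-lemma conjugation argument.
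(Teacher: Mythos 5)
Your treatment of the two later assertions is essentially the paper's argument: for $\gamma \in \Delta^{+}\setminus\Delta_{J}^{+}$ you get $r_{i}y \in (W^{J})_{\af}$ from Lemma~\ref{lem:r_i}, the edge $y \edge{\alpha_{i}} r_{i}y$ from Lemma~\ref{lem:sil} (the finite part of $y^{-1}\alpha_{i}$ is exactly the $v'^{-1}\alpha$ in that criterion), $\beta\ne\alpha_{i}$ by the sign of the finite part of $-x^{-1}\alpha_{i}$, and the bottom edge by the count $\sil(r_{i}y)=\sil(x)+2=\sil(r_{i}x)+1$; and your plan for the negative-$\gamma$ case (a $\sil$-count on the four vertices contradicting $\sil(y)=\sil(x)+1$) is exactly what the paper carries out, though you leave it as a gesture.

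The genuine gap is in your proof of the first assertion, $\gamma\notin\Delta_{J}^{+}$. Your ``cleanest route'' assumes $\gamma\in\Delta_{J}^{+}$, notes $r_{y^{-1}\alpha_{i}}\in(W_{J})_{\af}$, concludes $\PJ(r_{i}y)=\PJ(y)=y$ and hence (since $r_{i}y\ne y$) that $r_{i}y\notin(W^{J})_{\af}$, and then declares ``a contradiction''. But nothing in the hypotheses forces $r_{i}y\in(W^{J})_{\af}$: the only edges given are $x\edge{\beta}y$ and $x\edge{\alpha_{i}}r_{i}x$, and no condition at the vertex $y$ in the direction $\alpha_{i}$ is assumed. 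So no contradiction has been reached; the argument is circular (it silently uses the conclusion of the very diamond you are trying to establish). The missing idea, which is the actual content of this part in the paper, is to transport $y^{-1}\alpha_{i}$ back through $x$: since $x\in(W^{J})_{\af}$ and (under the supposition) $y^{-1}\alpha_{i}\in(\Delta_{J})_{\af}^{+}$, one gets $x\,y^{-1}\alpha_{i}=r_{\beta}\alpha_{i}=\alpha_{i}-\pair{\beta^{\vee}}{\alpha_{i}}\beta\in\prr$, whence $\pair{\beta^{\vee}}{\alpha_{i}}\le 0$; then writing $x^{-1}\alpha_{i}=\gamma_{1}+n_{1}\delta$ and $x^{-1}\beta=\gamma_{2}+n_{2}\delta$, the two given edges and Lemma~\ref{lem:sil} give $\gamma_{1},\gamma_{2}\in\Delta^{+}$, Proposition~\ref{prop:P} (membership of $x$, $r_{\beta}x$, $r_{i}x$ in $(W^{J})_{\af}$) gives $\gamma_{1},\gamma_{2}\notin\Delta_{J}^{+}$, and the identity $\gamma=\gamma_{1}-\pair{\beta^{\vee}}{\alpha_{i}}\gamma_{2}$ then shows $\gamma\notin\Delta_{J}^{+}$, the desired contradiction. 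Without some argument of this kind that actually uses the two hypothesized edges at $x$, your first claim does not go through.
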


\begin{proof}
First, suppose, for a contradiction, that $\gamma \in \Delta_{J}^{+}$.
Then, we must have $y^{-1}\alpha_{i} \in \prr$. Indeed, suppose that 
$y^{-1}\alpha_{i}$ is a negative real root. 
Since $\gamma \in \Delta_{J}^{+}$ by our assumption, we obtain 
$y^{-1}\alpha_{i} \in (\Delta_{J})_{\af} \cap 
(-\Delta_{\af}^{+})$. Since $y \in (W^{J})_{\af}$ 
by the assumption, it follows from the definition of $(W^{J})_{\af}$ 
that $\alpha_{i}=yy^{-1}\alpha_{i}$ is a negative real root, 
which is a contradiction. Thus, we have $y^{-1}\alpha_{i} \in \prr$, 
and hence $y^{-1}\alpha_{i} \in (\Delta_{J})_{\af}^{+}$. 
Here, since $x \in (W^{J})_{\af}$, we see that 
$\prr \ni xy^{-1}\alpha_{i}=r_{\beta}\alpha_{i}=
\alpha_{i}-\pair{\beta^{\vee}}{\alpha_{i}}\beta$. 
Therefore, we deduce that 
$\pair{\beta^{\vee}}{\alpha_{i}} \le 0$; 
in particular, $\beta \ne \alpha_{i}$. 
We write $x^{-1}\alpha_{i}$ and $x^{-1}\beta$ as 
$x^{-1}\alpha_{i}=\gamma_{1}+n_1\delta$ and 
$x^{-1}\beta=\gamma_{2}+n_2\delta$, 
with $\gamma_1,\,\gamma_2 \in \Delta$ and 
$n_1,\,n_2 \in \BZ$, respectively. 
Since we have $x \edge{\beta} r_{\beta}x=y$ 
and $x \edge{\alpha_{i}} r_{i}x$ in $\SB$ by the assumption, 
we see by Lemma~\ref{lem:sil} that 
$\gamma_1,\,\gamma_2 \in \Delta^{+}$. 
Also, since $x$, $r_{\beta}x=xr_{x^{-1}\beta}$, and 
$r_{i}x=xr_{x^{-1}\alpha_{i}}$ 
are contained in $(W^{J})_{\af}$, we see 
from Proposition~\ref{prop:P} that $x^{-1}\beta,\,x^{-1}\alpha_{i} 
\notin (\Delta_{J})_{\af}$, and hence $\gamma_{1},\,\gamma_{2} \in 
\Delta^{+} \setminus \Delta_{J}^{+}$. 
Therefore, we deduce that 
$\gamma_{1}-\pair{\beta^{\vee}}{\alpha_{i}}\gamma_{2} 
\notin \Delta_{J}^{+}$, since 
$\pair{\beta^{\vee}}{\alpha_{i}} \le 0$ as seen above. 
Because
\begin{align*}
\gamma+n\delta & =
y^{-1}\alpha_{i} = x^{-1}r_{\beta}\alpha_{i}=
x^{-1}\alpha_{i}-\pair{\beta^{\vee}}{\alpha_{i}}x^{-1}\beta \\
& =
\bigl\{\gamma_{1}-\pair{\beta^{\vee}}{\alpha_{i}}\gamma_{2}\bigr\}
+\bigl\{n_{1}-\pair{\beta^{\vee}}{\alpha_{i}}n_{2}\bigr\}\delta,
\end{align*}
we obtain $\gamma=\gamma_{1}-\pair{\beta^{\vee}}{\alpha_{i}}\gamma_{2}
\notin \Delta_{J}^{+}$, which contradicts our assumption that 
$\gamma \in \Delta_{J}^{+}$. 
Thus, we conclude that $\gamma \notin \Delta_{J}^{+}$, as desired. 

Assume that 
$\gamma \in \Delta^{+} \setminus  \Delta_{J}^{+}$. 
Suppose, for a contradiction, that $\beta=\alpha_{i}$. 
Then we have $\gamma+n\delta=
y^{-1}\alpha_{i} = x^{-1}r_{\beta}\alpha_{i}=
x^{-1}r_{i}\alpha_{i}=-x^{-1}\alpha_{i}$.
Since $x \edge{\alpha_{i}} r_{i}x$ in $\SB$ by the assumption, 
it follows immediately from Lemma~\ref{lem:sil} that 
$x^{-1}\alpha_{i}=\gamma_{1}+n_{1}\delta$ for some 
$\gamma_{1} \in \Delta^{+}$ and $n_{1} \in \BZ$.
Hence we obtain $\gamma=-\gamma_{1} \in (-\Delta^{+})$, 
which contradicts our assumption.
Thus, we conclude that $\beta \ne \alpha_{i}$. 
Also, since $\pair{\alpha_{i}^{\vee}}{y\lambda}=
\pair{y^{-1}\alpha_{i}^{\vee}}{\lambda}=
\pair{\gamma^{\vee}}{\lambda} > 0$, 
it follows immediately from \eqref{eq:simple2} that 
$y \edge{\alpha_{i}} r_{i}y$ in $\SB$. Therefore, we have 
\begin{align*}
\sil(r_{i}y) & =
\underbrace{\sil(r_{i}y)-\sil(y)}_{=1} + 
\underbrace{\sil(y)-\sil(x)}_{=1} + \sil(x) \\[1.5mm]
& = \sil(x)+2=\sil(r_{i}x)-1+2=\sil(r_{i}x)+1.
\end{align*}
From this equation, we conclude that 
$r_{i}x \edge{r_{i}\beta} r_{i}y$. 

Assume that $\gamma$ is a negative root, and 
suppose, for a contradiction, that $\beta \ne \alpha_{i}$. 
Then we have $r_{i}\beta \in \Delta_{\af}^{+}$.
Since $x \edge{\beta} y$ in $\SB$ by the assumption, 
it follows from Lemma~\ref{lem:sil} that $\sil(r_{i}x) < \sil(r_{i}y)$. 
Since $\pair{\alpha_{i}^{\vee}}{r_{i}y\lambda}=
-\pair{y^{-1}\alpha_{i}^{\vee}}{\lambda}=
-\pair{\gamma^{\vee}}{\lambda} > 0$, 
we see by \eqref{eq:simple2}
that $r_{i}y \edge{\alpha_{i}} y$ in $\SB$. Therefore, we have
$\sil(x) < \sil(r_{i}x) < \sil(r_{i}y) < \sil(y)$, 
which contradicts the equality $\sil(y)=\sil(x)+1$. 
This completes the proof of the lemma. 
\end{proof}

The following lemma can be proved in exactly the same way as 
Lemma~\ref{lem:dia1}.
%
%
\begin{lem} \label{lem:dia2}
Let $J$ be a subset of $I$. 
Let $x \in (W^{J})_{\af}$, $\beta \in \prr$, and $i \in I$. 
Assume that $y:=r_{\beta}x \edge{\beta} x$ and 
$r_{i}x \edge{\alpha_i} x$ in $\SB$.
If we write $y^{-1}\alpha_{i} \in \rr$ as
$y^{-1}\alpha_{i}=-\gamma+n\delta$ 
with $\gamma \in \Delta$ and $n \in \BZ$, 
then $\gamma \notin \Delta_{J}^{+}$. 
Moreover, if $\gamma \in \Delta^{+} \setminus  \Delta_{J}^{+}$, 
then $\beta \ne \alpha_{i}$, and 
we have dotted edges in $\SB$ as in the following diagram: 
\begin{equation*}
\begin{diagram}
\node{r_i y}\arrow{s,l,..}{\alpha_{i}} \arrow{e,t,..}{\beta} 
\node{r_i x}\arrow{s,r}{\alpha_{i}} \\
\node{y} \arrow{e,t}{\beta} \node{x}
\end{diagram}
\end{equation*}
If $\gamma$ is a negative root, 
then $\beta=\alpha_{i}$. 
\end{lem}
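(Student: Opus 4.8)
\emph{Proof strategy.} The plan is to follow the proof of Lemma~\ref{lem:dia1} line by line, the only difference being that the two given edges are now directed \emph{into} $x$ rather than out of it; consequently every ``positive root'' in that argument is to be replaced by a ``negative root'', which is precisely why the finite part of $y^{-1}\alpha_{i}$ is recorded as $-\gamma$ (and not $+\gamma$) in the statement. No new ideas beyond Lemma~\ref{lem:sil}, \eqref{eq:simple2}, Remark~\ref{rem:simple}, and Proposition~\ref{prop:P} are needed.

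Concretely, I would begin by writing $x^{-1}\alpha_{i}=\gamma_{1}+n_{1}\delta$ and $x^{-1}\beta=\gamma_{2}+n_{2}\delta$ with $\gamma_{1},\gamma_{2}\in\Delta$ and $n_{1},n_{2}\in\BZ$. Since $r_{i}x\edge{\alpha_{i}}x$ and $y=r_{\beta}x\edge{\beta}x$ in $\SB$, Lemma~\ref{lem:sil} forces $-\gamma_{1},-\gamma_{2}\in\Delta^{+}$; and since $x$, $r_{i}x=xr_{x^{-1}\alpha_{i}}$, $y=xr_{x^{-1}\beta}$ all lie in $(W^{J})_{\af}$, Proposition~\ref{prop:P} gives $x^{-1}\alpha_{i},x^{-1}\beta\notin(\Delta_{J})_{\af}$, so in fact $-\gamma_{1},-\gamma_{2}\in\Delta^{+}\setminus\Delta_{J}^{+}$. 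From $y=r_{\beta}x$ one computes $y^{-1}\alpha_{i}=x^{-1}\alpha_{i}-\pair{\beta^{\vee}}{\alpha_{i}}x^{-1}\beta$, whose finite part is $\gamma_{1}-\pair{\beta^{\vee}}{\alpha_{i}}\gamma_{2}$; comparing with $y^{-1}\alpha_{i}=-\gamma+n\delta$ yields $\gamma=(-\gamma_{1})-\pair{\beta^{\vee}}{\alpha_{i}}(-\gamma_{2})$.

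For $\gamma\notin\Delta_{J}^{+}$ I would argue by contradiction exactly as in Lemma~\ref{lem:dia1}: assuming $\gamma\in\Delta_{J}^{+}$, the hypothesis $y\in(W^{J})_{\af}$ forces $y^{-1}\alpha_{i}\in\prr\cap(\Delta_{J})_{\af}=(\Delta_{J})_{\af}^{+}$ (a negative value would make $\alpha_{i}=y(y^{-1}\alpha_{i})$ a negative root), whence $r_{\beta}\alpha_{i}=x(y^{-1}\alpha_{i})\in\prr$ because $x\in(W^{J})_{\af}$, so $\pair{\beta^{\vee}}{\alpha_{i}}\le 0$; then $\gamma=(-\gamma_{1})-\pair{\beta^{\vee}}{\alpha_{i}}(-\gamma_{2})$ is a nonnegative-integer combination of elements of $\Delta^{+}\setminus\Delta_{J}^{+}$, hence has a positive simple-root coefficient outside $J$ and cannot lie in $\Delta_{J}^{+}$, a contradiction. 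For the ``moreover'' part, if $\gamma\in\Delta^{+}\setminus\Delta_{J}^{+}$, then $\beta=\alpha_{i}$ would give $\gamma=\gamma_{1}$, a negative root, which is absurd; so $\beta\ne\alpha_{i}$. Then $\pair{\alpha_{i}^{\vee}}{y\lambda}=-\pair{\gamma^{\vee}}{\lambda}<0$, so by Remark~\ref{rem:simple} and \eqref{eq:simple2} we get $r_{i}y\in(W^{J})_{\af}$ and $r_{i}y\edge{\alpha_{i}}y$ in $\SB$, hence $\sil(r_{i}y)=\sil(y)-1=\sil(x)-2=\sil(r_{i}x)-1$; since $r_{i}x=r_{r_{i}\beta}(r_{i}y)$ with $r_{i}\beta\in\prr$ (as $\beta\in\prr\setminus\{\alpha_{i}\}$ and $i\in I$), this produces the remaining dotted edge and completes the diagram. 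Finally, if $\gamma$ is a negative root and $\beta\ne\alpha_{i}$, then $r_{i}\beta\in\prr$, Lemma~\ref{lem:sil} applied to $y\edge{\beta}x$ gives $\sil(r_{i}y)<\sil(r_{i}x)$, and \eqref{eq:simple2} gives $y\edge{\alpha_{i}}r_{i}y$ in $\SB$; chaining these with $r_{i}x\edge{\alpha_{i}}x$ yields $\sil(y)<\sil(r_{i}y)<\sil(r_{i}x)<\sil(x)$, contradicting $\sil(x)=\sil(y)+1$.

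The only delicate point — the part I expect to be the main obstacle — is the sign bookkeeping: because reversing the two edges at $x$ flips every inequality and every ``positive/negative root'' assertion in the proof of Lemma~\ref{lem:dia1}, one must check that each invocation of Lemma~\ref{lem:sil}, of \eqref{eq:simple2}, and of Proposition~\ref{prop:P} is made with the correct orientation, and that it is $-\gamma_{1},-\gamma_{2}$ (rather than $\gamma_{1},\gamma_{2}$) that lie in $\Delta^{+}\setminus\Delta_{J}^{+}$. Once this dictionary is fixed, the argument of Lemma~\ref{lem:dia1} transcribes verbatim.
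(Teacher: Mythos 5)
Your proof is correct and is precisely what the paper intends: the paper gives no separate argument for this lemma, stating only that it can be proved in exactly the same way as Lemma~\ref{lem:dia1}, and your sign-reversed transcription of that proof (with $-\gamma_{1},\,-\gamma_{2}\in\Delta^{+}\setminus\Delta_{J}^{+}$ in place of $\gamma_{1},\,\gamma_{2}$, the same deduction of $\pair{\beta^{\vee}}{\alpha_{i}}\le 0$ from $x,y\in(W^{J})_{\af}$, and the two $\sil$-computations producing the dotted edges and the final contradiction) is exactly that argument, at the same level of detail as the paper's own proof of Lemma~\ref{lem:dia1}. One point in your favor: the label $r_{i}\beta$ you obtain for the dotted edge from $r_{i}y$ to $r_{i}x$ is the correct one (it is forced by $r_{i}x=r_{r_{i}\beta}\,r_{i}y$, matches the pattern of Lemma~\ref{lem:dia1}, and is what the application of Lemma~\ref{lem:dia2} in the proof of Proposition~\ref{prop:sib-lv0} actually requires), so the label ``$\beta$'' printed on that edge in the statement's diagram is evidently a typo.
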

An inductive argument, which uses 
Lemmas~\ref{lem:dia1} and \ref{lem:dia2}, proves 
the following proposition. 
%
%
\begin{lem} \label{lem:directed}
Let $\lambda \in P^{+}$, and set 
$J:=J_{\lambda}=\bigl\{i \in I \mid 
\pair{\alpha_{i}^{\vee}}{\lambda}=0\bigr\}$. 
Let $0 < a \le 1$ be a rational number. 
Let $x,\,y \in (W^{J})_{\af}$, and assume that 
there exists a directed path 
$x=y_{0} \edge{\beta_{1}} y_{1} \edge{\beta_{2}}
 \cdots \edge{\beta_{k}} y_{k}=y$
from $x$ to $y$ in $\SBa$. Let $i \in I_{\af}$. 

\begin{enu}

\item If $\pair{\alpha_{i}^{\vee}}{y_{m}\lambda} > 0$ 
for all $0 \le m \le k$, or 
if $\pair{\alpha_{i}^{\vee}}{y_{m}\lambda} < 0$ 
for all $0 \le m \le k$, then there exists a directed path 
from $r_{i}x$ to $r_{i}y$ in $\SBa$ of the form: 
\begin{equation*}
r_{i}x=r_{i}y_{0} \edge{r_{i}\beta_{1}} 
r_{i}y_{1} \edge{r_{i}\beta_{2}}
 \cdots \edge{r_{i}\beta_{k}} r_{i}y_{k}=r_{i}y.
\end{equation*}

\item Assume that $\pair{\alpha_{i}^{\vee}}{x\lambda} > 0$, 
and $\pair{\alpha_{i}^{\vee}}{y_{m}\lambda} \le 0$ 
for some $1 \le m \le k$; let $l$ denote the minimum of all such $m$'s. 
Then, $\beta_{l}=\alpha_{i}$, and 
there exists a directed path 
from $r_{i}x$ to $y$ in $\SBa$ of the form:
\begin{equation*}
r_{i}x=r_{i}y_{0} \edge{r_{i}\beta_{1}} 
 \cdots 
\edge{r_{i}\beta_{l-1}} 
r_{i}y_{l-1}=y_{l} \edge{\beta_{l+1}}
 \cdots \edge{\beta_{k}} y_{k}=y.
\end{equation*}

\item Assume that $\pair{\alpha_{i}^{\vee}}{y\lambda} < 0$, 
and $\pair{\alpha_{i}^{\vee}}{y_{m}\lambda} \ge 0$ 
for some $0 \le m \le k-1$; let $l$ denote 
the maximum of all such $m$'s. 
Then, $\beta_{l+1}=\alpha_{i}$, and 
there exists a directed path 
from $x$ to $r_{i}y$ in $\SBa$ of the form:
\begin{equation*}
x=y_{0} \edge{\beta_{1}} 
 \cdots 
\edge{\beta_{l}} 
y_{l}= r_{i} y_{l+1} \edge{r_i \beta_{l+2}}
 \cdots \edge{r_i \beta_{k}} r_{i}y_{k}=r_{i}y.
\end{equation*}
\end{enu}
\end{lem}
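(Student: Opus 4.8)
The plan is to prove all three parts of Lemma~\ref{lem:directed} simultaneously by induction on the path length $k$, using the two diamond lemmas (Lemmas~\ref{lem:dia1} and~\ref{lem:dia2}) as the engine that transports a single edge across the application of $r_i$. The crucial observation throughout is Remark~\ref{rem:simple}: for $x \in (W^J)_{\af}$ and $i \in I_{\af}$, whether $r_i x$ lies in $(W^J)_{\af}$ is governed by the sign of $\pair{\alpha_i^\vee}{x\lambda}$, and whether the $\alpha_i$-edge goes up or down in $\SB$ is governed by that same sign via \eqref{eq:simple2}. So the sign pattern of $\pair{\alpha_i^\vee}{y_m\lambda}$ along the path is exactly the combinatorial data one must track, and the three cases of the lemma are precisely the three qualitative shapes this sign pattern can take (constant positive/negative; positive then hitting $\le 0$; ending negative having been $\ge 0$ somewhere). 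One must also check that the $\SBa$-condition \eqref{eq:siba} is preserved: since $r_i$ sends $\beta \mapsto r_i\beta$ and $\pair{(r_i\beta)^\vee}{r_ix\lambda} = \pair{\beta^\vee}{x\lambda}$, the integrality condition $a\pair{\beta^\vee}{x\lambda} \in \BZ$ is automatically inherited, so it suffices to produce the edges in $\SB$ with the right endpoints.

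For part~(1), the base case $k=0$ is the hypothesis that $r_ix = r_iy \in (W^J)_{\af}$ (guaranteed by the sign assumption and Lemma~\ref{lem:r_i}). For the inductive step, look at the first edge $x = y_0 \edge{\beta_1} y_1$. Both $\pair{\alpha_i^\vee}{x\lambda}$ and $\pair{\alpha_i^\vee}{y_1\lambda}$ have the same (nonzero) sign, so we are in the situation of Lemma~\ref{lem:dia1} (when the common sign is positive, reading the diamond at $x$ with $y = y_1$) or Lemma~\ref{lem:dia2} (when it is negative): the lemma furnishes the dotted edge $r_ix \edge{r_i\beta_1} r_iy_1$ in $\SB$, and moreover tells us $r_iy_1 \in (W^J)_{\af}$. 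Then apply the induction hypothesis to the shorter path $y_1 \to \cdots \to y_k$. I should be a little careful that the diamond lemmas are stated for an $\alpha_i$-edge emanating from (or into) the \emph{same} vertex as the $\beta$-edge; matching up the four orientation subcases (is the $\beta_1$-edge's $\alpha_i$-partner an up- or down-edge, and likewise at $y_1$) with the two available diamond lemmas is the routine bookkeeping here.

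For parts~(2) and~(3), the idea is that the path splits at the index $l$ where the sign of $\pair{\alpha_i^\vee}{y_m\lambda}$ first fails to be positive (resp. last fails to be negative). On the initial segment $y_0, \ldots, y_{l-1}$ the sign is constantly positive (resp. on the final segment $y_{l+1}, \ldots, y_k$ it is constantly negative), so part~(1) applies there to transport those edges across $r_i$. At the junction, one shows $\beta_l = \alpha_i$: indeed $\pair{\alpha_i^\vee}{y_{l-1}\lambda} > 0$ and $\pair{\alpha_i^\vee}{y_l\lambda} \le 0$ with $y_{l-1} \edge{\beta_l} y_l$ forces, via the ``if $\gamma$ is a negative root then $\beta = \alpha_i$'' clause of Lemma~\ref{lem:dia1} applied at $y_{l-1}$ (together with the fact that $y_l\lambda = r_{\beta_l}y_{l-1}\lambda$ having a non-positive $\alpha_i^\vee$-pairing means the relevant $\gamma$ went negative), that $\beta_l = \alpha_i$; hence $r_iy_{l-1} = y_l$, the two segments glue, and the claimed path $r_ix \to \cdots \to y$ results. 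Part~(3) is the mirror image, using Lemma~\ref{lem:dia2} and the final segment.

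The main obstacle I anticipate is not conceptual but organizational: getting the orientation conventions in Lemmas~\ref{lem:dia1} and~\ref{lem:dia2} to line up cleanly with all the sign subcases, and in particular pinning down \emph{exactly} which root $\gamma$ (positive or negative, in $\Delta_J^+$ or not) appears when one feeds a given edge of the path into the diamond lemma, so that one may legitimately invoke either the ``dotted edges'' conclusion or the ``$\beta = \alpha_i$'' conclusion. Once the correspondence between ``sign of $\pair{\alpha_i^\vee}{y_m\lambda}$ flips / stays constant'' and ``which clause of which diamond lemma fires'' is tabulated, the induction runs mechanically; identifying $\beta_l = \alpha_i$ at the flip in parts~(2) and~(3) is the one place where the non-trivial content of the diamond lemmas is really used, and that is where I would concentrate the careful writing.
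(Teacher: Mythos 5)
Your proposal is correct and follows exactly the route the paper intends: the paper gives no written-out proof, stating only that ``an inductive argument, which uses Lemmas~\ref{lem:dia1} and \ref{lem:dia2}, proves'' the lemma, and your induction on $k$ with the diamond lemmas transporting each edge (plus the dominance of $\lambda$ forcing the sign of $\gamma$, hence $\beta_l=\alpha_i$ at the junction in parts (2) and (3), and the invariance $a\pair{(r_i\beta)^\vee}{r_ix\lambda}=a\pair{\beta^\vee}{x\lambda}$ preserving membership in $\SBa$) is precisely that argument.
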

%
%
\begin{lem} \label{lem:zt}
Let $J$ be a subset of $I$. 
Let $\xi \in \jad$, and $\beta \in \prr$. 
If $z_{\xi}t_{\xi} \edge{\beta} 
r_{\beta}z_{\xi}t_{\xi}$ in $\SB$, then 
$\beta=\alpha_{i}$ for some $i \in I \setminus J$. 
\end{lem}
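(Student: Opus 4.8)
The plan is to analyze the edge $z_{\xi}t_{\xi} \edge{\beta} r_{\beta}z_{\xi}t_{\xi}$ in $\SB$ by writing $x := z_{\xi}t_{\xi}$ explicitly and using the characterization of edges via semi-infinite length. Write $\beta = \alpha + n\delta$ with $\alpha \in \Delta$ and $n \in \BZ_{\ge 0}$ (for $n = 0$ we need $\alpha \in \Delta^+$). Since $x = z_{\xi}t_{\xi}$ with $z_{\xi} \in W_J$, Lemma~\ref{lem:sil} tells us that $\sil(r_{\beta}x) = \sil(x) + 1$ forces $z_{\xi}^{-1}\alpha \in \Delta^+$. Next I would use that $x \in (W^J)_{\af}$ (which holds by Lemma~\ref{lem:xt}, taking the ``$x$'' there to be $e$, or directly from Lemma~\ref{lem:J-adj}(3)) together with the edge condition $r_{\beta}x \in (W^J)_{\af}$: by Proposition~\ref{prop:P}, $r_{\beta}x = x r_{x^{-1}\beta} \in (W^J)_{\af}$ forces $x^{-1}\beta \notin (\Delta_J)_{\af}$.

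The key computation is then to pin down $x^{-1}\beta$. We have $x^{-1}\beta = t_{-\xi}z_{\xi}^{-1}\beta = t_{-\xi}z_{\xi}^{-1}(\alpha + n\delta)$. Writing $\gamma := z_{\xi}^{-1}\alpha \in \Delta^+$ (as established above) and using \eqref{eq:refl}-type manipulations, or rather the action $t_{-\xi}(\alpha' + m\delta) = \alpha' + (m - \pair{\xi}{\alpha'})\delta$, we get $x^{-1}\beta = \gamma + (n - \pair{\xi}{\gamma})\delta$. Now I claim $n - \pair{\xi}{\gamma} \le 0$, i.e. the imaginary part is nonpositive, which combined with $\gamma \in \Delta^+$ should force $n - \pair{\xi}{\gamma} = 0$ (otherwise $x^{-1}\beta$ would be a positive root of the form $\gamma + m\delta$ with $m > 0$... actually I need to be careful about signs here). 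The point is to rule out $\gamma \in \Delta_J^+$: since $\xi \in \jad$, we have $\pair{\xi}{\gamma} \in \{-1, 0\}$ for $\gamma \in \Delta_J^+$, so $x^{-1}\beta = \gamma + (n - \pair{\xi}{\gamma})\delta$ would land in $(\Delta_J)_{\af}$ unless the coefficient structure forbids it — and this would contradict $x^{-1}\beta \notin (\Delta_J)_{\af}$. Hence $\gamma \in \Delta^+ \setminus \Delta_J^+$.

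To conclude $\beta = \alpha_i$ for some $i \in I \setminus J$, I would combine the constraints: $\sil(r_{\beta}x) = \sil(x) + 1$ means $r_{\beta}x$ covers $x$, so exactly one positive real root ``changes sign'' in a suitable sense, and the computation of $x^{-1}\beta = \gamma + m\delta$ with $\gamma \in \Delta^+ \setminus \Delta_J^+$ together with the length-difference-one condition should force $x^{-1}\beta$ (or equivalently $\beta$, after applying $x$) to be simple. More concretely, I expect $x^{-1}\beta = -\gamma'$ for a simple root situation or that the minimality in the semi-infinite order forces $\beta$ itself to be $\alpha_i$; since $\gamma \notin \Delta_J^+$, the corresponding $i$ lies in $I \setminus J$. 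The main obstacle will be the careful bookkeeping of which roots are positive/negative after applying translations $t_{\pm\xi}$ and reflections, and precisely extracting that an edge of semi-infinite-length-difference one from a translation $z_\xi t_\xi$ must be labeled by a \emph{simple} root — this likely requires invoking that $z_\xi t_\xi$ is a minimal-length coset representative type element and that its only ``up'' edges by length exactly one come from simple reflections $r_i$ with $\pair{\alpha_i^\vee}{z_\xi t_\xi \lambda} > 0$, which via Remark~\ref{rem:simple} and \eqref{eq:simple2} translates to $i \notin J$.
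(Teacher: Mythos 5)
Your opening moves are consistent with the paper's: writing $\beta=\alpha+n\delta$, using Lemma~\ref{lem:sil} to see that the condition $\sil(r_{\beta}z_{\xi}t_{\xi})=\sil(z_{\xi}t_{\xi})+1$ forces $z_{\xi}^{-1}\alpha\in\Delta^{+}$, and using membership in $(W^J)_{\af}$ to exclude $\alpha\in\Delta_J$ (the paper gets this slightly more directly from Lemma~\ref{lem:xt}, which gives $r_{\beta}\in(W^J)_{\af}$ and hence $\beta\notin(\Delta_J)_{\af}$; also note a sign slip in your translation action, $t_{-\xi}(\gamma+n\delta)=\gamma+(n+\pair{\xi}{\gamma})\delta$, which is harmless for ruling out $\gamma\in\Delta_J^{+}$ since $(\Delta_J)_{\af}$ allows any $\delta$-coefficient).

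However, the decisive step is missing. Your plan concludes by asserting that a semi-infinite-length increase of exactly one from $z_{\xi}t_{\xi}$ ``should force'' the label to be simple, appealing to an inversion-counting intuition and to Remark~\ref{rem:simple}/\eqref{eq:simple2}; but \eqref{eq:simple2} only characterizes when an edge \emph{with simple label} exists, it does not exclude non-simple labels, and edges of $\SB$ are in general labeled by non-simple roots — that they must be simple at the special vertices $z_{\xi}t_{\xi}$ is exactly the content of the lemma, so this part of your argument is circular. The paper closes the gap with two concrete ingredients you do not have: (i) the claim that $r_{\alpha}\in W^{J}$, proved for $n\ge 1$ by noting $r_{\beta}=r_{\alpha}t_{n\alpha^{\vee}}\in(W^J)_{\af}$ forces $n\alpha^{\vee}\in\jad$ via \eqref{eq:W^J_af}, whence $\pair{\alpha^{\vee}}{\gamma}\le 0$ for all $\gamma\in\Delta_J^{+}$ and $r_{\alpha}\in W^{J}$ by \eqref{eq:W^J}; and (ii) the exact length computation this enables, $\sil(r_{\beta}z_{\xi}t_{\xi})=\ell(r_{\alpha})+\sil(z_{\xi}t_{\xi})+2n\pair{z_{\xi}^{-1}\alpha^{\vee}}{\rho}$, using $\ell(r_{\alpha}z_{\xi})=\ell(r_{\alpha})+\ell(z_{\xi})$. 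Comparing with $\sil(r_{\beta}z_{\xi}t_{\xi})=\sil(z_{\xi}t_{\xi})+1$ gives $\ell(r_{\alpha})+2n\pair{z_{\xi}^{-1}\alpha^{\vee}}{\rho}=1$, and since $z_{\xi}^{-1}\alpha\in\Delta^{+}$ and $n\ge 0$ this forces $\ell(r_{\alpha})=1$ and $n=0$, i.e. $\beta=\alpha_{i}$ with $i\in I\setminus J$. Without an argument of this kind (in particular without the claim $r_{\alpha}\in W^{J}$, which is what turns the inequalities of Lemma~\ref{lem:sil} into an exact identity), your proposal does not establish the conclusion.
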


\begin{proof}
Write $\beta$ as $\beta=\alpha+n\delta$, 
with $\alpha \in \Delta$ and $n \in \BZ_{\ge 0}$. 
Since $z_{\xi}t_{\xi},\,r_{\beta}z_{\xi}t_{\xi} \in (W^{J})_{\af}$ by 
the assumption, we see by Lemma~\ref{lem:xt} that 
$r_{\beta} \in (W^{J})_{\af}$, 
which implies that $\beta \notin (\Delta_{J})_{\af}$ and 
hence $\alpha \notin \Delta_{J}$. 
Also, since $\sil(r_{\beta}z_{\xi}t_{\xi}) = 
\sil(z_{\xi}t_{\xi})+1 > \sil(z_{\xi}t_{\xi})$ by the assumption, 
we see from Lemma~\ref{lem:sil} that 
$z_{\xi}^{-1}\alpha$ is a positive root. 
Because $\alpha \notin \Delta_{J}$ as seen above, 
and $z_{\xi} \in W_{J}$, we deduce that 
$\alpha \in \Delta^{+}$, and hence 
$\alpha \in \Delta^{+} \setminus \Delta_{J}^{+}$. 
We claim that $r_{\alpha} \in W^{J}$. 
Indeed, if $n=0$, then we have $r_{\alpha}=r_{\beta} 
\in (W^{J})_{\af}$. Hence we obtain 
$r_{\alpha}=\Pi^{J}(r_{\alpha})=\mcr{r_{\alpha}}$ 
by Lemma~\ref{lem:PJ}\,(1), which implies that 
$r_{\alpha} \in W^{J}$. Assume that $n \ge 1$. 
By \eqref{eq:refl}, we have $r_{\beta}=
r_{\alpha}t_{n\alpha^{\vee}}$. 
Since $r_{\beta} \in (W^{J})_{\af}$ as seen above, 
we have $n\alpha^{\vee} \in \jad$ by \eqref{eq:W^J_af}. 
Therefore, we have 
$\pair{n\alpha^{\vee}}{\gamma} \in \bigl\{0,\,-1\bigr\}$ 
for all $\gamma \in \Delta_{J}^{+}$, and in particular, 
$\pair{\alpha^{\vee}}{\gamma} \le 0$ 
for all $\gamma \in \Delta_{J}^{+}$. 
From this, we see that $r_{\alpha}\gamma=
\gamma-\pair{\alpha^{\vee}}{\gamma}\alpha \in \Delta^{+}$ 
for all $\gamma \in \Delta_{J}^{+}$ since $\alpha \in \Delta^{+}$, 
which implies that 
$r_{\alpha} \in W^{J}$ by \eqref{eq:W^J}, as claimed. 
Now we compute
\begin{align*}
\sil(r_{\beta}z_{\xi}t_{\xi}) & = 
\sil(r_{\alpha}z_{\xi}t_{\xi+nz_{\xi}^{-1}\alpha^{\vee}}) = 
\ell(r_{\alpha}z_{\xi})+2\pair{\xi+nz_{\xi}^{-1}\alpha^{\vee}}{\rho} \\
& = 
\ell(r_{\alpha})+
\underbrace{\ell(z_{\xi})+2\pair{\xi}{\rho}}_{=\sil(z_{\xi}t_{\xi})}+
2n\pair{z_{\xi}^{-1}\alpha^{\vee}}{\rho} \quad 
 \text{since $r_{\alpha} \in W^{J}$ and $z_{\xi} \in W_{J}$}.
\end{align*}
Since $\sil(r_{\beta}z_{\xi}t_{\xi})=\sil(z_{\xi}t_{\xi})+1$ 
by the assumption, it follows from this equation that 
$\ell(r_{\alpha})+2n\pair{z_{\xi}^{-1}\alpha^{\vee}}{\rho}=1$.
Here, recall that $z_{\xi}^{-1}\alpha$ is a positive root, 
and that $n \ge 0$. Therefore, we obtain $\ell(r_{\alpha})=1$ and $n=0$, 
which implies that $\beta=\alpha_{i}$ 
for some $i \in I \setminus J$. This proves the lemma. 
\end{proof}

%
\subsection{Proof of Proposition~\ref{prop:pi_}.} 
\label{subsec:SB-LP}

In this subsection, we fix $\lambda \in P^+$, and set 
$J:=J_{\lambda} = \bigl\{ i \in I \mid 
\pair{\alpha_i^{\vee}}{\lambda}=0\bigr\}$. 

Let $\eta = 
(x_1,\,x_2,\,\ldots,\,x_s ; a_0,\,a_1,\,\ldots,\,a_s) \in \sLS$; 
recall that 
\begin{align*}
\ol{\eta}=
(x_1 \lambda,\,x_2 \lambda,\,\ldots,\,x_s\lambda ; 
 a_0,\,a_1,\,\ldots,\,a_s ).
\end{align*}
Hence it suffices to show that 
$x_u \lambda > x_{u+1} \lambda$, and that 
there exists a directed path from 
$x_{u+1} \lambda$ to $x_{u} \lambda$ in $\LPb{a_{u}}$. 
This follows immediately from the next proposition.

\begin{prop} \label{prop:sib-lv0}
Let $0 < a \le 1$ be a rational number, $x \in (W^J)_{\af}$, and $\beta \in \prr$. Then, $x \edge{\beta} r_{\beta} x$  in $\SBa$ if and only if 
$x \lambda \edge{\beta} r_{\beta} x \lambda $ in $\LPa$.
\end{prop}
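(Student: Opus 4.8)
The plan is to establish the equivalence $x \xrightarrow{\beta} r_\beta x$ in $\SBa$ $\iff$ $x\lambda \xrightarrow{\beta} r_\beta x\lambda$ in $\LPa$ by decomposing each side into its constituent conditions and matching them up. An edge $x \xrightarrow{\beta} r_\beta x$ in $\SBa$ amounts to three things: (a) $r_\beta x \in (W^J)_{\af}$; (b) $\sil(r_\beta x) = \sil(x) + 1$; and (c) the "level-$a$" condition $a\pair{\beta^\vee}{x\lambda} \in \BZ$. An edge $x\lambda \xrightarrow{\beta} r_\beta x\lambda$ in $\LPa$ amounts to: (a$'$) $r_\beta x\lambda$ covers $x\lambda$ in the level-zero weight poset $W_{\af}\lambda$ with label $\beta$, which by Definition~\ref{def:X}(1) and Remark~\ref{rem:cover} means $\pair{\beta^\vee}{x\lambda} > 0$ together with the covering relation; and (c$'$) the same condition $a\pair{\beta^\vee}{x\lambda}\in\BZ$. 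Since (c) and (c$'$) are literally identical, the work is to show that (a)$\wedge$(b) is equivalent to (a$'$). I would first prove the cleaner implication: if $x\xrightarrow{\beta} r_\beta x$ in $\SB$, then $x\lambda \xrightarrow{\beta} r_\beta x\lambda$ in $\LP$, i.e.\ $r_\beta x\lambda$ covers $x\lambda$.

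For that direction, write $x = vt_\zeta$ and $\beta = \alpha + n\delta$ with $\alpha \in \Delta$, $n \ge 0$. From $\sil(r_\beta x) = \sil(x)+1$ and Lemma~\ref{lem:sil}, $v^{-1}\alpha \in \Delta^+$, and then a computation using \eqref{eq:lv0action} and \eqref{eq:refl} shows $\pair{\beta^\vee}{x\lambda} = \pair{(v^{-1}\alpha)^\vee}{\lambda} > 0$ (it is $\geq 0$ since $v^{-1}\alpha$ is a positive root and $\lambda$ is dominant, and I must rule out $=0$: if it vanished then $r_\beta x\lambda = x\lambda$, i.e.\ $x^{-1}r_\beta^{-1}\cdot$ fixes $\lambda$, forcing $\beta \in (\Delta_J)_{\af}$ via the stabilizer description, contradicting $r_\beta x \in (W^J)_{\af}$ together with $x \in (W^J)_{\af}$ by Lemma~\ref{lem:r_i}/Lemma~\ref{lem:xt}-type reasoning). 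Then $r_\beta x\lambda = x\lambda - \pair{\beta^\vee}{x\lambda}\beta$ and one checks, using the path $\nu_0 = x\lambda$, $\nu_1 = r_\beta x\lambda$ with root $\beta$ and $\pair{\beta^\vee}{x\lambda} \in \BZ_{>0}$, that this is exactly a covering relation $x\lambda \xrightarrow{\beta} r_\beta x\lambda$ in $\LP$ — here I need that it genuinely covers, not just $\geq$, which follows since a single reflection step in Littelmann's order with positive pairing and the quantum-Bruhat-graph machinery gives a cover; more carefully, I would invoke that $\sil$ differs by exactly $1$ to conclude minimality.

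For the converse, suppose $x\lambda \xrightarrow{\beta} r_\beta x\lambda$ in $\LP$, so $\pair{\beta^\vee}{x\lambda} > 0$ and $r_\beta x\lambda$ covers $x\lambda$. Again write $\beta = \alpha + n\delta$, $x = vt_\zeta$; from $\pair{(v^{-1}\alpha)^\vee}{\lambda} = \pair{\beta^\vee}{x\lambda} > 0$ and dominance of $\lambda$, $v^{-1}\alpha$ is a positive root (it cannot be negative, else the pairing would be $\le 0$), so Lemma~\ref{lem:sil} gives $\sil(r_\beta x) > \sil(x)$. I must upgrade this to $\sil(r_\beta x) = \sil(x) + 1$ and show $r_\beta x \in (W^J)_{\af}$. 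The membership $r_\beta x \in (W^J)_{\af}$: since $\pair{\beta^\vee}{x\lambda} \neq 0$, the reflection $r_\beta$ does not stabilize $x\lambda$, which by the stabilizer identity $(W_J)_{\af} = \langle r_\gamma \mid \gamma \in (\Delta_J)_{\af}^+\rangle$ (Remark~\ref{rem:stabilizer}) and Lemma~\ref{lem:r_i}-type arguments forces $x^{-1}\beta \notin (\Delta_J)_{\af}$, whence $r_\beta x = x r_{x^{-1}\beta} \in (W^J)_{\af}$ by Proposition~\ref{prop:P}. For the length-one increment, I would use that a cover in Littelmann's order corresponds, via the equivalence with the (parabolic) quantum Bruhat graph established in \cite{LNSSS13a} and the relation \eqref{eq:W^J_af}, to a length-one step; alternatively, argue directly that any $\beta$-step strictly increasing $\sil$ and landing in $(W^J)_{\af}$ while giving a cover in $W_{\af}\lambda$ must be a length-one step, since a longer step would factor through an intermediate vertex whose image is strictly between $x\lambda$ and $r_\beta x\lambda$, contradicting the cover.

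\textbf{The main obstacle} I expect is the length-one increment in the converse direction — translating "covering relation in $W_{\af}\lambda$" into "$\sil$-increment exactly $1$". The hypothesis only directly yields $\sil(r_\beta x) > \sil(x)$ via Lemma~\ref{lem:sil}; getting the sharp value $+1$ requires genuinely connecting the combinatorics of the semi-infinite Bruhat order on $(W^J)_{\af}$ with Littelmann's order on the orbit, most cleanly through the parabolic quantum Bruhat graph as in \cite{LNSSS13a, BFP99} and the explicit description \eqref{eq:W^J_af} of $(W^J)_{\af}$ as $\{wz_\xi t_\xi \mid w \in W^J, \xi \in \jad\}$. A fallback, which may be what the authors do, is to reduce to the case of simple reflections via a chain decomposition (as in the diamond Lemmas~\ref{lem:dia1} and~\ref{lem:dia2}) and Remark~\ref{rem:simple}, where \eqref{eq:simple2} already gives the exact correspondence edge-by-edge; one then propagates along an $\SB$-path using Lemma~\ref{lem:directed} and the fact that conjugation by $W_{\af}$ preserves both $\sil$-increments and Littelmann-covers appropriately.
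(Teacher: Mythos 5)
Your reduction of the statement to ``$x \edge{\beta} r_{\beta}x$ in $\SB$ iff $x\lambda \edge{\beta} r_{\beta}x\lambda$ in $\LP$'' and your handling of the easy ingredients (identical integrality conditions, $\pair{\beta^{\vee}}{x\lambda}>0$ from Lemma~\ref{lem:sil}, and the non-vanishing argument via the stabilizer) are fine, but the core of both directions is left unproved. In the forward direction, the $\LP$-edge requires that $r_{\beta}x\lambda$ \emph{covers} $x\lambda$ in the level-zero weight poset; your justification that ``a single reflection step with positive pairing \dots gives a cover'' is false in general, and the refinement ``invoke that $\sil$ differs by exactly $1$ to conclude minimality'' presupposes exactly the correspondence between $\sil$-increments and covers in $W_{\af}\lambda$ that the proposition asserts, so it is circular. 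In the backward direction there are two gaps. First, your membership argument ``$\pair{\beta^{\vee}}{x\lambda}\ne 0$ forces $x^{-1}\beta \notin (\Delta_J)_{\af}$, whence $r_{\beta}x = x r_{x^{-1}\beta} \in (W^J)_{\af}$'' is only valid when $\beta$ is simple: Lemma~\ref{lem:r_i} rests on the fact that $r_i$ permutes $\prr \setminus \{\alpha_i\}$, and for a non-simple $\beta$ the reflection $r_{\beta}$ can send positive roots other than $\beta$ to negative roots, so $x^{-1}\beta \notin (\Delta_J)_{\af}$ does not yield $r_{\beta}x \in (W^J)_{\af}$; Proposition~\ref{prop:P} gives only the factorization, not membership. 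Second, the exact increment $\sil(r_{\beta}x)=\sil(x)+1$ is obtained either by appealing to a QBG--semi-infinite lifting (but Proposition~\ref{prop:SB-QB} is \emph{derived} in the Appendix from the present proposition, so within this paper that route is circular, and you do not supply an independent proof), or by a factorization-through-an-intermediate-vertex argument that tacitly uses a chain property of the semi-infinite order ($\sil$-difference $\ge 2$ implies existence of a directed path in $\SB$), which is nontrivial and nowhere established in your sketch.

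The paper's actual proof is quite different in structure: it takes a directed path of \emph{simple-root} edges in $\LP$ from $x\lambda$ down to an element congruent to $\lambda$ modulo $\BQ\delta$ (via \cite[Lemma~1.4]{AK}) and argues by induction on its length. The base case $x=z_{\xi}t_{\xi}$, $\xi \in \jad$, is handled by Lemma~\ref{lem:zt} together with \cite[Lemma~2.11]{NS08} and a reduced-expression argument forcing $\beta$ to be simple; the inductive step uses the diamond Lemmas~\ref{lem:dia1} and~\ref{lem:dia2}, the simple-root dictionary \eqref{eq:simple2}, and Littelmann's \cite[Lemma~4.1\,c) and Corollary~1 in \S4]{Lit95} to transfer the edge back and forth across $r_i$. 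Your closing ``fallback'' paragraph correctly guesses the flavour of this strategy, but as written it is only a plan; the submitted argument itself does not close the cover/increment/membership gaps identified above.
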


\begin{proof}
In view of conditions \eqref{eq:Xa} and \eqref{eq:siba}, 
we need only show that 
%
%
\begin{equation} \label{eq:iff1}
\text{$x \edge{\beta} r_{\beta} x$  in $\SB$ if and only if 
$x \lambda \edge{\beta} r_{\beta} x \lambda $ in $\LP$};
\end{equation}
write $\beta \in \prr$ as
$\beta=\alpha+n\delta$ with $\alpha \in \Delta$ 
and $n \in \BZ_{\ge 0}$.
We deduce from \cite[Lemma~1.4]{AK} that 
there exist $i_{1},\,i_{2},\,\dots,\,i_{k} \in I_{\af}$ 
such that
%
%
\begin{equation} \label{eq:ql}
x\lambda=\mu_{k} \edge{\alpha_{i_k}} \mu_{k-1} \edge{\alpha_{i_{k-1}}} 
\cdots \edge{\alpha_{i_2}} \mu_{1} \edge{\alpha_{i_1}} \mu_{0} 
\quad \text{in $\LP$},
\end{equation}
with $\mu_{0} \equiv \lambda \mod \BQ\delta$, where 
$\mu_{m}=r_{i_{m+1}} \cdots r_{i_{k}}x\lambda$ 
for $0 \le m \le k$. We will show \eqref{eq:iff1} 
by induction on the length $k$ of the directed path \eqref{eq:ql}. 

\paragraph{Step 1.}
If $k=0$, then we have $x\lambda \equiv 
\lambda \mod \BQ\delta$, which implies that 
$x \in (W^{J})_{\af}$ is of the form $x=z_{\xi}t_{\xi}$ 
for some $\xi \in \jad$ (see \eqref{eq:W^J_af}). 
First, let us show the ``only if'' part of \eqref{eq:iff1} 
in this case. By Lemma~\ref{lem:zt}, we have 
$\beta=\alpha_{i}$ for some $i \in I \setminus J$. 
Hence it follows immediately from \eqref{eq:simple2} that 
$x \lambda \edge{\alpha_i=\beta} r_{i} x \lambda =
r_{\beta} x \lambda $ in $\LP$.

Next, let us show the ``if part'' of \eqref{eq:iff1} 
in the case that $x=z_{\xi}t_{\xi}$ with $\xi \in \jad$. 
We see from \cite[Lemma~2.11]{NS08} that 
$\beta$ is of the form $\beta=\alpha$ with 
$\alpha \in \Delta^{+}$, or $\beta=-\alpha+\delta$ 
with $\alpha \in \Delta^{+}$. 
Since $\pair{\beta^{\vee}}{x\lambda} > 0$, 
and $x\lambda \equiv \lambda \mod \BQ\delta$, 
it follows immediately that 
$\beta=\alpha \in \Delta^{+} \setminus \Delta_{J}^{+}$. 
Let $\mcr{r_{\alpha}}=r_{j_{p}} \cdots r_{j_{1}}$ be 
a reduced expression of $\mcr{r_{\alpha}} \in W$;
since $x\lambda \equiv \lambda \mod \BQ\delta$, 
we have
\begin{equation*}
\pair{ \alpha_{j_{q}}^{\vee} }{
r_{j_{q-1}} \cdots r_{j_{1}}x\lambda }
=
\pair{ \alpha_{j_{q}}^{\vee} }{
r_{j_{q-1}} \cdots r_{j_{1}}\lambda } > 0
\end{equation*}
for all $1 \le q \le p$. Then, by \eqref{eq:simple2}, 
we deduce that
\begin{equation} \label{eq:step1-d1}
x\lambda \edge{\alpha_{j_1}} r_{j_1}x\lambda 
\edge{\alpha_{j_2}} \cdots \edge{\alpha_{j_p}} 
r_{j_{p}} \cdots r_{j_{1}}x\lambda=
\mcr{r_{\alpha}}x\lambda \quad \text{in $\LP$}.
\end{equation}
Noting that $x\lambda \equiv \lambda \mod \BQ\delta$, 
we see that $\mcr{r_{\alpha}}x\lambda = 
r_{\alpha}x\lambda = r_{\beta}x\lambda$, and hence 
\eqref{eq:step1-d1} is a directed path from 
$x\lambda$ to $r_{\beta}x\lambda$. 
However, we have 
$x\lambda \edge{\beta} r_{\beta}x\lambda$ by the assumption. 
From these, we deduce that 
$p=1$, and hence $\mcr{r_{\alpha}}=r_{i}$ 
for some $i \in I$. Since $\pair{\alpha^{\vee}}{\lambda}=
\pair{\beta^{\vee}}{x\lambda} > 0$
and $r_{\alpha}\lambda=r_{i}\lambda$, 
we see that $\beta=\alpha=\alpha_{i}$ (see Remark~\ref{rem:cover}). 
Therefore, it follows from \eqref{eq:simple2} that 
$x \edge{\alpha_{i}=\beta} r_{i}x=r_{\beta}x$ in $\SB$. 

\paragraph{Step 2.}
%
Assume that the length $k$ of the directed path \eqref{eq:ql} 
is greater than $0$; for simplicity of notation, 
we set $i:=i_{k}$ in \eqref{eq:ql}. 
Since we have $x\lambda \edge{\alpha_{i}} r_{i}x\lambda$ 
in $\LP$ as the initial edge of \eqref{eq:ql}, 
we have $\pair{\alpha_{i}^{\vee}}{x\lambda} > 0$. 
Hence it follows from \eqref{eq:simple2} 
that $x \edge{\alpha_{i}} r_{i}x$ in $\SB$. 

First, let us show the ``only if'' part of \eqref{eq:iff1}. 
Set $y:=r_{\beta}x$, and write $y^{-1}\alpha_{i}$ as 
$y^{-1}\alpha_{i}=\gamma+m\delta$, with $\gamma \in \Delta$ 
and $m \in \BZ$; we see by Lemma~\ref{lem:dia1} that 
$\gamma \notin \Delta_{J}^{+}$. If $\gamma$ is a negative root, 
then $\beta=\alpha_{i}$ by Lemma~\ref{lem:dia1}, and hence we have
$x\lambda \edge{\alpha_{i}=\beta} r_{i}x\lambda=r_{\beta}x\lambda$. 
Assume that $\gamma \in \Delta^{+} \setminus \Delta_{J}^{+}$. 
Then, by Lemma~\ref{lem:dia1}, $\beta \ne \alpha_{i}$, and 
we obtain the following diagram in $\SB$: 
\begin{equation*}
\begin{diagram}
\node{x}\arrow{s,l}{\alpha_{i}} \arrow{e,t}{\beta} 
\node{y}\arrow{s,r,..}{\alpha_{i}} \\
\node{r_{i}x} \arrow{e,t,..}{r_{i}\beta} \node{r_{i}y}
\end{diagram}
\end{equation*}
By our induction hypothesis applied to $r_{i}x$, we have 
$r_{i}x\lambda \edge{r_{i}\beta} r_{i}y\lambda$ in $\LP$. 
Also, since $y \edge{\alpha_{i}} r_{i}y$ in $\SB$, 
it follows from \eqref{eq:simple2} that 
$y\lambda \edge{\alpha_{i}} r_{i}y\lambda$ in $\LP$. 
Because $x\lambda \edge{\alpha_{i}} r_{i}x\lambda$ in $\LP$ 
as the initial edge of \eqref{eq:ql}, 
we deduce that $x\lambda \edge{\beta} y\lambda$ by 
\cite[Lemma~4.1\,c)]{Lit95}, as desired. 

Next, let us show the ``if'' part of \eqref{eq:iff1}. 
Recall that $\pair{\alpha_{i}^{\vee}}{x\lambda} > 0$. 
If $\pair{\alpha_{i}^{\vee}}{y\lambda} \le 0$, then 
it follows from \cite[Corollary~1 in \S4]{Lit95} that 
$\beta=\alpha_{i}$, and hence $x \edge{\alpha_{i}=\beta} 
r_{i}x=r_{\beta}x$. Assume that 
$\pair{\alpha_{i}^{\vee}}{y\lambda} > 0$. 
Then we have $y \edge{\alpha_{i}} r_{i}y$ 
by \eqref{eq:simple2}. 
It follows immediately from \cite[Lemma~4.1\,c)]{Lit95}
that $\beta \ne \alpha_{i}$, and 
$r_{i}x\lambda \edge{r_{i}\beta} r_{i}y\lambda$. 
By our induction hypothesis applied to $r_{i}x$, we have 
$r_{i}x \edge{r_{i}\beta} r_{i}y$. 
Therefore, we deduce from Lemma~\ref{lem:dia2} that 
$x \edge{\beta} y$, as desired. 
This completes the proof of the proposition. 
\end{proof}

The next corollary follows immediately 
from Proposition~\ref{prop:sib-lv0} and 
\cite[Lemma 2.11]{NS08}. 
%
%
\begin{cor} \label{cor:pi_}
Let $J$ be a subset of $I$. 
Let $x \in (W^{J})_{\af}$ and $\beta \in \prr$ be such that 
$x \edge{\beta} r_{\beta}x$. Then, $\beta$ is either of 
the following forms: $\beta=\alpha$ with $\alpha \in \Delta^{+}$, 
or $\beta=\alpha+\delta$ with $-\alpha \in \Delta^{+}$.
Moreover, if $x=wz_{\xi}t_{\xi}$ with $w \in W^{J}$ and 
$\xi \in \jad$ (see \eqref{eq:W^J_af}), then $w^{-1}\alpha \in 
\Delta^{+} \setminus \Delta_{J}^{+}$ in both cases.
\end{cor}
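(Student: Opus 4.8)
The plan is to transport the assertion to Littelmann's graph $\LP$ via Proposition~\ref{prop:sib-lv0} and then to invoke \cite[Lemma~2.11]{NS08}, which describes the positive real roots that can occur as edge labels in $\LP$. Since the corollary is stated for an arbitrary subset $J \subseteq I$, I would first fix $\lambda \in P^{+}$ with $J_{\lambda} = J$; for instance $\lambda := \sum_{i \in I \setminus J} \varpi_{i}$ works, because $\pair{\alpha_{i}^{\vee}}{\varpi_{j}} = \delta_{ij}$ for $i,j \in I$, so $\pair{\alpha_{i}^{\vee}}{\lambda} = 0$ exactly for $i \in J$. Applying Proposition~\ref{prop:sib-lv0} with $a = 1$ to the given edge $x \edge{\beta} r_{\beta}x$ of $\SB$ produces the edge $x\lambda \edge{\beta} r_{\beta}x\lambda$ of $\LP$; in particular $\pair{\beta^{\vee}}{x\lambda} > 0$. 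Writing $\beta = \alpha + n\delta$ with $\alpha \in \Delta$ and $n \in \BZ_{\ge 0}$, \cite[Lemma~2.11]{NS08} then forces one of the two alternatives $\beta = \alpha$ with $\alpha \in \Delta^{+}$, or $\beta = \alpha + \delta$ with $-\alpha \in \Delta^{+}$, proving the first assertion.

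For the ``moreover'' part, write $x = w z_{\xi} t_{\xi}$ with $w \in W^{J}$ and $\xi \in \jad$ as in \eqref{eq:W^J_af}. Since $z_{\xi} \in W_{J}$ by Lemma~\ref{lem:J-adj}\,(2) and $\lambda$ is $W_{J}$-invariant, while $t_{\xi}\lambda = \lambda - \pair{\xi}{\lambda}\delta$ by \eqref{eq:lv0action}, we get $x\lambda \equiv w\lambda \bmod \BQ\delta$; moreover, since $x\lambda$ is of level zero, \eqref{eq:refl} and \eqref{eq:lv0action} give $\pair{\beta^{\vee}}{x\lambda} = \pair{\alpha^{\vee}}{x\lambda}$. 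Hence
\[
0 < \pair{\beta^{\vee}}{x\lambda} = \pair{\alpha^{\vee}}{w\lambda} = \pair{w^{-1}\alpha^{\vee}}{\lambda}.
\]
For our $\lambda$ one has $\pair{\gamma^{\vee}}{\lambda} > 0$ precisely for $\gamma \in \Delta^{+}\setminus\Delta_{J}^{+}$, whereas $\pair{\gamma^{\vee}}{\lambda} \le 0$ for $\gamma \in (-\Delta^{+})$ and $\pair{\gamma^{\vee}}{\lambda} = 0$ for $\gamma \in \Delta_{J}$; so the displayed positivity forces $w^{-1}\alpha \in \Delta^{+}\setminus\Delta_{J}^{+}$ in both cases for $\beta$.

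Every step is a direct unwinding of definitions, so I do not anticipate a genuine obstacle. The only points deserving a moment of care are the level-zero identity $\pair{\beta^{\vee}}{x\lambda} = \pair{\alpha^{\vee}}{x\lambda}$, and checking that \cite[Lemma~2.11]{NS08} is being applied in the form valid for $\lambda \in P^{+}$ (a general level-zero weight would not suffice).
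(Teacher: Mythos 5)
Your argument is correct and is essentially the paper's own proof: the paper derives Corollary~\ref{cor:pi_} immediately from Proposition~\ref{prop:sib-lv0} together with \cite[Lemma~2.11]{NS08}, exactly as you do. You merely make explicit what the paper leaves implicit, namely the choice of $\lambda \in P^{+}$ with $J_{\lambda}=J$, the level-zero identity $\pair{\beta^{\vee}}{x\lambda}=\pair{\alpha^{\vee}}{x\lambda}$, and the short positivity computation giving $w^{-1}\alpha \in \Delta^{+}\setminus\Delta_{J}^{+}$.
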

%
%
\subsection{Proof of Theorem \ref{thm:stability}.}
\label{subsec:pr-thm:stability}

We prove part (1) only for $e_i$; 
the proof for $f_i$ is similar. 
Let $\eta = (x_1,\,\ldots,\,x_s ; a_0,\,\ldots,\,a_s) \in \sLS$ 
and $i \in I_{\af}$ be such that $m_i^{\ol{\eta}} \le -1$. 
Define $t_0,\,t_1 \in [0,1]$ by \eqref{eq:t-e} 
(for $\ol{\eta}$ and $i \in I_{\af}$), and 
let $1 \le p \le q \le s$ be such that 
$a_{p-1} \le t_0 < a_q$ and $t_1 = a_q$. 
By the definition of $e_{i}$, we have
\begin{equation*}
0= a_{0} \rsa{x_1} \cdots \rsa{x_{p-1}} a_{p-1} \rsa{x_p} 
\underbrace{t_{0} \rsa{r_{i}x_p} a_{p} \rsa{r_{i}x_{p+1}} \cdots 
\rsa{r_{i}x_{q}} a_{q}=t_{1}}_{\text{``reflected'' by $r_{i}$}}
\rsa{x_{q+1}} \cdots \rsa{x_{s}} a_{s}=1; 
\end{equation*}
if $t_0 = a_{p-1}$, then we drop $x_p$ and $a_{p-1}$, and 
if $r_i x_q = x_{q+1}$, then we drop $x_{q+1}$ and $a_{q}=t_1$.
We need to show that 
\begin{itemize}

\item[(i)]
$r_i x_u \in (W^J)_{\af}$ for all $p \le u \le q$;

\item[(ii)]
if $t_0 \neq a_{p-1}$ (resp., $t_0 = a_{p-1}$ and $p >1$), 
then there exists a directed path 
from $r_i x_p$ to $x_p$ (resp., to $x_{p-1}$) 
in $\SBb{t_{0}}$; 

\item[(iii)]
for each $p \le u \le q-1$, 
there exists a directed path from 
$r_i x_{u+1}$ to $r_i x_u$ in $\SBb{a_u}$;

\item[(iv)]
if $r_i x_q \neq x_{q+1}$, then 
there exists a directed path 
from $x_{q+1}$ to $r_i x_q$ 
in $\SBb{t_{1}}=\SBb{a_{q}}$. 

\end{itemize}

\noindent
{\it Proof of {\rm (i)}.} 
As mentioned in Definition~\ref{dfn:ro}\,(1), 
the function $H_i^{\ol{\eta}} (t)$ is 
strictly decreasing on $[t_0,\,t_1]$. 
Therefore, we see that $\pair{\alpha_i^{\vee}}{x_u \lambda} < 0$ 
for all $p \le u \le q$. This implies that 
$x_u^{-1} \alpha_i \notin (\Delta_J)_{\af}$, and hence 
$r_i x_u \in (W^J)_{\af}$ by Lemma \ref{lem:r_i}. \bqed

\noindent
{\it Proof of {\rm (ii)}.} 
Since $\pair{\alpha_i^{\vee}}{x_p \lambda} < 0$ as above, 
we have $r_i x_p \edge{\alpha_{i}} x_{p}$ in $\SB$ 
by \eqref{eq:simple2}. 
By applying \cite[Lemma 4.5\,c)]{Lit95} 
to $\ol{\eta} \in \LS$ and $t_0 \in [0,1]$, we deduce that 
$t_0 \pair{\alpha_i^{\vee}}{x_p \lambda} \in \BZ$, 
which implies that the edge 
$r_i x_p \edge{\alpha_{i}} x_{p}$ above is an edge in $\LPb{t_0}$. 
Thus we have shown (ii) in the case that $t_0 \neq a_{p-1}$. 
Assume next that $t_0 = a_{p-1}$ and $p >1$. 
By the assumption, there exists a directed path from $x_p$ to $x_{p-1}$ 
in $\SBb{a_{p-1}}=\SBb{t_{0}}$. 
By concatenating this directed path with
$r_i x_p \edge{\alpha_{i}} x_{p}$ obtained above, 
we obtain a directed path from $r_i x_p$ to $x_{p-1}$ in $\SBb{t_{0}}$.
Thus we have shown (ii). \bqed

\noindent
{\it Proof of {\rm (iii)}.}
Fix $p \le u \le q-1$. 
Let $x_{u+1} = y_0 \edge{\beta_1} y_1 \edge{\beta_2} \cdots 
\edge{\beta_{k}} y_k = x_u$ be a directed path 
from $x_{u+1}$ to $x_u$ in $\SBb{a_u}$. 
Since $H^{\ol{\eta}}_{i}(t)$ is strictly decreasing on 
$[t_{0},\,t_{1}]$, we see that 
$\pair{\alpha_{i}^{\vee}}{x_{u+1}\lambda} < 0$. 
Also, since 
$H^{\ol{\eta}}_{i}(t_{0})=
m^{\ol{\eta}}_{i}+1$ and 
$H^{\ol{\eta}}_{i}(t_{1})=
m^{\ol{\eta}}_{i}$, we see that 
$H^{\ol{\eta}}_{i}(a_{u}) \notin \BZ$. 
Therefore, it follows from \cite[Remark~4.6]{Lit95} that 
$\pair{\alpha_{i}^{\vee}}{y_{l}\lambda} > 0$ 
for all $0 \le l \le k$, and hence that 
there exists a directed path from $r_{i}x_{u+1}$ to 
$r_{i}x_{u}$ in $\SBb{a_{u}}$ 
by Lemma~\ref{lem:directed}\,(1). \bqed

\noindent
{\it Proof of {\rm (iv)}.}
By the definition, there exists a directed path from 
$x_{q+1}$ to $x_{q}$ in $\SBb{a_{q}}$. 
By the definition of $t_{1}$, we see that 
$\pair{\alpha_{i}^{\vee}}{x_{q+1}\lambda} \ge 0$ 
and $\pair{\alpha_{i}^{\vee}}{x_{q}\lambda} < 0$. 
Therefore, it follows from Lemma~\ref{lem:directed}\,(3) 
that there exists a directed path from $x_{q+1}$ to 
$r_{i}x_{q}$ in $\SBb{a_{q}}$. \bqed

Thus we have proved part (1) of 
Theorem \ref{thm:stability}. 
Next, let us prove part (2). 
We see from the definition of root operators 
that for all $\eta \in \sLS$ and $i \in I_{\af}$, 
\begin{equation} \label{eq:epsilon-phi}
\begin{cases}
\wt(\eta) = \wt (\ol{\eta}), & \\[1.5mm]
\ol{e_{i}\eta}=e_{i}\ol{\eta},\,
\ol{f_{i}\eta}=f_{i}\ol{\eta}, & \\[1.5mm]
\ve_i (\eta) = \ve_i (\ol{\eta}),\,
\vp_i (\eta) = \vp_i (\ol{\eta}),
\end{cases}
\end{equation}
where we understand that $\ol{\bzero}=\bzero$. 
From \eqref{eq:epsilon-phi}, we can easily deduce that 
the set $\sLS$ satisfies the axioms for crystals, 
except for the axiom that 
\begin{align} \label{eq:crystal}
\text{$e_i \eta_1 = \eta_2$ if and only if $\eta_1 = f_i \eta_2$
for $\eta_1,\,\eta_2 \in \sLS$ and $i \in I_{\af}$}.
\end{align}
Now we define $t_0^{1},\,t_1^{1} \in [0,1]$ 
(resp., $t_0^{2},\,t_1^{2} \in [0,1]$) 
by \eqref{eq:t-e} (resp., \eqref{eq:t-f})
for $\ol{\eta_1}$ (resp., $\ol{\eta_{2}}$) 
and $i \in I_{\af}$; 
note that $H^{ \ol{\eta_{1}} }_{i}(t)$ 
(resp., $H^{ \ol{\eta_{2}} }_{i}(t)$) is 
strictly decreasing (resp., increasing) on 
$[t_{0}^{1},\,t_{1}^{1}]$ 
(resp., $[t_{0}^{2},\,t_{1}^{2}]$). 
Then we deduce from the definitions that 
$t_{0}^{1}=t_{0}^{2}$ and $t_{1}^{1}=t_{1}^{2}$. 
Therefore, \eqref{eq:crystal} follows immediately
from the definition of the root operators 
$e_{i}$ and $f_{i}$. This completes the proof of 
Theorem~\ref{thm:stability}.
%
%
\begin{rem} \label{rem:SLS-LS}
\mbox{}
\begin{enu}
\item 
By \eqref{eq:epsilon-phi}, the map 
$\ol{\phantom{\eta}}:\sLS \rightarrow \LS$, 
$\eta \mapsto \ol{\eta}$, is a strict morphism of crystals 
in the sense of \cite[\S 1.5]{Kas94}; also, this map is surjective.

\item
By \eqref{eq:epsilon-phi} and \cite[Lemma 2.1\,c)]{Lit95}, 
we have $\ve_i (\eta) = -m_i^{\ol{\eta}}$ and 
$\vp_i (\eta ) = H_i^{\ol{\eta}}(1) - m_i^{\ol{\eta}}$ 
for all $\eta \in \sLS$ and $i \in I_{\af}$.
\end{enu}
\end{rem}
%
%
\section{Proof of Proposition \ref{prop:conn-isom}.}
\label{sec:pr-prop:conn-isom}

Throughout this section, we fix $\lambda \in P^+$, 
and set $J:=J_{\lambda} = 
\bigl\{i \in I \mid \pair{\alpha_i^{\vee}}{\lambda}=0 \bigr\}$.
%
%
\subsection{Extremal elements in $\Bo$ and $\sLSo$.} 
\label{subsec:extremal}

We know from \cite[\S7]{Kas94} that 
the affine Weyl group $W_{\af}$ acts on $\B$ by
\begin{align}\label{eq:W-action}
r_{i} b := 
\begin{cases}
f_i^{n} b 
 & \text{if $n=\pair{\alpha_i^{\vee}}{\wt(b)} \ge 0$}, \\[1.5mm]
e_i^{-n} b 
 & \text{if $n=\pair{\alpha_i^{\vee}}{\wt(b)} \le 0$},
\end{cases}
\end{align}
for each $b \in \B$ and $i \in I_{\af}$.

\begin{prop}[{cf. \cite[Conjecture 5.11]{Kas02b}}; 
see also Remark \ref{rem:stabilizer}] 
\label{prop:stab}
The following equality holds: 
\begin{equation*}
(W_J)_{\af} = \bigl\{ x \in W_{\af} 
  \mid xu_{\lambda } = u_{\lambda} \bigr\}.
\end{equation*}
\end{prop}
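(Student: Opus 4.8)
The plan is to prove the two inclusions separately. The inclusion $(W_J)_{\af} \subseteq \{x \in W_{\af} \mid x u_{\lambda} = u_{\lambda}\}$ is the easy direction: by Remark~\ref{rem:stabilizer} it suffices to check that $r_{\beta} u_{\lambda} = u_{\lambda}$ for every $\beta \in (\Delta_J)_{\af}^{+}$, and by the definition \eqref{eq:W-action} of the $W_{\af}$-action this amounts to showing that $\pair{\beta^{\vee}}{\wt(u_{\lambda})} = \pair{\beta^{\vee}}{\lambda} = 0$. Writing $\beta = \gamma + n\delta$ with $\gamma \in \Delta_J$, we have $\pair{\beta^{\vee}}{\lambda} = \pair{\gamma^{\vee}}{\lambda} = 0$ since $\pair{c}{\lambda} = 0$ and $\gamma \in \Delta_J$ means $\gamma^{\vee} \in \QJp{J} - \QJp{J}$, so $\pair{\gamma^{\vee}}{\lambda} = 0$ by the very definition $J = J_{\lambda}$. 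Hence $r_{\beta}$ acts on $u_{\lambda}$ by $e_i^{0}$ or $f_i^{0}$, i.e.\ trivially (more precisely, using that extremal weight vectors are annihilated by $e_i$ and $f_i$ in the appropriate direction when the pairing is zero), so $r_{\beta} u_{\lambda} = u_{\lambda}$.

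For the reverse inclusion $\{x \in W_{\af} \mid x u_{\lambda} = u_{\lambda}\} \subseteq (W_J)_{\af}$, the plan is to use the tensor-product isomorphism \eqref{eq:tensor}, namely $\CB(\lambda) \cong \bigotimes_{i \in I} \CB(m_i \vpi_i)$, under which $u_{\lambda}$ corresponds to $\bigotimes_{i \in I} u_{m_i \vpi_i}$. First I would observe that the $W_{\af}$-action commutes with the embedding of the connected component $\Bo$ and, on the tensor factor $\CB(m_i\vpi_i)$, the stabilizer of $u_{m_i\vpi_i}$ is known from our previous work \cite{NS03, NS06} (equivalently from the LS-path realization $\BB(m_i\vpi_i)$) to be exactly $(W_{I\setminus\{i\}})_{\af}$ when $m_i \geq 1$, and all of $W_{\af}$ when $m_i = 0$. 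Then for $x \in W_{\af}$ with $x u_{\lambda} = u_{\lambda}$, applying $x$ factor-by-factor forces $x u_{m_i\vpi_i} = u_{m_i\vpi_i}$ for every $i$ with $m_i \geq 1$ (here one must be careful that the $W_{\af}$-action on a tensor product is not simply the diagonal action on Kashiwara operators, so this step needs the description of how $r_i$ acts on tensor products, or alternatively an argument via weights and the fact that $u_{\lambda}$ is the unique extremal element of its weight in $\Bo$). Consequently $x \in \bigcap_{i : m_i \geq 1} (W_{I\setminus\{i\}})_{\af} = (W_J)_{\af}$, since $J = \{i \mid m_i = 0\}$ and the intersection of the parabolic-type subgroups $(W_{I\setminus\{i\}})_{\af}$ over $i \notin J$ is precisely $W_J \ltimes \{t_{\xi} \mid \xi \in Q_J^{\vee}\}$.

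The main obstacle I anticipate is making the "factor-by-factor" step rigorous: the $W_{\af}$-action of \eqref{eq:W-action} on a crystal is defined via iterated Kashiwara operators, and although it is a genuine group action on each crystal, its behavior under the tensor product $\otimes$ is subtle and is not literally the diagonal action. The cleanest route is probably to avoid the tensor action entirely and argue as follows: $x u_{\lambda} = u_{\lambda}$ implies $x\lambda = \lambda$, so $x \in (W_{\af})_{\lambda} = W_J \ltimes \{t_{\xi} \mid \pair{\xi}{\lambda} = 0\}$; it then remains to show that any $t_{\xi}$ with $\pair{\xi}{\lambda} = 0$ but $\xi \notin Q_J^{\vee}$ satisfies $t_{\xi} u_{\lambda} \neq u_{\lambda}$. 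This last point can be settled by projecting to a single tensor factor $\CB(m_i\vpi_i)$ with $m_i \geq 1$ and invoking the known stabilizer statement $(W_{I\setminus\{i\}})_{\af} = (W_{\af})_{m_i\vpi_i}$ together with the fact (from \cite{BN04} or \cite{NS03}) that the component map $\Bo \to \CB(m_i\vpi_i)$ is $W_{\af}$-equivariant on extremal elements. I expect the bulk of the technical work, and the reference to the companion results on LS paths for fundamental weights, to be concentrated exactly here.
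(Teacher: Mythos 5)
Your strategy is essentially the paper's: reduce via the Beck--Nakajima tensor decomposition to the level-zero fundamental weight factors, and get the easy inclusion from the generators $r_{\beta}$, $\beta \in (\Delta_J)_{\af}^{+}$. But the decisive step is exactly the one you flag and then leave open: every variant of your hard direction (the ``factor-by-factor'' argument, and also the ``cleanest route'' via translations $t_{\xi}$ with $\pair{\xi}{\lambda}=0$) needs to know that under the embedding $\Psi\colon \Bo \hookrightarrow \bigotimes_{i\in I}\CB(\vpi_i)^{\otimes m_i}$ one has $\Psi(xu_{\lambda})=\bigotimes_{i\in I}(xu_{\vpi_i})^{\otimes m_i}$ for all $x\in W_{\af}$, i.e.\ that the $W_{\af}$-action of \eqref{eq:W-action} is ``diagonal'' on this particular tensor of extremal elements. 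This is not automatic (the action on a tensor product is governed by the tensor product rule, as you note), and the paper proves it by induction on $\ell(x)$, using that each $u_{\vpi_i}$ is extremal, so that $\ve_j(yu_{\vpi_i})=\max\{0,-\pair{\alpha_j^{\vee}}{y\vpi_i}\}$ and $\vp_j(yu_{\vpi_i})=\max\{0,\pair{\alpha_j^{\vee}}{y\vpi_i}\}$, together with the tensor product rule (cf.\ \cite[Lemma~1.6]{AK}). Once this is available, $xu_{\lambda}=u_{\lambda}$ forces $x\vpi_i=\vpi_i$ for each $i\in I\setminus J$ \emph{separately}, and writing $x=wt_{\xi}$ gives $w\vpi_i=\vpi_i$ and $\pair{\xi}{\vpi_i}=0$ for all $i\notin J$, hence $x\in (W_J)_{\af}$; this factor-by-factor information is indispensable, since $\pair{\xi}{\lambda}=0$ alone does not force $\pair{\xi}{\vpi_i}=0$.

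Neither of your proposed patches closes this gap. The claim that $u_{\lambda}$ is the unique extremal element of weight $\lambda$ in $\Bo$ is equivalent to the stabilizer of $u_{\lambda}$ being all of $(W_{\af})_{\lambda}$, which contradicts the very statement being proved whenever $(W_J)_{\af}\subsetneq (W_{\af})_{\lambda}$ (e.g.\ $\lambda=\vpi_i+\vpi_j$): the extremal elements of weight $\lambda$ in $\Bo$ are the $xu_{\lambda}$ with $x\in(W_{\af})_{\lambda}$, and there are $\#\bigl((W_{\af})_{\lambda}/(W_J)_{\af}\bigr)>1$ of them in that case; so this auxiliary claim is both false in general and circular. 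Likewise, there is no crystal morphism (``component map'') $\Bo\rightarrow\CB(m_i\vpi_i)$ onto a single tensor factor; the only available map is the embedding $\Psi$, and identifying the $i$-th factor of $\Psi(xu_{\lambda})$ as $xu_{m_i\vpi_i}$ is precisely the unproved compatibility above. A minor point on the easy inclusion: your direct argument is fine (and shorter than the paper's, which goes through \cite[Lemma~5.6]{Kas02b}), but for non-simple $\beta$ you should justify $r_{\beta}u_{\lambda}=u_{\lambda}$ by writing $r_{\beta}=wr_iw^{-1}$ and using $\wt(yb)=y\,\wt(b)$, rather than saying that $r_{\beta}$ itself acts by $e_i^{0}$ or $f_i^{0}$.
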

\begin{proof} 
Write $\lambda \in P^+$ as 
$\lambda = \sum_{i \in I} m_i \varpi_i$ 
with $m_i \in \BZ_{\ge 0}$, $i \in I$. 
Then we know from \cite[Remark 4.17]{BN04} 
(see also \cite[\S 13]{Kas02b}) that 
there exists an embedding 
$\Psi : \Bo \hookrightarrow 
\bigotimes_{i \in I} \CB(\varpi_i)^{\otimes m_i}$ 
of crystals such that $\Psi (u_{\lambda }) = 
\bigotimes_{i \in I} u_{\varpi_i}^{\otimes m_i}$. 
Recall that for each $i \in I$, $u_{\vpi_{i}} \in \CB(\vpi_{i})$ 
is an extremal element of weight $\vpi_{i}$ 
in the sense of \cite[Definition~8.1.1]{Kas94}; 
in particular, we have
$\wt (xu_{\vpi_{i}}) = x\vpi_{i}$,
$\ve_{j}(xu_{\vpi_{i}}) = 
 \max\bigl\{0,\,-\pair{\alpha_{j}^{\vee}}{x\vpi_{i}}\bigr\}$, and 
$\vp_{j}(xu_{\vpi_{i}}) = 
 \max\bigl\{0,\,\pair{\alpha_{j}^{\vee}}{x\vpi_{i}}\bigr\}$ 
for every $x \in W_{\af}$ and $j \in I_{\af}$. 
From these, using the tensor product rule for crystals, 
we can show by induction on $\ell(x)$ 
(see also \cite[Lemma~1.6]{AK}) that 
$\Psi (xu_{\lambda }) = 
\bigotimes_{i \in I} (xu_{\varpi_i})^{\otimes m_i}$ 
for all $x \in W_{\af}$. Therefore, we deduce that 
\begin{align}\label{eq:cap}
\bigl\{ x \in W_{\af} \mid xu_{\lambda } = u_{\lambda} \bigr\} = 
\bigcap_{i \in I \setminus J} 
  \bigl\{ x \in W_{\af} \mid x u_{\varpi_i} = u_{\varpi_i} \bigr\} . 
\end{align}
Also, we know from \cite[Lemma 5.6]{Kas02b} that 
\begin{align}\label{eq:Kas}
\bigl\{ x \in W_{\af} \mid x u_{\varpi_i} = u_{\varpi_i}\bigr \} = 
\langle r_{\beta} \mid \beta \in \prr,\ \pair{\beta^{\vee}}{\varpi_i} =0 \rangle.
\end{align}
If $\beta \in (\Delta_J)_{\af}^+$, 
then $\pair{\beta^{\vee}}{\varpi_i} =0$, 
and hence $r_{\beta} \in 
\bigl\{ x \in W_{\af} \mid xu_{\varpi_i} = u_{\varpi_i} \bigr\}$ 
for all $i \in I \setminus J$. Because $(W_J)_{\af} = 
\langle r_{\beta} \mid \beta \in (\Delta_J)_{\af}^+ \rangle$ 
by Remark~\ref{rem:stabilizer}, it follows immediately 
that $(W_J)_{\af} \subset 
\bigl\{ x \in W_{\af} \mid xu_{\lambda } = u_{\lambda} \bigr\}$.

Let us show the opposite inclusion. 
Let $x \in W_{\af}$ be such that 
$x u_{\lambda } = u_{\lambda}$. 
By \eqref{eq:cap}, we have $x u_{\varpi_i} = u_{\varpi_i}$ 
for all $i \in I \setminus J$; 
in particular, $x \varpi_i = \varpi_i$ 
for all $i \in I \setminus J$ since the weight of 
$x u_{\varpi_i}$ is equal to $x \varpi_i$. 
We write $x$ as $x = w t_{\xi}$ with $w \in W$ and $\xi \in Q^{\vee}$. 
Then, for all $i \in I \setminus J$, we have 
$\varpi_i = x \varpi_i = w \varpi_i - \pair{\xi}{\varpi_i}\delta $ 
by \eqref{eq:lv0action}, and hence $w \varpi_i = \varpi_i$ and 
$\pair{\xi}{\varpi_i} = 0$. 
Therefore, we deduce that $w \in W_J \subset (W_J)_{\af}$, 
and $\xi \in Q_J^{\vee}$, which implies that 
$t_{\xi} \in (W_J)_{\af}$. 
Thus we obtain $x \in (W_J)_{\af}$, 
and hence $(W_J)_{\af} \supset 
\bigl\{ x \in W_{\af} \mid x u_{\lambda } = u_{\lambda} \bigr\}$. 
This completes the proof of the proposition. 
\end{proof}

Recall that $u_{\lambda} \in \CB_{0}(\lambda)$ is an extremal element 
of weight $\lambda$ in the sense of \cite[Definition~8.1.1]{Kas94}.
From Proposition~\ref{prop:stab}, we see that the set $\bigl\{yu_{\lambda} \mid 
y \in W_{\af} \bigr\}$ is in bijective correspondence with 
the quotient set $W_{\af}/(W_{J})_{\af}$, and hence with the set $(W^{J})_{\af}$ 
by Proposition~\ref{prop:P}. 
Hence we set $u_{x}:=xu_{\lambda}$ for $x \in (W^{J})_{\af}$; remark that 
\begin{equation} \label{eq:ext-u}
\wt(u_x) = x \lambda, \qquad
\ve_i (u_x) = 
  \max \bigl\{0,\,-\pair{\alpha_i^{\vee}}{x\lambda}\bigr\}, \qquad
\vp_i (u_x) = 
  \max \bigl\{0,\,\pair{\alpha_i^{\vee}}{x\lambda}\bigr\}
\end{equation}
for all $x \in (W^J)_{\af}$ and $i \in I_{\af}$. 
It follows from Proposition~\ref{prop:stab} that 
$yu_{\lambda}=u_{\PJ(y)}$ for all $y \in W_{\af}$. 

Now, we set $\eta_x := (x ; 0,1) \in \sLS$ for $x \in (W^J)_{\af}$.
By Remark \ref{rem:SLS-LS}\,(2) and \eqref{eq:ext-u}, 
we see that for each $x \in (W^J)_{\af}$ and $i \in I_{\af}$,
\begin{equation} \label{eq:ext-eta}
\begin{split}
& \wt(\eta_x ) = x \lambda =\wt(u_x ), \qquad
  \ve_i (\eta_x) = 
   \max \bigl\{0,\,-\pair{\alpha_i^{\vee}}{x\lambda}\bigr\}=
  \ve_i (u_x), \\
& 
\vp_i (\eta_x)=
  \max \bigl\{0,\,\pair{\alpha_i^{\vee}}{x\lambda}\bigr\}=
\vp_i (u_x).
\end{split}
\end{equation}
Next, for $x \in (W^J)_{\af}$ and $i \in I_{\af}$, we set
\begin{equation*}
r_{i}\eta_{x}:=
 \begin{cases}
  f_i^{n} \eta_x & 
  \text{if $n=\pair{\alpha_i^{\vee}}{x\lambda} \ge 0$}, \\[1.5mm] 
  e_i^{-n} \eta_x &
  \text{if $n=\pair{\alpha_i^{\vee}}{x\lambda} \le 0$};
 \end{cases}
\end{equation*}
observe that $r_{i}\eta_{x} \ne \bzero$ 
for any $x \in W_{\af}$ and $i \in I_{\af}$. 
Then we have
%
%
\begin{equation} \label{eq:etax}
r_{i_{p}}r_{i_{p-1}} \cdots r_{i_{2}}r_{i_{1}}\eta_{x} =
\eta_{\Pi^{J}(r_{i_{p}}r_{i_{p-1}} \cdots r_{i_{2}}r_{i_{1}}x)}
\end{equation}
for all $x \in W_{\af}$ and $i_{1},\,\dots,\,i_{p} \in I_{\af}$. 
Let us show \eqref{eq:etax} by induction on $p$. 
If $p=0$, then \eqref{eq:etax} is obvious. Assume that $p > 0$. 
By our induction hypothesis, we have 
$r_{i_{p-1}} \cdots r_{i_{2}}r_{i_{1}}\eta_{x} =
\eta_{\Pi^{J}(r_{i_{p-1}} \cdots r_{i_{2}}r_{i_{1}}x)}$; 
for simplicity of notation, we set 
$y:=\Pi^{J}(r_{i_{p-1}} \cdots r_{i_{2}}r_{i_{1}}x) \in (W^{J})_{\af}$. 
If $n=\pair{\alpha_{i_p}^{\vee}}{y \lambda} > 0$, then 
$y^{-1} \alpha_{i_p} \notin (\Delta_J)_{\af}$, and hence 
$r_{i_p} y \in (W^J)_{\af}$ by Lemma \ref{lem:r_i}. 
Therefore, we obtain
\begin{equation*}
r_{i_p} y = \Pi^{J}(r_{i_p} y) = 
\Pi^{J}(r_{i_p}\Pi^{J}(r_{i_{p-1}} \cdots r_{i_{2}}r_{i_{1}}x)) = 
\Pi^{J}(r_{i_p}r_{i_{p-1}} \cdots r_{i_{2}}r_{i_{1}}x).
\end{equation*}
Also, it is easily shown by induction on $k$ that 
$f_{i_p}^k \eta_y = (r_{i_p} y,\,y \,;\, 0,\,k/n,\,1)$
for $0 \le k \le n$; in particular, we obtain $f_{i_p}^n \eta_y = 
\eta_{r_{i_p} y}$. From these, we deduce that 
\begin{equation*}
r_{i_{p}}r_{i_{p-1}} \cdots r_{i_{2}}r_{i_{1}}\eta_{x} =
r_{i_{p}}\eta_{y}=f_{i_p}^n \eta_y =\eta_{r_{i_p} y} = 
\eta_{\Pi^{J}(r_{i_{p}}r_{i_{p-1}} \cdots r_{i_{2}}r_{i_{1}}x)},
\end{equation*}
as desired. The proof for the case that 
$n=\pair{\alpha_{i_p}^{\vee}}{y \lambda} < 0$ is similar. 
If $n=\pair{\alpha_{i_p}^{\vee}}{y \lambda}=0$, 
then we have $r_{i_p}\eta_{y}=\eta_{y}$ by the definition. 
Also, we see that $y^{-1} \alpha_{i_p} \in (\Delta_J)_{\af}$, 
and hence $r_{y^{-1}\alpha_{i_p}} \in (W_{J})_{\af}$ 
(see Remark~\ref{rem:stabilizer}).
Hence it follows that 
$\Pi^{J}(r_{i_p}y)=\Pi^{J}(yr_{y^{-1}\alpha_{i_p}})=
\Pi^{J}(y)=y$. Therefore, we deduce that 
\begin{equation*}
r_{i_{p}}r_{i_{p-1}} \cdots r_{i_{2}}r_{i_{1}}\eta_{x} =
r_{i_{p}}\eta_{y}=\eta_{y}=\eta_{\Pi^{J}(r_{i_p}y)}
=
\eta_{\Pi^{J}(r_{i_{p}}r_{i_{p-1}} \cdots r_{i_{2}}r_{i_{1}}x)},
\end{equation*}
as desired. 

\begin{rem} \label{rem:etax}
It follows from \eqref{eq:etax} that 
$\eta_{x} \in \sLSo$ for all $x \in (W^{J})_{\af}$.
\end{rem} 
%
%
\subsection{$N$-multiple maps.}

\begin{prop}\label{prop:Nmulti}
Let $N \in \BZ_{>0}$. There exists a unique injective map 
$\sigma_N : \Bo \hookrightarrow \Bo^{\otimes N}$ 
such that $\sigma_N (u_{\lambda }) = u_{\lambda }^{\otimes N}$, and
\begin{align}
& \wt(\sigma_N (b)) = N \wt(b), &
& \ve_i (\sigma_N (b)) = N \ve_i (b), &
& \vp_i (\sigma_N (b)) = N \vp_i (b), \label{eq:sigmaB1} \\
& \sigma_N (e_i b) = e_i^N \sigma_N (b), &
& \sigma_N (f_i b) = f_i^N \sigma_N (b), \label{eq:sigmaB2}
\end{align}
for $b \in \Bo$ and $i \in I_{\af}$, 
where we understand that $\sigma_N (\bzero) = \bzero$.
\end{prop}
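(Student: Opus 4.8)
The plan is to separate uniqueness, which is formal, from existence, which carries all the content. Uniqueness is immediate from connectedness of $\Bo$: if $\sigma_N$ exists, then writing $b \in \Bo$ as $b = f_{i_1} \cdots f_{i_k} u_{\lambda}$ (with some $f$'s replaced by $e$'s if needed), \eqref{eq:sigmaB2} forces $\sigma_N(b) = f_{i_1}^N \cdots f_{i_k}^N u_{\lambda}^{\otimes N}$, so $\sigma_N$ is determined; and injectivity of $\sigma_N$ comes for free once it is well defined, since a map satisfying \eqref{eq:sigmaB1}--\eqref{eq:sigmaB2} detects equalities along $i$-strings. Everything therefore reduces to constructing $\sigma_N$.

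First I would check that $u_{\lambda}^{\otimes N} \in \Bo^{\otimes N}$ is an extremal element of weight $N\lambda$ whose stabilizer $\{ x \in W_{\af} \mid x u_{\lambda}^{\otimes N} = u_{\lambda}^{\otimes N} \}$ is exactly $(W_J)_{\af}$ (and, in the same way, that $v_{\lambda}^{\otimes N} \in V(\lambda)^{\otimes N}$ is an extremal weight vector of weight $N\lambda$). Since the $W_{\af}$-action \eqref{eq:W-action} on crystals is compatible with tensor products (\cite[Lemma~1.6]{AK}), we have $x(u_{\lambda}^{\otimes N}) = (x u_{\lambda})^{\otimes N} = (u_{\PJ(x)})^{\otimes N}$ for every $x \in W_{\af}$; plugging this, \eqref{eq:ext-u}, and the fact that $\pair{\alpha_j^{\vee}}{\PJ(x)\lambda}$ has a definite sign for each $j$ into the tensor product rule gives $\wt(x u_{\lambda}^{\otimes N}) = x(N\lambda)$, $\ve_j(x u_{\lambda}^{\otimes N}) = \max \{ 0,\, -\pair{\alpha_j^{\vee}}{x(N\lambda)} \}$ and $\vp_j(x u_{\lambda}^{\otimes N}) = \max \{ 0,\, \pair{\alpha_j^{\vee}}{x(N\lambda)} \}$ for all $j \in I_{\af}$, which are exactly the extremality conditions; the stabilizer assertion then follows from Proposition~\ref{prop:stab} applied to $N\lambda$ (note $J_{N\lambda} = J_{\lambda} = J$) together with the triviality that $b^{\otimes N} = c^{\otimes N}$ implies $b = c$.

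Next, by the universal property of the extremal weight module (\cite[\S8]{Kas94}, \cite[\S3]{Kas02b}) there is a unique $U_q(\Fg_{\af})$-module homomorphism $V(N\lambda) \to V(\lambda)^{\otimes N}$ sending $v_{N\lambda}$ to $v_{\lambda}^{\otimes N}$, and it induces a strict morphism of crystals $\CB(N\lambda) \to \CB(\lambda)^{\otimes N}$ with $u_{N\lambda} \mapsto u_{\lambda}^{\otimes N}$; I would argue that its restriction to $\CB_0(N\lambda)$ is an embedding onto the connected component $\mathcal{C}$ of $u_{\lambda}^{\otimes N}$ in $\Bo^{\otimes N}$, the injectivity using the coincidence of stabilizers from the previous step (the rigidity of the extremal weight crystals $\CB_0(\mu)$, cf.\ \cite[\S13]{Kas02b} and \cite{BN04}). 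It then remains to produce a ratio-$N$ similarity $\theta_N \colon \Bo \to \CB_0(N\lambda)$ with $\theta_N(u_{\lambda}) = u_{N\lambda}$; for this I would push $\Bo$ through the embedding $\Psi$ of \cite[Remark~4.17]{BN04} into $\bigotimes_{i \in I} \CB(\vpi_i)^{\otimes m_i}$, apply in each tensor factor a ratio-$N$ similarity $\tau_N^{(i)} \colon \CB(\vpi_i) \to \CB(\vpi_i)^{\otimes N}$ with $\tau_N^{(i)}(u_{\vpi_i}) = u_{\vpi_i}^{\otimes N}$ (available from \cite{NS06}, where $\CB(m\vpi_i)$ is realized as the Lakshmibai-Seshadri path crystal $\BB(m\vpi_i)$ and the dilation $\pi \mapsto N\pi$ manifestly multiplies weights and $i$-strings by $N$), and land inside $\bigotimes_{i \in I} \CB(\vpi_i)^{\otimes N m_i}$, which is precisely the ambient crystal of $\Psi$ for $N\lambda$; since a similarity carries a connected crystal into a single connected component and $\Psi(u_{N\lambda}) = \bigotimes_{i \in I} u_{\vpi_i}^{\otimes N m_i}$ is the common image of $u_{\lambda}$, the image lies in $\Psi(\CB_0(N\lambda))$, and pulling back along $\Psi$ yields $\theta_N$. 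Finally $\sigma_N$ is the composite of $\theta_N$ with the embedding $\CB_0(N\lambda) \hookrightarrow \Bo^{\otimes N}$, and \eqref{eq:sigmaB1}--\eqref{eq:sigmaB2} are inherited from the two factors.

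The hard part is the construction of $\theta_N$ (equivalently, of the fundamental-weight similarities $\tau_N^{(i)}$): unlike the embedding $\CB_0(N\lambda) \hookrightarrow \Bo^{\otimes N}$, it is not induced by any module homomorphism, so the similarity relations $\theta_N(e_i b) = e_i^N \theta_N(b)$ and $\theta_N(f_i b) = f_i^N \theta_N(b)$ must be verified directly --- on the path side this is exactly where the integrality of the local minima of $H_i^{\pi}$ used in Definition~\ref{dfn:ro} makes the successive applications of $e_i$, $f_i$ behave multiplicatively. The other delicate point is the injectivity of the morphism $\CB(N\lambda) \to \CB(\lambda)^{\otimes N}$ on $\CB_0(N\lambda)$, which rests on the stabilizer computation of the second step.
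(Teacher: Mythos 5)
Your overall architecture (factor a similarity map through something like $\CB_{0}(N\lambda)$ and then embed into $\Bo^{\otimes N}$, with uniqueness from connectedness) is the same as the paper's, but both of your key steps have gaps where the paper instead cites established crystal-level results. The paper's proof is: take the ratio-$N$ similarity $\iota_{N}\colon \Bo \hookrightarrow \CB_{0}(N\lambda)$ directly from \cite[Theorem~3.7]{NS03}, and build the embedding $\iota'\colon \CB_{0}(N\lambda) \hookrightarrow \CB_{0}(\lambda)^{\otimes N}$ purely crystal-theoretically from the embeddings of \cite[Remark~4.17]{BN04} into $\bigotimes_{i}\CB(\vpi_{i})^{\otimes Nm_{i}}$ together with the commutation isomorphisms $\CB(\vpi_{i})\otimes\CB(\vpi_{j})\cong\CB(\vpi_{j})\otimes\CB(\vpi_{i})$ of \cite[\S10]{Kas02b}; then $\sigma_{N}=\iota'\circ\iota_{N}$.

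The first gap is your second step: you assert that the $U_{q}(\Fg_{\af})$-module homomorphism $V(N\lambda)\to V(\lambda)^{\otimes N}$ (granting that $v_{\lambda}^{\otimes N}$ is extremal, which itself needs an argument at the module level) ``induces a strict morphism of crystals $\CB(N\lambda)\to\CB(\lambda)^{\otimes N}$.'' Module homomorphisms do not in general induce morphisms of crystal bases; one needs compatibility with the crystal lattices and bases, and for level-zero extremal weight modules this compatibility (and the injectivity you then want on $\CB_{0}(N\lambda)$) is precisely the deep content of Kashiwara's and Beck--Nakajima's work, not a formal consequence of the universal property plus a stabilizer computation. This is why the paper never leaves the crystal category at this point. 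The second gap is your construction of $\theta_{N}$: the fundamental-weight similarity $\tau_{N}^{(i)}\colon \CB(\vpi_{i})\to\CB(\vpi_{i})^{\otimes N}$ with $u_{\vpi_{i}}\mapsto u_{\vpi_{i}}^{\otimes N}$ is not ``available from \cite{NS06}'' as stated --- the path dilation $\pi\mapsto N\pi$ only gives a similarity $\CB(\vpi_{i})\to\CB(N\vpi_{i})$, and passing from $\CB_{0}(N\vpi_{i})$ into $\CB(\vpi_{i})^{\otimes N}$ is exactly the case $\lambda=\vpi_{i}$ of the proposition you are proving (i.e.\ it already requires the $\iota'$-type embedding), so as written this step is circular; moreover, applying similarities factorwise and claiming the result is again a similarity of the tensor product (compatible with the tensor product rule) is true but requires a proof in the style of \cite{Kas96}, which you do not supply. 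Both gaps disappear if you follow the paper and quote \cite[Theorem~3.7]{NS03} for the map $\Bo\to\CB_{0}(N\lambda)$ and \cite[Remark~4.17]{BN04} plus \cite[\S10]{Kas02b} for the embedding into $\CB_{0}(\lambda)^{\otimes N}$.
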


\begin{proof}
We know from \cite[Theorem~3.7]{NS03} that 
there exists an injective map $\iota_{N} : \Bo \hookrightarrow 
\CB_{0}(N\lambda)$ such that $\iota_{N} (u_{\lambda}) = 
u_{N\lambda }$, and
\begin{align*}
& \wt(\iota_N (b)) = N \wt(b), &
& \ve_i (\iota_N (b)) = N \ve_i (b), &
& \vp_i (\iota_N (b)) = N \vp_i (b) , \\
& \iota_N (e_i b) = e_i^N \iota_{N} (b) , &
& \iota_N (f_i b) = f_i^N \iota_{N} (b),
\end{align*}
for $b \in \Bo$ and $i \in I_{\af}$, 
where we understand that $\iota_N (\bzero) = \bzero$.
Write $\lambda $ as 
$\lambda = \sum_{i \in I} m_i \varpi_i$, 
with $m_{i} \in \BZ_{\ge 0}$, $i \in I_{0}$.
We deduce from \cite[Remark~4.17]{BN04} 
(see also \cite[\S13]{Kas02b}) that 
there exists an embedding $\CB_{0}(N\lambda) 
\hookrightarrow 
\ti{\CB}:=
\bigotimes_{i \in I} \CB(\varpi_i)^{\otimes Nm_i}$ of 
crystals that maps $u_{N\lambda }$ to 
$\ti{u}:=\bigotimes_{i \in I} u_{\varpi_i}^{\otimes Nm_i}$, 
Also, we know from \cite[\S 10]{Kas02b} that 
for every $i,\,j \in I$, there exists an isomorphism 
$\CB(\vpi_{i}) \otimes \CB(\vpi_{j}) 
\stackrel{\sim}{\rightarrow}
\CB(\vpi_{j}) \otimes \CB(\vpi_{i})$ of crystals that maps 
$u_{\vpi_{i}} \otimes u_{\vpi_{j}}$ to 
$u_{\vpi_{j}} \otimes u_{\vpi_{i}}$. 
Therefore, we obtain an isomorphism 
$\ti{\CB} \stackrel{\sim}{\rightarrow} \ha{\CB}:=
 \bigl(\bigotimes_{i \in I} 
 \CB(\varpi_i)^{\otimes m_i}\bigr)^{\otimes N}$
of crystals that maps 
$\ti{u}$ to 
$\ha{u}:=
 \bigl(\bigotimes_{i \in I} 
 u_{\varpi_i}^{\otimes m_i}\bigr)^{\otimes N}$. 
Combining the above, we see that 
$\CB_{0}(N\lambda)$ is isomorphic, as a crystal, to 
the connected component of $\ha{\CB}$ containing $\ha{u}$. 

From \cite[Remark~4.17]{BN04}, we deduce that 
there exists an embedding $\CB_{0}(\lambda)^{\otimes N} 
\hookrightarrow \ha{\CB}$ of crystals that maps 
$u_{\lambda}^{\otimes N}$ to $\ha{u}$; 
note that the connected component 
of $\ha{\CB}$ containing $\ha{u}$ 
(which is isomorphic as a crystal to $\CB_{0}(N\lambda)$) 
is contained in the image of the embedding 
$\CB_{0}(\lambda)^{\otimes N} \hookrightarrow \ha{\CB}$ 
of crystals. Hence we obtain an embedding 
$\iota':\CB_{0}(N\lambda) \hookrightarrow 
\CB_{0}(\lambda)^{\otimes N}$ that maps 
$u_{N\lambda}$ to $u_{\lambda}^{\otimes N}$. 
Now it is clear that the composite
$\iota' \circ \iota_{N}:\CB_{0}(\lambda) \hookrightarrow 
\CB_{0}(\lambda)^{\otimes N}$ satisfies conditions 
\eqref{eq:sigmaB1} and \eqref{eq:sigmaB2} 
required of $\sigma_{N}$. The uniqueness follows from 
the connectedness of $\CB_{0}(\lambda)$. 
This proves the proposition. 
\end{proof}

Because $\sigma_{MN}=\sigma_{M}^{\otimes N} \circ \sigma_{N}$ 
for all $M,\,N \in \BZ_{> 0}$, we can prove 
the following proposition in exactly the same way as 
\cite[Proposition~8.3.2\,(3)]{Kas02a} and 
\cite[Proposition~3.12]{NS03}.

\begin{prop}\label{prop:similarB}
Let $b \in \Bo$. There exists $N_b \in \BZ_{>0}$ 
such that for every multiple $N \in \BZ_{>0}$ of $N_b$, 
the element $\sigma_N (b) \in \CB(\lambda)^{\otimes N}$ 
is of the form $\sigma_N (b)=u_{x_1} \otimes 
u_{x_2} \otimes \cdots \otimes u_{x_N}$ 
for some $x_1 , x_2 , \ldots , x_N \in (W^J)_{\af}$.
\end{prop}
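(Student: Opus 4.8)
The plan is to combine the connectedness of $\Bo$, the intertwining properties \eqref{eq:sigmaB1}--\eqref{eq:sigmaB2} of $\sigma_N$, and the relation $\sigma_{MN} = \sigma_M^{\otimes N} \circ \sigma_N$, reducing the assertion to a finite combinatorial computation with the tensor product rule, exactly as in \cite[Proposition~8.3.2\,(3)]{Kas02a} and \cite[Proposition~3.12]{NS03}. First, since $\Bo$ is connected, I would write $b = g_{i_r} g_{i_{r-1}} \cdots g_{i_1} u_{\lambda}$ for some $r \ge 0$, indices $i_1, \dots, i_r \in I_{\af}$, and $g_{i_m} \in \{ e_{i_m}, f_{i_m} \}$; then \eqref{eq:sigmaB2} and $\sigma_N(u_{\lambda}) = u_{\lambda}^{\otimes N}$ give $\sigma_N(b) = g_{i_r}^N g_{i_{r-1}}^N \cdots g_{i_1}^N (u_{\lambda}^{\otimes N})$, so the proposition becomes the assertion that iterated application of the $N$-th powers of root operators to $u_{\lambda}^{\otimes N} = u_e^{\otimes N}$ stays, for $N$ in a suitable arithmetic progression, within the set of tensor products $u_{x_1} \otimes \cdots \otimes u_{x_N}$ of extremal elements with $x_1,\dots,x_N \in (W^J)_{\af}$.

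The key preliminary step is the identity $\sigma_M(u_x) = u_x^{\otimes M}$ for all $x \in (W^J)_{\af}$ and $M \in \BZ_{>0}$. I would prove it by choosing $\widetilde{x} \in W_{\af}$ with $\PJ(\widetilde{x}) = x$, writing $\widetilde{x} = r_{j_l} \cdots r_{j_1}$, and inducting on $l$: each partial product sends $u_{\lambda}$ to an extremal element $u_y$, which satisfies $\ve_j(u_y)\vp_j(u_y) = 0$ by \eqref{eq:ext-eta}, so all $M$ tensor factors of $u_y^{\otimes M}$ have the same $j$-signature, and the tensor product rule together with \eqref{eq:sigmaB2} yields $\sigma_M(r_j u_y) = g_j^{M|n|}(u_y^{\otimes M}) = (r_j u_y)^{\otimes M}$, where $n = \pair{\alpha_j^{\vee}}{y\lambda}$ and $r_j u_y = u_{\PJ(r_j y)}$ with $\PJ(r_j y) \in (W^J)_{\af}$. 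Granting this, the proposition follows by induction on $r$: the case $r = 0$ is immediate with $N_b = 1$, and for the inductive step one writes $b = g_i b'$ with $b' = g_{i_{r-1}} \cdots g_{i_1} u_{\lambda}$, takes $N' = N_{b'}$ with $\sigma_{N'}(b') = u_{z_1} \otimes \cdots \otimes u_{z_{N'}}$, and computes, for any $M \in \BZ_{>0}$, using $\sigma_{MN'} = \sigma_M^{\otimes N'} \circ \sigma_{N'}$, the identity just proved, and \eqref{eq:sigmaB2},
\begin{equation*}
\sigma_{MN'}(b) = g_i^{MN'}\sigma_{MN'}(b') = g_i^{MN'}\bigl( u_{z_1}^{\otimes M} \otimes \cdots \otimes u_{z_{N'}}^{\otimes M} \bigr).
\end{equation*}
It then suffices to find $M$, depending only on $b$, making the right-hand side a tensor product of extremal elements; one sets $N_b := MN'$, and for an arbitrary multiple $N = KN_b$ one applies $\sigma_K^{\otimes N_b}$ and the identity once more.

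The remaining point is the choice of $M$, and this is the part I expect to be the main obstacle. Taking $g_i = f_i$ without loss of generality and putting $d_k := \pair{\alpha_i^{\vee}}{z_k\lambda}$, extremality of $u_{z_k}$ makes its $i$-signature a pure block of $\max\{0,d_k\}$ plus-signs or $\max\{0,-d_k\}$ minus-signs, so the $i$-signature of $u_{z_1}^{\otimes M} \otimes \cdots \otimes u_{z_{N'}}^{\otimes M}$ breaks into $N'$ same-sign blocks of sizes $M|d_k|$; since $\vp_i(b') \ge 1$ (as $f_i b' \neq \bzero$) and $\vp_i(\sigma_{MN'}(b')) = MN'\vp_i(b')$ by \eqref{eq:sigmaB1}, the bracketing always leaves at least $MN'$ uncancelled plus-signs. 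The hard part is to check that, for $M$ divisible by a suitable constant depending only on the finitely many integers $d_1, \dots, d_{N'}$ (and large enough), the $MN'$ leftmost uncancelled plus-signs fill up exactly a union of whole blocks, so that $f_i^{MN'}$ replaces each $u_{z_k}$ in those blocks by $r_i u_{z_k} = u_{\PJ(r_i z_k)}$ and leaves the remaining factors untouched; this signature bookkeeping is carried out precisely as in \cite[Proposition~8.3.2\,(3)]{Kas02a} and \cite[Proposition~3.12]{NS03}. Everything else in the argument is formal manipulation with \eqref{eq:sigmaB1}, \eqref{eq:sigmaB2}, the relation $\sigma_{MN} = \sigma_M^{\otimes N} \circ \sigma_N$, and the extremality formulas \eqref{eq:ext-u} and \eqref{eq:ext-eta}.
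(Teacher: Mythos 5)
Your proposal is correct in outline and takes essentially the same route as the paper, whose proof of this proposition consists precisely of the reduction you describe: invoke the relation $\sigma_{MN}=\sigma_{M}^{\otimes N}\circ\sigma_{N}$ and argue as in \cite[Proposition~8.3.2\,(3)]{Kas02a} and \cite[Proposition~3.12]{NS03}, i.e., via the identity $\sigma_M(u_x)=u_x^{\otimes M}$ for extremal elements and the signature rule for powers of root operators applied along a monomial expressing $b$ from $u_{\lambda}$. One point in your sketch should be corrected, however: the condition you propose to verify --- that the $MN'$ leftmost uncancelled plus-signs fill up a union of \emph{whole blocks} of size $M|d_k|$ --- is in general unattainable for any choice of $M$ (when $\varphi_i(b')>1$ only $MN'$ of the $MN'\varphi_i(b')$ surviving plus-signs are used, and $MN'$ is a multiple of $M$ but not of $M|d_k|$), and it is also stronger than what you need. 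The correct requirement is that within each block the chosen plus-signs fill up \emph{whole tensor factors}: since all block lengths are multiples of $M$, every quantity produced by the cancellation procedure, and hence the number of chosen plus-signs in each block, is again a multiple of $M$; so taking $M$ divisible by every nonzero $|d_k|=|\pair{\alpha_i^{\vee}}{z_k\lambda}|$ (for instance by $N_{\lambda}$) guarantees that each factor $u_{z_k}$ receives either $0$ or exactly $|d_k|$ applications of $g_i$, hence becomes $u_{z_k}$ or $r_i u_{z_k}=u_{\PJ(r_i z_k)}$. The outcome is then a pure tensor of extremal elements (no longer grouped into $N'$ blocks of $M$ equal factors, but that is all the induction requires), and with this adjustment the remaining steps of your argument --- the identity $\sigma_M(u_x)=u_x^{\otimes M}$, the induction on the length of the monomial, and the passage from $N_b$ to arbitrary multiples via $\sigma_K^{\otimes N_b}$ --- are sound.
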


Since $\pair{c}{\lambda}=0$, 
we see that
$\bigl\{ \pair{\beta^{\vee}}{\lambda} \mid 
  \beta \in \Delta_{\af} \bigr\} = 
\bigl\{ \pair{\alpha^{\vee}}{\lambda} \mid 
  \alpha \in \Delta \bigr\}$
is a finite set. Define $N_{\lambda } \in \BZ_{>0}$ to be 
the least common multiple of the integers in the finite set 
$\bigl\{ \pair{\beta^{\vee}}{\lambda} \mid 
  \beta \in \Delta_{\af}^{+} \bigr\} \setminus \bigl\{ 0 \bigr\}$. 
%
%
\begin{lem}\label{lem:Na}
Let $N \in \BZ_{>0}$ be a multiple of $N_{\lambda}$. 
If $\eta = (x_1,\,\ldots,\,x_s ; a_0,\,\ldots,\,a_s) \in \sLS$, 
then $Na_u \in \BZ$ for all $0 \le u \le s$. 
\end{lem}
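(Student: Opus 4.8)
The plan is to trace every value $a_u$ back to a single edge of the directed path that is built into the definition of a SiLS path (Definition~\ref{def:SLSpath}), and then to exploit the defining labelling condition of $\SBb{a_u}$ together with the fact that the relevant pairing is a positive integer dividing $N_{\lambda}$. First, the boundary cases are immediate: $a_0 = 0$ and $a_s = 1$ are integers, so $N a_0 = 0 \in \BZ$ and $N a_s = N \in \BZ$. So I would fix $1 \le u \le s-1$. Since $\bm{x} : x_1 >_{\si} \cdots >_{\si} x_s$ is strictly decreasing, $x_u \ne x_{u+1}$, hence any directed path from $x_{u+1}$ to $x_u$ in $\SBb{a_u}$ has at least one edge; I would take its first edge, say $x_{u+1} \edge{\beta} r_{\beta} x_{u+1}$ in $\SBb{a_u}$, with $\beta \in \prr$.

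From this one edge I would extract two facts. On one hand, by the defining condition \eqref{eq:siba} of $\SBb{a_u}$, the number $m := \pair{\beta^{\vee}}{x_{u+1}\lambda}$ satisfies $a_u m \in \BZ$, and $m \in \BZ$ since $\lambda \in P_{\af}$ and $\beta^{\vee} \in Q^{\vee}$. On the other hand, applying Proposition~\ref{prop:sib-lv0} with $a = 1$ to the edge $x_{u+1} \edge{\beta} r_{\beta} x_{u+1}$ in $\SB$ yields an edge $x_{u+1}\lambda \edge{\beta} r_{\beta} x_{u+1}\lambda$ in $\LP$, and by Definition~\ref{def:X} such an edge forces $m = \pair{\beta^{\vee}}{x_{u+1}\lambda} > 0$. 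Thus $m$ is a positive integer with $a_u m \in \BZ$.

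It then remains to see that $m$ divides $N_{\lambda}$. Writing $x_{u+1} = v t_{\zeta}$ with $v \in W$, $\zeta \in Q^{\vee}$, and $\beta = \alpha + n\delta$ with $\alpha \in \Delta$, $n \in \BZ_{\ge 0}$, it follows from \eqref{eq:lv0action} and $\pair{c}{\lambda} = 0$ that $m = \pair{\beta^{\vee}}{x_{u+1}\lambda} = \pair{(v^{-1}\alpha)^{\vee}}{\lambda}$ with $v^{-1}\alpha \in \Delta$, so $m$ is a positive integer lying in the finite set $\bigl\{ \pair{\gamma^{\vee}}{\lambda} \mid \gamma \in \Delta \bigr\} = \bigl\{ \pair{\beta^{\vee}}{\lambda} \mid \beta \in \Delta_{\af} \bigr\}$ recorded just before the definition of $N_{\lambda}$. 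Since this set is stable under negation and every $\gamma \in \Delta$ has $\gamma + \delta \in \prr$ with $\pair{(\gamma+\delta)^{\vee}}{\lambda} = \pair{\gamma^{\vee}}{\lambda}$, the positive number $m$ in fact lies in $\bigl\{ \pair{\beta^{\vee}}{\lambda} \mid \beta \in \prr \bigr\} \setminus \bigl\{ 0 \bigr\}$, hence divides its least common multiple $N_{\lambda}$. As $N$ is a multiple of $N_{\lambda}$, we get $m \mid N$, and therefore $N a_u = (N/m)(a_u m) \in \BZ$, as required.

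The argument is short, and I do not expect a serious obstacle. The two points that need care are (i) ensuring one really has an edge to work with — this is exactly why the strict decrease of $\bm{x}$ matters — and (ii) ensuring that $m$ is positive, not merely an integer, so that it genuinely occurs in the set whose least common multiple defines $N_{\lambda}$; this positivity is precisely what Proposition~\ref{prop:sib-lv0} (equivalently, the existence of the map $\ol{\phantom{\eta}} : \sLS \to \LS$) provides.
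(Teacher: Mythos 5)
Your proof is correct and follows essentially the same route as the paper's: take one edge $x \edge{\beta} y$ of the (necessarily nonempty) directed path in $\SBb{a_u}$, use the defining condition \eqref{eq:siba} to get $a_u\pair{\beta^{\vee}}{x\lambda} \in \BZ$, and conclude by divisibility of this pairing into $N_{\lambda}$. The only difference is how non-vanishing of $\pair{\beta^{\vee}}{x\lambda}$ is justified: the paper argues directly that if it vanished then $x^{-1}\beta \in (\Delta_J)_{\af}$ would force $r_{\beta}x \notin (W^J)_{\af}$, whereas you obtain positivity by transferring the edge to $\LP$ via Proposition~\ref{prop:sib-lv0}; both are valid and non-circular, and your explicit reduction of $\pair{\beta^{\vee}}{x\lambda}$ to $\pair{(v^{-1}\alpha)^{\vee}}{\lambda}$ just spells out the divisibility step the paper leaves implicit.
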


\begin{proof}
If $u = 0$ or $s$, then the assertion is obvious. 
Assume that $1 \le u \le s-1$. By the definition of a SiLS path, 
there exists a directed path from $x_{u+1}$ to $x_u$ in $\SBb{a_u}$;
in particular, there exist $x,\,y \in (W^{J})_{\af}$ 
and $\beta \in \prr$ such that $x \edge{\beta} y$ in $\SBb{a_{u}}$. 
By the definition of $N_{\lambda}$, it suffices to show that 
$a_u \pair{x^{-1} \beta^{\vee}}{\lambda} \in 
\BZ \setminus \{ 0 \}$. But, 
since $x \edge{\beta} y$ is contained in $\SBb{a_u}$, 
we have $a_u \pair{x^{-1} \beta^{\vee}}{\lambda} = 
a_u \pair{\beta^{\vee}}{x \lambda} \in \BZ$ by the definition. 
Suppose now that $\pair{x^{-1} \beta^{\vee}}{\lambda}= 0$. 
Then, $x^{-1} \beta \in (\Delta_J)_{\af}$, and hence 
$r_{x^{-1}\beta} \in (W_J)_{\af}$ by Remark~\ref{rem:stabilizer}.
Therefore, $y = r_{\beta} x = x r_{x^{-1} \beta} \notin (W^J)_{\af}$, 
which is a contradiction. This proves the lemma. 
\end{proof}

Let $N \in \BZ_{>0}$ be a multiple of $N_{\lambda}$. 
We define $\sigma_{N}:\sLS \hookrightarrow \sLS^{\otimes N}$ as follows. 
Let $\eta = (x_1,\,\ldots,\,x_s ; a_0,\,\ldots,\,a_s) \in \sLS$. 
Note that $k_{u}:=Na_{u} \in \BZ$ for all $0 \le u \le s$
by Lemma~\ref{lem:Na}. Now we set
\begin{equation*}
\sigma_{N}(\eta):=
\underbrace{
  \eta_{x_1} \otimes \cdots \otimes \eta_{x_1}
}_{\text{$(k_1 - k_0)$ times}} \otimes 
\underbrace{
  \eta_{x_2} \otimes \cdots \otimes \eta_{x_2}
}_{\text{$(k_2 - k_0)$ times}} \otimes \cdots \otimes 
\underbrace{
  \eta_{x_s} \otimes \cdots \otimes \eta_{x_s}
}_{\text{$(k_s - k_{s-1})$ times}} \in \sLS^{\otimes N}.
\end{equation*}

\begin{prop}\label{prop:Nmulti2}
Keep the notation and setting above. 
The map $\sigma_{N}:\sLS \hookrightarrow \sLS^{\otimes N}$ above 
is an injective map such that 
$\sigma_N (\eta_{e}) = \eta_{e}^{\otimes N}$, and
\begin{align}
& \wt(\sigma_N (\eta)) = N \wt(\eta), &
& \ve_i (\sigma_N (\eta)) = N \ve_i (\eta), &
& \vp_i (\sigma_N (\eta)) = N \vp_i (\eta), \label{eq:SN1} \\
& \sigma_N (e_i \eta) = e_i^N \sigma_N (\eta),&
& \sigma_N (f_i \eta) = f_i^N \sigma_N (\eta), \label{eq:SN2}
\end{align}
for all $\eta \in \sLS$ and $i \in I_{\af}$, 
where we understand that $\sigma_N (\bzero) = \bzero$.
\end{prop}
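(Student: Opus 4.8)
The plan is to verify the asserted properties of $\sigma_{N}$ one at a time, using the $N$-multiple map $\sigma_{N}$ on $\Bo$ from Proposition~\ref{prop:Nmulti} as a model and transporting its behaviour along the already-established dictionary between $\sLS$ and $\LS$. First I would record the base case: since $\eta_{e}=(e;0,1)$ has $s=1$, $a_{0}=0$, $a_{1}=1$, and $k_{1}-k_{0}=N$, the definition gives $\sigma_{N}(\eta_{e})=\eta_{e}^{\otimes N}$ immediately. For injectivity, I would observe that an element $\eta=(x_{1},\dots,x_{s};a_{0},\dots,a_{s})\in\sLS$ can be recovered from the tensor word $\eta_{x_{1}}^{\otimes(k_{1}-k_{0})}\otimes\cdots\otimes\eta_{x_{s}}^{\otimes(k_{s}-k_{s-1})}$: the distinct ``blocks'' read off the sequence $x_{1}>_{\si}\cdots>_{\si}x_{s}$ (consecutive $x_{u}$ are distinct because the sequence is strictly decreasing for $\le_{\si}$), and the block lengths recover $k_{u}=Na_{u}$, hence the $a_{u}$. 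So $\sigma_{N}$ is injective as a map of sets.

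Next I would establish the weight and $\ve_{i},\vp_{i}$ identities in \eqref{eq:SN1}. By Remark~\ref{rem:SLS-LS}\,(2) we have $\ve_{i}(\eta)=-m_{i}^{\ol{\eta}}$ and $\vp_{i}(\eta)=H_{i}^{\ol{\eta}}(1)-m_{i}^{\ol{\eta}}$; for the tensor product $\sigma_{N}(\eta)$ one computes $\wt$, $\ve_{i}$, $\vp_{i}$ via the tensor product rule for crystals together with \eqref{eq:ext-eta}, which gives $\wt(\eta_{x})=x\lambda$, $\ve_{i}(\eta_{x})=\max\{0,-\pair{\alpha_{i}^{\vee}}{x\lambda}\}$, $\vp_{i}(\eta_{x})=\max\{0,\pair{\alpha_{i}^{\vee}}{x\lambda}\}$. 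The point is that the tensor word $\sigma_{N}(\eta)$ has exactly the same ``height function'' as $N\cdot\ol{\eta}$ scaled: concatenating the piecewise-linear path $\ol{\eta}$ with direction vectors $x_{u}\lambda$ on intervals of (rescaled) length $k_{u}-k_{u-1}$ is precisely the path attached to the tensor product of straight-line paths, so $H_{i}^{\sigma_{N}(\eta)}$ (in the path model for $\sLS^{\otimes N}$, equivalently the signature rule for tensor products of crystals) is $N$ times $H_{i}^{\ol{\eta}}$ after reparametrization. From this \eqref{eq:SN1} follows by the formulas in Remark~\ref{rem:SLS-LS}\,(2).

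The main obstacle, and the bulk of the work, is \eqref{eq:SN2}, i.e.\ $\sigma_{N}(e_{i}\eta)=e_{i}^{N}\sigma_{N}(\eta)$ and similarly for $f_{i}$. I would handle $e_{i}$; the $f_{i}$ case is parallel. The cleanest route is to compare how $e_{i}$ acts on $\eta$ (Definition~\ref{def:e_if_i}) with how $e_{i}^{N}$ acts on the tensor word $\sigma_{N}(\eta)$, using the tensor product rule for root operators on $\sLS^{\otimes N}$. Concretely: the root operator $e_{i}$ on $\eta$ is governed by the times $t_{0},t_{1}$ of \eqref{eq:t-e} for $\ol{\eta}$, reflecting the steps $x_{p},\dots,x_{q}$ by $r_{i}$; on the tensor side, since $H_{i}^{\sigma_{N}(\eta)}$ is (the reparametrization of) $N H_{i}^{\ol{\eta}}$, the bracketing/signature rule singles out exactly the $N(t_{1}-t_{0})$ unmatched ``$+$''s lying in the sub-word corresponding to $[t_{0},t_{1}]$, and applying $e_{i}$ that many times reflects precisely the tensor factors $\eta_{x_{u}}$ with $p\le u\le q$ into $\eta_{r_{i}x_{u}}$ (note $r_{i}x_{u}\in(W^{J})_{\af}$ for these $u$ by part (i) of the proof of Theorem~\ref{thm:stability}, since $\pair{\alpha_{i}^{\vee}}{x_{u}\lambda}<0$), together with a possible ``boundary'' split of one block — which matches the introduction of the new breakpoint $t_{0}$ in $e_{i}\eta$. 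The key technical lemma to invoke here is the compatibility of $\sigma_{N}$ on $\Bo$ with root operators (Proposition~\ref{prop:Nmulti}), or more directly a careful run through the tensor product signature rule; I would phrase it as: it suffices to check, for each $i\in I_{\af}$, that $\sigma_{N}$ intertwines $e_{i}$ with $e_{i}^{N}$ on elements $\eta$ with $m_{i}^{\ol{\eta}}\le -1$, which reduces (via the block decomposition and Lemma~\ref{lem:directed}) to the already-known statement for the straight-line constituents $\eta_{x}$. Once \eqref{eq:SN1} and \eqref{eq:SN2} are in hand, the map is an injective morphism of the required type, completing the proof.
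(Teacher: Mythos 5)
Your treatment of injectivity, of $\sigma_N(\eta_e)=\eta_e^{\otimes N}$, and of \eqref{eq:SN1} is sound and matches the paper in substance: the tensor word $\ol{\sigma_N(\eta)}$, read as a concatenated path, coincides with the $N$-fold dilation of $\ol{\eta}$, and \eqref{eq:SN1} then follows from Remark~\ref{rem:SLS-LS}\,(2).

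The gap is in \eqref{eq:SN2}, which is the heart of the proposition. You assert that applying $e_i$ repeatedly to the tensor word reflects exactly the factors $\eta_{x_u}$, $p\le u\le q$ (those in positions corresponding to $[t_0,t_1]$), but this distribution of the $N$ applications of $e_i$ over the tensor factors is precisely what has to be proved, and neither tool you name delivers it: Proposition~\ref{prop:Nmulti} concerns the crystal basis $\Bo$, not $\sLS$, and cannot be invoked here (the whole point of this part of the paper is to build the comparison between the two sides, so leaning on the $\Bo$-statement is at best irrelevant and at worst circular), while Lemma~\ref{lem:directed} is about directed paths in $\SBa$ and says nothing about how $e_i^N$ acts on a tensor product; there is also no ``already-known statement for the straight-line constituents'' to reduce to. (A symptom of the hand-waving: $e_i$ is applied exactly $N$ times, while $N(t_1-t_0)=K-L$ is the number of factors affected, so your count of ``unmatched $+$'s'' is off.) The paper closes this gap indirectly: it introduces the concatenation crystal $\BB(\lambda)^{\ast N}$ with the isomorphism $\kappa:\BB(\lambda)^{\otimes N}\rightarrow\BB(\lambda)^{\ast N}$ and Littelmann's dilation $\iota_N:\LS\rightarrow\BB(N\lambda)$, which satisfies $\iota_N(e_i\pi)=e_i^N\iota_N(\pi)$, observes that $\iota_N(\ol{\eta})=\kappa(\ol{\sigma_N(\eta)})$, and from the resulting identity $e_i^{N}\kappa(\ol{\sigma_N(\eta)})=\kappa(\ol{\sigma_N(e_i\eta)})$ reads off the exponents $p_M$ in $e_i^N(\eta_{y_1}\otimes\cdots\otimes\eta_{y_N})=e_i^{p_1}\eta_{y_1}\otimes\cdots\otimes e_i^{p_N}\eta_{y_N}$; it then converts $e_i^{-n_M}\eta_{y_M}$ into $\eta_{r_iy_M}$ using \eqref{eq:etax}. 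If you insist on a direct signature-rule argument on $\sLS^{\otimes N}$, you must actually carry out that bookkeeping, using \eqref{eq:ext-eta} for the $\ve_i,\vp_i$ of the factors and \eqref{eq:etax} for the effect of repeated $e_i$ on a single $\eta_y$, and treat the boundary cases where $t_0$ or $t_1$ is not a breakpoint of $\eta$; as written, the key step is asserted rather than proved.
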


\begin{proof}
First, we define a map 
$\sLS^{\otimes N} \rightarrow \LS^{\otimes N}$ by 
$\eta_{1} \otimes \cdots \otimes \eta_{N} \mapsto 
 \ol{\eta_{1}} \otimes \cdots \otimes \ol{\eta_{N}}$; 
by abuse of notation, we also denote this map by 
$\ol{\phantom{\eta}}:\sLS^{\otimes N} \rightarrow \LS^{\otimes N}$. 
We see from Remark~\ref{rem:SLS-LS}\,(1) and 
the tensor product rule for crystals that 
this map $\ol{\phantom{\eta}}:
\sLS^{\otimes N} \rightarrow \LS^{\otimes N}$ 
is a strict morphism of crystals. 

For $\pi_{1},\,\dots,\,\pi_{N} \in \BB(\lambda)$, 
define the concatenation 
$\pi_{1} \ast \cdots \ast \pi_{N}$ by 
\begin{align*} \label{eq:cat}
& (\pi_{1} \ast \cdots \ast \pi_{N})(t):=
  \sum_{L=1}^{K-1} 
  \pi_{L}(1)+ \pi_{K}(Nt-K+1) \nonumber \\[-1.5mm]
& \hspace{60mm} \text{for \, } \frac{K-1}{N} \le t \le 
\frac{K}{N} \text{ and } 1 \le K \le N,
\end{align*}
and set $\BB(\lambda)^{\ast N}:=
\bigl\{\pi_{1} \ast \cdots \ast \pi_{N} \mid 
\pi_{K} \in \BB(\lambda),\,1 \le K \le L\bigr\}$. 
For $\ti{\pi} \in \BB(\lambda)$, 
we set $\wt(\ti{\pi}):=\ti{\pi}(1)$. 
Also, for $\ti{\pi} \in \BB(\lambda)$ and $i \in I_{\af}$, 
we define $e_{i}\ti{\pi}$ and $f_{i}\ti{\pi}$ 
in exactly the same way as in Definition~\ref{dfn:ro}; 
we deduce from \cite[Lemma~2.7]{Lit95} that 
$e_{i}\ti{\pi},\,f_{i}\ti{\pi} \in 
\BB(\lambda)^{\ast N} \cup \{\bzero\}$. 
Now, for $\ti{\pi} \in \BB(\lambda)$ and $i \in I_{\af}$, 
define $\ve_i (\ti{\pi})$ and $\vp_i (\ti{\pi})$ 
as in \eqref{eq:vevp}. Then we deduce from 
\cite[Lemma~2.7]{Lit95} that 
the set $\BB(\lambda)^{\ast N}$, 
equipped with the maps $\wt$, $e_{i}$, $f_{i}$, $i \in I$, 
and $\ve_{i}$, $\vp_{i}$, $i \in I$, is a crystal with 
weights in $P_{\af}$, and that the map 
$\kappa:\BB(\lambda)^{\otimes N} \rightarrow \BB(\lambda)^{\ast N}$, 
$\pi_{1} \otimes \cdots \otimes \pi_{N} \mapsto 
 \pi_{1} \ast \cdots \ast \pi_{N}$, is 
an isomorphism of crystals. 

For $\pi \in \BB(\lambda)$, we define $\iota_{N}(\pi)$ by 
$(\iota_{N}(\pi))(t):=N\pi(t)$ for $t \in [0,1]$; 
it is easily seen that $N\pi \in \BB(N\lambda)$. 
Thus we obtain an injective map $\iota_{N}:\BB(\lambda) 
\hookrightarrow \BB(N\lambda)$. We know from 
\cite[Lemma~2.4]{Lit95} that 
\begin{align*}
& \wt(\iota_N (\pi)) = N \wt(\pi), &
& \ve_i (\iota_N (\pi)) = N \ve_i (\pi), &
& \vp_i (\iota_N (\pi)) = N \vp_i (\pi), \\
& \iota_N (e_i \pi) = e_i^N \iota_N (\pi), &
& \iota_N (f_i \pi) = f_i^N \iota_N (\pi), 
\end{align*}
for $\pi \in \sLS$ and $i \in I_{\af}$, 
where we understand that $\iota_{N}(\bzero)=\bzero$.

Observe that for every $\eta \in \sLS$, the element 
$\iota_{N}(\ol{\eta}) \in \BB(N\lambda)$ is identical to 
the element $\kappa(\ol{\sigma_{N}(\eta)}) \in \BB(\lambda)^{\ast N}$ 
(as a piecewise-linear, continuous map from $[0,1]$ to 
$\BR \otimes_{\BZ} P$). The equalities in \eqref{eq:SN1} 
follow immediately from this fact and Remark~\ref{rem:SLS-LS}\,(2). 
Let us show the equalities in \eqref{eq:SN2}. 
We give a proof only for $e_{i}$, $i \in I_{\af}$;
the proof for $f_{i}$, $i \in I_{\af}$, is similar. 
Let $\eta \in \sLS$ and $i \in I_{\af}$ be such that 
$e_{i}\eta \ne \bzero$; note that $e_{i}\ol{\eta} \ne \bzero$, 
and $e_{i}^{N}\sigma_{N}(\eta) \ne \bzero$. 
Write $\sigma_{N}(\eta) \in \sLS^{\otimes N}$ as 
$\sigma_{N}(\eta) 
  = \eta_{y_{1}} \otimes \cdots \otimes \eta_{y_{N}}$
with $y_{1},\,\dots,\,y_{N} \in (W^{J})_{\af}$, and assume that 
%
%
\begin{equation} \label{eq:sn1}
e_{i}^{N}\sigma_{N}(\eta) =
e_{i}^{N}(\eta_{y_{1}} \otimes \cdots \otimes \eta_{y_{N}})=
e_{i}^{p_{1}}\eta_{y_{1}} \otimes \cdots \otimes e_{i}^{p_{N}}\eta_{y_{N}}
\end{equation}
for some $p_{1},\,\dots,\,p_{N} \in \BZ_{\ge 0}$ 
such that $p_{1}+\cdots+p_{N}=N$. Then we see that 
%
%
\begin{equation} \label{eq:sn2}
e_{i}^{N} \kappa(\ol{\sigma_{N}(\eta)}) =
\kappa(\ol{e_{i}^{N} \sigma_{N}(\eta)}) =
e_{i}^{p_{1}}\ol{\eta_{y_{1}}} \ast \cdots \ast
e_{i}^{p_{N}}\ol{\eta_{y_{N}}}.
\end{equation}

Next, let us define $t_{0},\,t_{1} \in [0,1]$ by \eqref{eq:t-e}, 
with $\pi=\ol{\eta}$. It follows 
from Lemma~\ref{lem:Na} and the definition of 
the root operator $e_{i}$ that $t_{0}=L/N$ and $t_{1}=K/N$ 
for some $0 \le L < K \le N$; 
note that $\pair{\alpha_{i}^{\vee}}{y_{M}\lambda} < 0$ 
for all $L+1 \le M \le K$ since the function $H^{\ol{\eta}}_{i}(t)$ 
is strictly decreasing on $[t_{0},\,t_{1}]$. 
It is easily seen from the definition of the root operator $e_{i}$ 
that $\sigma_{N}(e_{i}\eta) \in \sLS^{\otimes N}$ is of the form: 
%
%
\begin{equation} \label{eq:sn3}
\sigma_{N}(e_{i}\eta)=
 \eta_{y_{1}} \ast \cdots \ast \eta_{y_{L}}
 \ast
 \underbrace{\eta_{r_{i}y_{L+1}} \ast \cdots \ast
 \eta_{r_{i}y_{K}}}_{\text{``reflected'' by $r_{i}$}}
 \ast \eta_{y_{K+1}} \ast \cdots \ast \eta_{y_N},
\end{equation}
and hence 
$\kappa(\ol{\sigma_{N}(e_{i}\eta)}) \in \LS^{\ast N}$ 
is of the form:
\begin{equation} \label{eq:sn4}
\kappa(\ol{\sigma_{N}(e_{i}\eta)})=
 \ol{\eta_{y_{1}}} \ast \cdots \ast \ol{\eta_{y_{L}}}
 \ast
 \underbrace{\ol{\eta_{r_{i}y_{L+1}}} \ast \cdots \ast
 \ol{\eta_{r_{i}y_{K}}}}_{\text{``reflected'' by $r_{i}$}}
 \ast \ol{\eta_{y_{K+1}}} \ast \cdots \ast \ol{\eta_{y_N}}. 
\end{equation}
Here, we have 
\begin{equation*}
\kappa(\ol{\sigma_{N}(e_{i}\eta)}) = 
\iota_{N}(\ol{e_{i}\eta}) = 
\iota_{N} (e_{i}\ol{\eta}) =
e_{i}^{N} \iota_{N}(\ol{\eta})=
e_{i}^{N} \kappa(\ol{\sigma_{N}(\eta)}). 
\end{equation*}
Combining this with \eqref{eq:sn2} and \eqref{eq:sn4}, 
we obtain
\begin{align*}
 &  \ol{\eta_{y_{1}}} \ast \cdots \ast \ol{\eta_{y_{L-1}}}
 \ast
 \underbrace{\ol{\eta_{r_{i}y_{L}}} \ast \cdots \ast
 \ol{\eta_{r_{i}y_{K}}}}_{\text{``reflected'' by $r_{i}$}}
 \ast \ol{\eta_{y_{K+1}}} \ast \cdots \ast \ol{\eta_{y_{N}}} = 
e_{i}^{p_{1}}\ol{\eta_{y_{1}}} \ast \cdots \ast
e_{i}^{p_{N}}\ol{\eta_{y_{N}}}.
\end{align*}
Therefore, if we set 
$n_{M}:=\pair{\alpha_{i}^{\vee}}{y_{M}\lambda} < 0$
for $L+1 \le M \le K$, then
\begin{equation*}
p_{M}=
 \begin{cases}
 -n_{M} & \text{if $L+1 \le M \le K$}, \\[1.5mm]
 0 & \text{otherwise},
 \end{cases}
\end{equation*}
for $1 \le M \le N$. From this and \eqref{eq:sn1}, 
we deduce that 
\begin{align*}
e_{i}^{N}\sigma_{N}(\eta) 
& =
\eta_{y_{1}} \otimes \cdots \otimes \eta_{y_{L}} \otimes 
e_{i}^{-n_{L+1}}\eta_{y_{L+1}} \otimes \cdots \otimes
e_{i}^{-n_{K}}\eta_{y_{K}} \otimes \eta_{y_{K+1}} \otimes
\cdots \otimes \eta_{y_{N}} \\[1.5mm]
& =
\eta_{y_{1}} \otimes \cdots \otimes \eta_{y_{L}} \otimes 
\eta_{r_i y_{L+1}} \otimes \cdots \otimes
\eta_{r_i y_{K}} \otimes \eta_{y_{K+1}} \otimes
\cdots \otimes \eta_{y_{N}} 
\quad \text{by \eqref{eq:etax}} \\[1.5mm]
& = \sigma_{N}(e_{i}\eta) \quad \text{by \eqref{eq:sn3}}.
\end{align*}
This completes the proof of the proposition.
\end{proof}
%
%
\subsection{Proof of Proposition~\ref{prop:conn-isom}.}
\label{subsec:prf-connisom}

\begin{lem}\label{lem:N-multi}
Let $X$ be a monomial in the Kashiwara operators, i.e., 
$X = g_m g_{m-1} \cdots g_2 g_1$ with 
$g_k \in \bigl\{ e_i,\,f_i \mid i \in I_{\af} \bigr\}$ 
for $1 \le k \le m$. 
\begin{enu}
\item
If $X u_{\lambda } \neq \bzero$, then 
$X \eta_e \neq \bzero$. 
We set $b_{k}:=g_k g_{k-1} \cdots g_1u_{\lambda}$ 
for $0 \le k \le m$, and 
let $N \in \BZ_{> 0}$ be a common multiple of 
$N_{b_{k}}$, $0 \le k \le m$, and $N_{\lambda}$
(see Proposition~\ref{prop:similarB} and the comment 
preceding Lemma~\ref{lem:Na}). 
If $\sigma_N (X u_{\lambda }) = 
u_{x_1} \otimes \cdots \otimes u_{x_N}$ 
with $x_1,\,\ldots,\,x_N \in (W^J)_{\af}$, then 
$\sigma_N (X \eta_e) = 
\eta_{x_1} \otimes \cdots \otimes \eta_{x_N}$.

\item
If $X \eta_e \neq \bzero$, then $X u_{\lambda } \neq \bzero$. 
Let $N \in \BZ_{>0}$ be a multiple of $N_{\lambda}$, 
and write $\sigma_N (X\eta_e)$ as 
$\sigma_N (X \eta_e) = 
\eta_{x_1} \otimes \cdots \otimes \eta_{x_N}$ 
for some $x_1,\,\ldots,\,x_N \in (W^J)_{\af}$. 
Then, $\sigma_N (X u_{\lambda }) = 
u_{x_1} \otimes \cdots \otimes u_{x_N}$.
\end{enu}
\end{lem}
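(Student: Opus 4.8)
The plan is to prove (1) and (2) together by induction on the length $m$ of the monomial $X=g_m g_{m-1}\cdots g_1$, the engine being the intertwining relations $\sigma_N(g_i\,\cdot\,)=g_i^N\sigma_N(\cdot)$ from \eqref{eq:sigmaB2} and \eqref{eq:SN2}, together with the fact that $\ve_i$ and $\vp_i$ agree on the ``extremal'' elements $u_x$ and $\eta_x$ by \eqref{eq:ext-u} and \eqref{eq:ext-eta}. The base case $m=0$ is immediate, since $\sigma_N(u_\lambda)=u_\lambda^{\otimes N}$, $\sigma_N(\eta_e)=\eta_e^{\otimes N}$, and $u_e=u_\lambda$. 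For the inductive step of (1), write $X=g_m X'$ with $X'=g_{m-1}\cdots g_1$; then $b_{m-1}=X'u_\lambda\ne\bzero$, so Proposition~\ref{prop:similarB} applies (as $N$ is a multiple of each $N_{b_k}$) and gives $\sigma_N(X'u_\lambda)=u_{x_1'}\otimes\cdots\otimes u_{x_N'}$ and $\sigma_N(Xu_\lambda)=g_m^N\sigma_N(X'u_\lambda)=u_{x_1}\otimes\cdots\otimes u_{x_N}$, both tensor products of $u$'s, while the induction hypothesis yields $\sigma_N(X'\eta_e)=\eta_{x_1'}\otimes\cdots\otimes\eta_{x_N'}$; it remains to check $g_m^N(\eta_{x_1'}\otimes\cdots\otimes\eta_{x_N'})=\eta_{x_1}\otimes\cdots\otimes\eta_{x_N}$.

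Say $g_m=e_i$ (the case of $f_i$ is identical). The tensor product (signature) rule determines, at each of the $N$ successive applications of $e_i$, which tensor factor is acted on purely from the values $\ve_i,\vp_i$ of the current factors; and since $\ve_i(e_i^k b)=\ve_i(b)-k$ and $\vp_i(e_i^k b)=\vp_i(b)+k$, these values evolve in a way depending only on the initial ones. As the initial values coincide for $u_{x_j'}$ and $\eta_{x_j'}$ by \eqref{eq:ext-u} and \eqref{eq:ext-eta}, the $j$-th factor is acted on the same number $p_j$ of times on the two sides, with $\sum_j p_j=N$ and $e_i^{p_j}u_{x_j'}=u_{x_j}$. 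Now $u_{x_j}=x_j u_\lambda$ is an extremal element, hence sits at an endpoint of each of its $i$-strings; since $e_i^p b$ lies in the interior of the $i$-string through $b$ whenever $0<p<\ve_i(b)$, we must have $p_j\in\{0,\ve_i(u_{x_j'})\}$. If $p_j=0$, then $u_{x_j}=u_{x_j'}$, so $x_j=x_j'$ and $e_i^0\eta_{x_j'}=\eta_{x_j'}=\eta_{x_j}$. If $p_j=\ve_i(u_{x_j'})\ge1$, then by the definition \eqref{eq:W-action} of the $W_{\af}$-action $e_i^{p_j}u_{x_j'}=r_i u_{x_j'}=r_i x_j' u_\lambda=u_{\PJ(r_i x_j')}$, so $x_j=\PJ(r_i x_j')$, and likewise $e_i^{p_j}\eta_{x_j'}=r_i\eta_{x_j'}=\eta_{\PJ(r_i x_j')}=\eta_{x_j}$ by \eqref{eq:etax}. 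This proves the tensor identity, hence the inductive step of (1); moreover $X\eta_e\ne\bzero$, because $\sigma_N(X\eta_e)=g_m^N\sigma_N(X'\eta_e)$ vanishes exactly when $\ve_i(\eta_{x_1'}\otimes\cdots\otimes\eta_{x_N'})<N$, equivalently $\ve_i(u_{x_1'}\otimes\cdots\otimes u_{x_N'})<N$, i.e.\ when $\sigma_N(Xu_\lambda)=\bzero$, which is false. (We also record for later use that, by \eqref{eq:ext-eta}, each $\eta_x$ sits at an endpoint of every $i$-string, so the interior elements $e_i^p\eta_{x'}$ with $0<p<\ve_i(\eta_{x'})$ are never of the form $\eta_x$.)

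For (2) we first show $Xu_\lambda\ne\bzero$: if not, let $k$ be least with $b_k=g_k\cdots g_1 u_\lambda=\bzero$; applying part (1) to $g_{k-1}\cdots g_1$ (which sends $u_\lambda$ to $b_{k-1}\ne\bzero$), for a common multiple $N_0$ of $N_\lambda$ and $N_{b_0},\dots,N_{b_{k-1}}$, yields parallel tensor products $\sigma_{N_0}(b_{k-1})=u_{y_1}\otimes\cdots\otimes u_{y_{N_0}}$ and $\sigma_{N_0}(g_{k-1}\cdots g_1\eta_e)=\eta_{y_1}\otimes\cdots\otimes\eta_{y_{N_0}}$; since whether $g_k^{N_0}$ annihilates depends only on the matching $\ve_i,\vp_i$ data, $g_k^{N_0}$ kills both, so $g_k\cdots g_1\eta_e=\bzero$ and hence $X\eta_e=\bzero$, contrary to hypothesis. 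Thus $Xu_\lambda\ne\bzero$. Now take the given multiple $N$ of $N_\lambda$, choose $N'=MN$ that is also a common multiple of $N_\lambda$ and all $N_{b_k}$, and apply part (1) with $N'$: $\sigma_{N'}(Xu_\lambda)=u_{y_1}\otimes\cdots\otimes u_{y_{N'}}$ and $\sigma_{N'}(X\eta_e)=\eta_{y_1}\otimes\cdots\otimes\eta_{y_{N'}}$ with the same $y_l$. Using $\sigma_{N'}=\sigma_M^{\otimes N}\circ\sigma_N$ — valid for $\Bo$ as noted in the text, and for $\sLS$ by a direct check from the defining formula, under which $\sigma_M(\eta_x)=\eta_x^{\otimes M}$ — together with the hypothesis $\sigma_N(X\eta_e)=\eta_{x_1}\otimes\cdots\otimes\eta_{x_N}$, we get $\eta_{y_1}\otimes\cdots\otimes\eta_{y_{N'}}=\eta_{x_1}^{\otimes M}\otimes\cdots\otimes\eta_{x_N}^{\otimes M}$, so the sequence $(y_l)_l$ is $(x_1,\dots,x_1,\dots,x_N,\dots,x_N)$ with each $x_j$ repeated $M$ times. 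Writing $\sigma_N(Xu_\lambda)=c_1\otimes\cdots\otimes c_N$ with $c_j\in\Bo$ and applying $\sigma_M^{\otimes N}$ gives $\sigma_M(c_j)=u_{x_j}^{\otimes M}$ for every $j$; since $\sigma_M(u_{x_j})=u_{x_j}^{\otimes M}$ (proved by induction on the number of simple reflections in a word for $x_j$, via the $r_i$-transitions, exactly as in the derivation of \eqref{eq:etax}) and $\sigma_M$ is injective, $c_j=u_{x_j}$, i.e.\ $\sigma_N(Xu_\lambda)=u_{x_1}\otimes\cdots\otimes u_{x_N}$, as desired.

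The step I expect to be the main obstacle is the factor-by-factor analysis in the inductive step of (1): arguing that a power of a Kashiwara operator carrying one tensor product of extremal elements to another must act on each factor in an ``all or nothing'' fashion, so that the bookkeeping of the factors is governed entirely by the transition rules $u_x\mapsto u_{\PJ(r_i x)}$ and $\eta_x\mapsto\eta_{\PJ(r_i x)}$. The secondary technical point is the passage in (2) from an arbitrary multiple $N$ of $N_\lambda$ to a larger $N'$ to which Proposition~\ref{prop:similarB} (hence part (1)) applies, which is where the compatibility $\sigma_{MN}=\sigma_M^{\otimes N}\circ\sigma_N$ and injectivity of $\sigma_M$ enter.
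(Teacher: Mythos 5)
Your argument is correct. For part (1) it is essentially the paper's own proof: induction on $m$, writing $X=g_mY$, invoking Proposition~\ref{prop:similarB} for the prefix, matching the $\ve_i,\vp_i$ data of $u_{x'_j}$ and $\eta_{x'_j}$ via \eqref{eq:ext-u} and \eqref{eq:ext-eta} so that the tensor product rule distributes $g_m^N$ identically on both sides, and then transferring factor by factor through \eqref{eq:etax}. The one place you add genuine detail is the ``all or nothing'' step: the paper merely says that $g_m^{p_L}\eta_{y_L}=\eta_{x_L}$ is deduced ``by using \eqref{eq:etax}'', leaving implicit exactly the observation you spell out, namely that $u_{x}$ (and $\eta_x$) always has $\min(\ve_i,\vp_i)=0$, so interior points of an $i$-string never occur among them and hence $p_L\in\{0,\ve_i(u_{y_L})\}$, after which \eqref{eq:W-action} and \eqref{eq:etax} finish the transfer. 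For part (2) you genuinely diverge: the paper intends the symmetric induction with the roles of $u_x$ and $\eta_x$ exchanged (which works directly for any multiple $N$ of $N_{\lambda}$, since $\sigma_N$ on $\sLS$ outputs a tensor product of $\eta_x$'s by its very definition), whereas you first get $Xu_{\lambda}\neq\bzero$ by contradiction from part (1) applied to a prefix, then stretch from $N$ to a common multiple $N'=MN$, apply part (1) there, and descend using $\sigma_{MN}=\sigma_M^{\otimes N}\circ\sigma_N$, the identity $\sigma_M(u_x)=u_x^{\otimes M}$ (proved by the same $r_i$-transition induction as \eqref{eq:etax}), and the injectivity of $\sigma_M$. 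Both routes are sound; the paper's is shorter because it reuses the same induction verbatim, while yours avoids repeating it at the cost of the extra compatibility facts about the $N$-multiple maps, all of which are available (or straightforwardly checked) in the paper.
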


\begin{proof}
We give a proof only for part (1); 
the proof for part (2) is similar. 
We proceed by induction on $m$. 
If $m=0$, then the assertion is obvious. 
Assume that $m > 0$, and take $i \in I_{\af}$ for which $g_{m}=e_{i}$ or $f_{i}$. 
Set $Y := g_{m-1} \cdots g_2 g_1$, and 
write $\sigma_N (Yu_{\lambda })=\sigma_N (b_{m-1})$ as
$\sigma_N (Yu_{\lambda }) = 
u_{y_1} \otimes \cdots \otimes u_{y_N}$ 
for some $y_1,\,\ldots,\,y_N \in (W^J)_{\af}$. 
Then, by our induction hypothesis, 
$Y \eta_e \neq \bzero$, and 
$\sigma_N (Y\eta_e) = \eta_{y_1} \otimes \cdots \otimes \eta_{y_N}$.
Also, we see from the tensor product rule for crystals, 
using \eqref{eq:ext-eta}, that 
$\ve_{i}(\sigma_N (Yu_{\lambda}))=\ve_{i}(\sigma_N (Y\eta_e))$ and 
$\vp_{i}(\sigma_N (Yu_{\lambda}))=\vp_{i}(\sigma_N (Y\eta_e))$, 
and hence that $\ve_{i}(Yu_{\lambda})=\ve_{i}(Y\eta_e)$ and 
$\vp_{i}(Yu_{\lambda})=\vp_{i}(Y\eta_e)$ by 
\eqref{eq:sigmaB1}, \eqref{eq:SN1}. Therefore, 
$Xu_{\lambda}=g_{m}Yu_{\lambda} \ne \bzero$ implies that 
$X\eta_{e}=g_{m}Y\eta_{e} \ne \bzero$. Furthermore, if
\begin{align*}
\sigma_N (Xu_{\lambda }) = 
g_m^N (u_{y_1} \otimes \cdots \otimes u_{y_N}) = 
g_m^{p_1} u_{y_1} \otimes \cdots \otimes g_m^{p_N} u_{y_N}
\end{align*}
for some $p_1,\,\ldots,\,p_N \in \BZ_{\ge 0}$ 
with $p_1 + \cdots + p_N = N$, then it follows 
from the tensor product rule for crystals, 
together with \eqref{eq:ext-eta}, that 
\begin{align*}
\sigma_N (X\eta_{e}) = 
g_m^N (\eta_{y_1} \otimes \cdots \otimes \eta_{y_N}) = 
g_m^{p_1} \eta_{y_1} \otimes \cdots \otimes g_m^{p_N} \eta_{y_N}. 
\end{align*}
Since $g_m^{p_1} u_{y_1} \otimes \cdots \otimes g_m^{p_N} u_{y_N} = 
\sigma_N (Xu_{\lambda}) = 
u_{x_1} \otimes \cdots \otimes u_{x_N}$ by the assumption, 
we deduce, by using \eqref{eq:etax}, that 
$g_m^{p_L} \eta_{y_L} = \eta_{x_L}$ for all $1 \le L \le N$. 
This proves part (1). 
\end{proof}

\begin{proof}[{Proof of Proposition \ref{prop:conn-isom}}]
It suffices to show the following 
for monomials $X$, $Y$ in the Kashiwara operators 
(cf. \cite[Proof of Theorem~4.1]{Kas96} and 
\cite[Proof of Theorem~5.1]{NS03}):
\begin{itemize}

\item[(i)]
$X u_{\lambda } \neq \bzero$ in $\Bo$ if and only if 
$X \eta_e \neq \bzero$ in $\sLSo$; 

\item[(ii)] 
$X u_{\lambda } = Y u_{\lambda }$ in $\Bo$ 
if and only if $X \eta_e = Y \eta_e$ in $\sLSo$.
\end{itemize}
Part (i) has already been shown in Lemma~\ref{lem:N-multi}. 
Let us show part (ii). 
Assume that $X u_{\lambda } = Y u_{\lambda } \neq \bzero$. 
By Lemma \ref{lem:N-multi}\,(1), 
we have $X \eta_e \neq \bzero$ and $Y \eta_e \neq \bzero$. 
Take a multiple $N \in \BZ_{>0}$ of $N_{\lambda}$ 
such that the assumption of 
Lemma \ref{lem:N-multi}\,(1) is satisfied for both of 
the elements $Xu_{\lambda}$ and $Yu_{\lambda}$, and 
write $\sigma_N (X u_{\lambda})$ and $\sigma_N (Y u_{\lambda })$ as:
\begin{align*}
& \sigma_N (X u_{\lambda }) = 
  u_{x_1} \otimes u_{x_2} \otimes \cdots \otimes u_{x_N}, &
&\sigma_N (Y u_{\lambda }) = 
  u_{y_1} \otimes u_{y_2} \otimes \cdots \otimes u_{y_N}, 
\end{align*}
for some $x_1,\,\ldots,\,x_N \in (W^J)_{\af}$ and 
$y_1,\,\ldots,\,y_N \in (W^J)_{\af}$. 
Then, by Lemma \ref{lem:N-multi}\,(1),
\begin{align*}
& \sigma_N (X \eta_e) = 
  \eta_{x_1} \otimes \eta_{x_2} \otimes \cdots \otimes \eta_{x_N}, &
& \sigma_N (Y \eta_e) = 
  \eta_{y_1} \otimes \eta_{y_2} \otimes \cdots \otimes \eta_{y_N}.
\end{align*}
Since $X u_{\lambda } = Y u_{\lambda }$, 
we have $x_L = y_L$ for all $1 \le L \le N$. 
Therefore, we deduce that 
$\sigma_N (X \eta_e) = \sigma_N (Y \eta_e)$, 
and hence $X \eta_e = Y \eta_e$ by the injectivity of $\sigma_N$. 
Thus, we have proved the ``only if" part of part (ii). 
The ``if" part can be shown similarly; 
use Lemma~\ref{lem:N-multi}\,(2) instead of Lemma~\ref{lem:N-multi}\,(1). 
This completes the proof of Proposition \ref{prop:conn-isom}. 
\end{proof}
%
%
\begin{rem} \label{rem:etae-ext}
Since $\CB_{0}(\lambda) \cong \BB^{\si}_{0}(\lambda)$ as crystals, 
we can define the action of the Weyl group $W_{\af}$
on $\BB^{\si}_{0}(\lambda)$ by the same formula as \eqref{eq:W-action} 
for the one on $\CB_{0}(\lambda)$. Moreover, from equation \eqref{eq:etax},
we see that the element $\eta_{e}=(e \,;\,0,\,1) \in \BB^{\si}_{0}(\lambda)$
is an extremal element. 
\end{rem}
%
%
\section{Directed paths in $\SBa$.}
\label{sec:directed-path}
%
%
\subsection{Some technical lemmas.}
\label{subsec:technical}

Let $J$, $K$ be subsets of $I$ such that $K \subset J$; 
we see by the definitions that $(W^{J})_{\af} \subset (W^{K})_{\af}$. 
%
%
\begin{lem} \label{lem:PJK}
Let $J$, $K$ be subsets of $I$ such that $K \subset J$, 
and let $x \in (W^{K})_{\af}$ and $\beta \in \prr$ be 
such that $x \edge{\beta} r_{\beta}x$ in $\SBo{K}$. 
Then there exists a directed path from 
$\PJ(x)$ to $\PJ(r_{\beta}x)$ in $\SB$. 
\end{lem}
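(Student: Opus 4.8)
The plan is to reduce to the case of a single edge by induction, but since we are already given a single edge $x \edge{\beta} r_{\beta}x$ in $\SBo{K}$, the actual induction should be carried out on the semi-infinite length, or rather by using the structural description of edges from Corollary~\ref{cor:pi_}. First I would record what $\PJ$ does to the two endpoints: by Lemma~\ref{lem:PJ} and Lemma~\ref{lem:J-adj}, writing $x = w z_{\xi} t_{\xi}$ with $w \in W^{K}$ and $\xi \in Q^{\vee,\,\text{\rm $K$-ad}}$, the element $\PJ(x)$ is obtained by composing with the appropriate $W_J$-coset projection and $J$-adjustment of the translation part. The key point is that passing from $K$ to $J$ (with $K \subset J$) only "collapses" more, so an edge should either survive (possibly with a relabeled root) or become trivial, i.e. $\PJ(x) = \PJ(r_{\beta}x)$, in which case the empty directed path works.

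The cleanest route is to use the already-proved equivalence in Proposition~\ref{prop:sib-lv0} together with Proposition~\ref{prop:pi_}: an edge $x \edge{\beta} r_{\beta}x$ in $\SBo{K}$ corresponds, on applying $\lambda$, to an edge $x\lambda \edge{\beta} r_{\beta}x\lambda$ in $\mathrm{LZ}(\lambda)$ for a weight $\lambda \in P^{+}$ with $J_{\lambda} = K$. However, here we want to land in $\SB = \SBo{J}$, so I would instead choose $\lambda' \in P^{+}$ with $J_{\lambda'} = J$ and analyze how the projection $\PJ$ interacts with the covering relation. Concretely, I would write $\beta = \alpha$ or $\beta = \alpha + \delta$ as in Corollary~\ref{cor:pi_}, with $w^{-1}\alpha \in \Delta^{+} \setminus \Delta_{K}^{+}$, and split into two cases: either $w^{-1}\alpha \in \Delta^{+} \setminus \Delta_{J}^{+}$, or $w^{-1}\alpha \in \Delta_{J}^{+} \setminus \Delta_{K}^{+}$. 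In the first case I expect the edge to persist: $\PJ(x) \edge{\beta'} \PJ(r_{\beta}x)$ directly, where $\beta'$ is the corresponding root after the $W_J$-projection. In the second case, $r_{\beta}$ lies in $(W_J)_{\af}$ after conjugation by the relevant Weyl-group element, so $\PJ(x) = \PJ(r_{\beta}x)$ and there is nothing to prove.

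The main obstacle will be the bookkeeping in the first case: verifying that after applying $\PJ$, the pair $(\PJ(x), \PJ(r_{\beta}x))$ is still connected by a \emph{directed} path in $\SB$ of the right orientation, i.e. that the semi-infinite length goes \emph{up} by the correct amount (not just that an edge exists in the undirected sense). This is where I would lean on Lemma~\ref{lem:sil} to compute $\sil(\PJ(r_{\beta}x)) - \sil(\PJ(x))$ in terms of $\ell$ of $W^J$-parts and pairings of translation parts with $\rho$, together with the length-additivity for minimal coset representatives (as in the computation in the proof of Lemma~\ref{lem:zt}); the inequalities of Lemma~\ref{lem:ineq} control the error terms. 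One technical subtlety is that $\PJ(x)$ and $\PJ(r_\beta x)$ need not be related by a \emph{single} reflection, so I would actually factor the transition $x \mapsto r_\beta x$ through a reduced word for the relevant minimal coset representative in $W^K/W^J$-type data and apply the diamond lemmas (Lemmas~\ref{lem:dia1} and \ref{lem:dia2}) repeatedly, exactly as in the proof of Proposition~\ref{prop:sib-lv0}, to build up the directed path one edge at a time. Thus the final argument is an induction on the length of such a reduced word, with the diamond lemmas supplying the inductive step and Proposition~\ref{prop:P} guaranteeing that every intermediate vertex indeed lies in $(W^J)_{\af}$.
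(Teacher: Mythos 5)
Your reduction to the two cases supplied by Corollary~\ref{cor:pi_} is a reasonable start, and the second case is handled correctly: if $w^{-1}\alpha \in \Delta_{J}^{+}\setminus\Delta_{K}^{+}$, then the finite part of $x^{-1}\beta$ lies in $\Delta_{J}$, so $r_{x^{-1}\beta} \in (W_{J})_{\af}$ and $\PJ(r_{\beta}x)=\PJ(x)$, and the empty path works. But in the first case your plan has a genuine gap. First, the "expected" behaviour is simply false: a single edge of $\SBo{K}$ need not project to a single edge (or a trivial step) of $\SB$. For example, in type $A_{2}$ with $K=\emptyset$, $J=\{1\}$, the edge $r_{2}\edge{-\alpha_{2}+\delta}t_{\alpha_{2}^{\vee}}$ lies in $\SBo{\emptyset}$, while $\PJ(r_{2})=r_{2}$ and $\PJ(t_{\alpha_{2}^{\vee}})=r_{1}t_{\alpha_{2}^{\vee}}$ have semi-infinite lengths $1$ and $3$; since every edge of $\SB$ raises $\sil$ by exactly $1$, no single edge can join them, and any directed path must have length $2$. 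You hedge against this, but the fallback you describe is where the real problem lies: the induction "on the length of a reduced word for the relevant minimal coset representative in $W^{K}/W^{J}$-type data" is not a well-defined parameter ($W^{K}/W^{J}$ is not a quotient, and no such coset representative is specified), and you cannot "factor the transition $x \mapsto r_{\beta}x$" through intermediate steps of the projected relation before knowing that $\PJ(x)\le_{\si}\PJ(r_{\beta}x)$ — that is exactly what is to be proved, so the plan is circular at this point. Moreover, the diamond lemmas (Lemmas~\ref{lem:dia1}, \ref{lem:dia2}) only move edges around inside one fixed graph; by themselves they never transfer an edge of $\SBo{K}$ into an edge of $\SB$, so they cannot "supply the inductive step" for the transfer you need. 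Proposition~\ref{prop:P} likewise only gives well-definedness of $\PJ$; it does not certify that the intermediate vertices of a path you hope to build lie in $(W^{J})_{\af}$.

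What is missing is the actual engine of the paper's argument: the transfer from $\SBo{K}$ to $\SB$ is carried out only for \emph{simple} labels $\beta=\alpha_{i}$, where \eqref{eq:simple2} and Lemma~\ref{lem:r_i} show directly that either $\pair{\alpha_{i}^{\vee}}{x\lambda}>0$ and $\PJ(x)\edge{\alpha_{i}}\PJ(r_{i}x)$ in $\SB$, or $\pair{\alpha_{i}^{\vee}}{x\lambda}=0$ and $\PJ(r_{i}x)=\PJ(x)$ (here $\lambda\in P^{+}$ with $J_{\lambda}=J$). The general case is then reduced to this one by induction on the length $k$ of a directed path, obtained from \cite[Lemma~1.4]{AK} applied to $\Lambda\in P^{+}$ with $J_{\Lambda}=K$, from $x\Lambda$ down to an element congruent to $\Lambda$ modulo $\BQ\delta$ — i.e.\ on how far $x$ is from an element $z_{\xi}t_{\xi}$ with $\xi\in\kad$ (the base case being Lemma~\ref{lem:zt}). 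In the inductive step one applies Lemma~\ref{lem:dia1} \emph{in} $\SBo{K}$ to the pair $x\edge{\beta}y$, $x\edge{\alpha_{i}}r_{i}x$, uses the simple-root case for $\PJ(x)\to\PJ(r_{i}x)$ and the induction hypothesis for $\PJ(r_{i}x)\to\PJ(r_{i}y)$, and — crucially — corrects the endpoint with Lemma~\ref{lem:directed}\,(3) when $\PJ(r_{i}y)\neq\PJ(y)$, since the concatenated path ends at $\PJ(r_{i}y)$ rather than at $\PJ(y)$. Your proposal contains neither the correct induction parameter, nor the simple-root base mechanism, nor this endpoint-correction step, so as it stands it does not yield a proof.
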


\begin{proof}
Let $\lambda$ and $\Lambda$ be (arbitrary) elements in $P^{+}$
such that $J_{\lambda}=
\bigl\{ i \in I \mid \pair{\alpha_i^{\vee}}{\lambda}= 0 \bigr\}=J$, 
and $J_{\Lambda}=\bigl\{ i \in I \mid 
\pair{\alpha_i^{\vee}}{\Lambda}= 0 \bigr\}=K$. 
%
%
\begin{claim} \label{c:PJK1}
If $\beta$ is a simple root, then the assertion of the lemma holds. 
\end{claim}

\noindent
{\it Proof of Claim~\ref{c:PJK1}.}
Assume that $\beta=\alpha_{i}$ for some $i \in I_{\af}$. 
Because $x \edge{\alpha_i} r_{i}x$ in $\SBo{K}$, 
we see from \eqref{eq:simple2} that 
$\pair{\alpha_{i}^{\vee}}{x\Lambda} > 0$, and hence
$\pair{\alpha_{i}^{\vee}}{x\lambda} \ge 0$ since $K \subset J$. 
Assume that $\pair{\alpha_{i}^{\vee}}{x\lambda} > 0$. 
Since $x\lambda=\PJ(x)\lambda$, we see 
from Lemma~\ref{lem:r_i} and \eqref{eq:simple2} that 
$r_{i}\PJ(x) \in (W^{J})_{\af}$, and 
$\PJ(x) \edge{\alpha_{i}} r_{i}\PJ(x)$ in $\SB$;
note that $r_{i}\PJ(x)=\PJ(r_{i}\PJ(x))=\PJ(r_{i}x)$. 
Thus we obtain 
$\PJ(x) \edge{\alpha_{i}} \PJ(r_{i}x)$ in $\SB$. 
If $\pair{\alpha_{i}^{\vee}}{x\lambda} = 0$, then 
we see that $x^{-1}\alpha_{i} \in (\Delta_{J})_{\af}$, 
which implies that $\PJ(r_{i}x)=
\PJ(xr_{x^{-1}\alpha_{i}})=\PJ(x)$.  
This proves Claim~\ref{c:PJK1}. \bqed

\vspace{3mm}

Let us consider the case of general $\beta \in \prr$. 
By \cite[Lemma~1.4]{AK}, 
there exist $i_{1},\,i_{2},\,\dots,\,i_{k} \in I_{\af}$ such that
\begin{equation*}
x\Lambda=\mu_{k} \edge{\alpha_{i_k}} \mu_{k-1} \edge{\alpha_{i_{k-1}}} 
\cdots \edge{\alpha_{i_2}} \mu_{1} \edge{\alpha_{i_1}} \mu_{0}, 
\end{equation*}
with $\mu_{0} \equiv \Lambda \mod \BQ\delta$, where 
$\mu_{m}=r_{i_{m+1}} \cdots r_{i_{k}}x\Lambda$ 
for $0 \le m \le k$. We show the assertion 
by induction on $k$. 
Assume that $k=0$; observe that in this case, 
$x=z_{\xi}t_{\xi}$ for some $\xi \in \kad$ since 
$x\Lambda \equiv \Lambda \mod \BQ\delta$. 
It follows from Lemma~\ref{lem:zt} that 
$\beta=\alpha_{i}$ for some $i \in I \setminus K$, 
and hence the assertion follows immediately 
from Claim~\ref{c:PJK1}.
Assume that $k > 0$. For simplicity of notation, 
we set $y:=r_{\beta}x \in (W^{K})_{\af}$ and $i:=i_{k}$. 
Then, by \eqref{eq:simple2}, 
we have $x \edge{\alpha_{i}} r_{i}x$ in $\SBo{K}$. 
Write $y^{-1}\alpha_{i}=\alpha+n\delta$ with 
$\alpha \in \Delta$ and $n \in \BZ$; 
by Lemma~\ref{lem:dia1}, $\alpha \notin \Delta_{K}^{+}$.
If $\alpha$ is a negative root, then 
$\beta=\alpha_{i}$ by Lemma~\ref{lem:dia1}, and hence 
the assertion follows immediately from Claim~\ref{c:PJK1}.
Assume that $\alpha \in \Delta^{+} \setminus 
\Delta_{K}^{+}$. By Lemma~\ref{lem:dia1} applied to 
the subset $K$ of $I$, we obtain the following diagram in $\SBo{K}$:
\begin{equation} \label{CD:PJK2}
\begin{diagram}
\node{x}\arrow{s,l}{\alpha_{i}} \arrow{e,t}{\beta} 
\node{y}\arrow{s,r,..}{\alpha_{i}} \\
\node{r_{i}x} \arrow{e,t,..}{r_{i}\beta} \node{r_{i}y}
\end{diagram}
\end{equation}
We see from Claim~\ref{c:PJK1} that 
there exists a directed path from 
$\PJ(x)$ to $\PJ(r_{i}x)$ in $\SB$. 
Also, by our induction hypothesis applied to $r_{i}x$, 
there exists a directed path from 
$\PJ(r_{i}x)$ to $\PJ(r_{i}y)$ in $\SB$. 
Concatenating these, we obtain 
a directed path from $\PJ(x)$ to $\PJ(r_{i}y)$ in $\SB$, 
which proves the lemma in the case that 
$\PJ(r_{i}y)=\PJ(y)$. Now, let us assume that 
$\PJ(r_{i}y) \ne \PJ(y)$. We see from \eqref{CD:PJK2} 
and \eqref{eq:simple2} that $\pair{\alpha_{i}^{\vee}}{y\Lambda} > 0$, 
and hence $\pair{\alpha_{i}^{\vee}}{y\lambda} \ge 0$. 
Since $\PJ(yr_{y^{-1}\alpha_{i}})=
\PJ(r_{i}y) \ne \PJ(y)$ by our assumption, 
it follows from Proposition~\ref{prop:P}, 
together with Remark~\ref{rem:stabilizer}, that 
$\pair{\alpha_{i}^{\vee}}{y\lambda} \ne 0$, and hence 
$\pair{\alpha_{i}^{\vee}}{y\lambda} > 0$.
Since $\PJ(y)\lambda = y\lambda$, 
we see from Lemma~\ref{lem:r_i} that 
$r_{i}\PJ(y) \in (W^{J})_{\af}$; 
note that $r_{i}\PJ(y)=\PJ(r_{i}\PJ(y))=\PJ(r_{i}y)$, and 
$\pair{\alpha_{i}^{\vee}}{\PJ(r_{i}y)\lambda} < 0$. 
Similarly, we can show that 
$\pair{\alpha_{i}^{\vee}}{\PJ(x)\lambda} \ge 0$ 
since $x \edge{\alpha_{i}} r_{i}x$ in $\SBo{K}$. 
Therefore, by applying Lemma~\ref{lem:directed}\,(3) 
to the directed path from $\PJ(x)$ to $\PJ(r_{i}y)$ in $\SB$ 
obtained above, we see that 
there exists a directed path from $\Pi^{J}(x)$ to 
$r_{i}\PJ(r_{i}y)=\PJ(y)$ in $\SB$. This proves the lemma. 
\end{proof}

Here, we make the following remark.
%
%
\begin{rem} \label{rem:weight}
Let $J$ be a subset of $I$. 
Let $x,\,y \in (W^{J})_{\af}$ and $\beta \in \prr$ be 
such that $x \edge{\beta} y$ in $\SB$. 
Write $x$ and $y$ as 
$x=wz_{\xi}t_{\xi}$ and 
$y=vz_{\zeta}t_{\zeta}$, with $w,\,v \in W$ and 
$\xi,\,\zeta \in \jad$ (see \eqref{eq:W^J_af}). 
We see from Corollary~\ref{cor:pi_} that 
$\beta=w\gamma+n\delta$ for some $\gamma \in \Delta^{+}
\setminus \Delta_{J}^{+}$ and $n \in \bigl\{0,\,1\bigr\}$. 
Also, it follows easily that 
$\zeta-\xi = nz_{\xi}^{-1}\gamma^{\vee}$. 
In particular, we have 
$\pJ{\zeta-\xi}=n\pJ{\gamma^{\vee}} \in \QJp{J^{c}}=
\sum_{j \in J^{c}} \BZ_{\ge 0} \alpha_{j}^{\vee}$, 
where $J^{c}:=I \setminus J$, and 
$\pJ{\,\cdot\,}=\pJ{\,\cdot\,}_{J^{c}}:
Q^{\vee} \twoheadrightarrow 
Q_{J^{c}}^{\vee} = \bigoplus_{j \in J^{c}} \BZ\alpha_{j}^{\vee}$ 
denote the projection from $Q^{\vee}=Q_{J^{c}}^{\vee} \oplus Q_{J}^{\vee}$
onto $Q_{J^{c}}^{\vee}$ with kernel $Q_{J}^{\vee}$. 
\end{rem}
%
%
\begin{lem} \label{lem:di1}
Let $J$ be a subset of $I$. 
For each $i \in J^{c}=I \setminus J$ and $\xi \in \jad$, 
there exists a positive real root $\beta \in \prr$ 
of the form $\beta=-\gamma+\delta$, 
with $\gamma \in 
\Delta^{+} \setminus \Delta_{J}^{+}$,
satisfying the conditions that 
$\pJ{\gamma^{\vee}}=\alpha_{i}^{\vee}$, 
$r_{\beta}z_{\xi}t_{\xi} \in (W^{J})_{\af}$, and 
$r_{\beta}z_{\xi}t_{\xi} \edge{\beta} z_{\xi}t_{\xi}$ in $\SB$.
\end{lem}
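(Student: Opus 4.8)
Throughout fix $\lambda\in P^{+}$ with $J_{\lambda}=J$ (such a $\lambda$ exists), so that Proposition~\ref{prop:sib-lv0} is available, and recall $\sil(z_{\xi}t_{\xi})=\ell(z_{\xi})+2\pair{\xi}{\rho}$.

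\textbf{Choice of $\gamma$.} By Lemma~\ref{lem:J-adj}(1) there is a unique $\phi_{J}(-\alpha_{i}^{\vee})\in Q_{J}^{\vee}$ with $-\alpha_{i}^{\vee}+\phi_{J}(-\alpha_{i}^{\vee})\in\jad$; put $\gamma^{\vee}:=\alpha_{i}^{\vee}-\phi_{J}(-\alpha_{i}^{\vee})$, so that $-\gamma^{\vee}$ is $J$-adjusted (equivalently $\pair{\gamma^{\vee}}{\eta}\in\{0,1\}$ for all $\eta\in\Delta_{J}^{+}$) and $\pJ{\gamma^{\vee}}=\pJ{\alpha_{i}^{\vee}}=\alpha_{i}^{\vee}$. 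The first point to settle is that $\gamma^{\vee}$ is actually the coroot of a root $\gamma\in\Delta^{+}$ with $\gamma\notin\Delta_{J}^{+}$ and with positive $\alpha_{i}$-coefficient (equivalently, that $\gamma$ is the unique element of $\Delta^{+}\setminus\Delta_{J}^{+}$ with $\pJ{\gamma^{\vee}}=\alpha_{i}^{\vee}$ and $-\gamma^{\vee}\in\jad$); this is a root-datum statement, and I would deduce it from the description of $J$-adjusted elements in \cite[Lemma~3.7, 3.8]{LNSSS13a} (cf.\ Definition~\ref{def:J-adj}). Note that $\gamma$ depends only on $i$ and $J$, not on $\xi$. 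Set $\beta:=-\gamma+\delta$; since $\gamma\in\Delta^{+}$ we have $\beta=(-\gamma)+1\cdot\delta\in\prr$, and $\pJ{\gamma^{\vee}}=\alpha_{i}^{\vee}$ by construction.

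\textbf{Membership in $(W^{J})_{\af}$.} By \eqref{eq:refl}, $r_{\beta}=r_{\gamma}t_{-\gamma^{\vee}}$. I would first show $r_{\beta}\in(W^{J})_{\af}$ directly from \eqref{eq:Pet}: for $\gamma'\in\Delta_{J}^{+}$ one computes $r_{\beta}\gamma'=r_{\gamma}\gamma'+\pair{\gamma^{\vee}}{\gamma'}\delta$ and $r_{\beta}(-\gamma'+\delta)=-r_{\gamma}\gamma'+(1-\pair{\gamma^{\vee}}{\gamma'})\delta$, and similarly for $\pm\gamma'+n\delta$; using $\pair{\gamma^{\vee}}{\gamma'}\in\{0,1\}$ together with the fact that $\gamma$ (hence $r_{\gamma}\gamma'=\gamma'$ or $\gamma'-\gamma$) has a controlled $\alpha_{i}$-coefficient, each image lies in $\prr$. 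Hence $r_{\beta}\in(W^{J})_{\af}$. Since $z_{\xi}t_{\xi}=\PJ(t_{\xi})\in(W^{J})_{\af}$ and $\xi\in\jad$, Lemma~\ref{lem:xt} then gives $r_{\beta}z_{\xi}t_{\xi}\in(W^{J})_{\af}$ at once.

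\textbf{The semi-infinite length drops by exactly one.} Using \eqref{eq:refl} and $t_{-\gamma^{\vee}}z_{\xi}=z_{\xi}t_{-z_{\xi}^{-1}\gamma^{\vee}}$ one gets $r_{\beta}z_{\xi}t_{\xi}=r_{\gamma}z_{\xi}t_{\mu}=z_{\xi}r_{z_{\xi}^{-1}\gamma}\,t_{\mu}$ with $\mu:=\xi-z_{\xi}^{-1}\gamma^{\vee}$. As $z_{\xi}\in W_{J}$ and $\gamma$ has positive $\alpha_{i}$-coefficient, $\delta':=z_{\xi}^{-1}\gamma$ is a positive root; so by Lemma~\ref{lem:sil} (applied with $v=z_{\xi}$, $n=1$, $v^{-1}\alpha=-\delta'$) we already have $\sil(r_{\beta}z_{\xi}t_{\xi})<\sil(z_{\xi}t_{\xi})$. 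Tracing the estimates in the proof of Lemma~\ref{lem:sil} shows more: $\sil(r_{\beta}z_{\xi}t_{\xi})=\sil(z_{\xi}t_{\xi})+\ell(r_{\delta'})-2\pair{\delta'^{\vee}}{\rho}+2\bigl|N_{\delta'}\cap\mathrm{inv}(z_{\xi})\bigr|$, where $N_{\delta'}=\{\eta\in\Delta^{+}\mid r_{\delta'}\eta<0\}$. The intersection is empty: for $\eta\in\mathrm{inv}(z_{\xi})\subseteq\Delta_{J}^{+}$ we have $z_{\xi}\eta\in\Delta_{J}^{-}$, whence $\pair{\delta'^{\vee}}{\eta}=-\pair{\gamma^{\vee}}{-z_{\xi}\eta}\le 0$ because $-z_{\xi}\eta\in\Delta_{J}^{+}$ and $-\gamma^{\vee}$ is $J$-adjusted, so $r_{\delta'}\eta=\eta+(\ge 0)\delta'$ has non-negative $\alpha_{i}$-coefficient and is positive, i.e.\ $\eta\notin N_{\delta'}$. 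Thus $\sil(r_{\beta}z_{\xi}t_{\xi})=\sil(z_{\xi}t_{\xi})+\ell(r_{\delta'})-2\pair{\delta'^{\vee}}{\rho}$, and Lemma~\ref{lem:ineq} gives $\le\sil(z_{\xi}t_{\xi})-1$ with equality iff $\ell(r_{\delta'})=2\pair{\delta'^{\vee}}{\rho}-1$. \emph{This equality for the distinguished root $\delta'=z_{\xi}^{-1}\gamma$ is the main obstacle:} it amounts to showing that the prescribed $\gamma$ (and its $W_{J}$-translates) are ``extremal'' in the sense that $\pair{\gamma^{\vee}}{\beta'}\le 1$ for every $\beta'\in N_{\gamma}\setminus\{\gamma\}$, a refinement of Lemma~\ref{lem:ineq} which I would establish from the $J$-adjustedness of $-\gamma^{\vee}$ (this is exactly the point where one again leans on \cite{LNSSS13a}). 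Granting it, $\sil(z_{\xi}t_{\xi})=\sil(r_{\beta}z_{\xi}t_{\xi})+1$; combined with the membership established above and Definition~\ref{def:sib}, this yields the edge $r_{\beta}z_{\xi}t_{\xi}\edge{\beta}r_{\beta}(r_{\beta}z_{\xi}t_{\xi})=z_{\xi}t_{\xi}$ in $\SB$, completing the proof.
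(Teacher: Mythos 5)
Your construction rests on two claims that you leave unproven, and they are precisely the substance of the lemma. First, that $\gamma^{\vee}:=\alpha_{i}^{\vee}-\phi_{J}(-\alpha_{i}^{\vee})$ really is the coroot of some $\gamma\in\Delta^{+}\setminus\Delta_{J}^{+}$: you only say you ``would deduce'' this from \cite{LNSSS13a}. Second, and more seriously, the equality $\ell(r_{\delta'})=2\pair{(\delta')^{\vee}}{\rho}-1$ for $\delta'=z_{\xi}^{-1}\gamma$, which you yourself call the main obstacle and then simply grant. This is the assertion that every $W_{J}$-translate $z_{\xi}^{-1}\gamma$, $\xi\in\jad$, is a quantum root; it is not a routine refinement of Lemma~\ref{lem:ineq} --- in non-simply-laced types the inequality is strict for many roots (e.g.\ for the short root $\alpha_{1}+\alpha_{2}$ in type $B_{2}$, where $\ell(r_{\alpha_1+\alpha_2})=3$ while $2\pair{(\alpha_1+\alpha_2)^{\vee}}{\rho}-1=5$), so the equality must be extracted from the special shape of $\gamma$, and without it you only obtain $\sil(r_{\beta}z_{\xi}t_{\xi})\le\sil(z_{\xi}t_{\xi})-1$, not an edge of $\SB$. (A minor point: the correction term in your length formula should be subtracted, $\ell(z_{\xi}r_{\delta'})=\ell(z_{\xi})+\ell(r_{\delta'})-2\,\#\bigl(N_{\delta'}\cap\Inv(z_{\xi})\bigr)$; since you prove the intersection empty this does not change what remains to be shown.) The parts you do carry out --- $r_{\beta}\in(W^{J})_{\af}$ via \eqref{eq:Pet} and Lemma~\ref{lem:xt}, and the emptiness of $N_{\delta'}\cap\Inv(z_{\xi})$ from $J$-adjustedness of $-\gamma^{\vee}$ --- are fine in outline, but the proof is incomplete at its crux.

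The paper's proof avoids both issues by producing the edge rather than constructing its label. Since $\sil(r_{i}t_{\xi-\alpha_{i}^{\vee}})=\sil(t_{\xi})-1$, one has the edge $r_{\beta'}t_{\xi}\edge{\beta'}t_{\xi}$ in $\SBo{\emptyset}$ with $\beta'=-\alpha_{i}+\delta$; only a simple reflection is involved, so the drop by exactly one is automatic and no quantum-root input is needed. Lemma~\ref{lem:PJK} then projects this edge to a directed path in $\SB$ ending at $\PJ(t_{\xi})=z_{\xi}t_{\xi}$, and its final edge is the desired one: Remark~\ref{rem:weight} shows the increments $\pJ{\,\cdot\,}$ along the path lie in $\QJp{J^{c}}$ and sum to $\alpha_{i}^{\vee}$, while Proposition~\ref{prop:sib-lv0} together with $z_{\xi}t_{\xi}\lambda\equiv\lambda\bmod\BQ\delta$ and \cite[Lemma~2.11]{NS08} forces the label to be $-\gamma+\delta$ with $\gamma\in\Delta^{+}\setminus\Delta_{J}^{+}$ and $\pJ{\gamma^{\vee}}\ne 0$, hence $\pJ{\gamma^{\vee}}=\alpha_{i}^{\vee}$. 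If you want to keep your direct approach you must actually prove your two granted claims; via Proposition~\ref{prop:SB-QB} they amount to exhibiting a quantum edge into $e$ in $\QB$ with the prescribed coroot projection, i.e.\ to importing the relevant results of \cite{LNSSS13a} rather than bypassing them.
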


\begin{proof}
Let $i \in I \setminus J$, and let $\xi \in \jad$. 
Since $\sil(r_{i}t_{\xi-\alpha_{i}^{\vee}})=
\ell(r_{i})+2\pair{\xi-\alpha_{i}^{\vee}}{\rho}=
2\pair{\xi}{\rho}-1=\sil(t_{\xi})-1$, 
we see that 
$r_{i}t_{\xi-\alpha_{i}^{\vee}}=
r_{\beta'}t_{\xi} \edge{\beta'} t_{\xi}$ in 
$\SBo{\emptyset}$, with $\beta'=-\alpha_{i}+\delta$. 
Therefore, by Lemma~\ref{lem:PJK}, there exists a directed path 
from $\PJ(r_{i}t_{\xi-\alpha_{i}^{\vee}})$ to $\PJ(t_{\xi})$ in $\SB$. 
Since $i \in I \setminus J$, we see from Lemma~\ref{lem:J-adj}\,(2) that
$\PJ(r_{i}t_{\xi-\alpha_{i}^{\vee}})=r_{i}z_{\xi-\alpha_{i}^{\vee}}t_{\zeta'}$, 
with $\zeta'=\xi-\alpha_{i}^{\vee}+
\phi_{J}(\xi-\alpha_{i}^{\vee})$. 
Also, we see from Lemma~\ref{lem:J-adj}\,(1), (2) that 
$\PJ(t_{\xi})=z_{\xi}t_{\xi}$. 
Remark that 
$\pJ{\xi-\zeta'} = \alpha_{i}^{\vee}$. 
Let $x \edge{\beta} y:=z_{\xi}t_{\xi}$, $x \in (W^{J})_{\af}$, 
$\beta \in \prr$, be the final edge of the directed path 
in $\SB$ from $\PJ(r_{i}t_{\xi-\alpha_{i}^{\vee}})$ to 
$\PJ(t_{\xi})=z_{\xi}t_{\xi}$ above:
%
%
\begin{equation} \label{eq:dp1}
\PJ(r_{i}t_{\xi-\alpha_{i}^{\vee}}) \rightarrow \cdots \rightarrow 
x \edge{\beta} y=z_{\xi}t_{\xi}
\quad \text{in $\SB$}.
\end{equation}
Here, let us write $x$ as $x=vz_{\zeta}t_{\zeta}$, 
with $v \in W^{J}$ and $\zeta \in \jad$. 
By applying Remark~\ref{rem:weight} to 
each edge of the directed path from 
$\PJ(r_{i}t_{\xi-\alpha_{i}^{\vee}})=
 r_{i}z_{\xi-\alpha_{i}^{\vee}}t_{\zeta'}$ to 
$x=vz_{\zeta}t_{\zeta}$ (resp., the final edge 
$x=vz_{\zeta}t_{\zeta} \edge{\beta} y=z_{\xi}t_{\xi}$), 
we obtain $\pJ{\zeta-\zeta'} \in \QJp{J^c}$ (resp., 
$\pJ{\xi-\zeta} \in \QJp{J^c}$). Therefore, we deduce that
\begin{equation*}
\alpha_{i}^{\vee} = \pJ{\xi-\zeta'} = 
 \underbrace{\pJ{\xi-\zeta}}_{\in \QJp{J^c}} + 
 \underbrace{\pJ{\zeta-\zeta'}}_{\in \QJp{J^c}},
\end{equation*}
which implies that 
$\pJ{\xi-\zeta} = 0$ or $\alpha_{i}^{\vee}$. 
Now, let $\lambda \in P^{+}$ be an (arbitrary) element 
such that $J_{\lambda}=J$. 
It follows immediately 
from Proposition~\ref{prop:sib-lv0} that 
$x\lambda \edge{\beta} y\lambda=z_{\xi}t_{\xi}\lambda$ in $\LP$. 
Since $z_{\xi}t_{\xi}\lambda \equiv \lambda \mod \BQ\delta$, 
we see that $\beta$ is of the form 
$\beta=-\gamma+\delta$ for some 
$\gamma \in \Delta^{+} \setminus \Delta_{J}^{+}$. 
Hence a direct computation shows that 
$\zeta=\xi-z_{\xi}^{-1}\gamma^{\vee}$; 
note that $\pJ{z_{\xi}^{-1}\gamma^{\vee}}=
\pJ{\gamma^{\vee}} \ne 0$. 
Since $\pJ{\xi-\zeta} = 0$ or $\alpha_{i}$ as seen above, 
we conclude that $\pJ{\gamma^{\vee}}=\alpha_{i}$. 
This proves the lemma. 
\end{proof}
%
%
\subsection{Directed paths from a translation to another translation.}

In this subsection, we fix $\lambda \in P^+$, 
and set $J:=J_{\lambda}= 
\bigl\{ i \in I \mid \pair{\alpha_i^{\vee}}{\lambda}= 0 \bigr\}$.
Recall that $\pJ{\,\cdot\,}=\pJ{\,\cdot\,}_{J^c}:
Q^{\vee} \twoheadrightarrow Q_{J^{c}}^{\vee}$ denote
the projection from $Q^{\vee}=Q_{J^{c}}^{\vee} \oplus Q_{J}^{\vee}$
onto $Q_{J^{c}}^{\vee}$ with kernel $Q_{J}^{\vee}$, where $J^{c}=I \setminus J$. 
Also, for a rational number $0 < a \le 1$, we set 
\begin{equation*}
\Jca:=
\bigl\{ i \in J^{c} \mid 
 a \pair{\alpha_i^{\vee}}{\lambda} \in \BZ \bigr\}, \qquad
\Ia :=
\bigl\{ i \in I \mid 
 a \pair{\alpha_i^{\vee}}{\lambda} \in \BZ \bigr\}=
\Jca \cup J.
\end{equation*}
%
%
\begin{lem} \label{lem:di3}
Let $\zeta,\,\xi \in \jad$, and let $0 < a \le 1$ 
be a rational number. If $\pJ{\xi-\zeta} \in \QJp{\Jca}$, 
then there exists a directed path 
from $z_{\zeta}t_{\zeta}$ to $z_{\xi}t_{\xi}$ in $\SBa$. 
\end{lem}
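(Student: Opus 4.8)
The statement to prove is Lemma~\ref{lem:di3}: for $\zeta,\xi \in \jad$ and a rational $0 < a \le 1$, if $\pJ{\xi-\zeta} \in \QJp{\Jca}$, then there is a directed path from $z_{\zeta}t_{\zeta}$ to $z_{\xi}t_{\xi}$ in $\SBa$. My plan is to reduce the general case to repeated application of Lemma~\ref{lem:di1}, which produces, for each $i \in J^c$ and each $\eta \in \jad$, a specific positive real root $\beta = -\gamma + \delta$ with $\gamma \in \Delta^+ \setminus \Delta_J^+$, $\pJ{\gamma^\vee} = \alpha_i^\vee$, and an edge $r_\beta z_\eta t_\eta \edge{\beta} z_\eta t_\eta$ in $\SB$. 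Reversing the reading direction, this gives an edge $z_\eta t_\eta \to r_\beta z_\eta t_\eta$ — wait, no: Lemma~\ref{lem:di1} gives the edge pointing \emph{into} $z_\eta t_\eta$, so I should build the path backwards, from $z_\xi t_\xi$ down to $z_\zeta t_\zeta$, and then note that this is the same as a directed path from $z_\zeta t_\zeta$ up to $z_\xi t_\xi$.

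\textbf{Main steps.} First I would write $\pJ{\xi-\zeta} = \sum_{i \in \Jca} c_i \alpha_i^\vee$ with $c_i \in \BZ_{\ge 0}$, and induct on $N := \sum_i c_i$. The base case $N=0$ means $\pJ{\xi - \zeta} = 0$, i.e. $\xi - \zeta \in Q_J^\vee$; since both $\xi,\zeta \in \jad$ and $\jad$ meets each coset of $Q_J^\vee$ in exactly one point (Lemma~\ref{lem:J-adj}\,(1)), this forces $\xi = \zeta$, hence $z_\xi t_\xi = z_\zeta t_\zeta$ and the trivial (length-zero) path works. For the inductive step, pick $i \in \Jca$ with $c_i > 0$. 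Apply Lemma~\ref{lem:di1} with this $i$ and with the element $\xi \in \jad$: I get $\gamma \in \Delta^+ \setminus \Delta_J^+$ with $\pJ{\gamma^\vee} = \alpha_i^\vee$, $\beta = -\gamma + \delta \in \prr$, $r_\beta z_\xi t_\xi \in (W^J)_{\af}$, and the edge $r_\beta z_\xi t_\xi \edge{\beta} z_\xi t_\xi$ in $\SB$. Now I must check this edge lies in $\SBa$: by \eqref{eq:siba} I need $a\pair{\beta^\vee}{(r_\beta z_\xi t_\xi)\lambda} \in \BZ$, equivalently $a\pair{(r_\beta z_\xi t_\xi)^{-1}\beta^\vee}{\lambda} = -a\pair{(z_\xi t_\xi)^{-1}\beta^\vee}{\lambda} \in \BZ$; since $z_\xi t_\xi$ fixes $\lambda$ modulo $\BQ\delta$ this is $\pm a\pair{\gamma^\vee}{\lambda}$ up to the $\delta$-term, and $\pair{\gamma^\vee}{\lambda} = \pair{\pJ{\gamma^\vee}}{\lambda} = \pair{\alpha_i^\vee}{\lambda}$ because $\gamma^\vee - \pJ{\gamma^\vee} \in Q_J^\vee$ pairs to zero with $\lambda$ — and $a\pair{\alpha_i^\vee}{\lambda} \in \BZ$ precisely because $i \in \Jca$. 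So the edge is in $\SBa$.

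\textbf{Completing the induction.} The endpoint $r_\beta z_\xi t_\xi$ is in $(W^J)_{\af}$, so by Lemma~\ref{lem:J-adj}\,(3) it equals $w z_{\xi'} t_{\xi'}$ for some $w \in W^J$, $\xi' \in \jad$; a direct computation as in the proof of Lemma~\ref{lem:di1} shows $\xi' = \xi - z_\xi^{-1}\gamma^\vee$ and in fact (since $r_\beta z_\xi t_\xi$ reduces to a translation part this way) $w z_{\xi'} = z_{\xi'}$, i.e. $r_\beta z_\xi t_\xi = z_{\xi'} t_{\xi'}$ — this needs the same style of argument as in Lemma~\ref{lem:di1}, comparing $W$-parts. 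Then $\pJ{\xi' - \zeta} = \pJ{\xi - \zeta} - \pJ{z_\xi^{-1}\gamma^\vee} = \pJ{\xi-\zeta} - \pJ{\gamma^\vee} = \pJ{\xi-\zeta} - \alpha_i^\vee$, which still lies in $\QJp{\Jca}$ (its $i$-coefficient drops to $c_i - 1 \ge 0$, the others unchanged), with $\sum$-value $N-1$. By the inductive hypothesis there is a directed path from $z_\zeta t_\zeta$ to $z_{\xi'} t_{\xi'} = r_\beta z_\xi t_\xi$ in $\SBa$; concatenating with the edge $r_\beta z_\xi t_\xi \edge{\beta} z_\xi t_\xi$ in $\SBa$ gives the desired path to $z_\xi t_\xi$.

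\textbf{Expected obstacle.} The delicate point is verifying that the endpoint $r_\beta z_\xi t_\xi$ produced by Lemma~\ref{lem:di1} is again of the normalized form $z_{\xi'} t_{\xi'}$ with $\xi' \in \jad$ (rather than $w z_{\xi'} t_{\xi'}$ with a nontrivial $w \in W^J$), so that the induction can be iterated cleanly; this should follow from tracking the finite Weyl group component through the reflection $r_\beta$ with $\beta = -\gamma + \delta$ and using that $z_\xi \in W_J$ together with $\gamma \notin \Delta_J$, exactly as in the final computation of Lemma~\ref{lem:di1}'s proof, but it requires care. The second, more routine, point to get right is the membership of each edge in the \emph{graded} graph $\SBa$ rather than just $\SB$, which is the $a\pair{\gamma^\vee}{\lambda} = a\pair{\alpha_i^\vee}{\lambda} \in \BZ$ check outlined above.
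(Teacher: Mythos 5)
Your induction skeleton matches the paper's: induct on the height of $\pJ{\xi-\zeta}$, settle the base case via Lemma~\ref{lem:J-adj}\,(1), and in the inductive step produce one edge into $z_{\xi}t_{\xi}$ from Lemma~\ref{lem:di1} and check its membership in $\SBa$ via $a\pair{\gamma^{\vee}}{\lambda}=a\pair{\alpha_{i}^{\vee}}{\lambda}\in\BZ$ (that check is fine). But the step you flagged as the ``delicate point'' is a genuine gap, not a routine verification: it is simply false in general that $r_{\beta}z_{\xi}t_{\xi}$ is again a normalized translation $z_{\xi'}t_{\xi'}$. Already for $J=\emptyset$ the root from Lemma~\ref{lem:di1} is $\gamma=\alpha_{i}$ and $r_{\beta}t_{\xi}=r_{i}t_{\xi-\alpha_{i}^{\vee}}$, whose finite Weyl group part is $r_{i}\neq e$; in general one only gets $r_{\beta}z_{\xi}t_{\xi}=wz_{\xi'}t_{\xi'}$ with $w=r_{\gamma}z_{\xi}z_{\xi'}^{-1}\in W^{J}$ typically nontrivial. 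Consequently your inductive hypothesis (which is a statement about elements of the form $z_{\zeta}t_{\zeta}$, $z_{\xi'}t_{\xi'}$) cannot be applied to the endpoint $r_{\beta}z_{\xi}t_{\xi}$, and the induction does not close as written.

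The paper bridges exactly this gap: it applies the inductive hypothesis to the pair $(z_{\zeta}t_{\zeta},\,z_{\xi'}t_{\xi'})$ with $\xi':=\xi-\alpha_{i}^{\vee}+\phi_{J}(\xi-\alpha_{i}^{\vee})$, then shows $w=r_{\gamma}z_{\xi}z_{\xi'}^{-1}\in W_{\Ia}$ (because $\pJ{\gamma^{\vee}}=\alpha_{i}^{\vee}$ with $i\in\Jca$ forces $\gamma\in\Delta_{\Ia}$, and $z_{\xi}z_{\xi'}^{-1}\in W_{J}\subset W_{\Ia}$), and finally builds a directed path in $\SBa$ from $z_{\xi'}t_{\xi'}$ up to $wz_{\xi'}t_{\xi'}=r_{\beta}z_{\xi}t_{\xi}$ along a reduced expression $w=r_{i_{1}}\cdots r_{i_{k}}$ with all $i_{l}\in\Ia$ (each simple-reflection edge lies in $\SBa$ because $z_{\xi'}t_{\xi'}\lambda\equiv\lambda \bmod \BQ\delta$ and $a\pair{\alpha_{i_l}^{\vee}}{\lambda}\in\BZ$, as in Step~1 of the proof of Proposition~\ref{prop:sib-lv0}); only then does it append the edge $wz_{\xi'}t_{\xi'}\edge{\beta}z_{\xi}t_{\xi}$. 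To repair your argument you would need to add this intermediate path (or strengthen your inductive statement), since the claim ``$wz_{\xi'}=z_{\xi'}$'' cannot be salvaged.
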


\begin{proof}
For $\alpha^{\vee}=\sum_{i \in I} c_{i}\alpha_{i}^{\vee}$ 
with $c_{i} \in \BZ_{\ge 0}$, $i \in I$, we set 
$\Ht(\alpha^{\vee}) := \sum_{i \in I} c_{i} \in \BZ_{\ge 0}$. 
We prove the lemma 
by induction on $\Ht(\pJ{\xi-\zeta})$. Assume first that 
$\Ht(\pJ{\xi-\zeta}) = 0$. Then, we have $\pJ{\xi-\zeta}=0$, 
which implies that $\xi-\zeta \in Q_{J}^{\vee}$, and hence 
$\xi=\zeta$ by Lemma~\ref{lem:J-adj}\,(1). 
Thus there exists a directed path (of length zero) 
from $z_{\zeta}t_{\zeta}$ to 
$z_{\xi}t_{\xi} = z_{\zeta}t_{\zeta}$ in $\SBa$. 

Assume next that $\Ht(\pJ{\xi-\zeta}) > 0$. 
Take $i \in \Jca$ such that 
$\pJ{\xi-\zeta}-\alpha_{i}^{\vee} \in \QJp{\Jca}$, and set 
$\xi':=\xi-\alpha_{i}^{\vee}+\phi_{J}(\xi-\alpha_{i}^{\vee}) 
\in \jad$ (see Lemma~\ref{lem:J-adj}\,(1)); note that 
$\pJ{\xi'}=\pJ{\xi}-\alpha_{i}^{\vee}$, and hence 
$\Ht (\pJ{\xi'-\zeta}) = \Ht (\pJ{\xi-\zeta})-1$. 
Therefore, by our induction hypothesis, there exists a directed 
path from $z_{\zeta}t_{\zeta}$ to $z_{\xi'}t_{\xi'}$ in $\SBa$.
Hence it suffices to show that there exists a directed 
path from $z_{\xi'}t_{\xi'}$ to $z_{\xi}t_{\xi}$ in $\SBa$.
We take $\beta=-\gamma+\delta$ of Lemma~\ref{lem:di1} 
(for the $\xi \in \jad$ and the $i \in \Jca$ above). 
Since $\pJ{\gamma^{\vee}}=\alpha_{i}^{\vee}$ and $i \in \Jca$, 
we see that the edge $r_{\beta}z_{\xi}t_{\xi} 
\edge{\beta} z_{\xi}t_{\xi}$ in $\SB$ is in fact an edge in $\SBa$. 
Write $r_{\beta}z_{\xi}t_{\xi}$ as 
$r_{\beta}z_{\xi}t_{\xi}=wz_{\xi''}t_{\xi''}$, 
with $w \in W^{J}$ and $\xi'' \in \jad$. 
Since $r_{\beta}z_{\xi}t_{\xi} = 
r_{\gamma}t_{-\gamma^{\vee}}z_{\xi}t_{\xi}$ by \eqref{eq:refl},
we have $\xi''=\xi-z_{\xi}^{-1}\gamma^{\vee}$. 
Because $\pJ{\xi'-\xi''}= (\pJ{\xi}-\alpha_{i}^{\vee})-
(\pJ{\xi}-\pJ{z_{\xi}^{-1}\gamma^{\vee}})=
(\pJ{\xi}-\alpha_{i}^{\vee})-
(\pJ{\xi}-\alpha_{i}^{\vee})=0$, 
we deduce from Lemma~\ref{lem:J-adj}\,(1) 
that $\xi''=\xi'$. Also, we see that 
$w \in W^{J}$ is equal to $r_{\gamma}z_{\xi}z_{\xi'}^{-1}$.
Since $\pJ{\gamma^{\vee}}=\alpha_{i}^{\vee}$ 
and $i \in \Jca$, it follows immediately that 
$\gamma \in \Delta_{\Ia}$, and hence 
$r_{\gamma} \in W_{\Ia} = \langle r_{j} \mid j \in \Ia \rangle$.
This implies that $w=r_{\gamma}z_{\xi}z_{\xi'}^{-1} \in W_{\Ia}$ 
since $z_{\xi}z_{\xi'}^{-1} \in W_{J} \subset W_{\Ia}$. 
Therefore, if $w=r_{i_{1}}r_{i_{2}} \cdots r_{i_{k}}$ is 
a reduced expression of $w$, then $i_{l} \in \Ia$ 
for all $1 \le l \le k$. 
From this, it follows 
(see also the argument in Step 1 in 
the proof of Proposition~\ref{prop:sib-lv0}) that
\begin{equation*}
z_{\xi'}t_{\xi'} \edge{\alpha_{i_{k}}} 
r_{i_{k}} z_{\xi'}t_{\xi'} \edge{\alpha_{i_{k-1}}}
r_{i_{k-1}}r_{i_{k}} z_{\xi'}t_{\xi'} \edge{\alpha_{i_{k-2}}} \cdots 
\edge{\alpha_{i_1}} r_{i_{1}}r_{i_{2}} \cdots r_{i_{k}}z_{\xi'}t_{\xi'}=
wz_{\xi'}t_{\xi'}
\end{equation*}
is a directed path in $\SBa$. Concatenating 
this directed path with 
$wz_{\xi'}t_{\xi'}=r_{\beta}z_{\xi}t_{\xi} 
\edge{\beta} z_{\xi}t_{\xi}$ above, we obtain a directed path 
from $z_{\xi'}t_{\xi'}$ to $z_{\xi}t_{\xi}$ in $\SBa$, 
as desired. 
\end{proof}

%
\begin{prop} \label{prop:di2}
Let $\zeta,\,\xi \in \jad$, and let $0 < a \le 1$ 
be a rational number. There exists a directed path of nonzero length
from $z_{\zeta}t_{\zeta}$ to $z_{\xi}t_{\xi}$ in $\SBa$ 
if and only if the set $\Jca=\bigl\{ i \in J^{c} \mid 
 a \pair{\alpha_i^{\vee}}{\lambda} \in \BZ \bigr\}$ is not empty, 
and $\pJ{\xi-\zeta} \in \QJp{\Jca} \setminus \{0\}$.
\end{prop}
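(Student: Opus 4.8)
The plan is to prove the two implications separately; the ``if'' direction is essentially Lemma~\ref{lem:di3} together with a remark about nonzero length, so the real content is the ``only if'' direction. For the ``if'' direction: assume $\Jca \ne \emptyset$ and $\pJ{\xi-\zeta} \in \QJp{\Jca} \setminus \{0\}$. Then $\pJ{\xi-\zeta} \in \QJp{\Jca}$, so Lemma~\ref{lem:di3} gives a directed path from $z_{\zeta}t_{\zeta}$ to $z_{\xi}t_{\xi}$ in $\SBa$. This path has nonzero length, because if it had length zero we would have $z_{\zeta}t_{\zeta} = z_{\xi}t_{\xi}$, forcing $\zeta = \xi$ (apply $\PJ$ and use Lemma~\ref{lem:J-adj}\,(1), or just observe the translation parts must agree), hence $\pJ{\xi-\zeta} = 0$, contradicting our hypothesis.

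For the ``only if'' direction, suppose there is a directed path of nonzero length
\[
z_{\zeta}t_{\zeta} = y_0 \edge{\beta_1} y_1 \edge{\beta_2} \cdots \edge{\beta_k} y_k = z_{\xi}t_{\xi}
\]
in $\SBa$, with $k \ge 1$. First I would show $\pJ{\xi - \zeta} \in \QJp{J^c} \setminus \{0\}$ and then upgrade $J^c$ to $\Jca$. For the nonvanishing and the membership in $\QJp{J^c}$: write each $y_m = v_m z_{\eta_m} t_{\eta_m}$ with $v_m \in W^J$, $\eta_m \in \jad$; applying Remark~\ref{rem:weight} to each edge $y_{m-1} \edge{\beta_m} y_m$ gives $\pJ{\eta_m - \eta_{m-1}} = n_m \pJ{\gamma_m^{\vee}} \in \QJp{J^c}$ with $n_m \in \{0,1\}$ and $\gamma_m \in \Delta^+ \setminus \Delta_J^+$. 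Summing telescopically over $m = 1,\dots,k$ yields $\pJ{\xi - \zeta} = \sum_m n_m \pJ{\gamma_m^{\vee}} \in \QJp{J^c}$. For nonvanishing I would argue that it cannot be that every $n_m = 0$: if $n_m = 0$ for all $m$ then each $\beta_m$ is a (finite, real) root $\gamma_m \in \Delta^+$ (the case $n=0$ of Corollary~\ref{cor:pi_}), and $\sil(y_m) = \sil(y_{m-1}) + 1$ with the translation part unchanged forces $\ell$ of the $W$-part to strictly increase along the path while the translation part stays $t_{\zeta}$ throughout, so $\xi = \zeta$ and $\pJ{\xi-\zeta}=0$; but a closed inspection shows one can also derive a contradiction more directly by noting that with all translation parts equal the path lies in a single coset $W^J z_{\zeta} t_{\zeta}$, and since $k \ge 1$ and the $\sil$-values strictly increase we would at least get $\xi \neq \zeta$ only if some $n_m \neq 0$ — in any case, $\pJ{\xi-\zeta} = 0$ would force all $\gamma_m^\vee$ contributions to cancel, impossible since each lies in $\QJp{J^c}$ (a positive cone, so a sum of such is $0$ only if each summand is $0$). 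Hence $\pJ{\xi-\zeta} \in \QJp{J^c} \setminus \{0\}$, and in particular $J^c \ne \emptyset$.

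It remains to refine $\QJp{J^c}$ to $\QJp{\Jca}$, i.e.\ to show that for every edge $y_{m-1} \edge{\beta_m} y_m$ in $\SBa$ with $\beta_m = w_m \gamma_m + n_m \delta$ (notation of Remark~\ref{rem:weight}, $w_m$ the $W$-part of $y_{m-1}$) contributing $n_m \pJ{\gamma_m^\vee}$, one has $\pJ{\gamma_m^\vee} \in \QJp{\Jca}$ whenever $n_m \neq 0$; summing then gives $\pJ{\xi - \zeta} \in \QJp{\Jca}$, and since it is nonzero, $\Jca \ne \emptyset$. For this I would use the defining condition \eqref{eq:siba} of $\SBa$: the edge being in $\SBa$ means $a\pair{\beta_m^\vee}{y_{m-1}\lambda} \in \BZ$, i.e. $a\pair{w_m \gamma_m^\vee}{w_m\lambda} = a\pair{\gamma_m^\vee}{\lambda} \in \BZ$ (using $y_{m-1}\lambda = w_m\lambda$ since $z_{\eta_{m-1}}t_{\eta_{m-1}}$ fixes $\lambda$ up to $\BQ\delta$ and $z_{\eta_{m-1}} \in W_J$ fixes $\lambda$). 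Writing $\gamma_m^\vee = \sum_{j \in J^c} c_j \alpha_j^\vee + (\text{something in } Q_J^\vee)$, one has $\pair{\gamma_m^\vee}{\lambda} = \sum_{j\in J^c} c_j \pair{\alpha_j^\vee}{\lambda}$; I expect that the structure of $J$-adjusted coroots forces each $c_j \in \{0,1\}$ (this is essentially the $J$-adjustedness of $n\gamma_m^\vee$-type elements appearing via $r_{\beta_m} \in (W^J)_{\af}$, cf.\ the argument in Lemma~\ref{lem:zt}), so $a\pair{\gamma_m^\vee}{\lambda} \in \BZ$ with all coefficients $c_j \in \{0,1\}$ should give $a\pair{\alpha_j^\vee}{\lambda}\in\BZ$ for each $j$ with $c_j \neq 0$, i.e.\ $j \in \Jca$, whence $\pJ{\gamma_m^\vee} = \sum_{j} c_j \alpha_j^\vee \in \QJp{\Jca}$.

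\textbf{Main obstacle.} The delicate point is this last refinement step: extracting from a single congruence $a\sum_j c_j \pair{\alpha_j^\vee}{\lambda} \in \BZ$ the conclusion that each individual term $a c_j\pair{\alpha_j^\vee}{\lambda} \in \BZ$. This is false for arbitrary nonnegative integers $c_j$, so the proof must genuinely use that $\pJ{\gamma_m^\vee}$ arises as the $J^c$-projection of a coroot $\gamma_m^\vee$ with $\gamma_m \in \Delta^+ \setminus \Delta_J^+$ attached to an edge of $(W^J)_{\af}$, which constrains the $c_j$ to be $0$ or $1$ via $J$-adjustedness (Definition~\ref{def:J-adj}, as used in Lemma~\ref{lem:zt} and Lemma~\ref{lem:di1}). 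I would isolate this as a small lemma: \emph{if $x \edge{\beta}r_\beta x$ in $\SB$ with $\beta = \alpha + n\delta$, $\alpha \in \Delta^+\setminus\Delta_J^+$, then $\pJ{\alpha^\vee}$ is a sum of distinct simple coroots $\alpha_j^\vee$, $j \in J^c$} (more precisely, $\pair{\pJ{\alpha^\vee}}{\varpi_j} = \pair{\alpha^\vee}{\varpi_j} \in \{0,1\}$ via the $J$-adjustedness forced by $r_\beta \in (W^J)_{\af}$) — and then the arithmetic goes through termwise. Everything else is bookkeeping with Remark~\ref{rem:weight}, Lemma~\ref{lem:di3}, and Corollary~\ref{cor:pi_}.
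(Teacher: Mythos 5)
Your ``if'' direction and the telescoping reduction to $\pJ{\xi-\zeta}\in\QJp{J^{c}}$ via Remark~\ref{rem:weight} agree with the paper, but the decisive step --- upgrading $\QJp{J^{c}}$ to $\QJp{\Jca}$ --- has a genuine gap, at exactly the point you flag as the main obstacle. Your plan is edge-local and arithmetic: from the single condition $a\pair{\gamma_m^{\vee}}{\lambda}\in\BZ$ together with a claimed constraint that the coefficients of $\pJ{\gamma_m^{\vee}}$ on the $\alpha_j^{\vee}$, $j\in J^{c}$, lie in $\{0,1\}$, you want to conclude $a\pair{\alpha_j^{\vee}}{\lambda}\in\BZ$ for every $j$ that occurs. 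This implication is false even with $0/1$ coefficients: for $\lambda=\varpi_1+\varpi_2$, $a=1/2$, $\gamma^{\vee}=\alpha_1^{\vee}+\alpha_2^{\vee}$ one has $a\pair{\gamma^{\vee}}{\lambda}=1\in\BZ$ while $a\pair{\alpha_1^{\vee}}{\lambda}=1/2\notin\BZ$. Moreover the $0/1$ claim itself is not what $J$-adjustedness gives you (membership in $\jad$ constrains pairings with $\Delta_J^{+}$, not the coefficients of a coroot on the simple coroots indexed by $J^{c}$), and coroots do have coefficients $\ge 2$ on simple coroots already for highest roots in most types. So no termwise argument from the integrality of $a\pair{\gamma_m^{\vee}}{\lambda}$ alone can close this step; whether such a label can actually occur on a path \emph{starting from a translation} is precisely what must be ruled out, and that requires global information about the path.

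This is how the paper's proof proceeds: since the path begins at $z_{\zeta}t_{\zeta}$, Lemma~\ref{lem:zt} forces the first label to be $\alpha_i$ with $i\in\Jca$ (which also yields $\Jca\neq\emptyset$ at the outset), and then one proves by induction on $l$ that $w_l\in W_{\Ia}$ and $\gamma_l\in\Delta_{\Ia}$, transferring each edge to $\LPa$ by Proposition~\ref{prop:sib-lv0} and invoking Lemmas~2.13 and 3.11 of \cite{NS08} to obtain $w_{l-1}\gamma_l\in\Delta_{\Ia}$; the inductive hypothesis $w_{l-1}\in W_{\Ia}$ is essential. The conclusion $\gamma_l\in\Delta_{\Ia}$ is a root-theoretic statement much stronger than the integrality of $a\pair{\gamma_l^{\vee}}{\lambda}$, and it is what makes $\pJ{\gamma_l^{\vee}}\in\QJp{\Jca}$. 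A minor further remark: your nonvanishing passage is also shaky --- if all $n_m=0$ the ``positive cone'' contradiction evaporates; the clean finish is that $\pJ{\xi-\zeta}=0$ forces $\xi=\zeta$ by Lemma~\ref{lem:J-adj}\,(1), and a cycle of nonzero length is impossible because $\sil$ strictly increases along edges.
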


\begin{proof}
We prove the ``only if'' part. Let 
\begin{equation*}
z_{\zeta}t_{\zeta}=y_0 \edge{\beta_1} y_1 \edge{\beta_2} 
\cdots \edge{\beta_k} y_k= z_{\xi}t_{\xi}
\end{equation*}
be a directed path of nonzero length from $z_{\zeta}t_{\zeta}$ 
to $z_{\xi}t_{\xi}$ in $\SBa$. By Lemma~\ref{lem:zt}, 
we see that $\beta_{1}=\alpha_{i}$ for some $i \in J^{c}$. 
Since $a\pair{\alpha_{i}^{\vee}}{\lambda} = 
a\pair{\beta_1^{\vee}}{z_{\zeta}t_{\zeta}\lambda} \in \BZ$, 
it follows immediately that $i \in \Jca$, and in particular, 
$\Jca \ne \emptyset$. 
We write $y_{l} \in (W^{J})_{\af}$, $0 \le l \le k$, 
and $\beta_{l} \in \prr$ (see Corollary~\ref{cor:pi_}), 
$1 \le l \le k$, as: 
\begin{equation*}
\begin{cases}
y_{l}=w_{l}z_{\xi_{l}}t_{\xi_{l}} & 
\text{with $w_{l} \in W^{J}$ and $\xi_{l} \in \jad$}, \\[1.5mm]
\beta_{l}=w_{l-1}\gamma_{l}+n_{l}\delta & 
\text{with $\gamma_{l} \in \Delta^{+} \setminus \Delta_{J}^{+}$ 
and $n_{l} \in \bigl\{0,\,1\bigr\}$}.
\end{cases}
\end{equation*}
Then we deduce from Remark~\ref{rem:weight} that
\begin{equation} \label{eq:weight2}
\xi-\zeta = \sum_{l=1}^{k} (\xi_{l}-\xi_{l-1})=
\sum_{l=1}^{k} n_{l}z_{\xi_{l-1}}^{-1} \gamma_{l}^{\vee}, 
\quad \text{and hence} \quad
[\xi-\zeta]=
\sum_{l=1}^{k} n_{l} [\gamma_{l}^{\vee}] \in Q_{J^{c}}^{\vee +}.
\end{equation}
Also, using \eqref{eq:W^J_af}, we see by direct computation that 
$w_{l}=\mcr{w_{l-1}r_{\gamma_{l}}}$ for all $1 \le l \le k$. 
Let us show by induction on $l$ that 
$w_{l} \in W_{\Ia}$ and $\gamma_{l} \in \Delta_{\Ia}$ for all $1 \le l \le k$. 
Since $w_{0}=e$, and since $\beta_{1}=\alpha_{i}$ with $i \in \Jca \subset \Ia$ 
as seen above, it follows immediately that 
$\gamma_{1}=\alpha_{i} \in \Delta_{\Ia}$ and 
$w_{1}=\mcr{ w_{0}r_{\gamma_{1}} }=\mcr{r_{i}} \in W_{\Ia}$. 
Assume that $l > 0$. 
By Proposition~\ref{prop:sib-lv0}, we have 
$y_{l-1}\lambda \edge{\beta_{l}} y_{l}\lambda$ in $\LPa$. 
We deduce from \cite[Lemma~3.11]{NS08}, 
together with \cite[Lemma~2.13]{NS08}, that 
$w_{l-1}\gamma_{l} \in \Delta_{\Ia}$. 
%
Since $w_{l-1} \in W_{\Ia}$ by our induction hypothesis, 
we obtain $\gamma_{l} \in \Delta_{\Ia}$. Also, 
since $w_{l}=\mcr{w_{l-1}r_{\gamma_{l}}}$ 
with $w_{l-1} \in W_{\Ia}$ and $\gamma_{l} \in \Delta_{\Ia}$, 
it follows immediately that $w_{l} \in W_{\Ia}$. 
Thus we have shown that 
$w_{l} \in W_{\Ia}$ and $\gamma_{l} \in \Delta_{\Ia}$ for all $1 \le l \le k$. 
Combining this fact and \eqref{eq:weight2}, we conclude that 
$\pJ{\xi-\zeta} \in \QJp{\Jca}$. 
Suppose, for a contradiction, that $\pJ{\xi-\zeta} = 0$. Then, 
we have $\xi-\zeta \in Q_{J}^{\vee}$.
This implies that $\xi=\zeta$ by Lemma~\ref{lem:J-adj}\,(1), 
which contradicts the assumption. 
Thus we have proved the ``only if'' part. 
The ``if'' part follows immediately from Lemma~\ref{lem:di3}. 
This completes the proof of the proposition.
\end{proof}
%
%
\section{Connected components of $\sLS$.}
\label{sec:ConnComp}
Throughout this section, we fix $\lambda \in P^+$, 
and set $J:=J_{\lambda}= 
\bigl\{ i \in I \mid \pair{\alpha_i^{\vee}}{\lambda}= 0 \bigr\}$.
Write $\lambda \in P^+$ as 
$\lambda = \sum_{i \in I} m_i \varpi_i$ 
with $m_i \in \BZ_{\ge 0}$, $i \in I$, and set 
\begin{equation*}
\Turn(\lambda ) := 
 \bigl\{ k/m_i \mid 
 i \in J^{c}=I \setminus J, \ 0 \le k \le m_i \bigr\};
\end{equation*}
note that $m_{i}=\pair{\alpha_{i}^{\vee}}{\lambda}$. 
Also, for simplicity of notation, we set 
$T_{\xi} := \PJ (t_{\xi}) = z_{\xi} t_{\xi} \in (W^J)_{\af}$ 
for $\xi \in \jad$.
%
%
\subsection{An extremal element in each connected component.}
\label{subsec:ExtConnComp}

The next proposition follows immediately 
from Proposition \ref{prop:di2}.

\begin{prop}\label{prop:conn}
Let $\xi_1,\,\ldots,\,\xi_{s-1},\,\xi_{s} \in \jad$. 
An element 
\begin{align} \label{eq:extr}
\eta = 
(T_{\xi_1}, \ldots , T_{\xi_{s-1}}, T_{\xi_{s}} \,;\, 
 a_0 , a_1 , \ldots , a_{s-1} , a_s )
\end{align}
is contained in $\sLS$ if and only if 
$a_u \in \Turn(\lambda)$ for all $0 \le u \le s$ and 
$\pJ{\xi_u - \xi_{u+1}} \in \QJp{\Jcb{a_{u}}} \setminus \{0\}$ 
for all $1 \le u \le s-1$.  
\end{prop}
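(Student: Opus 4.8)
The plan is to derive this statement directly from Definition~\ref{def:SLSpath} of a SiLS path together with Proposition~\ref{prop:di2}, which characterizes when a directed path of nonzero length exists between two translations in $\SBa$. Recall that $\eta$ as in \eqref{eq:extr} lies in $\sLS$ precisely when the sequence $T_{\xi_1} >_{\si} \cdots >_{\si} T_{\xi_s}$ is strictly decreasing in the semi-infinite Bruhat order and there exists a directed path from $T_{\xi_{u+1}}$ to $T_{\xi_u}$ in $\SBb{a_u}$ for each $1 \le u \le s-1$. The key observation is that these two requirements, for an $s$-tuple of translations, collapse into a single condition: a directed path from $T_{\xi_{u+1}}$ to $T_{\xi_u}$ in $\SBb{a_u}$ automatically witnesses $T_{\xi_{u+1}} <_{\si} T_{\xi_u}$ (by Definition~\ref{def:sib}, since $\SBb{a_u}$ is a subgraph of $\SB$), and moreover if such a path exists in $\SBb{a_u}$ and the $x_u$'s are pairwise distinct translations, then the path must have nonzero length. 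So, strictness of the chain is subsumed once we know each consecutive pair is joined by a directed path in the appropriate subgraph; and strictness in turn forces $\xi_u \ne \xi_{u+1}$.

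First I would handle the ``only if'' direction. Assume $\eta \in \sLS$. For each $1 \le u \le s-1$, there is a directed path from $T_{\xi_{u+1}}$ to $T_{\xi_u}$ in $\SBb{a_u}$; since $T_{\xi_{u+1}} \ne T_{\xi_u}$ (strictness of the chain, hence $\xi_{u+1} \ne \xi_u$ by Lemma~\ref{lem:J-adj}\,(1) applied to $z_{\xi}t_{\xi}$ being determined by $\xi$), this path has nonzero length. Now apply the ``only if'' part of Proposition~\ref{prop:di2} (with $\zeta = \xi_{u+1}$ and $\xi = \xi_u$): it gives that $\Jcb{a_u}$ is nonempty and $\pJ{\xi_u - \xi_{u+1}} \in \QJp{\Jcb{a_u}} \setminus \{0\}$. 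It remains to see that $a_u \in \Turn(\lambda)$ for all $0 \le u \le s$. For $u = 0, s$ this is clear since $0, 1 \in \Turn(\lambda)$. For $1 \le u \le s-1$: since $\Jcb{a_u} \ne \emptyset$, pick $i \in \Jcb{a_u}$, so $a_u \pair{\alpha_i^{\vee}}{\lambda} = a_u m_i \in \BZ$; writing $a_u m_i = k$ with $0 \le k \le m_i$ (using $0 < a_u < 1$ and $m_i \ge 1$), we get $a_u = k/m_i \in \Turn(\lambda)$.

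Conversely, for the ``if'' direction, suppose $a_u \in \Turn(\lambda)$ for all $u$ and $\pJ{\xi_u - \xi_{u+1}} \in \QJp{\Jcb{a_u}} \setminus \{0\}$ for all $1 \le u \le s-1$. The ``if'' part of Proposition~\ref{prop:di2} (equivalently Lemma~\ref{lem:di3}) immediately yields a directed path of nonzero length from $T_{\xi_{u+1}}$ to $T_{\xi_u}$ in $\SBb{a_u}$ for each $u$; in particular $T_{\xi_{u+1}} <_{\si} T_{\xi_u}$ in $\SB$, so the chain $T_{\xi_1} >_{\si} \cdots >_{\si} T_{\xi_s}$ is strictly decreasing, and the path-existence condition of Definition~\ref{def:SLSpath} is met. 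Hence $\eta \in \sLS$. The main (and only real) obstacle is the minor bookkeeping that the semi-infinite Bruhat order on the $T_{\xi_u}$ is exactly recovered from the $\SBb{a_u}$-path condition — i.e., that no separate verification of strictness is needed — but this is immediate from the definitions once one notes $\xi_u \ne \xi_{u+1}$ (which follows from $\pJ{\xi_u - \xi_{u+1}} \ne 0$ in the ``if'' direction, and from chain-strictness in the ``only if'' direction). Everything else is a direct appeal to Proposition~\ref{prop:di2}.
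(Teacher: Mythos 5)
Your proposal is correct and is essentially the paper's argument: the paper simply states that Proposition~\ref{prop:conn} follows immediately from Proposition~\ref{prop:di2}, and your write-up just makes that immediate deduction explicit (nonzero length from strictness of the chain, $a_u \in \Turn(\lambda)$ from $\Jcb{a_u} \neq \emptyset$, and the converse via Lemma~\ref{lem:di3}).
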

%
%
\begin{prop}\label{prop:unique}
Each connected component of $\sLS$ contains 
a unique element of the form \eqref{eq:extr} with $\xi_{s}=0$. 
\end{prop}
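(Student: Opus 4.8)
The plan is to show two things: (a) every connected component of $\sLS$ contains \emph{at least one} element of the form \eqref{eq:extr} with $\xi_s = 0$, and (b) such an element is \emph{unique} within its connected component. For existence, I would start from an arbitrary $\eta \in \sLS$ and use the root operators to move toward a ``translation-only'' element. The key tool is Proposition~\ref{prop:BNforSLS}: since $\sLS \cong \Par(\lambda) \otimes \sLSo$, every connected component of $\sLS$ is isomorphic to $\sLSo$, so it suffices to identify, inside each component, the image of a distinguished element. Concretely, I would argue that applying a suitable monomial in the $e_i$'s to $\eta$ drives it to an element all of whose directions $x_u$ are translations $T_{\xi_u}$; this is because raising operators $e_i$ tend to replace $x_u$ by $r_i x_u$ and, iterated in the right order, push the ``$W$-part'' of each $x_u$ down to the identity, leaving only the translation part. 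One must check that the resulting element still lies in $\sLS$ (guaranteed by Theorem~\ref{thm:stability}\,(1)) and has the required shape; Proposition~\ref{prop:conn} then characterizes exactly which tuples $(T_{\xi_1},\dots,T_{\xi_s}; a_0,\dots,a_s)$ occur, and in particular tells us the last direction can be taken to be $T_0 = e$ after a final normalization (if $\xi_s \ne 0$, translate all $\xi_u$ by $-\xi_s$, or equivalently apply the appropriate operators; note $T_0 = \PJ(t_0) = e$).

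For uniqueness, the strategy is to show that an element $\eta$ of the form \eqref{eq:extr} with $\xi_s = 0$ is ``$e_i$-highest up to the $\Par(\lambda)$-factor'' in a precise sense, i.e. it maps to a prescribed element under the isomorphism $\sLS \cong \Par(\lambda)\otimes\sLSo$. More precisely, I would compute $\ve_i(\eta)$ and $\vp_i(\eta)$ for such an $\eta$ and show that $\eta$ behaves like $\bm{\rho} \otimes \eta_e$ for a suitable $\bm{\rho} \in \Par(\lambda)$: the partition data $\bm{\rho}$ should be recoverable from the ``jumps'' $\pJ{\xi_u - \xi_{u+1}}$ and the turning points $a_u$, while the $\sLSo$-component is forced to be $\eta_e$ because $\eta$ is, in an appropriate sense, extremal (cf. Remark~\ref{rem:etae-ext}, which says $\eta_e$ is the extremal element of $\sLSo$). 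Since within $\{\bm{\rho}\}\otimes\sLSo$ the map $b \mapsto \bm{\rho}\otimes b$ is bijective and the connected component is determined by $\bm{\rho}$, two elements of the form \eqref{eq:extr} with $\xi_s=0$ lying in the same component would have the same $\bm{\rho}$ and both equal $\bm{\rho}\otimes\eta_e$, hence coincide.

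The main obstacle I anticipate is making the existence argument rigorous: showing that from an arbitrary SiLS path one can reach a translation-only path of the form \eqref{eq:extr} purely by applying root operators, and controlling what happens to the sequence $\bm{x}$ under iterated $e_i$. This requires a careful inductive argument on some statistic (e.g. $\sum_u \sil(x_u)$ or the total length of the $W$-parts of the $x_u$), using Lemma~\ref{lem:directed} to track how directed paths in $\SBa$ transform under reflection, and using Corollary~\ref{cor:pi_} to keep the directions in the correct form $wz_\xi t_\xi$. The second subtlety is correctly matching the combinatorial data of such an $\eta$ with an element $\bm{\rho} \in \Par(\lambda)$ so that the uniqueness claim reduces cleanly to the bijectivity statement following \eqref{eq:P*B}; here I would lean on Proposition~\ref{prop:conn} and Proposition~\ref{prop:di2} to pin down precisely which $(\xi_u)$ and $(a_u)$ can occur, and on the structure of $\Par(\lambda)$ to read off the corresponding partitions $\rho^{(i)}$ from the increments $\pJ{\xi_u - \xi_{u+1}}$.
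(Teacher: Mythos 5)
Your plan has a circularity problem: Proposition~\ref{prop:BNforSLS}, which you take as the key tool for both existence and uniqueness, is proved in the paper (\S\ref{subsec:prf-BNforSLS}) via Proposition~\ref{prop:1-1corr}, whose proof begins by invoking Propositions~\ref{prop:conn} and \ref{prop:unique}. In other words, the decomposition $\sLS \cong \Par(\lambda)\otimes\sLSo$ is a \emph{consequence} of the statement you are asked to prove, so it cannot be used here. The paper's argument is independent of it: for existence it quotes the known result for ordinary LS paths (\cite[Theorem~3.1\,(2)]{NS08}) to find a monomial $X$ with $(X\ol{\eta})(t) \equiv t\lambda \bmod \BR\delta$, lifts this through the strict morphism $\ol{\phantom{\eta}}:\sLS\rightarrow\LS$ to conclude that all directions of $X\eta$ are translations $T_{\xi'_u}$, and then --- this is exactly the step your ``final normalization'' glosses over --- applies a further monomial $Y$ (chosen so that $Y\eta_{T_{\xi}}=\eta_e$) and controls its effect on $X\eta$ through the embedding $\sigma_N$ and Lemmas~\ref{lem:tensor} and \ref{lem:vT}, so that the last direction becomes $e$. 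Simply ``translating all $\xi_u$ by $-\xi_s$'' does produce an element of $\sLS$, but nothing in your sketch shows that this element lies in the \emph{same connected component}, which is the whole point; likewise your heuristic that iterated $e_i$'s ``push the $W$-part down to the identity'' is not substantiated and is replaced in the paper by the citation of \cite{NS08}.

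For uniqueness you again lean on the not-yet-available decomposition, together with the unproven claim that any element of the form \eqref{eq:extr} with $\xi_s=0$ must correspond to $\bm{\rho}\otimes\eta_e$; the extremality of $\eta_{\bm{\rho}}$ is established in the paper only \emph{after} Proposition~\ref{prop:BNforSLS}, as a remark, so it cannot be the engine of the uniqueness proof. The paper argues directly: assuming $X\eta=\eta'$ with $\eta\neq\eta'$ both of the form \eqref{eq:extr} with $\xi_s=0$, it compares $\sigma_N(\eta)$ and $\sigma_N(\eta')$, lets $L$ be the largest index where the translation parts $\zeta_M,\zeta'_M$ differ, uses Lemmas~\ref{lem:tensor} and \ref{lem:vT} to deduce that $X\eta_e\neq\bzero$ with $\sigma_N(X\eta_e)=[T_{\zeta''_1},\dots,T_{\zeta''_L},e,\dots,e]$ where $\pJ{\zeta''_L}=\pJ{\zeta'_L-\zeta_L}\notin\QJp{J^{c}}$ (after possibly interchanging $\eta$ and $\eta'$), and then contradicts Proposition~\ref{prop:conn}. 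To repair your proposal you would need either an argument of this concreteness or a proof of the decomposition $\sLS\cong\Par(\lambda)\otimes\sLSo$ that does not rely on Proposition~\ref{prop:unique}; as written, both halves of your plan rest on facts that come later in the logical order.
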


In order to prove this proposition, 
we need some lemmas. Let $N \in \BZ_{>0}$. 
For simplicity of notation, we set 
$[y_1 , y_2 , \ldots , y_N] := 
\eta_{y_1} \otimes \eta_{y_2} 
\otimes \cdots \otimes \eta_{y_N} \in \sLS^{\otimes N}$ 
for $y_1,\,y_2,\,\ldots,\,y_N \in (W^J)_{\af}$.

\begin{lem}\label{lem:tensor}
Let $N \in \BZ_{>0}$ be a multiple of $N_{\lambda}$ 
(see the comment preceding Lemma~\ref{lem:Na}). 
Let $\eta \in \sLS$, and 
write $\sigma_N (\eta ) \in \sLS^{\otimes N}$ as
$\sigma_N (\eta ) = 
[y_1 , y_2 , \ldots , y_N]$, with 
$y_1 , y_2 , \ldots , y_N \in (W^J)_{\af}$.
Let $X$ be a monomial in the root operators 
$e_i$ and $f_i$, $i \in I_{\af}$, and 
assume that $X \eta \neq \bzero$. Then, 
$\sigma_N (X \eta ) = [v_1 y_1 , v_2 y_2 , \ldots , v_N y_N ]$
for some $v_1 , v_2 , \ldots , v_N \in W_{\af}$ 
such that $v_M y_M \in (W^J)_{\af}$, $1 \le M \le N$.
\end{lem}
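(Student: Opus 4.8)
The plan is to prove Lemma~\ref{lem:tensor} by induction on the length of the monomial $X$, reducing to the case of a single root operator $e_i$ or $f_i$. For the base case $X = \mathrm{id}$, one takes all $v_M = e$, and $\sigma_N(\eta)$ already has the required form by Proposition~\ref{prop:Nmulti2} (or rather by the construction of $\sigma_N$). For the inductive step, suppose the claim holds for a monomial $Y$ with $\sigma_N(Y\eta) = [v_1 y_1, \ldots, v_N y_N]$, and consider $X = e_i Y$ (the $f_i$ case being symmetric). Using \eqref{eq:SN2} from Proposition~\ref{prop:Nmulti2}, we have $\sigma_N(e_i Y\eta) = e_i^N \sigma_N(Y\eta) = e_i^N [v_1 y_1, \ldots, v_N y_N]$. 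The tensor product rule for crystals tells us that $e_i^N$ acting on $\eta_{v_1 y_1} \otimes \cdots \otimes \eta_{v_N y_N}$ distributes as $e_i^{p_1}\eta_{v_1 y_1} \otimes \cdots \otimes e_i^{p_N}\eta_{v_N y_N}$ for suitable nonnegative integers $p_M$ with $\sum_M p_M = N$, where each $p_M$ is determined by the $\varepsilon_i$ and $\varphi_i$ values of the factors.

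The heart of the argument is then to show that for each factor, $e_i^{p_M} \eta_{v_M y_M}$ is again of the form $\eta_{v_M' y_M}$ for some $v_M' \in W_{\af}$ with $v_M' y_M \in (W^J)_{\af}$, and that moreover $p_M$ equals either $0$ or $|\pair{\alpha_i^\vee}{v_M y_M \lambda}|$ — i.e., $e_i^{p_M}$ applied to the extremal element $\eta_{v_M y_M}$ is either the identity or the full ``$r_i$-move'' $r_i \eta_{v_M y_M} = \eta_{\PJ(r_i v_M y_M)}$. This is exactly the content of \eqref{eq:ext-eta} (which computes $\varepsilon_i(\eta_x) = \max\{0, -\pair{\alpha_i^\vee}{x\lambda}\}$ and $\varphi_i(\eta_x) = \max\{0, \pair{\alpha_i^\vee}{x\lambda}\}$, so the only intermediate power of $e_i$ that can occur in a tensor product computation at an extremal factor is $0$ or the full $\varepsilon_i$) together with equation \eqref{eq:etax} and the computation $f_{i_p}^k \eta_y = (r_{i_p}y, y\,;\,0, k/n, 1)$ from the proof of \eqref{eq:etax} in \S\ref{subsec:extremal}. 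Concretely: in the tensor product rule, reading from the appropriate end, $e_i$ either annihilates a factor's $\varepsilon_i$ completely before moving on, or leaves it untouched; so $p_M \in \{0, \varepsilon_i(\eta_{v_M y_M})\}$. When $p_M = \varepsilon_i(\eta_{v_M y_M}) = -\pair{\alpha_i^\vee}{v_M y_M \lambda} > 0$, we get $e_i^{p_M}\eta_{v_M y_M} = r_i \eta_{v_M y_M} = \eta_{\PJ(r_i v_M y_M)}$ by the definition of $r_i$ on $\sLS$ in \S\ref{subsec:extremal} and by \eqref{eq:etax}; setting $v_M' = $ (the element of $W_{\af}$ realizing this, e.g. take a $W_{\af}$-preimage, though cleanly one should argue $\PJ(r_i v_M y_M) = \PJ(r_i \PJ(v_M y_M)) = r_i (v_M y_M)$ when $r_i v_M y_M \in (W^J)_{\af}$, so $v_M' = r_i v_M$ works after checking $v_M' y_M \in (W^J)_{\af}$). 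When $p_M = 0$, we take $v_M' = v_M$. Either way, $v_M' y_M = v_M' y_M \in (W^J)_{\af}$, as required.

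I expect the main obstacle to be the bookkeeping in the case where $e_i^{p_M}$ is a \emph{partial} power — but the point of invoking \eqref{eq:ext-eta} is precisely that this never happens at the level of extremal-element factors: because $\eta_{v_M y_M}$ is an extremal element (Remark~\ref{rem:etax} / Remark~\ref{rem:etae-ext}), its $i$-string is "short" in the sense that $e_i$ either exhausts it or does nothing, so the tensor-product rule forces $p_M \in \{0, \varepsilon_i(\eta_{v_M y_M})\}$. Making this precise requires carefully tracking the signature / bracketing in the tensor product rule for $e_i^N$ acting on a tensor of extremal elements, and confirming that at each step the operator acts on a single factor (raising its $H_i$ across one unit) in a way that matches $r_i$ — this is essentially the same computation as in the proof of Proposition~\ref{prop:Nmulti2}, equation \eqref{eq:sn3}, which identifies $\sigma_N(e_i\eta)$ with the "reflected by $r_i$" tensor, so I would cite that computation and the identities \eqref{eq:etax}, \eqref{eq:ext-eta} rather than redo it. Finally, one assembles $v_M := v_M'(\text{new}) \cdots$ by composing the one-step reflections across the induction, noting that at each stage $v_M y_M \in (W^J)_{\af}$ is preserved by Lemma~\ref{lem:r_i}, which gives the conclusion.
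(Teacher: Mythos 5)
Your final plan, once the detour is stripped away, is the paper's proof: reduce by induction to the case $X=e_i$ or $f_i$, and then observe that, by the very definition of the root operators on $\sLS$ (Definition~\ref{def:e_if_i}, together with Theorem~\ref{thm:stability} and Lemma~\ref{lem:Na}, which ensures $Nt_0,\,Nt_1\in\BZ$), applying $e_i$ or $f_i$ to $Y\eta$ simply replaces a contiguous block of the factors of $\sigma_N(Y\eta)$, say those indexed by $L+1\le M\le K$, by their images under $r_i$, these images lying in $(W^J)_{\af}$ (the components of $e_iY\eta\in\sLS$ are in $(W^J)_{\af}$; alternatively use Lemma~\ref{lem:r_i}). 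This is precisely what you invoke when you say you would ``cite \eqref{eq:sn3}'', and it immediately gives $\sigma_N(XY\eta)=[\,\ldots,v_My_M,\ldots]$ with the new $v_M$ equal to $r_i$ times the old one on the block and unchanged elsewhere; the paper's proof consists of exactly this observation.

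However, the route you present as the heart of the argument — writing $\sigma_N(e_iY\eta)=e_i^N\sigma_N(Y\eta)$ via \eqref{eq:SN2} and then claiming that the tensor product rule forces each factor to receive either $0$ or its full $\ve_i$ ``because the factors are extremal, by \eqref{eq:ext-eta}'' — is not sound as an independent justification. Knowing $\min\{\ve_i(b),\vp_i(b)\}=0$ for every factor does not rule out partial raising: for instance, if $\vp_i(b_1)=1$, $\ve_i(b_1)=0$ and $\ve_i(b_2)=2$, $\vp_i(b_2)=0$, then $e_i^{\max}(b_1\otimes b_2)=b_1\otimes e_ib_2$, which raises the second factor by only one step out of two. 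In the paper, the all-or-nothing distribution of the exponents $p_M$ is obtained in the proof of Proposition~\ref{prop:Nmulti2} only by comparing $e_i^N\kappa(\ol{\sigma_N(\eta)})$ with the \emph{directly computed} $\kappa(\ol{\sigma_N(e_i\eta)})$ (via $\iota_N$ on LS paths); that is, the values of the $p_M$ are deduced from the block-reflected form \eqref{eq:sn3}, not the other way around. So if you insist on the tensor-product-rule route you must first compute $\sigma_N(e_iY\eta)$ directly from Definition~\ref{def:e_if_i} — at which point the lemma is already proved and the tensor computation is superfluous (and, if used to establish the block form itself, circular). Since you do end by citing \eqref{eq:sn3} and \eqref{eq:etax}, your proof lands in the right place, but the ``short $i$-string at extremal factors'' principle should be deleted or replaced by the direct computation; as stated it is false in general and is not what \eqref{eq:ext-eta} says.
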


\begin{proof}
It suffices to show the assertion 
in the case when $X = e_i$ or $f_i$, $i \in I_{\af}$; 
the assertion for a general $X$ follows immediately 
by induction. From the definition of root operators, 
we obtain 
\begin{align*}
\sigma_N (X \eta ) = 
[y_1 , \ldots , y_{L} , r_i y_{L+1} , \ldots , r_i y_K , 
 y_{K+1} , \ldots , y_N]
\end{align*}
for some $1 \le L < K \le N$, with $r_i y_M \in (W^J)_{\af}$ 
for all $L+1 \le M \le K$. This proves the lemma.
\end{proof}

Let $N \in \BZ_{>0}$ be a multiple of $N_{\lambda}$. 
Let $\eta \in \sLS$ be of the form \eqref{eq:extr}. 
Then, $\sigma_N (\eta )$ is of the form
$\sigma_N (\eta ) = [T_{\zeta_1} , T_{\zeta_2} , \ldots , T_{\zeta_N}]$
for some $\zeta_1 , \zeta_2 , \ldots , \zeta_N \in \jad$. 
Let $X$ be a monomial in the root operators
$e_i$ and $f_i$, $i \in I_{\af}$, 
such that $X \eta \neq \bzero$; 
by Lemma~\ref{lem:tensor}, $\sigma_N (X \eta)$ is of the form
$\sigma_N (X \eta) = 
 [v_1 T_{\zeta_1} , v_2 T_{\zeta_2} , \ldots , v_N T_{\zeta_N}]$
for some $v_1 , v_2 , \ldots , v_N \in W_{\af}$ 
such that $v_M T_{\zeta_M} = 
v_M z_{\zeta_M} t_{\zeta_M} \in (W^J)_{\af}$, $1 \le M \le N$; 
note that $v_M \in (W^J)_{\af}$ for all $1 \le M \le N$ 
by Lemma~\ref{lem:xt}.

\begin{lem}\label{lem:vT}
Keep the notation and setting above. 
Let $\eta ' \in \sLS$ be another element of the form \eqref{eq:extr}, 
and write $\sigma_N (\eta ')$ as 
$\sigma_N (\eta ') = 
 [T_{\zeta '_1} , T_{\zeta '_2} , \ldots , T_{\zeta '_N}]$
for some $\zeta '_1 , \zeta '_2 , \ldots , \zeta '_N \in \jad$. 
Then, $X \eta ' \neq \bzero$, and 
$\sigma_N (X \eta ') = 
[v_1 T_{\zeta '_1} , v_2 T_{\zeta '_2} , \ldots , v_N T_{\zeta '_N}]$;
note that $v_M T_{\zeta'_M} \in (W^J)_{\af}$ 
for all $1 \le M \le N$ by Lemma~\ref{lem:xt} 
since $v_M \in (W^J)_{\af}$. 
\end{lem}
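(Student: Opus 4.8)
The plan is to prove the assertion by induction on the length $m$ of the monomial $X$, carrying $\eta$ and $\eta'$ along in parallel; the case $m=0$ is trivial with $v_M=e$ for all $M$. For the inductive step I would write $X=g_mY$ with $g_m\in\{e_i,f_i\}$ for some $i\in I_{\af}$ and $Y$ a monomial of length $m-1$. Since $X\eta=g_mY\eta\neq\bzero$, in particular $Y\eta\neq\bzero$, so the inductive hypothesis (whose hypotheses, by Lemma~\ref{lem:tensor}, are met) gives $Y\eta'\neq\bzero$ together with decompositions $\sigma_N(Y\eta)=[u_1T_{\zeta_1},\ldots,u_NT_{\zeta_N}]$ and $\sigma_N(Y\eta')=[u_1T_{\zeta'_1},\ldots,u_NT_{\zeta'_N}]$ sharing a common tuple $u_1,\ldots,u_N$ with $u_MT_{\zeta_M},u_MT_{\zeta'_M}\in(W^J)_{\af}$ (hence $u_M\in(W^J)_{\af}$ by Lemma~\ref{lem:xt}). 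It then remains to propagate this through one more operator $g_m$, and I will take $g_m=e_i$, the case $f_i$ being symmetric.

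The point that makes a single tuple work for both $\eta$ and $\eta'$ is that the root-operator data of the factors does not see the difference between $T_{\zeta_M}$ and $T_{\zeta'_M}$. Indeed, for $u\in(W^J)_{\af}$ and $\zeta\in\jad$ one has $(uT_\zeta)\lambda=u(z_\zeta t_\zeta\lambda)=u\lambda-\pair{\zeta}{\lambda}\delta$ by \eqref{eq:lv0action} (as $z_\zeta\in W_J$ fixes $\lambda$), and $\pair{\alpha_i^\vee}{\delta}=0$ since $\delta$ is $W_{\af}$-invariant; hence $\pair{\alpha_i^\vee}{(uT_\zeta)\lambda}=\pair{\alpha_i^\vee}{u\lambda}$, so by \eqref{eq:ext-eta} the values $\ve_i(\eta_{u_MT_{\zeta_M}}),\vp_i(\eta_{u_MT_{\zeta_M}})$ equal $\ve_i(\eta_{u_MT_{\zeta'_M}}),\vp_i(\eta_{u_MT_{\zeta'_M}})$; call them $\varepsilon_M,\varphi_M$. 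Thus $\sigma_N(Y\eta)$ and $\sigma_N(Y\eta')$ have identical sequences of $(\ve_i,\vp_i)$-data, so by the tensor product rule $e_i^N$ acts on them through the same exponents: $e_i^N\sigma_N(Y\eta)=e_i^{p_1}\eta_{u_1T_{\zeta_1}}\otimes\cdots\otimes e_i^{p_N}\eta_{u_NT_{\zeta_N}}$ and $e_i^N\sigma_N(Y\eta')=e_i^{p_1}\eta_{u_1T_{\zeta'_1}}\otimes\cdots\otimes e_i^{p_N}\eta_{u_NT_{\zeta'_N}}$ with the same $p_M\ge0$, $\sum_Mp_M=N$. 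For non-vanishing: $X\eta\neq\bzero$ forces $e_i^N\sigma_N(Y\eta)\neq\bzero$ by \eqref{eq:SN2}, i.e. $\ve_i(\sigma_N(Y\eta))\ge N$; this quantity also depends only on the common factor data, so $\ve_i(\sigma_N(Y\eta'))\ge N$, whence $e_i^N\sigma_N(Y\eta')\neq\bzero$ and $X\eta'=e_iY\eta'\neq\bzero$ by \eqref{eq:SN2}.

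Finally I would identify the factors. Since $\sigma_N(X\eta)$ is, by the very definition of $\sigma_N$, a tensor product of single-segment paths $\eta_{(\cdot)}$, each $e_i^{p_M}\eta_{u_MT_{\zeta_M}}$ is single-segment; as $\ol{\eta_{u_MT_{\zeta_M}}}$ is the straight line of direction $(u_MT_{\zeta_M})\lambda$, one checks that whenever $0<p_M<\varepsilon_M$ the path $e_i^{p_M}\eta_{u_MT_{\zeta_M}}$ would have two distinct segments, so $p_M\in\{0,\varepsilon_M\}$. If $p_M=0$ set $w_M:=u_M$; if $p_M=\varepsilon_M\ge1$, then $\pair{\alpha_i^\vee}{(u_MT_{\zeta_M})\lambda}=-\varepsilon_M\neq0$, so $r_i(u_MT_{\zeta_M})\in(W^J)_{\af}$ by Lemma~\ref{lem:r_i} and \eqref{eq:etax} gives $e_i^{\varepsilon_M}\eta_{u_MT_{\zeta_M}}=\eta_{\PJ(r_iu_MT_{\zeta_M})}=\eta_{r_iu_MT_{\zeta_M}}$, so set $w_M:=r_iu_M$. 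Since $\varepsilon_M$ and $p_M$ are unchanged under $\zeta_M\rightsquigarrow\zeta'_M$ and $\pair{\alpha_i^\vee}{(u_MT_{\zeta'_M})\lambda}=\pair{\alpha_i^\vee}{u_M\lambda}$, the identical computation (again via Lemma~\ref{lem:r_i} and \eqref{eq:etax}, resp. Lemma~\ref{lem:xt}) yields $e_i^{p_M}\eta_{u_MT_{\zeta'_M}}=\eta_{w_MT_{\zeta'_M}}$ with $w_MT_{\zeta'_M}\in(W^J)_{\af}$. Hence $\sigma_N(X\eta)=[w_1T_{\zeta_1},\ldots,w_NT_{\zeta_N}]$ and $\sigma_N(X\eta')=[w_1T_{\zeta'_1},\ldots,w_NT_{\zeta'_N}]$ with a common tuple, closing the induction; comparing with the given $\sigma_N(X\eta)=[v_1T_{\zeta_1},\ldots,v_NT_{\zeta_N}]$ and using injectivity of $x\mapsto\eta_x$ on $(W^J)_{\af}$ forces $w_M=v_M$, which is exactly the claim.

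The main obstacle is the observation of the second paragraph — that the $(\ve_i,\vp_i)$-data of the factors, and therefore the entire action of the monomial, is insensitive to the $\jad$-part of the translations, so that the same sequence of reflections $w_M$ is produced for $\eta$ and $\eta'$. Once this is in place, what remains is routine: the tensor product rule bookkeeping, and the verification $p_M\in\{0,\varepsilon_M\}$, which I would extract from the explicit shape of $e_i^k$ applied to a straight-line path $\eta_x$ (equivalently, from the fact that $\sigma_N$ lands in tensor products of single-segment paths).
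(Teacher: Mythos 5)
Your proof is correct and follows essentially the same route as the paper: induction on the length of the monomial $X$, carrying $\eta$ and $\eta'$ in parallel, with the key point that $\pair{\alpha_i^{\vee}}{u_M T_{\zeta'_M}\lambda}=\pair{\alpha_i^{\vee}}{u_M T_{\zeta_M}\lambda}$, so the action of the last operator is insensitive to the translation parts. The only (minor) difference is in the one-step bookkeeping: the paper notes that $H_i^{\ol{Y\eta}}=H_i^{\ol{Y\eta'}}$, so the same $t_0,t_1$ determine the same reflected block of factors, whereas you carry out the equivalent bookkeeping inside $\sLS^{\otimes N}$ via the tensor product rule together with the all-or-nothing behaviour of the exponents $p_M$ on single-segment factors and \eqref{eq:etax}.
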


\begin{proof}
Let $X = g_m g_{m-1} \cdots g_2 g_1$, 
where $g_k \in \bigl\{ e_i , f_i \mid i \in I_{\af} \bigr\}$ 
for each $1 \le k \le m$. We show the assertion 
by induction on $m$. If $m=0$, then the assertion is obvious 
since $X = \mathrm{id}$. Assume that $m > 0$. 
Set $Y := g_{m-1} \cdots g_2 g_1$. 
Since $X \eta \neq \bzero$, it follows 
that $Y \eta \neq \bzero$. 
By Lemma~\ref{lem:tensor}, we can write $\sigma_N (Y \eta)$ as
$\sigma_N (Y \eta) = 
 [u_1 T_{\zeta_1} , u_2 T_{\zeta_2} , \ldots , u_N T_{\zeta_N}]$
for some $u_1 , u_2 , \ldots , u_N \in W_{\af}$ 
such that $u_M T_{\zeta_M} = u_M z_{\zeta_M} t_{\zeta_M} 
\in (W^J)_{\af}$, $1 \le M \le N$. 
By our induction hypothesis, we have $Y \eta ' \neq \bzero$, and 
$\sigma_N (Y \eta ') = 
 [u_1 T_{\zeta '_1} , u_2 T_{\zeta '_2} , \ldots , u_N T_{\zeta '_N}]$.
Now, let us take $i \in I_{\af}$ for which $g_m = e_i$ or $f_i$. 
We see from the definition of 
the root operator $e_i$ or $f_i$ that
$\sigma_N (X \eta ) = \sigma_N (g_m Y \eta )$ is of the form:
\begin{equation*}
\sigma_N (X \eta) = 
[u_1 T_{\zeta_1} , \ldots , u_{L} T_{\zeta_{L}} , 
 r_i u_{L+1} T_{\zeta_{L+1}} , \ldots , r_i u_K T_{\zeta_K} , 
 u_{K+1} T_{\zeta_{K+1}} , \ldots , u_N T_{\zeta_N}]
\end{equation*}
for some $0 \le L < K \le N$, 
with $r_i u_M T_{\zeta_M} \in (W^J)_{\af}$ for all $L+1 \le M \le K$; 
remark that $L$ and $K$ are determined 
by the function $H_i^{\ol{Y \eta}} (t) = 
\pair{\alpha_i^{\vee}}{\ol{Y \eta}(t)}$ 
(see the definitions of $t_0,\,t_1 \in [0,1]$ in 
Definition \ref{def:e_if_i}). Because 
\begin{align*}
\pair{\alpha_i^{\vee}}{u_M T_{\zeta'_M} \lambda} = 
\pair{\alpha_i^{\vee}}{u_M \lambda} = 
\pair{\alpha_i^{\vee}}{u_M T_{\zeta_M} \lambda} 
\qquad \text{for all $1 \le M \le N$}, 
\end{align*}
we deduce that $H_i^{\ol{Y \eta}} (t) = H_i^{\ol{Y \eta '}} (t)$ 
for all $t \in [0,1]$, which implies that $t_0 , t_1$ for $Y \eta '$ 
coincide with those for $Y \eta$. 
Therefore, it follows from the definition of the root operator 
$e_i$ or $f_i$ that $X \eta ' = g_m Y \eta ' \neq \bzero$, and 
that 
\begin{align*}
\sigma_N (X \eta ' ) 
&= \sigma_N (g_m Y \eta ' ) \\
&= [u_1 T_{\zeta '_1} , \ldots , u_{L} T_{\zeta '_{L}} , 
 r_i u_{L+1} T_{\zeta '_{L+1}} , \ldots , 
 r_i u_{K} T_{\zeta '_K} , 
 u_{K+1} T_{\zeta '_{K+1}} , \ldots , u_N T_{\zeta '_N}] .
\end{align*}
This proves the lemma.
\end{proof}

\begin{proof}[Proof of Proposition \ref{prop:unique}.]
First, we prove that each connected component of 
$\sLS$ contains an element of the form \eqref{eq:extr} with $\xi_{s}=0$. 
Let $\eta \in \sLS$; recall that $\ol{\eta} \in \LS$ by Proposition~\ref{prop:pi_}. 
From \cite[Theorem 3.1\,(2)]{NS08}, we know that 
there exists a monomial $X$ in the root operators $e_i$ and $f_i$, 
$i \in I_{\af}$, such that $(X \ol{\eta}) (t) \equiv 
t \lambda \mod \BR \delta$ for $t \in [0,1]$. 
Because the map $\ol{\phantom{\eta}}:\sLS \rightarrow \LS$ is 
a strict morphism of crystals (see Remark \ref{rem:SLS-LS}\,(1)), 
it follows immediately that $X \eta \neq \bzero$, and 
that $X \eta \in \sLS$ is of the form:
\begin{align*}
X \eta = 
(T_{\xi_1 '}, T_{\xi '_2}, \ldots , T_{\xi '_{s-1}} , T_{\xi} \,;\, 
 a_0 , a_1 , \ldots , a_{s-1} , a_s)
\end{align*}
for some $\xi'_1,\,\xi'_2,\,\ldots,\,\xi'_{s-1},\,\xi \in \jad$. 

Since $\eta_{T_{\xi}} = (T_{\xi}\,;\,0,1) \in \sLSo$ 
by Remark~\ref{rem:etax}, there exists 
a monomial $Y$ in the root operators 
$e_i$ and $f_i$, $i \in I_{\af}$, 
such that $Y \eta_{T_{\xi}} = \eta_e$. 
Let $N$ be a multiple of $N_{\lambda}$ 
(see the comment preceding Lemma~\ref{lem:Na}). Then,
$\sigma_N (\eta_{T_{\xi}}) 
 = [T_{\xi} , \ldots , T_{\xi}] \in \sLS^{\otimes N}$ and 
$\sigma_N (\eta_e) 
 = [e,\ldots ,e] \in \sLS^{\otimes N}$. 
Also, we see from Lemma \ref{lem:tensor} that
$\sigma_N (Y \eta_{T_{\xi}}) = 
 [v_1 T_{\xi} , v_2 T_{\xi} , \ldots , v_N T_{\xi}]$
for some $v_1 , v_2 , \ldots , v_N \in W_{\af}$ such that 
$v_M T_{\xi} \in (W^J)_{\af}$ for all $1 \le M \le N$. 
Since $Y \eta_{T_{\zeta}} = \eta_e$, and hence 
$\sigma_N (Y \eta_{T_{\zeta}}) = \sigma_N (\eta_e)$, 
we deduce that $v_M T_{\xi} = e$ for all $1 \le M \le N$. 
Thus, we obtain $v_M = T_{\xi}^{-1}$ for all $1 \le M \le N$.

It follows that $\sigma_N (X \eta)$ is of the form 
$\sigma_N (X \eta) = 
[T_{\zeta '_1} , T_{\zeta '_2} , \ldots , T_{\zeta '_N}]$
for some $\zeta '_1 , \zeta '_2 , \ldots , \zeta '_N \in \jad$
with $\zeta '_N = \xi$. 
Therefore, from Lemma \ref{lem:vT} applied to $\eta_{T_{\xi}}$ and $X\eta$, 
we deduce that
\begin{equation*}
\sigma_N (YX \eta) 
 = [v_1 T_{\zeta '_1} , v_2 T_{\zeta '_2} , \ldots , v_N T_{\zeta '_N}] 
 = [T_{\xi}^{-1} T_{\zeta '_1} , T_{\xi}^{-1} T_{\zeta '_2} , \ldots , 
   \underbrace{T_{\xi}^{-1} T_{\zeta '_N}}_{=e}], 
\end{equation*}
where $v_M T_{\zeta_M '} = T_{\xi}^{-1} T_{\zeta '_M} 
\in (W^J)_{\af}$ for all $1 \le M \le N$; 
using \eqref{eq:W^J_af}, we see by direct computation that 
$T_{\xi}^{-1} T_{\zeta '_M} \in (W^{J})_{\af}$ is 
of the form: 
%
%
\begin{equation} \label{eq:Txi}
T_{\xi}^{-1} T_{\zeta '_M}=
z_{\zeta_{M}}t_{\zeta_{M}}=T_{\zeta_{M}}
\quad \text{for some $\zeta_{M} \in \jad$}.
\end{equation}
Hence we obtain 
$\sigma_N (YX \eta) = 
 [T_{\zeta_1} , T_{\zeta_2} , \ldots , T_{\zeta_{N-1}} , e]$.
Because the final factor of $\sigma_{N}(YX\eta)$ is identical to $e=T_{0}$, 
we conclude that $YX \eta$ is of the form \eqref{eq:extr} with $\xi_{s}=0$. 
Thus, we have proved that each connected component of $\sLS$ 
contains an element of the form \eqref{eq:extr} with $\xi_{s}=0$.

Next, we prove the uniqueness statement. 
Let $\eta, \eta ' \in \sLS$, $\eta \neq \eta '$, 
be of the form \eqref{eq:extr} with $\xi_{s}=0$, and 
suppose that $X \eta = \eta '$ for some monomial $X$ 
in the root operators $e_i$ and $f_i$, $i \in I_{\af}$. 
As above, let $N \in \BZ_{>0}$ be a multiple of $N_{\lambda}$. 
Then, $\sigma_N (\eta)$ and $\sigma_N (\eta ')$ are 
of the form (note that $T_0=e$):
\begin{equation*}
\sigma_N (\eta ) = 
[T_{\zeta_1} , T_{\zeta_2} , \ldots , T_{\zeta_{N-1}} , T_0] , \qquad
\sigma_N (\eta ') = 
[T_{\zeta '_1} , T_{\zeta '_2} , \ldots , T_{\zeta '_{N-1}} , T_0], 
\end{equation*}
for some $\zeta_M , \zeta '_M \in \jad$, $1 \le M \le N-1$, 
respectively. Since $\eta \neq \eta '$ and 
the map $\sigma_N : \sLS \hookrightarrow \sLS^{\otimes N}$ 
is injective, 
there exists $1 \le M \le N-1$ such that $\zeta_M \neq \zeta '_M$; 
let $1 \le L \le N-1$ be the maximum of all such $M$'s.
Then we deduce from Lemma~\ref{lem:J-adj}\,(1) that 
$\zeta_L' - \zeta_L \notin Q_J^{\vee}$, and hence 
$\pJ{\zeta_L' - \zeta_L} \neq 0$; here, 
by interchanging $\eta$ and $\eta'$ if necessary, 
we may assume that 
$\pJ{\zeta_L' - \zeta_L} \notin \QJp{J^{c}}$. 

From Lemma \ref{lem:tensor}, we have 
\begin{align*}
\sigma_N (X \eta ) = 
[v_1 T_{\zeta_1} , v_2 T_{\zeta_2} , \ldots , 
 v_{N-1} T_{\zeta_{N-1}} , v_N T_0]
\end{align*}
for some $v_1 , v_2 , \ldots , v_N \in W_{\af}$ 
such that $v_M T_{\zeta_M} \in (W^J)_{\af}$, 
$1 \le M \le N-1$, and $v_N T_0 \in (W^J)_{\af}$; 
note that $v_1 , v_2 , \ldots , v_N \in (W^J)_{\af}$ 
by Lemma \ref{lem:xt}. Since $X \eta = \eta '$ by our assumption, 
it follows that $v_M T_{\zeta_M} = T_{\zeta '_M}$ 
for all $1 \le M \le N-1$, and $v_N T_0 = T_0$. 
Therefore, we obtain $v_M = T_{\zeta '_M} T_{\zeta_M}^{-1}$ 
for all $1 \le M \le N-1$, and $v_N = e$; 
in particular, $v_{L+1} = \cdots = v_{N-1} = v_N = e$ 
by the definition of $L$. 
Also, the same reasoning as for \eqref{eq:Txi} shows that 
for each $1 \le M \le N$, 
$v_M = T_{\zeta '_M} T_{\zeta_M}^{-1} \in (W^J)_{\af}$ is of the form 
$v_M = T_{\zeta ''_M}$ for some $\zeta ''_M \in \jad$ 
such that $\pJ{\zeta ''_M} = \pJ{\zeta '_M - \zeta_M}$. 
Therefore, from Lemma \ref{lem:vT} applied to $\eta$ and $\eta_{e}$, 
we deduce that $X \eta_e \neq \bzero$, and that 
\begin{align*}
\sigma_N (X \eta_e) = [v_1 , v_2 , \ldots , v_N ] = 
[T_{\zeta ''_1} , T_{\zeta ''_2} , \ldots , T_{\zeta ''_L} , e , \ldots , e], 
\end{align*}
which implies that $X \eta_e \in \sLS$ is of the form:
\begin{align*}
X \eta_e = 
(T_{\xi_1} , \ldots , T_{\xi_{s-1}} , e ; 
 a_0 , a_1 , \ldots , a_{s-1} , a_s ),
\end{align*}
for some $\xi_1 , \ldots , \xi_{s-1} \in \jad$, 
with $\xi_{s-1} = \zeta ''_L$. Hence we obtain 
$\pJ{\xi_{s-1} - 0} = \pJ{\zeta ''_L} = 
\pJ{\zeta '_L - \zeta_L} \notin \QJp{J^{c}}$,
which contradicts Proposition~\ref{prop:conn}. 
This completes the proof of Proposition~\ref{prop:unique}.
\end{proof}
%
%
\subsection{Proof of Proposition~\ref{prop:BNforSLS}.}
\label{subsec:prf-BNforSLS}

Recall that $\Turn(\lambda) = 
 \bigl\{ k / m_i \mid i \in J^{c}=I \setminus J,\ 0 \le k \le m_i \bigr\}$; 
we enumerate the elements of $\Turn(\lambda)$ in increasing order as:
\begin{equation*}
\Turn(\lambda) = 
 \bigl\{ 0 = \tau_0 < \tau_1 < \cdots < \tau_p = 1 \bigr\}.
\end{equation*}
Let $0 \le q \le p$. Note that $i \in \Jcb{\tau_q}$ 
if and only if $i \notin J$ and 
$\tau_q = k / m_i$ for some $0 \le k \le m_i$. 

\begin{prop}\label{prop:1-1corr}
There exists a bijective correspondence between 
the set $\Par(\lambda)$ and the set of all connected components of $\sLS$.
\end{prop}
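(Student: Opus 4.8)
The plan is to combine Proposition~\ref{prop:unique} with an explicit combinatorial bijection. By Proposition~\ref{prop:unique}, the set of connected components of $\sLS$ is in bijective correspondence with the set $\mathcal{C}$ of those SiLS paths which are of the canonical form \eqref{eq:extr} with $\xi_{s}=0$, so it suffices to construct a bijection $\mathcal{C} \stackrel{\sim}{\rightarrow} \Par(\lambda)$. As a preliminary reduction, I note that the projection $\pJ{\,\cdot\,}=\pJ{\,\cdot\,}_{J^{c}} : Q^{\vee} \twoheadrightarrow Q_{J^{c}}^{\vee}$ restricts to a bijection from $\jad$ onto $Q_{J^{c}}^{\vee}$: surjectivity is clear from $\beta=\pJ{\beta+\phi_{J}(\beta)}$, and injectivity follows from the uniqueness statement in Lemma~\ref{lem:J-adj}\,(1) (if $\xi,\xi'\in\jad$ have $\pJ{\xi}=\pJ{\xi'}$, then $\xi-\xi'\in Q_{J}^{\vee}$, so uniqueness of $\phi_{J}(\xi')=0$ forces $\xi=\xi'$). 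Thus a canonical element is completely encoded by the sequence of projections $\pJ{\xi_{u}}\in Q_{J^{c}}^{\vee}$ of its directions, together with its turning points.

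The next step is to pass to a normal form whose turning point set is all of $\Turn(\lambda)=\{0=\tau_{0}<\tau_{1}<\cdots<\tau_{p}=1\}$. Given $\eta\in\mathcal{C}$, each of its turning points lies in $\Turn(\lambda)$ by Proposition~\ref{prop:conn}; refining to all of $\Turn(\lambda)$ (allowing repeated directions) produces a sequence $\zeta_{1},\ldots,\zeta_{p}\in\jad$ with $\zeta_{p}=0$, where $T_{\zeta_{q}}$ is the direction on $[\tau_{q-1},\tau_{q}]$, and collapsing runs of equal consecutive $\zeta_{q}$ recovers $\eta$ — the strict decrease of the defining sequence $\bm{x}$ of $\eta$ corresponding precisely to the jumps being nonzero. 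By Proposition~\ref{prop:conn} and the identification of the previous paragraph, $\mathcal{C}$ is thereby in bijection with the set of all sequences $(\zeta_{1},\ldots,\zeta_{p})$ in $\jad$ with $\zeta_{p}=0$ and $\pJ{\zeta_{q}-\zeta_{q+1}}\in\QJp{\Jcb{\tau_{q}}}$ for $1\le q\le p-1$ (the inequality $\pJ{\zeta_{q}}\ge\pJ{\zeta_{q+1}}$ in the coroot order being automatic, and no other constraint surviving). Writing each $\zeta_{q}$ via its projection, this set is nothing but $\prod_{q=1}^{p-1}\QJp{\Jcb{\tau_{q}}}$, the correspondence being recorded by the ``jumps'' $\pJ{\zeta_{q}}-\pJ{\zeta_{q+1}}$, $1\le q\le p-1$, whose partial sums from the right recover all the $\pJ{\zeta_{q}}$ (with $\pJ{\zeta_{p}}=0$).

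Finally I would unravel $\prod_{q=1}^{p-1}\QJp{\Jcb{\tau_{q}}}=\prod_{q=1}^{p-1}\bigoplus_{j\in\Jcb{\tau_{q}}}\BZ_{\ge 0}\alpha_{j}^{\vee}$ coordinate by coordinate. Fix $j\in J^{c}=I\setminus J$; since $m_{j}=\pair{\alpha_{j}^{\vee}}{\lambda}$, one has $j\in\Jcb{\tau_{q}}$ for an interior index $1\le q\le p-1$ if and only if $\tau_{q}=k/m_{j}$ for some $1\le k\le m_{j}-1$, and there are exactly $m_{j}-1$ such indices. Consequently the coefficient of $\alpha_{j}^{\vee}$ in $\pJ{\zeta_{q}}$ is constant on each ``coarse'' interval $[(k-1)/m_{j},\,k/m_{j}]$, $1\le k\le m_{j}$ (it can change only across a $\tau_{q}$ which is a multiple of $1/m_{j}$); denoting this common value by $d_{k}^{(j)}$, monotonicity gives $d_{1}^{(j)}\ge d_{2}^{(j)}\ge\cdots\ge d_{m_{j}}^{(j)}\ge 0$ with $d_{m_{j}}^{(j)}=0$ forced by $\zeta_{p}=0$, so $\rho^{(j)}:=(d_{1}^{(j)}\ge\cdots\ge d_{m_{j}-1}^{(j)})$ is a partition of length less than $m_{j}$. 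Setting $\rho^{(i)}:=\emptyset$ when $m_{i}=0$, the assignment $\eta\mapsto\bm{\rho}=(\rho^{(i)})_{i\in I}$ is the desired bijection $\mathcal{C}\stackrel{\sim}{\rightarrow}\Par(\lambda)$, with inverse obtained by reversing each of the three steps (pad each $\rho^{(j)}$ to length $m_{j}-1$, reconstruct the $d_{k}^{(j)}$ and hence the $\zeta_{q}$, then collapse). The main obstacle is purely the bookkeeping in the refine/collapse step and in the coordinatewise reindexing: one must verify that ``collapse'' is a genuine inverse to ``refine'', that the $\alpha_{j}^{\vee}$-coefficient of $\pJ{\zeta_{q}}$ changes only across turning points of the form $k/m_{j}$, and that forming partial sums matches exactly the constraint defining $\Par(\lambda)$; everything else is an immediate consequence of Propositions~\ref{prop:unique} and~\ref{prop:conn}.
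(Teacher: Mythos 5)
Your proposal is correct and takes essentially the same route as the paper: both reduce, via Propositions~\ref{prop:conn} and~\ref{prop:unique}, to the canonical representatives of the form \eqref{eq:extr} with $\xi_s=0$, refine the turning points to all of $\Turn(\lambda)$, and then read off (resp.\ reconstruct) the partitions coordinatewise from the $\alpha_i^{\vee}$-coefficients. The paper packages the same bookkeeping as two explicitly defined mutually inverse maps $\Theta$ and $\Xi$, while you phrase it as a chain of bijections (making explicit the auxiliary fact that $\pJ{\,\cdot\,}$ restricts to a bijection $\jad \to Q_{J^c}^{\vee}$, which the paper uses implicitly via Lemma~\ref{lem:J-adj}\,(1)).
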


\begin{proof}
Let $\Conn(\lambda)$ denote the set of 
all connected components of $\sLS$. First, we define a map 
$\Theta : \Conn(\lambda ) \rightarrow \Par(\lambda )$ as follows. 
Take an arbitrary $C \in \Conn(\lambda )$. 
By Propositions~\ref{prop:conn} and \ref{prop:unique}, 
the connected component $C$ contains a unique element $\eta$ 
of the form
$\eta = 
 (T_{\xi_1} , T_{\xi_2} , \ldots , T_{\xi_{s-1}} ,e \,;\, 
 a_0 ,a_1 , \ldots , a_{s-1} , a_s )$, 
with $\xi_1 , \xi_2 , \ldots , \xi_{s-1} \in \jad$, 
such that 
$a_{u} \in \Turn(\lambda)$ for all $0 \le u \le s$, and 
such that $\pJ{\xi_{u}-\xi_{u+1}} \in \QJp{\Jcb{a_{u}}}$ 
for all $1 \le u \le s-1$, where we set $\xi_s := 0$. 
For each $0 \le u \le s$, let $0 \le q_u \le p$ be 
such that $a_u = \tau_{q_u}$. 
Then we define $\zeta_q$, $1 \le q \le p$, by 
$\zeta_q := \xi_u$ if $q_{u-1} + 1 \le q \le q_u$,
that is, 
\begin{align*}
\underbrace{\zeta_1 , \ldots , \zeta_{q_1}}_{:= \xi_1},\ 
\underbrace{\zeta_{q_1 +1} , \ldots , \zeta_{q_2}}_{:= \xi_2},\ 
\ldots,\ 
\underbrace{\zeta_{q_{s-2} + 1} , \ldots , \zeta_{q_{s-1}}}_{:= \xi_{s-1}},\ 
\underbrace{\zeta_{q_{s-1} + 1} , \ldots , \zeta_p}_{:= \xi_s = 0};
\end{align*}
remark that for all $1 \le q \le p-1$, 
\begin{equation} \label{eq:nonnegative}
\pJ{ \zeta_{q} - \zeta_{q+1} } \in 
\QJp{\Jcb{\tau_{q}}}.
\end{equation}
Fix $i \in J^{c}=I \setminus J$, and 
let $c^{(i)}_{q}$, $1 \le q \le p$, 
be the coefficient of $\alpha_{i}^{\vee}$ in $\zeta_q$; 
we see from \eqref{eq:nonnegative} that 
$c_1^{(i)} \ge c_2^{(i)} \ge \cdots \ge c_{p-1}^{(i)} \ge c_p^{(i)} =0$,
and that $c_q^{(i)} = c_{q+1}^{(i)}$ 
for $1 \le q \le p -1$ such that 
$i \notin \Jcb{\tau_{q}}$, i.e., 
$\tau_q \notin \bigl\{ k / m_i \mid 0 \le k \le m_i \bigr\}$.
For each $1 \le k \le m_{i}$, 
let $1 \le p_{k} \le p$ be such that $\tau_{p_k}=k/m_i$. 
Then we define $\rho^{(i)}_{k}$, $1 \le k \le m_{i}-1$, 
in the following way: 
\begin{align*}
& 
\underbrace{c_1^{(i)} = \cdots =c_{p_1}^{(i)}}_{=:\rho^{(i)}_{1}} \ge 
\underbrace{c_{p_1+1}^{(i)} = \cdots =c_{p_2}^{(i)}}_{=:\rho^{(i)}_{2}} 
\ge \cdots \cdots \\[3mm]
& \hspace*{30mm} \cdots \cdots \ge
\underbrace{c_{p_{m_{i}-2}+1}^{(i)} = \cdots =c_{p_{m_{i}-1}}^{(i)}}_{%
  =:\rho^{(i)}_{m_{i}-1}} \ge 
c_{p_{m_{i}-1}+1}^{(i)} = \cdots =c_{p_{m_{i}}}^{(i)}=0.
\end{align*}
Thus we obtain a partition 
$\rho^{(i)} := (\rho^{(i)}_1 \ge \rho^{(i)}_2 \ge \cdots \ge \rho^{(i)}_{m_i -1})$ 
of length less than $m_i$. Now we define $\Theta (C) := 
(\rho^{(i)})_{i \in I} \in \Par(\lambda )$, 
where for every $j \in J$, we define $\rho^{(j)}$ 
to be the empty partition.

Next, we define a map 
$\Xi : \Par(\lambda ) \rightarrow \Conn(\lambda )$ as follows. 
Take an arbitrary $\bm{\rho} = (\rho^{(i)}) \in \Par(\lambda)$, 
with $\rho^{(i)} = 
(\rho^{(i)}_1 \ge \rho^{(i)}_2 \ge \cdots \ge \rho^{(i)}_{m_i -1})$ 
for $i \in I \setminus J$. 
Define $\zeta_{q} \in Q^{\vee}$, $1 \le q \le p$, inductively by
\begin{align*}
\zeta_{p}:=0, \quad \text{and} \quad
\zeta_{q} : = \zeta_{q+1}+
 \sum_{i \in \Jcb{\tau_q}}
   (\rho^{(i)}_{\tau_q m_i} - \rho^{(i)}_{\tau_q m_i + 1})
   \alpha_{i}^{\vee}
\quad
\text{for $1 \le q \le p-1$};
\end{align*}
note that for $1 \le q \le p -1$, 
if $i \in \Jcb{\tau_q}$, then 
$\tau_q m_i=\tau_q \pair{\alpha_i^{\vee}}{\lambda} \in \BZ$, 
with $1 \le \tau_q m_i \le m_i-1$. 
We write the set 
$\bigl\{ 1 \le q \le p-1 \mid \zeta_q \neq \zeta_{q+1} \bigr\}$ 
in the form $\bigl\{ q_1 < q_2 < \cdots < q_{s-1} \bigr\}$, i.e., 
\begin{align*}
\zeta_1 = \cdots = \zeta_{q_1} \neq 
\zeta_{q_1 +1} = \cdots = \zeta_{q_2} \neq 
\cdots \cdots \neq 
\zeta_{q_{s-1} + 1} = \cdots = \zeta_{p} = 0.
\end{align*}
Then we define (for the definition of $\phi_J (\zeta_{p_u})$, 
see Lemma \ref{lem:J-adj})
\begin{align*}
\begin{cases}
\xi_s := 0, \qquad 
\xi_u := \zeta_{q_u} + \phi_J (\zeta_{q_u}) \quad 
\text{for $1 \le u \le s-1$}, \\[1.5mm]
a_0 := 0, \qquad 
a_u := \tau_{q_u} \quad \text{for $1 \le u \le s-1$}, \qquad 
a_s := 1;
\end{cases}
\end{align*}
it follows from Proposition \ref{prop:conn} that 
\begin{align} \label{eq:eta_rho}
\eta_{\bm{\rho}} := 
(T_{\xi_1} , T_{\xi_2} , \ldots , T_{\xi_{s-1}} , e \,;\, 
a_0 , a_1 , \ldots , a_{s-1} , a_s ) \in \sLS. 
\end{align}
Now we define $\Xi (\bm{\rho})$ to be 
the connected component of $\sLS$ 
containing this $\eta_{\bm{\rho}}$.

We deduce from the definitions that 
the maps $\Theta$ and $\Xi$ are inverses of each other. 
This completes the proof of the proposition.
\end{proof}

\begin{proof}[Proof of Proposition~\ref{prop:BNforSLS}.]
For $\bm{\rho} \in \Par(\lambda )$, 
let $\eta_{\bm{\rho}} \in \sLS$ be as defined by \eqref{eq:eta_rho}, 
which is a unique element of the form \eqref{eq:extr} with $\xi_{s}=0$ 
contained in the connected component 
$\BB_{\bm{\rho}}^{\si} (\lambda ) := \Xi (\bm{\rho})$. 
We prove that there exists a unique isomorphism 
$\BB_{\bm{\rho}}^{\si} (\lambda ) \stackrel{\sim}{\rightarrow} 
\{ \bm{\rho} \} \otimes \sLSo$ of crystals 
that maps $\eta_{\bm{\rho}}$ to $\bm{\rho} \otimes \eta_e$. 
As in the proof of Proposition \ref{prop:conn-isom}, 
it suffices to show the following 
for monomials $X$, $Y$ in the Kashiwara operators:
\begin{itemize}

\item[(i)]
$X \eta_{\bm{\rho}} \neq \bzero$ 
if and only if $X (\bm{\rho} \otimes \eta_e) \neq \bzero$;

\item[(ii)]
$X \eta_{\bm{\rho}} = Y \eta_{\bm{\rho}}$ 
if and only if $X(\bm{\rho} \otimes \eta_e) = 
Y(\bm{\rho} \otimes \eta_e)$.

\end{itemize}
Part (i) follows immediately from Lemma \ref{lem:vT} and 
the equality $X (\bm{\rho} \otimes \eta_e) = 
\bm{\rho} \otimes X \eta_e$.
Let us show part (ii). 
We give a proof only for the ``only if" part; 
the proof for the ``if" part is similar. 
Assume that $X \eta_{\bm{\rho}} = Y \eta_{\bm{\rho}} \ne \bzero$. 
Let $N \in \BZ_{>0}$ be a multiple of $N_{\lambda}$ 
(see the comment preceding Lemma~\ref{lem:Na}), 
and write $\sigma_N (\eta_{\bm{\rho}})$ as
$\sigma_N (\eta_{\bm{\rho}}) = 
 [T_{\xi_1} , T_{\xi_2} , \ldots , T_{\xi_N}]$ 
 for some $\xi_1 , \xi_2 , \ldots , \xi_N \in \jad$. 
By Lemma~\ref{lem:tensor}, 
$\sigma_N (X \eta_{\bm{\rho}})$ and 
$\sigma_N (Y \eta_{\bm{\rho}})$ are of the form: 
\begin{align*}
\sigma_N (X \eta_{\bm{\rho}}) = 
[u_1 T_{\xi_1} , u_2 T_{\xi_2} , \ldots , u_N T_{\xi_N}] , && 
\sigma_N (Y \eta_{\bm{\rho}}) = 
[v_1 T_{\xi_1} , v_2 T_{\xi_2} , \ldots , v_N T_{\xi_N}],
\end{align*}
for some $u_1 , u_2 , \ldots , u_N \in (W^J)_{\af}$ and 
$v_1 , v_2 , \ldots , v_N \in (W^J)_{\af}$, respectively. 
Then, by Lemma \ref{lem:vT}, we have 
\begin{align*}
\sigma_N (X \eta_e) = [u_1 , u_2 , \ldots , u_N] , &&
\sigma_N (Y \eta_e) = [v_1 , v_2 , \ldots , v_N ],
\end{align*}
respectively. Since $X \eta_{\bm{\rho}} = Y \eta_{\bm{\rho}}$ 
by the assumption, we have $u_M = v_M$ for all $1 \le M \le N$. 
Therefore, we see that $\sigma_N (X \eta_e) = \sigma_N (Y \eta_e)$, 
and hence $X \eta_e = Y \eta_e$ by the injectivity of $\sigma_N$. 
Thus, we obtain $X(\bm{\rho} \otimes \eta_e) = 
\bm{\rho} \otimes X\eta_e=\bm{\rho} \otimes Y\eta_e=
Y(\bm{\rho} \otimes \eta_e)$, as desired. 
Finally, from the existence of the isomorphism 
$\BB_{\bm{\rho}}^{\si} (\lambda ) \stackrel{\sim}{\rightarrow} 
\{ \bm{\rho} \} \otimes \sLSo$ of crystals 
for each $\bm{\rho} \in \Par(\lambda)$, together with 
Proposition~\ref{prop:1-1corr}, we conclude that 
\begin{align*}
\sLS = \bigsqcup_{\bm{\rho} \in \Par(\lambda )} 
\BB_{\bm{\rho}}^{\si} (\lambda ) \cong 
\bigsqcup_{\bm{\rho} \in \Par(\lambda )} 
\{ \bm{\rho} \} \otimes \sLSo = 
\Par(\lambda ) \otimes \sLSo .
\end{align*}
This completes the proof of 
Proposition \ref{prop:BNforSLS}.
\end{proof}
\begin{rem}
Since $\CB(\lambda) \cong \BB^{\si}(\lambda)$ as crystals, 
we can define the action of the Weyl group $W_{\af}$ on 
$\BB^{\si}(\lambda)$ by the same formula as \eqref{eq:W-action} 
for the one on $\mathcal{B}(\lambda)$.
Let $\bm{\rho} \in \Par(\lambda)$. 
Then, as shown in the proof of Proposition~\ref{prop:BNforSLS},
there exists a unique isomorphism 
$\BB_{\bm{\rho}}^{\si} (\lambda ) \stackrel{\sim}{\rightarrow} 
\{ \bm{\rho} \} \otimes \sLSo$ of crystals that maps 
$\eta_{\bm{\rho}}$ to $\bm{\rho} \otimes \eta_e$.
Hence it follows from the tensor product rule for crystals that 
$x(\bm{\rho} \otimes \eta_e) = \bm{\rho} \otimes (x\eta_e)$ 
for every $x \in W_{\af}$.
From this equality, again using the tensor product rule for crystals,
we deduce that $\eta_{\bm{\rho}}$ is an extremal element 
of weight $\lambda + \wt(\bm{\rho})$, 
since $\eta_{e}$ is an extremal element of 
weight $\lambda$ by Remark~\ref{rem:etae-ext}.
\end{rem}
%
%
\appendix

\section{Appendix.} \label{sec:Appendix}

%
\subsection{Relation between the semi-infinite Bruhat graph and 
the quantum Bruhat graph.}
\label{subsec:SB-QB}

In this subsection, we fix a subset $J \subset I$. 
Set $\rho_J := \frac{1}{2}\sum_{\alpha \in \Delta_J^+} \alpha$; 
note that 
%
%
\begin{align}\label{eq:rhoJ}
\pair{\xi}{\rho - \rho_J} = 0 \quad 
\text{for all $\xi \in Q_J^{\vee}$}.
\end{align}

\begin{define}[{\cite[\S4]{LNSSS13a}; see also \cite[\S6]{BFP99}}] 
\label{def:qedge}
Let $J$ be a subset of $I$. 
Define the (parabolic) quantum Bruhat graph $\QB$ to 
be the $(\Delta^{+} \setminus \Delta_J^+)$-labeled, 
directed graph with vertex set $W^J$ and 
$(\Delta^{+} \setminus \Delta_J^+)$-labeled, 
directed edges of the following form: 
$w \edge{\gamma} \mcr{w r_{\gamma}}$ 
for $w \in W^J$ and $\gamma \in \Delta^+ \setminus \Delta _J^+$ 
such that either (i) $\ell(\mcr{w r_{\gamma}}) = \ell (w) + 1$, or 
(ii) $\ell(\mcr{w r_{\gamma}}) = \ell (w) + 1 - 
2 \pair{\gamma^{\vee}}{\rho - \rho _J}$; 
we call an edge $w \edge{\gamma} \mcr{w r_{\gamma}}$ 
satisfying condition (i) (resp., (ii)) 
a Bruhat (resp., quantum) edge. 
\end{define}

Combining Proposition~\ref{prop:sib-lv0} and 
\cite[Theorem~6.5]{LNSSS13a}, we obtain the following.
%
%
\begin{prop} \label{prop:SB-QB}
\mbox{}
\begin{enu}

\item
Let $x = w z_{\xi} t_{\xi} \in (W^J)_{\af}$ with $w \in W^J$ and $\xi \in \jad$, 
and $\beta \in \prr$. Assume that $x \edge{\beta} r_{\beta} x$ in $\SB$; 
note that $\beta = w \gamma + n \delta$ 
for some $\gamma \in \Delta^{+} \setminus \Delta _J^{+}$ and 
$n \in \bigl\{ 0,\,1 \bigr\}$ by Corollary~\ref{cor:pi_}.
If $n = 0$ (resp., $n = 1$), then 
we have a Bruhat edge (reps., quantum edge) 
$w \edge{\gamma} \mcr{wr_{\gamma}}$ in $\QB$.

\item
Let $w \in W^J$, and $\gamma \in \Delta^+ \setminus \Delta _J ^+$. 
Assume that $w \edge{\gamma} \mcr{wr_{\gamma}}$ in $\QB$. 
Set $\beta := w \gamma $ (resp., $\beta := w \gamma + \delta$) 
if the edge is a Bruhat edge (resp., quantum edge). 
Then, $\beta \in \prr$, and $w z_{\xi} t_{\xi} \edge{\beta} 
r_{\beta} w z_{\xi} t_{\xi}$ in $\SB$ for every $\xi \in \jad$.

\end{enu}
\end{prop}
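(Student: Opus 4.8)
The plan is to factor the asserted equivalence through the level-zero weight poset graph $\LP$, using two ``if and only if'' statements in tandem: Proposition~\ref{prop:sib-lv0} (taken with $a=1$, so that $\SBb{1}=\SB$, $\LPb{1}=\LP$, and the integrality conditions become vacuous), which matches edges of $\SB$ with edges of $\LP$, and \cite[Theorem~6.5]{LNSSS13a}, which matches edges of $\LP$ with edges of $\QB$. Since the present proposition concerns an arbitrary $J\subset I$ while Proposition~\ref{prop:sib-lv0} is stated for a fixed $\lambda$ with $J_\lambda=J$, I would first fix once and for all some $\lambda\in P^+$ with $J_\lambda=J$ (for instance $\lambda=\sum_{i\in I\setminus J}\varpi_i$). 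Recall that $w\mapsto w\lambda$ is a bijection from $W^J$ onto $W^J\lambda$, that $\pair{\gamma^\vee}{\lambda}>0$ for every $\gamma\in\Delta^+\setminus\Delta_J^+$, and that the $\prr$-labeled directed graph $\LP$ is invariant under translating its vertices by $\BQ\delta$ (immediate from $\pair{\beta^\vee}{\delta}=0$ for $\beta\in\prr$; cf.\ the reduction to $x\lambda\equiv\lambda\bmod\BQ\delta$ in the proof of Proposition~\ref{prop:sib-lv0}).

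For part (1), suppose $x=wz_\xi t_\xi\edge{\beta}r_\beta x$ in $\SB$. Proposition~\ref{prop:sib-lv0} gives $x\lambda\edge{\beta}r_\beta x\lambda$ in $\LP$, and Corollary~\ref{cor:pi_} lets me write $\beta=w\gamma+n\delta$ with $\gamma\in\Delta^+\setminus\Delta_J^+$ and $n\in\{0,1\}$. Using \eqref{eq:lv0action}, \eqref{eq:refl}, and the $W_J$-invariance of $\lambda$, a short computation shows $x\lambda=w\lambda-\pair{\xi}{\lambda}\delta$, that $\pair{\beta^\vee}{x\lambda}=\pair{\gamma^\vee}{\lambda}$, and hence
\begin{equation*}
r_\beta x\lambda = w\lambda-\pair{\gamma^\vee}{\lambda}\,w\gamma-\bigl(\pair{\xi}{\lambda}+n\pair{\gamma^\vee}{\lambda}\bigr)\delta = \mcr{wr_\gamma}\lambda-\bigl(\pair{\xi}{\lambda}+n\pair{\gamma^\vee}{\lambda}\bigr)\delta,
\end{equation*}
so that, modulo $\BQ\delta$, the edge $x\lambda\edge{\beta}r_\beta x\lambda$ of $\LP$ runs from $w\lambda$ to $\mcr{wr_\gamma}\lambda$ with a label carrying exactly $n$ copies of $\delta$. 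Feeding this into \cite[Theorem~6.5]{LNSSS13a} produces the edge $w\edge{\gamma}\mcr{wr_\gamma}$ of $\QB$, which is a Bruhat edge when $n=0$ and a quantum edge when $n=1$ (this is exactly the Bruhat/quantum dichotomy of Definition~\ref{def:qedge} as read off from the cited theorem).

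For part (2), I would run the two steps in reverse. Starting from an edge $w\edge{\gamma}\mcr{wr_\gamma}$ of $\QB$, \cite[Theorem~6.5]{LNSSS13a} produces the corresponding edge of $\LP$, whose label is $w\gamma$ in the Bruhat case and $w\gamma+\delta$ in the quantum case; in particular this label lies in $\prr$, which gives $\beta:=w\gamma\in\Delta^+$ for a Bruhat edge and $\beta:=w\gamma+\delta\in\prr$ (as $\beta$ has positive $\delta$-coefficient) for a quantum edge. By the $\BQ\delta$-translation invariance of $\LP$ noted above, for every $\xi\in\jad$ the translated edge $(wz_\xi t_\xi)\lambda\edge{\beta}r_\beta(wz_\xi t_\xi)\lambda$ is again an edge of $\LP$, and Proposition~\ref{prop:sib-lv0} then converts it back into the edge $wz_\xi t_\xi\edge{\beta}r_\beta wz_\xi t_\xi$ of $\SB$; the membership $r_\beta wz_\xi t_\xi\in(W^J)_{\af}$ comes for free, since it is built into the definition of an edge of $\SB$.

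I expect the main obstacle to be the bookkeeping in the middle link of the chain: verifying that the vertex reached by reflecting in $\SB$ corresponds, under reduction modulo $\BQ\delta$, exactly to the vertex $\mcr{wr_\gamma}$ of $\QB$, and, crucially, that the number $n$ of copies of $\delta$ occurring in the affine label $\beta=w\gamma+n\delta$ is precisely the invariant distinguishing a Bruhat edge ($n=0$) from a quantum edge ($n=1$) in Definition~\ref{def:qedge}. Once one is satisfied that the ``Bruhat vs.\ quantum'' alternative in \cite[Theorem~6.5]{LNSSS13a} coincides with the ``$n=0$ vs.\ $n=1$'' alternative coming from Corollary~\ref{cor:pi_}, independence of the construction from the auxiliary choice of $\lambda$ and the uniformity in $\xi\in\jad$ in part (2) follow at once.
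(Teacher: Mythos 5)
Your proposal is correct and follows exactly the route the paper takes: the paper derives Proposition~\ref{prop:SB-QB} precisely by combining Proposition~\ref{prop:sib-lv0} (with $a=1$) with \cite[Theorem~6.5]{LNSSS13a}, which is the two-step transfer $\SB \leftrightarrow \LP \leftrightarrow \QB$ you describe, including the identification of the $n=0$/$n=1$ alternative with the Bruhat/quantum dichotomy. Your added bookkeeping (choice of $\lambda$ with $J_\lambda=J$, the computation of $r_\beta x\lambda$ modulo $\BQ\delta$) is just making explicit what the paper leaves to the reader.
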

%
%
\subsection{Another definition of the semi-infinite Bruhat order.}
\label{subsec:def}

In this subsection, we fix a subset $J \subset I$. 
For $v t_{\zeta} \in W_{\af}$ 
with $v \in W$ and $\zeta \in Q^{\vee}$, 
we define
\begin{align}\label{eq:silJ}
\sil_J (v t_{\zeta}) := \ell (\mcr{v}) + 
2 \pair{\zeta}{\rho - \rho_J}. 
\end{align}
\begin{lem}\label{lem:length}
The equalities $\sil_J (x) = \sil_J (\PJ (x)) = 
\sil (\PJ (x))$ hold for all $x \in W_{\af}$.
\end{lem}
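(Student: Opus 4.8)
The plan is to break the asserted chain into two independent statements: (a) $\sil_J(x) = \sil_J(\PJ(x))$ for every $x \in W_{\af}$, and (b) $\sil_J(y) = \sil(y)$ for every $y \in (W^J)_{\af}$. Since $\PJ(x) \in (W^J)_{\af}$, applying (b) to $y = \PJ(x)$ and combining with (a) yields $\sil_J(x) = \sil_J(\PJ(x)) = \sil(\PJ(x))$, as desired.

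To prove (a), I would write $x = w t_{\xi}$ with $w \in W$ and $\xi \in Q^{\vee}$, so that by Lemma~\ref{lem:J-adj}\,(2) we have $\PJ(x) = \mcr{w} z_{\xi} t_{\xi + \phi_J(\xi)}$. Because $z_{\xi} \in W_J$, the coset $\mcr{w} z_{\xi} W_J$ equals $w W_J$, hence $\mcr{\mcr{w} z_{\xi}} = \mcr{w}$; and $\phi_J(\xi) \in Q_J^{\vee}$, hence $\pair{\phi_J(\xi)}{\rho - \rho_J} = 0$ by \eqref{eq:rhoJ}. Substituting into the definition \eqref{eq:silJ} of $\sil_J$ then gives $\sil_J(\PJ(x)) = \ell(\mcr{w}) + 2\pair{\xi}{\rho - \rho_J} = \sil_J(x)$.

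For (b), I would use \eqref{eq:W^J_af} to write $y = w z_{\xi} t_{\xi}$ with $w \in W^J$ and $\xi \in \jad$. Then $\mcr{w z_{\xi}} = w$, and $\ell(w z_{\xi}) = \ell(w) + \ell(z_{\xi})$ by the standard parabolic factorization (\cite[\S 2.4]{BB05}). Comparing $\sil_J(y) = \ell(w) + 2\pair{\xi}{\rho - \rho_J}$ with $\sil(y) = \ell(w) + \ell(z_{\xi}) + 2\pair{\xi}{\rho}$, the claim (b) reduces to the single identity
\[
\ell(z_{\xi}) = -2\pair{\xi}{\rho_J} = \sum_{\gamma \in \Delta_J^+} \bigl(-\pair{\xi}{\gamma}\bigr) = \#\bigl\{\gamma \in \Delta_J^+ \mid \pair{\xi}{\gamma} = -1\bigr\},
\]
where the last two equalities use that $\xi \in \jad$, i.e. $\pair{\xi}{\gamma} \in \{-1, 0\}$ for all $\gamma \in \Delta_J^+$.

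This identity is the crux of the proof; everything else is bookkeeping. To establish it I would compare two descriptions of a set of roots. On one side, $\ell(z_{\xi})$ equals the cardinality of the inversion set $\{\gamma \in \Delta_J^+ \mid z_{\xi}\gamma \in -\Delta_J^+\}$ (length in the parabolic subgroup $W_J$). On the other side, $\PJ(t_{\xi}) = z_{\xi} t_{\xi}$ lies in $(W^J)_{\af}$ (Lemma~\ref{lem:J-adj}), so by the defining property \eqref{eq:Pet} it carries $(\Delta_J)_{\af}^+$ into $\prr$; I would feed it the roots $\gamma$ and $\delta - \gamma$ for each $\gamma \in \Delta_J^+$. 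A direct computation using \eqref{eq:lv0action} and the $W_{\af}$-invariance of $\delta$ gives $z_{\xi} t_{\xi}\gamma = z_{\xi}\gamma - \pair{\xi}{\gamma}\delta$ and $z_{\xi} t_{\xi}(\delta - \gamma) = -z_{\xi}\gamma + (1 + \pair{\xi}{\gamma})\delta$; since $z_{\xi}\gamma \in \Delta_J$, membership in $\prr$ forces $z_{\xi}\gamma \in \Delta_J^+$ whenever $\pair{\xi}{\gamma} = 0$ and $z_{\xi}\gamma \in -\Delta_J^+$ whenever $\pair{\xi}{\gamma} = -1$. Hence $\{\gamma \in \Delta_J^+ \mid z_{\xi}\gamma \in -\Delta_J^+\} = \{\gamma \in \Delta_J^+ \mid \pair{\xi}{\gamma} = -1\}$, which gives the identity and finishes (b). I anticipate no genuine difficulty here; the only point requiring care is keeping track of $\delta$-coefficients when translating ``$z_{\xi} t_{\xi}\beta \in \prr$'' into a condition on $z_{\xi}\gamma$ for the two test roots.
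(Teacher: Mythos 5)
Your proof is correct, and it follows the paper's overall plan: split the chain into $\sil_J(x)=\sil_J(\PJ(x))$ and $\sil_J(y)=\sil(y)$ for $y\in(W^J)_{\af}$, and verify each by direct computation with \eqref{eq:silJ} and \eqref{eq:rhoJ}. It diverges in two places. For the first equality the paper writes $x=x_1x_2$ with $x_1\in(W^J)_{\af}$ and $x_2\in(W_J)_{\af}$ (Proposition~\ref{prop:P}) and computes $\sil_J(x)=\sil_J(x_1)$ directly, whereas you write $x=wt_{\xi}$ and invoke Lemma~\ref{lem:J-adj}\,(2); these are routine variants of the same computation. The genuine difference is in the second equality: the paper disposes of it by quoting \cite[(3.11)]{LNSSS13a} together with $\ell(wz_{\xi})=\ell(w)+\ell(z_{\xi})$ --- the cited identity is exactly the relation $\ell(z_{\xi})=-2\pair{\xi}{\rho_J}$ to which your argument reduces --- while you prove that identity from scratch: $\ell(z_{\xi})$ counts the inversions of $z_{\xi}$ in $\Delta_J^{+}$, and applying the defining property \eqref{eq:Pet} of $(W^J)_{\af}$ to $\PJ(t_{\xi})=z_{\xi}t_{\xi}$ on the test roots $\gamma$ and $\delta-\gamma$, $\gamma\in\Delta_J^{+}$, forces $z_{\xi}\gamma\in\Delta_J^{+}$ when $\pair{\xi}{\gamma}=0$ and $z_{\xi}\gamma\in-\Delta_J^{+}$ when $\pair{\xi}{\gamma}=-1$, so the inversion set is $\bigl\{\gamma\in\Delta_J^{+}\mid\pair{\xi}{\gamma}=-1\bigr\}$, whose cardinality equals $-2\pair{\xi}{\rho_J}$ because $\xi\in\jad$. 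Your version is therefore self-contained where the paper leans on an external citation, at the cost of some extra root bookkeeping; all the steps in that bookkeeping check out.
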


\begin{proof}
We write $\PJ (x)$ as $\PJ(x) = w z_{\xi} t_{\xi}$, 
with $w \in W^J$ and $\xi \in \jad$. 
The second equality follows from \cite[(3.11)]{LNSSS13a} and 
the equality $\ell (w z_{\xi}) = \ell (w) + \ell (z_{\xi})$.

In order to prove the first equality, 
we write $x = x_1 x_2$ with $x_1 \in (W^J)_{\af}$ 
and $x_2 \in (W_J)_{\af}$; note that $\PJ (x) = x_1$. 
We have $x_1 = w_1 z_{\xi_1} t_{\xi_1}$ 
for some $w_1 \in W^J$ and $\xi_1 \in \jad$ by \eqref{eq:W^J_af}, 
and $x_2 = w_2 t_{\xi_2}$ for some $w_2 \in W_J$ and 
$\xi_2 \in Q_J^{\vee}$ by \eqref{eq:stabilizer}. 
Since $x = x_1 x_2 = w_1 z_{\xi_1} w_2 t_{w_2^{-1} \xi_1 + \xi_2}$, 
we compute
\begin{align*}
\sil_J (x) 
&= \ell (\mcr{w_1 z_{\xi_1} w_2}) + 
   2 \pair{ w_2^{-1} \xi_1 + \xi_2 }{ \rho - \rho_J } & \\
&= \ell (w_1) + 2 \pair{ w_2^{-1} \xi_1 + \xi_2 }{ \rho - \rho_J }
\quad \text{since $w_1 \in W^J$ and $z_{\xi_1} w_2 \in W_J$} \\
&= \ell (w_1) + 2 \pair{ \xi_1 }{ \rho - \rho_J} 
\quad \text{since $\xi_2 \in Q_J^{\vee}$ and $w_2 \in W_J$ 
(see \eqref{eq:rhoJ})} \\
&= \sil_J (w_1 z_{\xi_1} t_{\xi_1}) = \sil_J (x_1) = \sil_J (\PJ (x)). 
\end{align*}
This proves the lemma.
\end{proof}

\begin{prop}
Let $x,\,y \in (W^J)_{\af}$ and $\beta \in \prr$. 
We have $x \edge{\beta} y$ in $\SB$ 
if and only if the following three conditions are satisfied: 
\begin{itemize}

\item[\rm (a)] 
$y = \PJ (r_{\beta} x)$;

\item[\rm (b)] 
$\sil_J (r_{\beta} x) = \sil_J (x) +1$;

\item[\rm (c)]
if we write $x$ as $x = w z_{\xi} t_{\xi}$ 
with $w \in W^J$ and $\xi \in \jad$, then 
$\beta = w \gamma + n \delta$ 
for some $\gamma \in \Delta^+ \setminus \Delta_J^+$ and 
$n \in \bigl\{ 0,\,1 \bigr\}$.

\end{itemize}
\end{prop}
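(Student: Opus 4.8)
The plan is to prove the equivalence by unwinding the definition of the edge $x \edge{\beta} y$ in $\SB$, which by Definition~\ref{def:sib} means $y = r_\beta x \in (W^J)_{\af}$ together with $\sil(r_\beta x) = \sil(x) + 1$. I will show that this is equivalent to the conjunction of (a), (b), (c). First I would establish the ``only if'' direction: assume $x \edge{\beta} y$ in $\SB$. Then $r_\beta x = y \in (W^J)_{\af}$, so $\PJ(r_\beta x) = r_\beta x = y$, giving (a); also $\sil(r_\beta x) = \sil(x) + 1$. Using Lemma~\ref{lem:length} twice (with $\sil_J(x) = \sil(\PJ(x)) = \sil(x)$ since $x \in (W^J)_{\af}$, and similarly $\sil_J(r_\beta x) = \sil(\PJ(r_\beta x)) = \sil(r_\beta x)$ since $r_\beta x \in (W^J)_{\af}$), this translates into $\sil_J(r_\beta x) = \sil_J(x) + 1$, which is (b). Finally, (c) is exactly the conclusion of Corollary~\ref{cor:pi_}: writing $x = w z_\xi t_\xi$ with $w \in W^J$, $\xi \in \jad$, we have $\beta = \alpha$ or $\beta = \alpha + \delta$ with $w^{-1}\alpha \in \Delta^+ \setminus \Delta_J^+$; setting $\gamma := w^{-1}\alpha$ and $n \in \{0,1\}$ accordingly gives $\beta = w\gamma + n\delta$.

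For the ``if'' direction, I would assume (a), (b), (c) and deduce $x \edge{\beta} y$ in $\SB$. The key point is to show $r_\beta x \in (W^J)_{\af}$; once this is known, (a) gives $y = \PJ(r_\beta x) = r_\beta x$, and then Lemma~\ref{lem:length} converts (b) into $\sil(r_\beta x) = \sil(x) + 1$, which combined with $r_\beta x \in (W^J)_{\af}$ is precisely the defining condition of the edge. To prove $r_\beta x \in (W^J)_{\af}$, I would use (c): with $x = w z_\xi t_\xi$, $\beta = w\gamma + n\delta$, a direct computation via \eqref{eq:refl} shows $r_\beta x = r_{w\gamma} t_{n w\gamma^\vee} w z_\xi t_\xi$; one checks that $r_{w\gamma} w = w r_\gamma$ and then $r_\beta x = w r_\gamma z_\xi t_{\xi + n z_\xi^{-1}\gamma^\vee}$ (for $n=0$) or the analogous translation for $n=1$. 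By Lemma~\ref{lem:xt} it suffices to verify that the ``$W$-part times translation'' lands in $(W^J)_{\af}$; since $\gamma \in \Delta^+ \setminus \Delta_J^+$, the element $w r_\gamma$ has minimal coset representative $\mcr{w r_\gamma} \in W^J$, and using that $\xi + n z_\xi^{-1} \gamma^\vee$ (suitably adjusted) is $J$-adjusted we conclude membership in $(W^J)_{\af}$ via \eqref{eq:W^J_af}.

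The main obstacle I anticipate is the verification that $r_\beta x \in (W^J)_{\af}$ in the ``if'' direction using only (a), (b), (c), without circularly invoking the edge condition. The subtlety is that (c) alone gives the \emph{shape} of $\beta$ but not immediately that reflecting by it preserves $(W^J)_{\af}$; one must combine it with (a) --- which says $r_\beta x$ and $\PJ(r_\beta x)$ agree --- in an essential way. A clean route is: condition (a) says $r_\beta x = \PJ(r_\beta x)$, and since $\PJ$ always maps into $(W^J)_{\af}$, condition (a) \emph{by itself already forces} $r_\beta x \in (W^J)_{\af}$. So in fact (a) directly yields the needed membership, and (c) is then used only to match up with the labelled-edge convention (i.e., to record the constraint on $\beta$ that is automatically present for genuine $\SB$-edges by Corollary~\ref{cor:pi_}); (b) supplies the length increment. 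Thus once one notices that (a) is doing the heavy lifting for membership, the proof reduces to bookkeeping with Lemma~\ref{lem:length} and Corollary~\ref{cor:pi_}. I would therefore structure the write-up so that the membership claim is extracted immediately from (a), then handle (b) via Lemma~\ref{lem:length}, and finally note that (c) is the content of Corollary~\ref{cor:pi_} in the forward direction and is consistent (but not used for membership) in the backward direction.
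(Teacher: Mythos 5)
Your ``only if'' direction is fine and matches the paper (edge $\Rightarrow$ $r_{\beta}x=y\in (W^J)_{\af}$, then Lemma~\ref{lem:length} gives (b) and Corollary~\ref{cor:pi_} gives (c)). The ``if'' direction, however, has a genuine gap, and it is exactly at the point you identified as the main obstacle. Your ``clean route'' rests on a misreading of condition (a): (a) states $y=\PJ(r_{\beta}x)$, \emph{not} $r_{\beta}x=\PJ(r_{\beta}x)$. Since $\PJ$ always lands in $(W^J)_{\af}$, the statement $y=\PJ(r_{\beta}x)$ carries no information about whether $r_{\beta}x$ itself lies in $(W^J)_{\af}$; a priori $r_{\beta}x=y\,x_2$ with a possibly nontrivial $x_2\in (W_J)_{\af}$. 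So (a) does not ``do the heavy lifting for membership,'' and your reduction of the backward direction to bookkeeping collapses. Your earlier sketch via Lemma~\ref{lem:xt} and \eqref{eq:W^J_af} does not repair this: writing $r_{\beta}x=wr_{\gamma}z_{\xi}t_{\xi+nz_{\xi}^{-1}\gamma^{\vee}}$, the element $wr_{\gamma}$ need not lie in $W^J$, the $W_J$-factor sitting next to the translation is $z_{\xi}$ rather than the $z$-element attached to the new translation part, and $\xi+nz_{\xi}^{-1}\gamma^{\vee}$ need not be $J$-adjusted, so membership cannot be read off from \eqref{eq:W^J_af} in this way.

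The paper closes this gap by using (b) \emph{and} (c) in an essential way: substituting (c) into (b) and using $\sil_J$ yields $1=\ell(\mcr{wr_{\gamma}})-\ell(w)+2n\pair{\gamma^{\vee}}{\rho-\rho_J}$, which for $n\in\{0,1\}$ is precisely the condition that $w\edge{\gamma}\mcr{wr_{\gamma}}$ is a Bruhat (if $n=0$) or quantum (if $n=1$) edge in $\QB$; then Proposition~\ref{prop:SB-QB}\,(2) converts this into the edge $x\edge{\beta}r_{\beta}x$ in $\SB$, which in particular proves $r_{\beta}x\in(W^J)_{\af}$, and only then does (a) identify $y=\PJ(r_{\beta}x)=r_{\beta}x$. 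In other words, membership of $r_{\beta}x$ in $(W^J)_{\af}$ is a consequence of the quantum-Bruhat-graph comparison driven by (b) and (c), not of (a); your write-up explicitly declines to use (b) and (c) for membership, so the argument as structured would fail. To fix it, replace the ``(a) forces membership'' step by the computation above together with Proposition~\ref{prop:SB-QB}\,(2) (or an equivalent argument supplying an $\SB$-edge out of $x$ with label $\beta$).
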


\begin{proof}
The ``only if" part follows immediately 
from Corollary~\ref{cor:pi_} and Lemma~\ref{lem:length}. 
We show the ``if" part. 
By condition (c), we have $r_{\beta} x = 
w r_{\gamma} z_{\xi} t_{\xi + n z_{\xi}^{-1} \gamma^{\vee}}$.
We compute
\begin{align*}
1 & = \sil_J (r_{\beta} x) - \sil_J (x) \quad \text{by (b)} \\
& = 
 \ell (\mcr{w r_{\gamma}}) + 
 2 \pair{\xi + n z_{\xi}^{-1} \gamma^{\vee}}{\rho - \rho_J} - 
 \ell (w) - 2 \pair{\xi}{\rho - \rho_J} \\
& = 
  \ell (\mcr{w r_{\gamma}}) - 
  \ell (w) + 2 n \pair{\gamma^{\vee}}{\rho - \rho_J} \qquad 
  \text{(see \eqref{eq:rhoJ})}.
\end{align*}
From this, using the condition that $n \in \bigl\{0,\,1\bigr\}$, 
we deduce that $w \edge{\gamma} \mcr{wr_{\gamma}}$ in $\QB$; 
observe that this edge is a Bruhat (resp., quantum) edge if and only if 
$n=0$ (resp., $n=1$). Therefore, by Proposition \ref{prop:SB-QB}\,(2), we have 
$x=wz_{\xi}t_{\xi} \edge{\beta} r_{\beta}wz_{\xi}t_{\xi}=r_{\beta}x$ in $\SB$; 
in particular, $r_{\beta}x \in (W^{J})_{\af}$. Thus, by condition (a), 
we obtain $y = \Pi^{J}(r_{\beta} x)=r_{\beta} x$, and hence 
$x \edge{\beta} y$ in $\SB$. This proves the proposition. 
\end{proof}
%
%
\subsection{Relation between the semi-infinite Bruhat order 
and the generic Bruhat order.}
\label{subsec:si-gen}

In this subsection, we assume that $J = \emptyset$; 
note that $(W^J)_{\af} = W_{\af}$. 
Fix an (arbitrary) element $\xi \in Q^{\vee}$ 
such that $\pair{\xi}{\alpha_i} > 0$ 
for all $i \in I$. 
We know from \cite{Pet97} 
(see also \cite[Theorem~5.2]{LNSSS13a}, and 
Proposition \ref{prop:SB-QB}) that 
for $x,\,y \in (W^J)_{\af} = W_{\af}$, 
$x \le_{\si} y$ if and only if there exists $N \in \BZ_{\ge 0}$, 
depending on $x$, $y$, and $\xi$, 
such that $y t_{-n\xi} \le x t_{-n\xi}$ 
(or equivalently, 
$t_{n \xi} y^{-1} \le t_{n \xi} x^{-1}$) 
for all $n \in \BZ_{\ge N}$, where 
$\le$ is the (ordinary) Bruhat order on $W_{\af}$. 
Also, in \cite[\S 1.5]{Lus80}, 
Lusztig introduced a partial order $\le_L$ on $W_{\af}$, 
which we call Lusztig's generic Bruhat order; 
we know from \cite[Claim~4.14 in the proof of Lemma~4.13]{Soe97} 
that $x \le_L y$ if and only if there exists 
$N \in \BZ_{\ge 0}$, depending on $x, y$, and $\xi$, 
such that $t_{n\xi} x \le t_{n\xi} y$ for all 
$n \in \BZ_{\ge N}$. Combining these facts, 
we obtain the following.

\begin{lem}\label{lem:sib-gen}
Let $x , y \in W_{\af}$. We have $x \le_{\si} y$ 
if and only if $y^{-1} \le_L x^{-1}$.
\end{lem}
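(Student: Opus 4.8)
The plan is to derive Lemma~\ref{lem:sib-gen} by chaining together the two characterizations recalled in the paragraph immediately preceding it, after translating both sides through the inversion map $w \mapsto w^{-1}$ and the standard fact that the ordinary Bruhat order is preserved by inversion. Fix, as the problem does, an element $\xi \in Q^{\vee}$ with $\pair{\xi}{\alpha_i} > 0$ for all $i \in I$. First I would write down the stated criterion for the semi-infinite Bruhat order: for $x,\,y \in W_{\af}$, we have $x \le_{\si} y$ if and only if there exists $N \in \BZ_{\ge 0}$ (depending on $x$, $y$, $\xi$) such that $t_{n\xi} y^{-1} \le t_{n\xi} x^{-1}$ for all $n \ge N$, where $\le$ is the ordinary Bruhat order. (This is exactly the ``equivalently'' reformulation given in the excerpt, obtained from $y t_{-n\xi} \le x t_{-n\xi}$ by applying the inversion anti-automorphism, which preserves $\le$.) Next I would write down Soergel's criterion for the generic Bruhat order: for $u,\,v \in W_{\af}$, we have $u \le_L v$ if and only if there exists $N' \in \BZ_{\ge 0}$ such that $t_{n\xi} u \le t_{n\xi} v$ for all $n \ge N'$.

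The actual argument is then a one-line substitution. Apply the $\le_L$ criterion with $u := y^{-1}$ and $v := x^{-1}$: then $y^{-1} \le_L x^{-1}$ holds if and only if there is $N'$ with $t_{n\xi} y^{-1} \le t_{n\xi} x^{-1}$ for all $n \ge N'$. But this is verbatim the condition appearing in the $\le_{\si}$ criterion for $x \le_{\si} y$ (the two ``$N$''s can be taken equal by choosing the larger). Hence $x \le_{\si} y \iff y^{-1} \le_L x^{-1}$, which is the assertion of the lemma. The only subtlety worth spelling out is that the threshold $N$ in the stabilization statements may a priori differ between the two criteria, but since both conditions are of the form ``holds for all sufficiently large $n$'', one simply takes $N'' := \max\{N, N'\}$; I would include a sentence to that effect.

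There is genuinely no main obstacle here: all the hard content has already been imported from \cite{Pet97}, \cite[Theorem~5.2]{LNSSS13a}, \cite[Claim~4.14]{Soe97}, and Proposition~\ref{prop:SB-QB}, and the lemma is a formal bookkeeping consequence of combining them. If anything requires a word of justification it is the passage from $y t_{-n\xi} \le x t_{-n\xi}$ to $t_{n\xi} y^{-1} \le t_{n\xi} x^{-1}$, which uses that $w \mapsto w^{-1}$ is an order-preserving bijection of $(W_{\af}, \le)$ — a standard fact (see e.g.\ \cite[\S2]{BB05}) — together with $(t_{-n\xi})^{-1} = t_{n\xi}$ from $t_{\xi} t_{\zeta} = t_{\xi+\zeta}$. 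I would state this explicitly and then conclude. The proof is therefore short: recall the two criteria, note they are literally the same condition after the inversion substitution, invoke $\max$ to reconcile the thresholds, and we are done.
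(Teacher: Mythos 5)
Your proposal is correct and follows essentially the same route as the paper: the appendix derives the lemma precisely by combining the recalled criterion for $\le_{\si}$ (in its inverted form $t_{n\xi}y^{-1} \le t_{n\xi}x^{-1}$) with Soergel's criterion for $\le_L$, exactly as you do. The extra remarks about reconciling the thresholds via a maximum and about inversion preserving the ordinary Bruhat order are fine and only make explicit what the paper leaves implicit.
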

%
%
{\small
\setlength{\baselineskip}{13pt}
\renewcommand{\refname}{References}

}

\end{document}